\documentclass[oneside,10pt,leqno]{amsart}
\usepackage{amssymb,amsmath,latexsym,amscd}
\usepackage{array,hhline}

\usepackage{longtable}

\pagestyle{plain}

\setlength{\hoffset}{-1in}
\setlength{\voffset}{-1.5in}
\setlength{\oddsidemargin}{1in}
\setlength{\evensidemargin}{1in}
\setlength{\textwidth}{6.5in}
\setlength{\textheight}{8.5in}
\setlength{\topmargin}{1in}
\setlength{\baselineskip}{10pt}
\setlength{\parskip}{3pt}
\setlength{\LTcapwidth}{6in}


\newcommand{\diag}{\operatorname{diag}}


\def\gg{\mathfrak{g}}
\def\gh{\mathfrak{h}}

\def\gj{\mathfrak{j}}
\def\gk{\mathfrak{k}}

\def\gm{\mathfrak{m}}
\def\gn{\mathfrak{n}}

\def\gr{\mathfrak{r}}
\def\gs{\mathfrak{s}}
\def\gt{\mathfrak{t}}

\def\gv{\mathfrak{v}}
\def\gw{\mathfrak{w}}

\def\gz{\mathfrak{z}}



\def\A{\mathbb{A}}

\def\C{\mathbb{C}}
\def\D{\mathbb{D}}
\def\E{\mathbb{E}}
\def\F{\mathbb{F}}

\def\H{\mathbb{H}}
\def\I{\mathbb{I}}

\def\O{\mathbb{O}}

\def\Q{\mathbb{Q}}
\def\R{\mathbb{R}}
\def\bbS{\mathbb{S}}
\def\T{\mathbb{T}}

\def\Z{\mathbb{Z}}

\def\bI{\mathbf{I}}

\def\Im{{\rm Im}\,}
\def\Re{{\rm Re}\,}

\def\Ad{{\rm Ad}}

\def\rank{{\rm rank}\,}

\def\Ker{{\rm Ker\,}}
\def\tr{{\rm trace\,}}
\def\Aut{{\rm Aut}}
\def\Int{{\rm Int}}
\def\Out{{\rm Out}}

\def\diag{{\rm diag}}


\renewcommand{\thesection}{\arabic{section}}
\renewcommand{\thesubsection}{\thesection\Alph{subsection}}

\renewcommand{\thetable}{{\large \thesection.\arabic{equation}}}
\newtheorem{theorem}[equation]{Theorem}

\newtheorem{lemma}[equation]{Lemma}

\newtheorem{corollary}[equation]{Corollary}

\newtheorem{proposition}[equation]{Proposition}

\newtheorem{examples}[equation]{Examples}

\newtheorem{remark}[equation]{Remark}

\def\sideremark#1{\ifvmode\leavevmode\fi\vadjust{\vbox to0pt{\vss
 \hbox to 0pt{\hskip\hsize\hskip1em
\vbox{\hsize2cm\tiny\raggedright\pretolerance10000 
 \noindent #1\hfill}\hss}\vbox to8pt{\vfil}\vss}}} 
\newcommand \<{\langle}
\renewcommand \>{\rangle}

\begin{document}

\title{On the Homogeneity Conjecture}

\author{Joseph A. Wolf}\thanks{Research partially supported by a Simons
Foundation grant}
\address{Department of Mathematics \\ University of California, Berkeley \\
	CA 94720--3840, U.S.A.} \email{jawolf@math.berkeley.edu}

\date{file last edited 28 March 2023}

\dedicatory{Dedicated to Prof. Toshi Kobayashi on the occassion of his 60th birthday}

\subjclass[2010]{22E40, 22F30, 22F50, 22D45, 53C30, 53C35}

\keywords{Homogeneity Conjecture, homogeneous riemannian manifold, locally 
homogeneous space, constant displacement isometry, Clifford translation, 
Clifford-Wolf isometry}

\begin{abstract}
Consider a connected homogeneous Riemannian manifold $(M,ds^2)$ and a
Riemannian covering $(M,ds^2) \to \Gamma \backslash (M,ds^2)$.  If
$\Gamma \backslash (M,ds^2)$ is homogeneous then every $\gamma \in \Gamma$
is an isometry of constant displacement.  The Homogeneity Conjecture 
suggests the converse: if every $\gamma \in \Gamma$ is an isometry of 
constant displacement on $(M,ds^2)$ then $\Gamma \backslash (M,ds^2)$ is 
homogeneous.  We survey the cases in which the Homogeneity Conjecture
has been verified, including some new results, and suggest some related 
open problems.
\end{abstract}

\maketitle

\section{\bf Introduction.}\label{sec1}
\setcounter{equation}{0}

Let $(M',ds'^2)$ be a connected locally homogeneous Riemannian manifold.  
We are going to study a simple geometric condition for $(M',ds'^2)$ to be 
(globally) homogeneous.  The obvious conditions for this are (i) that 
$(M',ds'^2)$ is complete and (ii) that the universal Riemannian covering
manifold $(M,ds^2)$ of $(M',ds'^2)$ is complete, connected and locally 
homogeneous.  Then $(M,ds^2)$ is homogeneous, specifically

\begin{lemma}\label{cover-homog}
If $\xi_U$ is a Killing vector field on an open subset $(U,ds^2|_U)$ of
$(M,ds^2)$, then $\xi_U$ extends uniquely to a Killing vector field $\xi$
on $(M,ds^2)$, and $\xi$ generates a one parameter group 
$\{\exp(t\xi) | t \in \R\}$ of
isometries of $(M,ds^2)$.  In particular local homogeneity of $(M',ds'^2)$
results in global homogeneity of $(M,ds^2)$.
\end{lemma}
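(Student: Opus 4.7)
The plan is to extend $\xi_U$ to a global Killing field on $M$ by ODE continuation along geodesics, to invoke completeness of $(M,ds^2)$ in order to integrate the extension to a one-parameter group, and finally to deduce global homogeneity of $M$ from local homogeneity of $M'$ by assembling enough global Killing fields to make the isometry group act transitively.

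The starting point is the second-order identity $\nabla_X\nabla_Y\xi = R(\xi,X)Y$ satisfied by every Killing field. Restricted to a geodesic $\gamma$, this is a linear ODE for the pair $(\xi\circ\gamma,\,\nabla_{\dot\gamma}\xi)$, so that $\xi$ along $\gamma$ is completely and uniquely determined by its $1$-jet at $\gamma(0)$. Consequently, if $\xi$ is defined on a neighborhood of $p\in M$, then along any geodesic emanating from $p$ one obtains a canonical prolongation, and these prolongations agree on overlaps with any alternative Killing extension. Fixing $p_0\in U$, for each $q\in M$ I would choose a piecewise geodesic from $p_0$ to $q$ and prolong $\xi_U$ step by step. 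Because $M$ is simply connected (as the universal Riemannian covering of $M'$), the classical monodromy theorem ensures that the value at $q$ is independent of the path chosen. This yields a smooth Killing field $\xi$ defined globally on $M$, unique by the identity principle for the Killing ODE, and the assumed completeness of $(M,ds^2)$ guarantees that $\xi$ integrates to a global one-parameter group $\{\exp(t\xi) : t\in\R\}$ of isometries.

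For the homogeneity assertion, local homogeneity of $M'$ lifts to local homogeneity of $M$, so around every point there exist local Killing fields whose values span the tangent space. The extension principle just established upgrades each of these to a global Killing field on $(M,ds^2)$. Joining any two points $p,q\in M$ by a broken geodesic and composing the flows associated to appropriately chosen global Killing fields then produces an isometry of $(M,ds^2)$ carrying $p$ to $q$, so $(M,ds^2)$ is globally homogeneous.

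The main obstacle I foresee is the extension step itself: one must verify that the prolongation obtained by solving the second-order ODE really satisfies the Killing equation at the new base point, and not merely the derived differential equation. Once that rigidity is in hand, simple connectedness of the universal cover eliminates monodromy and the remainder of the argument reduces to standard facts about complete vector fields and transitive one-parameter group actions.
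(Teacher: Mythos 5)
The paper states this lemma without giving a proof of its own (it is the classical extension theorem of Nomizu for locally homogeneous, equivalently real-analytic, Riemannian manifolds), so your proposal must be judged against the standard argument being invoked. Your outline follows the right architecture, but it has a genuine gap exactly at the point you flag and then defer: prolonging the $1$-jet of $\xi_U$ along a geodesic by the linear ODE $\nabla_{\dot\gamma}\nabla_{\dot\gamma}\xi = R(\xi,\dot\gamma)\dot\gamma$ produces a vector field along that geodesic, but for a general smooth metric the resulting field is \emph{not} a Killing field near the new endpoint, and no amount of ODE uniqueness will make it so. The Killing equation is an overdetermined first-order system; the second-order identity you quote is only a differential consequence of it, and the space of solutions of the prolonged ODE has dimension $n(n+1)/2$ at every point, regardless of whether any isometries exist there. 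A concrete failure: take a metric on $\R^2$ that is flat on a ball $U$ but has nonconstant curvature outside; the infinitesimal rotation on $U$ prolongs along every geodesic, yet cannot extend to a Killing field of the whole plane, which has no rotational symmetry. So the extension claim is simply false without the standing hypothesis that $(M,ds^2)$ is locally homogeneous --- a hypothesis your argument never actually uses in the extension step, only in the final transitivity step.

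The missing idea is precisely how local homogeneity enters. One standard route observes that a locally homogeneous Riemannian metric is real-analytic and then applies Nomizu's theorem: on a connected, simply connected, complete real-analytic Riemannian manifold every local Killing field extends globally, because the obstruction to the prolonged jet being Killing is an analytic tensor that vanishes on $U$ and hence vanishes identically. A second route avoids analyticity: local homogeneity implies that the sheaf of germs of local Killing fields is a locally constant sheaf of Lie algebras of constant finite dimension (local isometries carry Killing germs to Killing germs and act transitively), which is exactly the ``continuation along paths exists and is unique'' hypothesis your monodromy argument requires. You must insert one of these inputs before invoking the monodromy theorem; as written, the existence of the continuation is asserted where it needs to be proved. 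The remaining steps --- uniqueness from the $1$-jet, completeness of a Killing field on a complete manifold (via constancy of $|\xi|$ along its own integral curves), and transitivity from openness and closedness of the orbits of the group generated by the flows --- are all sound.
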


The special case $U = M$ of Lemma \ref{cover-homog} says

\begin{lemma}\label{lift-isom}
Suppose that $(M',ds'^2)$ is homogeneous.  Consider the largest connected 
group of isometries, $G' = \bI^0(M',ds'^2)$.  Then there is a Lie group 
covering $G \to G'$ such that the action of $G'$ on $M'$ lifts to an
effective transitive isometric action of $G$ on $(M,ds^2)$.
\end{lemma}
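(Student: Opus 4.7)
The plan is to construct $G$ as the connected Lie subgroup of $\bI^0(M,ds^2)$ whose Lie algebra equals the lift to $M$ of $\gg'=\mathrm{Lie}(G')$ through the universal Riemannian covering $\pi\colon M\to M'$. The key input is Lemma~\ref{cover-homog}: any complete Killing vector field on $M$ integrates to a one-parameter group of global isometries, so any finite-dimensional Lie algebra of Killing fields on $M$ integrates to a connected Lie subgroup of $\bI^0(M,ds^2)$.

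First I would show that $\pi$ admits a canonical lift of Killing vector fields. Since $\pi$ is a local isometry, for each $\xi'\in\gg'$ the formula $\xi_p=(d\pi_p)^{-1}\xi'_{\pi(p)}$ defines a globally smooth $\pi$-related Killing vector field $\xi$ on $M$, and the assignment $\xi'\mapsto\xi$ is an injective Lie algebra homomorphism. By Lemma~\ref{cover-homog}, each such $\xi$ is complete and generates a one-parameter subgroup of $\bI^0(M,ds^2)$. Let $G$ be the connected Lie subgroup of $\bI^0(M,ds^2)$ generated by these one-parameter subgroups; its Lie algebra is isomorphic to $\gg'$, and effectiveness of the $G$-action on $M$ is built in since $G\subseteq\bI(M,ds^2)$.

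The central step is verifying that $G$ covers $G'$. The $\pi$-relatedness yields $\pi\circ\exp(t\xi)=\exp(t\xi')\circ\pi$ for each lifted Killing field, so every one-parameter subgroup of $G$ normalizes the deck transformation group $\Gamma$ of the covering and descends to the corresponding one-parameter subgroup of $G'$. Since these generate $G$, the whole group $G$ normalizes $\Gamma$, and I obtain a Lie group homomorphism $G\to\bI^0(M',ds'^2)=G'$. Its image is a connected Lie subgroup of $G'$ whose Lie algebra is all of $\gg'$, hence the image equals $G'$ by uniqueness of the integral subgroup with prescribed Lie subalgebra. Its kernel lies in $\Gamma$ and is therefore discrete, so $G\to G'$ is a Lie group covering.

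Transitivity then follows from infinitesimal transitivity: since $G'$ acts transitively on $M'$, the evaluation $\gg'\to T_{p'}M'$ is surjective at every $p'$, and $\pi$-relatedness transfers this surjectivity to every $p\in M$; thus each $G$-orbit is open in $M$, and connectedness of $M$ forces a single orbit. The step I expect to be the main obstacle is the descent in the middle paragraph: one must ensure that the induced map $G\to\bI(M',ds'^2)$ has image exactly $G'$ rather than some other connected Lie subgroup sharing its Lie algebra, and that the kernel is discrete. This rests on uniqueness of integral subgroups for a given Lie subalgebra together with the proper discontinuity of $\Gamma$.
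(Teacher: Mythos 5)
Your argument is correct and follows the same route the paper intends: the paper simply asserts Lemma \ref{lift-isom} as a consequence of Lemma \ref{cover-homog}, and your proposal supplies exactly the standard details that are left implicit there (lifting the Killing algebra of $G'$ through the local isometry $\pi$, integrating via Lemma \ref{cover-homog}, checking that the induced homomorphism $G\to G'$ is surjective with kernel in the discrete deck group $\Gamma$, and deducing transitivity from openness of orbits). No gaps.
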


Now we can formulate a simple, but basic, observation.

\begin{proposition}\label{CW-translation}
{\rm (\cite[Theorems 1 and 2]{W1959}.)}
Suppose that $(M',ds'^2)$ is homogeneous.  Express $M' = \Gamma\backslash M$
where $\Gamma$ is a discontinuous group of fixed point free isometries of
$(M,ds^2)$.  Let $G \subset \bI(M,ds^2)$ as in Lemma {\rm \ref{lift-isom}}.
Then $G$ centralizes $\Gamma$, and every $\gamma \in \Gamma$ is an isometry 
of constant displacement on $(M,ds^2)$.
\end{proposition}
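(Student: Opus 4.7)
I would prove that $G$ centralizes $\Gamma$ first, and then derive constant displacement as a direct consequence of transitivity. Fix $\gamma \in \Gamma$ and $g \in G$. Because $\gamma$ is a deck transformation of $\pi : M \to M' = \Gamma \backslash M$, we have $\pi \circ \gamma = \pi$, and the defining property of the lifted action from Lemma \ref{lift-isom} gives $\pi \circ g = g' \circ \pi$, where $g' \in G'$ is the image of $g$. The key identity is
$$\pi \circ (\gamma g \gamma^{-1}) = \pi \circ g \circ \gamma^{-1} = g' \circ \pi \circ \gamma^{-1} = g' \circ \pi,$$
which shows that $\gamma g \gamma^{-1}$ is another lift of $g'$ to $M$.

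To upgrade this to $\gamma g \gamma^{-1} = g$, I would invoke Lemma \ref{cover-homog}. Writing $g_t = \exp(tX)$ for a one-parameter subgroup of $G$ generated by the Killing vector field $X$ on $M$, the conjugate $t \mapsto \gamma g_t \gamma^{-1}$ is the flow of the Killing field $\gamma_* X$. Both $X$ and $\gamma_* X$ descend via $\pi$ to the same Killing field $X'$ on $M'$, since $\gamma$ acts trivially on $M'$. The uniqueness assertion in Lemma \ref{cover-homog}---applied on an evenly covered neighborhood and extended by connectedness of $M$---then forces $\gamma_* X = X$. Hence $\gamma g_t \gamma^{-1} = g_t$ for all $t \in \R$, and since $G$ is connected and generated by its one-parameter subgroups, $\gamma$ centralizes all of $G$.

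Constant displacement is then immediate from transitivity. Given $x, y \in M$, choose $g \in G$ with $gx = y$; using the centralization just proved and the isometry property of $g$,
$$d(y, \gamma y) = d(gx, \gamma g x) = d(gx, g \gamma x) = d(x, \gamma x),$$
so $x \mapsto d(x, \gamma x)$ is constant on $M$, which is the definition of constant displacement.

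The only point requiring some care is the uniqueness of Killing field lifts under a Riemannian covering, which has to be pulled cleanly out of Lemma \ref{cover-homog}. As a shorter alternative that avoids vector fields entirely, one observes that $g \mapsto \gamma g \gamma^{-1} g^{-1}$ is continuous from the connected group $G$ into $\bI(M,ds^2)$ and, by the displayed projection calculation, lands in the discrete deck group $\Gamma$; since it sends $e$ to $e$, it is identically $e$ on $G$, and the two arguments coincide.
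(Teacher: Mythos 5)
Your proposal is correct and, in its streamlined form, coincides with the paper's proof: the paper observes that $G$ permutes $\Gamma$-orbits, hence normalizes the discrete group $\Gamma$, so connectedness of $G$ forces centralization, after which the displacement computation $\rho(x,\gamma x)=\rho(gx,g\gamma x)=\rho(gx,\gamma gx)=\rho(y,\gamma y)$ is exactly yours. The Killing-field detour in your main line is a valid but heavier route to the same centralization (both $X$ and $\gamma_*X$ are $\pi$-related to $X'$ and $d\pi$ is a pointwise isomorphism, so they agree); your ``shorter alternative'' via continuity into the discrete deck group is the argument the paper actually uses.
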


\begin{proof} As constructed, every element of $G$ sends $\Gamma$--orbits 
to $\Gamma$-orbits, so $G$ normalizes $\Gamma$.  But $G$ is connected and 
$\Gamma$ is discrete, so $G$ centralizes $\Gamma$.

Let $x, y \in M$ and $\gamma \in \Gamma$.  Choose $g \in G$ with 
$g(x) = y$.  Let $\rho$ denote distance in $(M,ds^2)$.  Then the 
displacement $\rho(x,\gamma(x)) =
\rho(gx,g\gamma(x)) = \rho(gx,\gamma g(x)) = \rho(y,\gamma(y))$.
\end{proof}

The ``Homogeneity Conjecture'' includes the converse:

\begin{quote}
\noindent {\bf Homogeneity Conjecture.}
{\sl Let $(M,ds^2)$ be a connected simply connected homogeneous Riemannian 
manifold and $\pi: (M,ds^2) \to (M',ds'^2)$ a Riemannian covering.  Express
$M' = \Gamma \backslash M$ where $\Gamma$ is a discontinuous group of isometries
of $(M,ds^2)$.  Then $(M',ds'^2)$ is homogeneous if and only if every
$\gamma \in \Gamma$ is an isometry of constant displacement on
$(M,ds^2)$.}
\end{quote}

This paper surveys the published cases for which the Homogeneity Conjecture has 
been verified, and also includes a number of new results.  In the cases where 
I recall published results I tried to indicate the steps in the proof, with 
precise references to the original papers for the reader who wants to see
the details.  In the cases of new results I indicated complete proofs.  
I also tried to be precise about cases where verification of the Homogeneity 
Conjecture still is an open problem.

In order to have uniform notation to the extent possible, some notation has 
been changed from that in the references.  This is especially noticeable 
when dealing with isotropy split fibrations.  However the structure of the 
isometry group and the definition of the group $G$
can change from section to section as we look at various classes of 
Riemannian manifolds.

In {\bf Part I} we sketch the verification of the Homogeneity Conjecture for 
Riemannian symmetric spaces, and then we indicate how this verification
extends to Finsler symmetric spaces.  Part I consists of 

{\bf Section \ref{sec2}: Spaces of Constant Curvature.}  These were the first 
instances of results that led to the Homogeneity Conjecture. The euclidean 
case (curvature $K = 0$) 
is elementary. The hyperbolic case ($K < 0$) illustrates the fact that 
for noncompact manifolds one can work more generally with isometries of 
bounded displacement; it depends on the fact there that any two distinct 
geodesics diverge. The elliptic case ($K > 0$) is the hard case 
because the round sphere has so many symmetries, and it requires
some nontrivial finite group theory.

{\bf Section \ref{sec3}: Riemannian Symmetric Spaces with Simple Isometry 
Group.}  As one 
expects from the cases of constant curvature, 
the noncompact case is straightforward and the compact case is
delicate.  It is relatively straightforward to study isometries $\gamma$ 
of constant displacement when $\gamma$ is contained in the identity  
component $G = \bI^0(M,ds^2)$ of the isometry group, but is much less 
straightforward when $\gamma$ is in a component that involves outer 
automorphisms of $G$.

{\bf Section \ref{sec4}: Riemannian Symmetric Spaces that are Group Manifolds.} 
These are 
the ones where $\bI^0(M,ds^2)$ is not simple and the symmetric space 
$(M,ds^2)$ is a group manifold $G$ with bi-invariant Riemannian 
metric $ds^2$.  Then $M = (G \times G)/\{diag\, G\}$; $G$ acts by left and 
right translations and the isotropy subgroup of $\bI^0(M,ds^2)$ acts
on $G$ by inner automorphisms. The delicate points are the cases where $G$ has
outer automorphisms.

{\bf Section \ref{sec5}: A Classification Free Approach.} In this later 
development, 
isometries of constant displacement are characterized as preserving
a minimizing geodesic from a point to its image.  Combining this with
Proposition \ref{dS} one proves that if an isometry $g \in \bI(M,ds^2)$
is of constant displacement then its centralizer in $\bI(M,ds^2)$
is transitive on $M$.  That is enough to verify the Homogeneity
Conjecture for $\Gamma$ cyclic, and it sidesteps the case by case argument
parts of the proofs of Theorems \ref{symm-simple-isom} and \ref{symm-group}
that involve close looks at outer automorphisms.

{\bf Section \ref{sec6}: Extension to Finsler Symmetric Spaces.} 
It makes perfect sense to 
consider isometries of constant displacement on a metric space $(M,\rho)$,
and if $\Gamma \backslash (M,\rho)$ is a homogeneous metric space then every
$\gamma \in \Gamma$ is an isometry of constant displacement.  Thus one can
consider the Homogeneity Conjecture for metric spaces, in particular for
Finsler manifolds.  In this section we indicate the proof of the Homogeneity
Conjecture for Finsler symmetric spaces.  

In {\bf Part II} we sketch the verification (or progress toward the 
verification) 
of the Homogeneity Conjecture for various geometrically defined 
classes of compact homogeneous Riemannian manifolds.  Part II consists of

{\bf Section \ref{sec7}: Isotropy Splitting Fibrations.}  This section
develops a tool for tracing constant displacement isometries along a certain
class of fibrations, modeled on the canonical projections
$SO(k+\ell)/SO(k) \to SO(k+\ell)/[SO(k) \times SO(\ell)]$ of a Stieffel
manifold over a Grassmann manifold.  

{\bf Section \ref{sec8}: Manifolds of Positive Euler Characteristic.}  
This section consists of new results.
We look at Riemannian manifolds $(M,ds^2)$ where $M = G/H$, $G$ is a 
compact connected Lie group, $ds^2$ is a $G$--invariant Riemannian 
metric on $M$, and the Euler characteristic $\chi(M) \ne 0$.
This last condition is equivalent to $\rank H = \rank G$, and then
$\chi(M)$ is the quotient $|W_G/W_H|$ of the orders of the Weyl groups.
The proofs quickly reduce to the case where $G$ is simple.
The main result here is Theorem \ref{euler-pos}.  That verifies the 
Homogeneity Conjecture when, for every $\gamma \in \Gamma$, $\Ad(\gamma)$
is an inner automorphism on $G$. It is an open problem to deal with outer
automorphisms.

{\bf Section \ref{sec9}: Compact Group Manifolds.}  This section also 
contains new results.  We look at Riemannian manifolds $(M,ds^2)$ on
which a connected Lie group $G$ acts simply transitively by isometries.
That reduces to Riemannian manifolds $(G,ds^2)$ where $ds^2$ is a
Riemannian metric invariant under the left translations $\ell(g), g \in G$.
The identity component of the isometry group has form $\ell(G)\times r(H)$
where $r(H)$ consists of the right translations $r(h)$ that preserve $ds^2$, 
i.e. such that $\Ad(h)$ preserves the inner product on $\gg$ defined by $ds^2$.
The main result here is Theorem \ref{result-grp}, which says that a finite
group $\Gamma$ of constant displacement isometries is contained either in
$r(H)$ or $\ell(G)$.  In the $r(H)$ case $\Gamma$ centralizes $\ell(G)$, so
$\Gamma \backslash (G,ds^2)$ is homogeneous and the Homogeneity Conjecture
is verified.  In the $\ell(G)$ case one needs more information on the
elements of $\Gamma$.

{\bf Section \ref{sec10}: Positive Curvature Manifolds.}  In this section 
we verify the Homogeneity Conjecture for Riemannian homogeneous spaces
$(M,ds^2)$, $M = G/H$, such that $M$ admits Riemannian metric $dt^2$ of
strictly positive sectional curvature.  We rely on the classification of
those spaces $M$ and the structure of their isometry groups, carry that
over to the more general spaces $(M,ds^2)$, and use various tools to 
complete the verification there of the Homogeneity Conjecture.

In {\bf Part III} we sketch the verification of the Homogeneity Conjecture 
for several classes of noncompact homogeneous Riemannian manifolds.  
In these noncompact cases ``bounded'' can replace ``constant displacement''
and the result becomes independent of the choice of Riemannian metric.
The results are all based on \cite{W1964} and \cite{T1964}. 
Part III consists of

{\bf Section \ref{sec11}: Negative Curvature.} This section recalls the results
of \cite{W1964}, which were applied to Riemannian symmetric spaces in
Subsection \ref{ss3a}A.

{\bf Section \ref{sec12}: Semisimple Groups.}  This section deals with the 
cases where a real semisimple group $G$, without any compact factors, is
transitive on $(M,ds^2)$.  Combining ideas from \cite{W1964} and \cite{T1964}
it is shown that $(M,ds^2)$ has no nontrivial bounded isometries, and in 
particular the Homogeneity Conjecture is verified for $(M,ds^2)$.  This
applies, in particular, to the flag domains that appear in automorphic
function theory.

{\bf Section \ref{sec13}: Bounded Automorphisms.}  This section recalls 
the results of Jacques Tits \cite{T1964} on bounded automorphisms.  They
give results on bounded isometries.  For reasons of clarity I have quoted 
them in the original.

{\bf Section \ref{sec14}: Exponential Solvable Groups.}  This section
combines results on exponential solvable groups (\cite{W2017}, \cite{W2022} 
and semisimple groups with no compact factors (from Section \ref{sec12}.  
It is shown that if such a group is transitive then there are no nontrivial 
bounded isometries.

In {\bf Part IV} we summarize the results and pose some open problems.
Part IV consists of 

{\bf Section \ref{sec15}: Open Problems.}  This section mentions five open
problems related to the Homogeneity Conjecture.  They are
\begin{itemize}
\item to complete the results on manifolds $(M,ds^2)$ with $\chi(M) > 0$,
\item to complete the results on group manifolds with left invariant 
Riemannian metric,
\item to verify the Homogeneity Conjecture for weakly symmetric Riemannian 
manifolds, or even geodesic orbit Riemannian manifolds,
\item to study the Homogeneity Conjecture for Finsler manifolds, and
\item to study an appropriate variation on the Homogeneity Conjecture for 
pseudo--Riemannian manifolds.
\end{itemize}

Much of the material in Sections \ref{sec8}, \ref{sec9} and \ref{sec14} is
new, except of course where it is cited from one of the references.

\medskip
\centerline{\bf Part I. Riemannian Symmetric Spaces.}
\medskip

In three Sections \ref{sec2} through \ref{sec4} we will sketch the proof 
of the Homogeneity Conjecture for the cases where $(M,ds^2)$ is a 
Riemannian symmetric space.  In Section \ref{sec2} we carry this out 
for the cases of constant sectional curvature; they illustrate the issues 
that must be addressed in general, and in particular
for symmetric spaces.  In fact the case of constant positive curvature
is the most difficult case, and it requires a bit of finite group theory.

After I published these results on Riemannian symmetric spaces,
H. Freudenthal and V. Ozols gave shorter proofs for some special cases.
Freudenthal \cite{F1963} gave a proof for the case where $\Gamma$ is
contained in the identity component $\bI^0(M,ds^2)$, and Ozols \cite{O1974}
gave a classification--free proof for the case where $\Gamma$ is cyclic.
The result of Ozols is sketched in Section \ref{sec5}.

Section \ref{sec6} sketches the proof of the Homogeneity
Conjecture for Finsler symmetric spaces.  This is a bit technical; the
strategy is to develop tools of Finsler geometry that allow one to reduce 
considerations to the Riemannian case.

The main results in Part I are Theorem \ref{hc-const-curv} for constant
curvature spaces, Theorem \ref{hc-symmetric} (for Riemannian symmetric spaces)
and Theorem \ref{hc-finsler} (for Finsler symmetric spaces).

\section{\bf Spaces of Constant Curvature.}\label{sec2}
\setcounter{equation}{0}
\setcounter{subsection}{1}

In this section we indicate the proof of the Homogeneity Conjecture for 
the cases where $(M,ds^2)$ has constant sectional curvature $K$.  This
material comes from \cite{W1959} and \cite{W1960}.  The statement is

\begin{theorem}\label{hc-const-curv}
Let $(M,ds^2)$ be a connected simply connected homogeneous Riemannian 
manifold of constant sectional curvature $K$.  Let 
$\pi: (M,ds^2) \to (M',ds'^2) = (\Gamma \backslash M, ds'^2)$ be a
Riemannian covering.  Then $(M',ds'^2)$ is homogeneous if and only if every
$\gamma \in \Gamma$ is an isometry of constant displacement on $(M,ds^2)$.
\end{theorem}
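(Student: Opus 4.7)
The implication ``$\Gamma\backslash M$ homogeneous $\Rightarrow$ every $\gamma\in\Gamma$ is of constant displacement'' is Proposition \ref{CW-translation}, so I concentrate on the converse.  The strategy is uniform: assuming every $\gamma\in\Gamma$ has constant displacement on the simply connected model $(M,ds^2)$ (euclidean, hyperbolic, or spherical), I will exhibit a subgroup of $\bI(M,ds^2)$ that centralizes $\Gamma$ and acts transitively on $M$; pushing that subgroup down to $M' = \Gamma\backslash M$ then yields the desired homogeneity.  The argument then splits according to the sign of $K$.

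\textbf{The flat case $K = 0$.}  Writing a general element of $\bI(\R^n) = O(n) \ltimes \R^n$ as $\gamma(x) = Ax + v$, the function $x \mapsto \|(A-I)x + v\|$ is constant on $\R^n$ only if $A = I$.  Hence every constant displacement isometry is a pure translation, $\Gamma$ is centralized by the full translation subgroup $\R^n \subset \bI(\R^n)$, and that subgroup is already transitive on $M$.

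\textbf{The hyperbolic case $K < 0$.}  In hyperbolic space any two distinct complete geodesics diverge in at least one direction, so no two distinct geodesics can remain within bounded Hausdorff distance.  If $\gamma$ has constant (hence bounded) displacement, it sends every geodesic $c$ to the geodesic $\gamma\circ c$ at uniformly bounded distance from $c$; by the previous remark $\gamma$ must preserve each geodesic setwise, which forces $\gamma = 1$.  Since $\Gamma$ acts freely this gives $\Gamma = \{1\}$ and $(M',ds'^2) = (M,ds^2)$.

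\textbf{The spherical case $K > 0$.}  Take $M = S^n \subset \R^{n+1}$, so $\bI(M,ds^2) = O(n+1)$ and the displacement of $\gamma$ at $x$ equals $\arccos\langle x,\gamma x\rangle$.  Constancy of displacement is thus equivalent to $\gamma + \gamma^\top$ being a scalar matrix; combined with orthogonality, for $\gamma \ne \pm 1$ this forces $n+1$ to be even and all eigenvalues of $\gamma$ to belong to a single conjugate pair $e^{\pm i\theta}$.  Consequently $\gamma$ is multiplication by $e^{i\theta}$ with respect to some orthogonal complex structure $J_\gamma$ on $\R^{n+1}$, and the unitary group for $J_\gamma$ already centralizes $\gamma$ and acts transitively on $S^n$.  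This settles cyclic $\Gamma$.  For general $\Gamma$, the family $\{J_\gamma : \gamma\in\Gamma\}$ generates a subalgebra of Clifford type in $\mathrm{End}_\R(\R^{n+1})$; freeness of the $\Gamma$-action on $S^n$ places $\Gamma$ in the Vincent--Wolf classification of finite fixed point free subgroups of $O(n+1)$, and a case by case inspection of that list shows $\Gamma$ is conjugate into the unit group of one of $\R, \C, \H, \O$ acting on $\R^{n+1}$ by left multiplication in the appropriate division algebra structure.  The right multiplications by the same division algebra then supply a transitive subgroup of $O(n+1)$ centralizing $\Gamma$.

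\textbf{The main obstacle.}  The positive curvature case is by far the hardest.  Knowing that each individual $\gamma\in\Gamma$ acts as a single complex scalar is not enough: one must fit the whole finite group $\Gamma$ into one coherent division algebra action on $\R^{n+1}$, and this step genuinely uses the classification of fixed point free finite subgroups of orthogonal groups --- input that has no counterpart in the flat or negatively curved settings.
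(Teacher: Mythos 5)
Your treatment of the cases $K=0$ and $K<0$ is correct and is essentially the paper's own argument (Lemmas \ref{const-curve-zero} and \ref{const-curve-neg}), and your single-element analysis on the sphere --- constancy of $\arccos\langle x,\gamma x\rangle$ forces $\gamma+\gamma^{\top}$ scalar, hence eigenvalues $e^{\pm i\theta}$, hence $\gamma=\cos\theta\, I+\sin\theta\, J_\gamma$ for an orthogonal complex structure $J_\gamma$ --- is sound and disposes of cyclic $\Gamma$ just as the paper does via case 2 of (\ref{div-alg}).

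The gap is in the step you yourself flag as the main obstacle, and your proposed resolution does not close it. You write that freeness of the $\Gamma$-action places $\Gamma$ in the classification of finite fixed point free subgroups of $O(n+1)$ and that ``a case by case inspection of that list shows $\Gamma$ is conjugate into the unit group of one of $\R,\C,\H,\O$.'' Read literally this is false: the fixed point free classification contains, for example, the metacyclic groups $\Z_m\rtimes\Z_r$ with the generator of $\Z_r$ acting by a nontrivial automorphism of order $>2$ (such as $\Z_7\rtimes\Z_3$ acting freely on $S^5$), and these are not isomorphic to any subgroup of $Sp(1)$, whose finite subgroups are exactly the cyclic, binary dihedral and binary polyhedral groups. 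Freeness alone cannot eliminate them; the constant displacement hypothesis must be used at the level of the whole group, and the only group-theoretic consequence you have extracted from it is a statement about single elements. (Also, $\O$ is not associative, so ``the unit group of $\O$'' is not a group and cannot contain $\Gamma$; Schur's lemma gives an associative real division algebra, so only $\R$, $\C$, $\H$ occur, as in (\ref{div-alg}).) The paper closes exactly this gap with nontrivial finite group theory: from constant displacement one gets Lemma \ref{clif-basics}(4) (if $\alpha$ and $\alpha^{t}$ are conjugate in $\Gamma$ then $\alpha^{t}\in\{\alpha,\alpha^{-1}\}$), and this, fed through Vincent's theorem on groups with all Sylow subgroups cyclic, Zassenhaus's normal-subgroup procedure (Lemmas \ref{build} and \ref{solv}) in the solvable case, and Suzuki's theorem together with Lemmas \ref{non-solv-1}--\ref{non-solv-3} in the non-solvable case, is what pins $\Gamma$ down to the cyclic, binary dihedral and binary polyhedral groups before the realization inside $U(1)$ or $Sp(1)$ and the transitivity of the commuting right multiplications (your last step, which does match Proposition \ref{const-curve-pos}). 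Without some substitute for that group-theoretic input your argument proves the theorem only for cyclic $\Gamma$.
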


Theorem \ref{hc-const-curv} will follow directly from Proposition
\ref{CW-translation}, Lemmas \ref{const-curve-neg} and \ref{const-curve-zero},
and Proposition \ref{const-curve-pos}.

\smallskip
\centerline{\bf \thesubsection.  Constant Nonpositive Curvature.}

\noindent The hyperbolic space 
case $K < 0$ is easy because any two distinct geodesics diverge.  If
$\gamma$ is an isometry of bounded displacement and $g$ is a geodesic of
$(M,ds^2)$, this says that $\gamma(g) = g$. Since any point $x \in M$ can
be described as the intersection of two geodesics this says $\gamma(x) = x$.
Thus

\begin{lemma}\label{const-curve-neg}
Any isometry of bounded displacement on real hyperbolic space is the
identity transformation.  If $(M',ds'^2)$ is a homogeneous Riemannian manifold
of constant negative curvature, then it is isometric to the real hyperbolic 
space $\H^n(\R)$.
\end{lemma}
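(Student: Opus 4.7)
The plan is to establish the first statement—every bounded-displacement isometry of real hyperbolic space $\H^n(\R)$ is trivial—from which the homogeneity classification follows directly from Proposition \ref{CW-translation}. The geometric input I would rely on is the divergence property of hyperbolic space, cleanly phrased via the ideal boundary $\partial\H^n$: every complete geodesic has two distinct ideal endpoints, any two distinct ideal points determine a unique complete geodesic, and two sequences in $\H^n(\R)$ tending to distinct points of $\partial\H^n$ are at unbounded distance from each other.

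Given this input, let $\gamma$ be an isometry of bounded displacement. First I would show $\gamma(g)=g$ for every geodesic $g$. Indeed, $\gamma(g)$ is a geodesic, and as $x$ runs to an ideal endpoint $\xi$ of $g$, its image $\gamma(x)$ runs to the corresponding endpoint of $\gamma(g)$; if that endpoint were different from $\xi$, the displacement $d(x,\gamma(x))$ would be unbounded along $g$, contradicting our assumption. Hence $\gamma$ fixes both ideal endpoints of $g$, and since a complete geodesic is determined by its pair of ideal endpoints, $\gamma(g)=g$. Next, using that two distinct geodesics of $\H^n(\R)$ meet in at most one point, I would pick any $x\in\H^n(\R)$ together with two distinct geodesics through it; both are preserved by $\gamma$, so $\gamma(x)$ lies in their intersection $\{x\}$, forcing $\gamma$ to be the identity.

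For the second claim, the universal Riemannian cover of a connected Riemannian manifold of constant sectional curvature $K<0$ is, after rescaling so that $K=-1$, isometric to $\H^n(\R)$, and one writes $M'=\Gamma\backslash\H^n(\R)$ with $\Gamma$ a freely and properly discontinuously acting group of isometries. If $(M',ds'^2)$ is homogeneous, Proposition \ref{CW-translation} makes every $\gamma\in\Gamma$ an isometry of constant—hence bounded—displacement, and the first part forces $\Gamma=\{e\}$, so $(M',ds'^2)\cong\H^n(\R)$. The only substantive step is the divergence input and its use to pin down the behaviour of $\gamma$ on $\partial\H^n$; once that is in hand, the rest is bookkeeping.
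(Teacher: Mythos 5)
Your proposal is correct and follows essentially the same route as the paper: the paper's proof is precisely that divergence of distinct geodesics forces $\gamma(g)=g$ for every geodesic $g$, and that each point is the intersection of two geodesics, hence fixed; you have merely made the divergence step precise via the ideal boundary. The deduction of the second claim from Proposition \ref{CW-translation} is likewise the intended argument.
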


The euclidean space case $K = 0$ is elementary because any two straight lines
diverge unless they are parallel.  If
$\gamma$ is an isometry of bounded displacement and $\sigma$ is a geodesic of
$(M,ds^2)$, this says that $\gamma(\sigma)$ is parallel to $\sigma$.  A rigid 
motion of euclidean space that sends every straight line to a parallel line 
must be a pure translation.  Thus

\begin{lemma}\label{const-curve-zero}
Any isometry of bounded displacement on euclidean space is a pure translation.
If $(M',ds'^2)$ is a homogeneous Riemannian manifold
of constant sectional curvature zero, then it is isometric to a product 
$\Gamma \backslash \E^n \cong \T^k \times \E^{n-k}$ of a locally 
euclidean torus with an euclidean space.
\end{lemma}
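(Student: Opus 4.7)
The plan is to expand the sketch already given and then assemble the pieces. The statement really has two parts: a local claim (bounded-displacement isometries of $\E^n$ are translations) and a global claim (flat homogeneous quotients are toral bundles over euclidean factors).

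For the first part, I would write an arbitrary isometry of $\E^n$ as $\gamma(x) = Ax + b$ with $A \in O(n)$ and $b \in \R^n$. Given any straight line $\sigma(t) = p + tv$, its image $\gamma(\sigma)$ is the line $\gamma(p) + t\,Av$. Two lines in $\E^n$ with direction vectors $v$ and $w$ diverge linearly in $t$ unless $w = \pm v$, i.e.\ unless they are parallel. If $\gamma(\sigma)$ were not parallel to $\sigma$, then $\rho(\sigma(t),\gamma(\sigma(t)))$ would grow without bound as $|t|\to\infty$, violating the hypothesis that $\gamma$ has bounded displacement. Hence $Av \in \{\pm v\}$ for every $v \in \R^n$. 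A straightforward linearity argument (apply this to $v$, $w$ and $v+w$) forces $Av = v$ for all $v$, so $A = I$ and $\gamma(x) = x + b$ is a pure translation.

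For the second part, let $(M',ds'^2)$ be homogeneous and flat. Its universal Riemannian covering is $(\E^n,ds^2)$, and we may write $M' = \Gamma\backslash\E^n$ for a discontinuous fixed-point-free group $\Gamma$ of isometries. By Proposition \ref{CW-translation}, every $\gamma\in\Gamma$ is a constant displacement isometry of $\E^n$, in particular bounded, so by the first part $\Gamma$ is a discrete subgroup of the translation group $\R^n$. A discrete subgroup of $\R^n$ is a free abelian lattice of some rank $k\le n$; spanning its $\R$-linear hull $V\subset\R^n$ and taking an orthogonal complement $V^\perp$ gives an isometric splitting $\E^n \cong V\oplus V^\perp$ on which $\Gamma$ acts trivially on $V^\perp$, yielding $\Gamma\backslash\E^n \cong \T^k\times\E^{n-k}$.

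I do not expect a serious obstacle here: the only nontrivial step is the divergence estimate for non-parallel lines, which is elementary euclidean geometry, and the discrete-subgroup classification for $\R^n$ is standard. The argument is genuinely simpler than the hyperbolic case of Lemma \ref{const-curve-neg} (where one got $\gamma = \mathrm{id}$) precisely because in the flat setting translations provide a nontrivial reservoir of constant-displacement isometries, and the conclusion correspondingly produces a nontrivial family of homogeneous quotients $\T^k\times\E^{n-k}$.
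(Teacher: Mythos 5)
Your argument follows the same route as the paper, which proves this lemma only by the sketch preceding its statement (non-parallel lines diverge, hence a bounded-displacement isometry sends every line to a parallel line, hence is a translation), plus the standard lattice decomposition for the quotient; the second half of your write-up is complete and correct. There is, however, one small loophole in the first half: from ``$\gamma(\sigma)$ is parallel to $\sigma$ for every line $\sigma$'' you only get $Av=\pm v$ for all $v$, and the linearity argument on $v$, $w$, $v+w$ then forces the sign to be \emph{constant}, i.e.\ $A=I$ or $A=-I$; it does not by itself exclude the point reflection $A=-I$, which also sends every line to a parallel line but is not a translation. (The paper's one-sentence sketch has the same imprecision.) The gap is closed instantly by returning to the hypothesis: the displacement of $\gamma(x)=Ax+b$ at $x$ is $\lvert (A-I)x+b\rvert$, which is bounded on $\R^n$ if and only if the linear map $A-I$ is zero --- and this computation in fact replaces the whole line-divergence discussion. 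Alternatively, note that along the line $\sigma(t)=p+tv$ the displacement is $\lvert (A-I)p+b+t(A-I)v\rvert$, so boundedness gives $Av=v$ directly (not merely $Av=\pm v$): parallelism of the image line \emph{as a set} is weaker than what bounded displacement actually yields. With either repair your proof is complete.
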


\smallskip
\addtocounter{subsection}{1}
\centerline{\bf \thesubsection.  Constant Positive Curvature: Binary Dihedral 
and Binary Polyhedral Groups.}

\noindent 

The round sphere case $K > 0$ is the hard case.  It is worked out in 
\cite{W1960} and uses some nontrivial finite group theory \cite{S1955}.
It was more or less conjectured by G. Vincent, at least for the 
case where $\Gamma$ is cyclic or binary dihedral, in the last
sentence of \cite[\S 10,5]{V1947}.  Here is a sketch of the proof
from \cite{W1960}.

We first describe the homogeneous quotients of $\bbS^n$ so that we know
the structure of the groups $\Gamma$ that occur here.  We'll need some 
finite group preliminaries.  

Recall that the {\em dihedral group} $\D_m$
has order $2m > 4$ and is the symmetry group in $SO(3)$ of a regular $m$--gon,
the {\em tetrahedral group} $\T$ has order $12$ and is the symmetry 
group in $SO(3)$ of a regular tetrahedron, the {\em octahedral group} $\O$
has order $24$ and is the symmetry group in $SO(3)$ of a regular 
octahedron, and the {\em icosahedral group} $\I$ has order $60$ and 
is the symmetry group in $SO(3)$ of a regular icosahedron.
The last three are the {\em polyhedral groups}.
Every finite subgroup of $SO(3)$ is a cyclic, dihedral, tetrahedral, 
octahedral or icosahedral group.  If two finite subgroups of $SO(3)$ are 
isomorphic they are conjugate in $SO(3)$.

Let $p: Sp(1) \to SO(3)$ denote the universal covering group.  It is 
$2$--sheeted, and $Sp(1)$ is the group of unit quaternions.  The 
{\em binary dihedral}, {\em binary tetrahedral}, {\em binary octahedral}
and {\em binary icosahedral} groups are the
\begin{equation*}
\D^*_k = p^{-1}(\D_k),\;\; \T^* = p^{-1}(\T),\;\; \O^* = p^{-1}(\O) 
\text{ and } \I^* = p^{-1}(\I).
\end{equation*}
The last three are the {\em binary polyhedral} groups.
Every finite subgroup of $Sp(1)$ is a cyclic, binary dihedral, binary 
tetrahedral, binary octahedral or binary icosahedral group.  
If two finite subgroups of $Sp(1)$ are
isomorphic they are conjugate in $Sp(1)$.

Suppose that $(M',ds'^2) = \Gamma \backslash \bbS^n$ is a homogeneous 
Riemannian manifold of constant sectional curvature $K > 0$.  As in
Lemma \ref{lift-isom} the identity component $G' = \bI^0(M',ds'^2)$
of the centralizer of $\Gamma$ in the isometry group $\bI(\bbS^n) = O(n+1)$
is transitive on $\bbS^n$.  In particular it is irreducible on the ambient 
$\R^{n+1}$.  Thus Schur's Lemma says that the centralizer $\A$ of $G'$
in the algebra of linear transformations of $\R^{n+1}$ is a real division
algebra.  Now there $\R^{n+1}$ is a left $\A$ vector space,
$\Gamma \subset \A \cap O(n+1)$, and there are three cases.
\begin{equation}\label{div-alg}
\begin{aligned}
 {\bf 1.} \text{ $\A = \R$,} &\text{ so $\Gamma \subset (\R \cap O(n+1)) = 
		O(1) = \{z \in \R \mid |z = 1\} = \{\pm I\}$.}\\  
	&\text{ Then $(M',ds'^2)$ is the round sphere or the real
		projective space.}\\
 {\bf 2.} \text{ $\A = \C$,} &\text{ so $\Gamma \subset (\C \cap O(n+1)) = 
	U(1) = \{z \in \C \mid |z| = 1\}$.} \\
	&\text{ Then $\Gamma$ is a
		cyclic group $\Z_k,  k > 2$,
		with generator $\diag\{J_k, \dots , J_k\}$}\\
	&\text{	\quad where $J_k = \left ( \begin{smallmatrix}
		e^{2\pi\sqrt{-1}/k} & 0 \\ 0 & e^{-2\pi\sqrt{-1}/k}
		\end{smallmatrix} \right )$,
		and $(M',ds'^2)$ is a ``lens space''.} \\
 {\bf 3.} \text{ $\A = \Q$,} &\text{ so $\Gamma \subset (\Q \cap O(n+1)) = 
	Sp(1) = \{z \in \Q \mid |z| = 1\}$. }\\ 
	&\text{ Then $\Gamma \subset Sp(1)$ is a binary
		dihedral or binary polyhedral group. } 
\end{aligned}
\end{equation}

\smallskip
\addtocounter{subsection}{1}
\centerline{\bf \thesubsection.  Constant Positive Curvature.}

\noindent
Now we assume that $\Gamma$ is a finite non--cyclic group that has a 
faithful unitary representation $\varphi: \Gamma \to U(\ell)$ such that 
every $\varphi(\gamma)$ is an isometry of constant displacement on the 
unit sphere $\bbS^{2\ell - 1}$ in $\C^\ell$.

\begin{lemma}\label{clif-basics}
{\rm (\cite[Lemma 1]{W1960})}
With $\Gamma$ as just specified,

{\rm (1)} Every abelian subgroup of $\Gamma$ is cyclic.

{\rm (2)} Given primes p and q, every subgroup of $\Gamma$ of order pq is cyclic.

{\rm (3)} $\Gamma$ has a unique element of order 2. It generates the center of 
$\Gamma$.

{\rm (4)} If $\alpha$ and its transpose $\alpha^t$ are conjugate elements of 
$\Gamma$, then either $\alpha =  \alpha^t$ or $\alpha^{-1} =  \alpha^t$.
\end{lemma}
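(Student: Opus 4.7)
The proof hinges on a sharp eigenvalue characterization of constant displacement. For $A\in U(\ell)$ on $\bbS^{2\ell-1}\subset\C^\ell$ with the round metric, $\cos d(x,Ax)=\Re\langle x,Ax\rangle$; diagonalizing $A$ with eigenvalues $e^{i\theta_j}$, this reads $\sum_j(\cos\theta_j)|x_j|^2$, which is constant on the unit sphere iff all $\cos\theta_j$ agree. So $A$ is of constant displacement iff its eigenvalues lie in $\{e^{i\theta_A},e^{-i\theta_A}\}$ for some $\theta_A$. In particular every non--identity $\varphi(\gamma)$ has no fixed point on $\bbS^{2\ell-1}$, so $\varphi(\Gamma)$ acts freely.

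For (1), simultaneously diagonalize an abelian $A\subseteq\Gamma$, obtaining characters $\chi_1,\dots,\chi_\ell\colon A\to\C^\times$. For each $\alpha\in A$ and each $j$, $\chi_j(\alpha)\in\{\chi_1(\alpha),\chi_1(\alpha)^{-1}\}$. The homomorphism identity $\chi_j(\alpha\beta)=\chi_j(\alpha)\chi_j(\beta)$, combined with the eigenvalue constraint on $\alpha\beta$, forces the choice of sign to be essentially independent of $\alpha$. Hence each $\chi_j$ equals $\chi_1$ or $\chi_1^{-1}$, so $\varphi(A)$ is determined by $\chi_1$ and is cyclic. For (2), if $p=q$ a group of order $p^2$ is abelian hence cyclic by (1). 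If $p\neq q$ and $H$ is nonabelian of order $pq$ (so $p<q$, $p\mid q-1$), then $H$ has a $p$--dimensional irrep in which a generator $b$ of order $q$ has $p$ distinct $q$--th roots of unity as eigenvalues; for $p\ge 3$ this contradicts the "at most two conjugate eigenvalues'' condition, and for $p=2$ the element of order $2$ acts in the $2$--dimensional irrep with eigenvalues $\pm1$ of opposite real part. Either way $\varphi|_H$ cannot contain this irrep, so $\varphi|_H$ factors through $H/[H,H]$, contradicting faithfulness.

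For (3), I first show $|\Gamma|$ is even: otherwise every Sylow subgroup has odd prime--power order and, by (1) plus the fact that an odd $p$--group with all abelian subgroups cyclic is itself cyclic, $\Gamma$ is a Z--group of odd order; a nonabelian such group contains some $\Z_q\rtimes\Z_p$ violating (2), while an abelian such group would be cyclic by (1), contradicting the hypothesis. So Cauchy gives an element $\sigma$ of order $2$. For uniqueness, two order--$2$ elements $\gamma_1\ne\gamma_2$ generate a dihedral group in which either the product has order $2$ (giving Klein four, violating (1)) or a prime power order $q^a$; passing to $(\gamma_1\gamma_2)^{q^{a-1}}$ together with $\gamma_1$ produces a dihedral subgroup of order $2q$, violating (2). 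The unique $\sigma$ is fixed by all conjugations, hence central. To show $\langle\sigma\rangle=Z(\Gamma)$, note $Z(\Gamma)$ is cyclic by (1); suppose $\zeta\in Z(\Gamma)$ has order $>2$. Then $\varphi(\zeta)$ has eigenvalues $e^{\pm i\theta}$ with $\theta\notin\{0,\pi\}$, and since $\zeta$ is central all of $\varphi(\Gamma)$ preserves the eigenspace decomposition $\C^\ell=V_+\oplus V_-$. On $V_+$, $\varphi(\zeta)$ is the scalar $e^{i\theta}$; the same eigenvalue analysis applied to $\varphi_+(\zeta\gamma)=e^{i\theta}\varphi_+(\gamma)$ forces $\varphi_+(\gamma)=\pm I$ for every $\gamma$, so $[\Gamma:\ker\varphi_+]\le 2$, and symmetrically $[\Gamma:\ker\varphi_-]\le 2$. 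Faithfulness $\ker\varphi_+\cap\ker\varphi_-=\{1\}$ then forces $|\Gamma|\le 4$, and the only noncyclic option is Klein four, violating (1).

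For (4), $\alpha^t$ is the matrix transpose of $\varphi(\alpha)$; its eigenvalues coincide with those of $\alpha$, so the assumption $\beta\alpha\beta^{-1}=\alpha^t$ tightly constrains how $\beta$ permutes the two eigenspaces $V_{\pm}$ of $\alpha$. Taking transposes of this identity yields $\beta^t\alpha\beta^{-t}=\alpha^t$, hence $\beta^{-1}\beta^t$ lies in the centralizer of $\alpha$; combined with the cyclic structure of abelian subgroups from (1) and the central--element analysis of (3), a short case analysis on how $\beta$ acts on $V_{\pm}$ yields the two options $\alpha=\alpha^t$ (symmetric $\varphi(\alpha)$) or $\alpha^{-1}=\alpha^t$ (real $\varphi(\alpha)$). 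The main obstacle is the center computation in (3): the reduction from a nonscalar central $\zeta$ of large order to a scalar one, via the invariant splitting into $\varphi(\zeta)$--eigenspaces and then the index bound on $\ker\varphi_\pm$, is the technical heart of the lemma and is what ultimately forces $\Gamma$ into the Vincent--type list.
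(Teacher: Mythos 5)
Your foundation---that a unitary matrix is of constant displacement on the sphere exactly when its eigenvalues lie in a single conjugate pair $\{\lambda,\bar\lambda\}$---is the right one, and it is the fact the paper's sketch uses implicitly. Your proofs of (1) and (2) are correct and actually more self-contained than the paper's, which simply cites the free action on the sphere (and Vincent) for these points: the sign-consistency argument for the characters $\chi_j$ in (1) and the analysis of the $p$-dimensional irreducibles of a nonabelian group of order $pq$ in (2) both check out. For (3) the paper takes a different route, reducing to an irreducible $\varphi$ and applying Schur's lemma to a central element; your direct use of the $\varphi(\zeta)$-eigenspace decomposition is a legitimate alternative and handles the reducible case without the preliminary reduction.

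There are, however, two genuine problems. First, in (3) the assertion that the eigenvalue analysis ``forces $\varphi_+(\gamma)=\pm I$'' is false. What the analysis actually gives is this: if $\varphi_+(\gamma)$ had two distinct eigenvalues $\lambda\ne\bar\lambda$, then $e^{i\theta}\lambda$ and $e^{i\theta}\bar\lambda$ would have to be mutually conjugate, forcing $e^{2i\theta}=1$; hence $\varphi_+(\gamma)$ is a \emph{scalar}, but nothing makes that scalar equal to $\pm1$ (any scalar unitary matrix trivially satisfies the conjugate-pair condition). Consequently the index bounds $[\Gamma:\ker\varphi_\pm]\le 2$ and the conclusion $|\Gamma|\le 4$ do not follow. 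The repair is immediate and cleaner: $\varphi_+(\gamma)$ and $\varphi_-(\gamma)$ scalar for every $\gamma$ means $\varphi(\Gamma)$ is abelian, so $\Gamma$ is cyclic by (1), contradicting the standing hypothesis that $\Gamma$ is non-cyclic. Second, part (4) is not actually proved: you assemble plausible ingredients (transpose preserves eigenvalues; $\beta^{-1}\beta^{t}$ centralizes $\alpha$), but the ``short case analysis'' that is supposed to yield the dichotomy $\alpha=\alpha^{t}$ or $\alpha^{-1}=\alpha^{t}$ is never carried out, and that is where the entire content of (4) lies. As it stands, your (4) is a plan rather than an argument, so the lemma is not fully established by the proposal.
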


\noindent {\em Sketch of Proof.}  Statements (1), (2) and the uniqueness of 
elements of order $2$ in $\Gamma$ follow from the fact that $\Gamma$ has a 
free action on a sphere. As $\Gamma$ has even order \cite[\S 10.5]{V1947}, 
(3) follows when we show that any central element $\gamma \in \Gamma$ has 
order 2.

Looking at characters, one sees that the irreducible components of 
$\varphi$ are equal and inherit the property that every element in the
image is an isometry of constant displacement on the unit sphere in the
representation space.  Thus we may assume that $\varphi$ is irreducible. 
If $\gamma \ne 1$ is central in $\Gamma$, Schur's Lemma shows that 
$\varphi(\gamma)$ is scalar, so its eigenvalues satisfy 
$\lambda = \overline{\lambda}$, in other words $\lambda = \pm 1$.
As $\gamma \ne 1$ now $\varphi(\gamma) = -1$.  That proves (3).

If $\alpha$ and $\alpha^t$ are conjugate they have the same eigenvalues,
$\{\lambda, \overline{\lambda}\} = \{\lambda', \overline{\lambda'}\}$.
If $\lambda = \lambda'$ then $\alpha = \alpha^t$, and if $\lambda =
\overline{\lambda'}$ then $\alpha^{-1} = \alpha^t$.  That proves (4).
\hfill $\diamondsuit$

\begin{lemma}\label{build} 
{\rm (\cite[Lemma 2]{W1960})}
Let $\Gamma_1$ be a normal subgroup of $\Gamma$, assume
$\Gamma_1$ cyclic or binary dihedral $\D_k^*$ ($k \ne 2$), and suppose 
$\Gamma$ generated by $\Gamma_1$ and some element $\tau \in \Gamma$. Then 
$\Gamma$ is cyclic or binary dihedral
\end{lemma}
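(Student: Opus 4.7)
The plan is to treat the two cases of $\Gamma_1$ in turn, reducing the binary dihedral case to the cyclic case by exploiting the unique maximal cyclic subgroup of $\D^*_k$ when $k \ne 2$.

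For the cyclic case $\Gamma_1 = \langle\sigma\rangle$ of order $n$, write $\Gamma/\Gamma_1 = \langle\bar\tau\rangle$ of order $e$ and $\tau\sigma\tau^{-1} = \sigma^r$. Since $\tau^e \in \Gamma_1$ commutes with $\sigma$, the abelian subgroup $\langle\sigma, \tau^e\rangle$ is cyclic by Lemma \ref{clif-basics}(1); replacing $\sigma$ by one of its generators and iterating (a process that terminates because $|\Gamma|$ is finite), I may assume $C_\Gamma(\sigma) = \Gamma_1$, which makes $e$ equal to the order of $r$ in $(\Z/n\Z)^\times$. The heart of the argument is then to force $e \le 2$ and $r \equiv \pm 1 \pmod n$: for each prime $q \mid e$, the element $\tau^{e/q}$ acts with order $q$ on $\langle\sigma\rangle$, hence nontrivially on the $p$-primary part for some prime $p \mid n$; by Lemma \ref{clif-basics}(2), pairing an order-$p$ element of $\langle\sigma\rangle$ with a suitable order-$q$ element of $\Gamma$ forces the resulting subgroup of order $pq$ to be cyclic, which rules out any action on that $p$-primary part other than trivial or inversion. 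Combining across all primes gives $r \equiv \pm 1 \pmod n$; the case $r \equiv 1$ makes $\Gamma$ abelian, hence cyclic, while $r \equiv -1$ makes $\tau^2 \in \Gamma_1$ central in $\Gamma$, so by Lemma \ref{clif-basics}(3) $\tau^2$ equals the unique involution $\sigma^{n/2}$, yielding the defining relations of $\D^*_{n/2}$.

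For the binary dihedral case $\Gamma_1 = \D^*_k$ with $k \ne 2$, the subgroup $A = \langle a\rangle$ of order $2k$ is the unique maximal cyclic subgroup of $\D^*_k$ (this is where $k \ne 2$ enters, since $\D^*_2$ is the quaternion group with three maximal cyclic subgroups of order $4$), hence characteristic in $\Gamma_1$ and therefore normal in $\Gamma$. Applying the cyclic case to $A \triangleleft \langle A,\tau\rangle$, the subgroup $\langle A,\tau\rangle$ is itself cyclic or binary dihedral. Adjoining an element $b \in \Gamma_1 \setminus A$ (which inverts $a$ and has square equal to the central involution $a^k$), I use Lemma \ref{clif-basics}(1) to enlarge $A$ to a maximal abelian cyclic subgroup of $\Gamma$ and Lemma \ref{clif-basics}(3) to identify $b^2$ as the unique central involution, concluding that $\Gamma$ itself is binary dihedral.

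The main obstacle is the $pq$-argument in the cyclic case: one must select order-$q$ elements of $\Gamma$ that act faithfully on prescribed prime parts of $\langle\sigma\rangle$, so that the resulting subgroup really has order $pq$ rather than a proper multiple or divisor. This requires a brief Sylow analysis in $\Gamma$, using that Lemmas \ref{clif-basics}(1)--(2) force the Sylow subgroups of $\Gamma$ to be cyclic at odd primes and cyclic or generalized quaternion at the prime $2$.
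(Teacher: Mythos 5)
There is a genuine gap, and it sits exactly where you flagged ``the main obstacle'': the claim that Lemma \ref{clif-basics}(1)--(2) force $r\equiv\pm 1\pmod n$ in the cyclic case. That claim is false as pure group theory. Take $\Gamma=\Z_2\times(\Z_7\rtimes\Z_9)$, where the generator $\tau$ of $\Z_9$ acts on $\Z_7$ through an automorphism of order $3$ (say $x\mapsto x^2$). Every Sylow subgroup is cyclic, every abelian subgroup is cyclic, every subgroup of order $pq$ is cyclic (every element of order $3$ lies in a conjugate of $\langle\tau^3\rangle$ and hence centralizes $\Z_7$, so every subgroup of order $21$ is $\Z_{21}$, and the orders $6$ and $14$ only pair the central involution with odd-order elements), and there is a unique involution and it generates the center; so conditions (1), (2), (3) of Lemma \ref{clif-basics} all hold. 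Yet $\Gamma_1=\Z_{14}$ is normal and cyclic, $\Gamma=\langle\Gamma_1,\tau\rangle$, and $\Gamma$ is neither cyclic nor binary dihedral: here $r=2$ has order $3$ modulo $7$. The precise point of failure in your $pq$-argument is visible in this example: the ``suitable order-$q$ element of $\Gamma$ acting faithfully on the $p$-primary part'' need not exist, because all elements of order $q=3$ centralize the $7$-part even though $\tau$ itself does not, so condition (2) is never violated and no bound on $e$ results.

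What is missing is the one part of Lemma \ref{clif-basics} you never invoke, namely (4) --- or rather the eigenvalue fact behind it, which is the only place the constant-displacement hypothesis actually enters. In the faithful constant-displacement unitary representation every element of $\Gamma$ has eigenvalue set $\{\lambda,\overline{\lambda}\}$; since $\tau\sigma\tau^{-1}=\sigma^r$ is conjugate to $\sigma$ it has the same eigenvalues, so $\{\lambda^r,\overline{\lambda}^r\}=\{\lambda,\overline{\lambda}\}$ and hence $\sigma^r=\sigma^{\pm 1}$ in one line. This is exactly how the paper disposes of the cyclic case, with no Sylow analysis at all; the group above is of course excluded because it admits no such representation. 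Once $\tau\sigma\tau^{-1}=\sigma^{\pm 1}$ is in hand, the remainder of your cyclic case (Lemma \ref{clif-basics}(1) for the abelian branch, (3) to pin $\tau^2$ to the unique central involution) is sound, and your reduction of the binary dihedral case to the cyclic one via the characteristic cyclic subgroup $\langle a\rangle$ of $\D_k^*$ ($k\ne 2$) is essentially the paper's route. So the fix is local --- replace the $pq$-argument by the conjugate-eigenvalue argument --- but as written the proof cannot close.
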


\begin{proof}  This is a typical verification based on the
structure of binary dihedral and binary polyhedral groups in terms of generators
and relations.  First, one supposes that $\Gamma_1 = \<\alpha\>$, cyclic.
Then $\tau\alpha\tau^{-1}$ is $\alpha$ or $\alpha^{-1}$ by Lemma 
\ref{clif-basics}.  If $\tau\alpha\tau^{-1} = \alpha$ then $\Gamma$ is abelian,
hence cyclic by Lemma \ref{clif-basics}.  If $\tau\alpha\tau^{-1} = 
\alpha^{-1} \ne \alpha$ then $\tau$ has order $4$ and $\Gamma$ is binary
dihedral.

Now suppose $\Gamma_1 = \D^*_m$ with $m > 2$: $\alpha^m = \beta^4 = 1$ and
$\beta\alpha\beta^{-1} = \alpha^{-1}$.  As $m \ne 2$, $\<\alpha\>$ is a 
characteristic subgroup of $\Gamma_1$, hence normal in $\Gamma$, so 
$\tau\alpha\tau^{-1}$ is $\alpha$ or $\alpha^{-1}$.  $\beta^2$ is central in
$\Gamma$ because it has order $2$.  Thus 
$\Gamma' := \<\alpha,\beta^2,\tau\>$ or
$\Gamma' := \<\alpha,\beta^2,\tau\beta\>$ is abelian, hence cyclic, and
$\Gamma = \<\Gamma',\beta\>$.  $\tau\beta\tau^{-1}$ has order $4$ so it has
form $\beta\alpha^u$ or $\beta^3\alpha^u$.  Thus $\beta^{-1}\tau\beta$ has
form $\alpha^u\tau$ or $\alpha^u\tau\beta^2$, and $\beta^{-1}(\tau\beta)\beta$
has form $\alpha^u(\tau\beta)$ or $\alpha^u(\tau\beta)\beta^2$.  Thus $\Gamma_1$
is normal in $\Gamma$, and the first paragraph of the proof shows that
$\Gamma$ is binary dihedral.
\end{proof}

Now we need a result of G. Vincent \cite[Théorème X]{V1947} which implies 
that if $\Gamma$ has all Sylow subgroups cyclic then it is either cyclic or 
binary dihedral $\D_m^*$ (m odd).  We will also need a procedure of 
H. Zassenhaus \cite[proof of Satz 7]{Z1935}, which depends on his result 
\cite[Satz 6]{Z1935}: If $\Gamma$ is solvable and of order not divisible
by $2^{s+1}$, and if $\Gamma$ has an element of order $2^{s-1}, s > 1$, 
then $\Gamma$ has a normal subgroup $\Gamma_1$ with cyclic Sylow $2$--subgroup,
such that $\Gamma/\Gamma_1$ is the cyclic group $\Z_2$, the alternating group
$\A_4$, or the symmetric group $\bbS_3$\,.  While \cite{Z1935} has errors,
they are corrected in \cite{Z1985} and \cite{Z1987}, and they have no 
consequences for our results here.

\begin{lemma}\label{solv}
If $\Gamma$ is solvable then it is cyclic, binary dihedral, binary 
tetrahedral or binary octahedral.
\end{lemma}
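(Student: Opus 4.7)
The plan is to use the structural constraints of Lemma \ref{clif-basics}, together with the results of Vincent and Zassenhaus cited in the text, to force $\Gamma$ into one of the four listed families. The first move is to control the Sylow subgroups. By Lemma \ref{clif-basics}(1) every abelian subgroup of each Sylow $p$-subgroup of $\Gamma$ is cyclic; a classical structure theorem (also consistent with Lemma \ref{clif-basics}(3)) says that such a $p$-group is cyclic for odd $p$ and is cyclic or generalized quaternion $Q_{2^s}$ for $p=2$. So every odd Sylow of $\Gamma$ is cyclic, and the $2$-Sylow is cyclic or generalized quaternion of order $2^s$ for some $s \geq 3$.

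If the $2$-Sylow is also cyclic, Vincent's theorem recalled above immediately yields $\Gamma$ cyclic or binary dihedral $\D^*_m$ with $m$ odd. Otherwise the $2$-Sylow is $Q_{2^s}$ with $s\geq 3$, so $\Gamma$ contains an element of order $2^{s-1} > 2$; taking $2^s$ as the exact $2$-part of $|\Gamma|$, the Zassenhaus result applies and produces a normal subgroup $\Gamma_1 \lhd \Gamma$ with cyclic $2$-Sylow and $\Gamma/\Gamma_1 \in \{\Z_2, \A_4, \bbS_3\}$. All Sylows of $\Gamma_1$ are cyclic (odd ones by the first paragraph, the $2$-Sylow by Zassenhaus), so Vincent gives $\Gamma_1$ cyclic or binary dihedral $\D^*_m$ with $m$ odd; in particular $m \neq 2$, making Lemma \ref{build} available.

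In the case $\Gamma/\Gamma_1 = \Z_2$, $\Gamma$ is generated by $\Gamma_1$ and one further element, so Lemma \ref{build} immediately yields $\Gamma$ cyclic or binary dihedral. For $\Gamma/\Gamma_1 = \A_4$ and $\Gamma/\Gamma_1 = \bbS_3$ I would build $\Gamma$ in stages by pulling back the derived series of the quotient: in the $\A_4$ case, pull back the Klein four subgroup to a normal intermediate $\Gamma_2$ and show $\Gamma_2 \cong \T^*$; in the $\bbS_3$ case, pull back the cyclic subgroup of order $3$ to an index-$2$ normal $\Gamma_2$ to which the preceding $\A_4$ analysis applies, and recover $\Gamma$ as a degree-$2$ extension of $\Gamma_2 \cong \T^*$ which, by arguments in the spirit of Lemma \ref{build}, can only be $\O^*$. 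The main obstacle is the $\A_4$ step: here $|\Gamma_2/\Gamma_1| = 4$ is not prime, so Lemma \ref{build} does not apply directly, and one instead combines Lemma \ref{clif-basics}(3) (the unique involution is central, forcing the $2$-Sylow of $\Gamma_2$ to be $Q_8$) with Lemma \ref{clif-basics}(4) (constraining the conjugation action on $\Gamma_1$) and an explicit computation with generators and relations to match $\Gamma_2$ against the standard presentation of the binary tetrahedral group.
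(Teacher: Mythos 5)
Your skeleton for the first half is exactly the paper's: odd Sylow subgroups cyclic and the $2$-Sylow cyclic or generalized quaternion (from Lemma \ref{clif-basics}(1)), Vincent's theorem when the $2$-Sylow is cyclic, Zassenhaus's theorem to produce the normal subgroup $\Gamma_1$ with all Sylow subgroups cyclic, and Lemma \ref{build} for the quotient $\Z_2$. The gap is in the remaining two branches. In the $\A_4$ branch, your claim that $\Gamma_2 := \psi^{-1}(V)$ (preimage of the Klein four-group) is isomorphic to $\T^*$ cannot be correct: $\Gamma_2$ has the normal subgroup $\Gamma_1$ with $\Gamma_2/\Gamma_1 \cong V$, a non-cyclic abelian quotient, whereas $\T^* \cong SL(2,5)$... rather $SL(2,3)$ has derived subgroup $Q_8$ and abelianization $\Z_3$, so it admits no quotient isomorphic to $V$. (And if $\Gamma_2 \cong \T^*$ were normal of index $3$, then $|\Gamma| = 72$, which is none of the four listed groups.) What the argument must produce is a statement about $\Gamma$ itself — that it is binary tetrahedral or binary dihedral — and the real work is exactly where you gesture vaguely: one uses the order-$3$ elements of $\A_4$, which cyclically permute the involutions of $V$ and hence induce an automorphism of order $3$ on the generalized quaternion $2$-Sylow; such an automorphism exists only for $Q_8$, and that is what pins down $\T^*$.

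The third branch is worse. You follow the quotient $\bbS_3$ from the displayed statement of Zassenhaus's result, but the paper's actual proof uses $\bbS_4$ (the $\bbS_3$ in the statement appears to be a typo — with quotient $\bbS_3$ one could never manufacture $\O^*$ at all, since the preimage of $\Z_3 \le \bbS_3$ has cyclic $2$-Sylow and index $2$, so Vincent plus Lemma \ref{build} would force $\Gamma$ cyclic or binary dihedral). Moreover your internal logic there is broken independently of which symmetric group is meant: the subgroup you pull back satisfies $\Gamma_2/\Gamma_1 \cong \Z_3$, so ``the preceding $\A_4$ analysis'' does not apply to it, and since $[\Gamma_2:\Gamma_1]$ is odd, $\Gamma_2$ has the same cyclic $2$-Sylow as $\Gamma_1$ and cannot be $\T^*$. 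The correct route, and the one the paper takes, is: with quotient $\bbS_4$, set $\Gamma' := \psi^{-1}(\A_4)$, an index-$2$ normal subgroup with $\Gamma'/\Gamma_1 \cong \A_4$; the $\A_4$ case gives $\Gamma' \cong \D_q^*$ or $\T^*$; in the first subcase Lemma \ref{build} gives $\Gamma = \D_{2q}^*$, and in the second one shows $\Gamma_1 = \Z_2$ and the only admissible index-$2$ extension of $\T^*$ is $\O^*$.
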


\begin{proof} The odd
Sylow subgroups of $\Gamma$ are cyclic and the 2--Sylow subgroups are either
cyclic or generalized quaternionic (binary dihedral $\D^*_m$ where $m > 1$ 
is a power of $2$), because every abelian subgroup of $\Gamma$ is cyclic. 
If the $2$--Sylow
subgroups of $\Gamma$ are cyclic, we are done by the above-mentioned result
Vincent. Otherwise, $\Gamma$ has order $2^sn$ with $n$ odd and $s>2$, and an
element of order $2^{s-1}$.  From the above-mentioned result of Zassenhaus 
we have a normal subgroup $\Gamma_1 \subset \Gamma$ with all Sylow subgroups 
cyclic and $\Gamma / \Gamma_1$ equal to $\Z_2, \A_4 \text{ or } \bbS_4$\,,
and $\Gamma_1$ is either cyclic of $\D_m^*, m \text{ odd}$ by the
result of Vincent.

If $\Gamma / \Gamma_1 = \Z_2$ then $\Gamma$ is cyclic or binary dihedral 
by Lemma \ref{build}.

If $\Gamma / \Gamma_1 = \A_4$ with $\Gamma_1$ cyclic one argues that $\Gamma$
is binary dihedral or binary tetrahedral.  If $\Gamma / \Gamma_1 = \A_4$ 
with $\Gamma_1$ binary dihedral one argues that $\Gamma$ is binary dihedral.
These arguments are a little bit complicated.

If $\Gamma / \Gamma_1 = \bbS_4$ with projection $\psi: \Gamma \to \bbS_4$
then $\Gamma' := \psi^{-1}(\A_4)$ is binary dihedral or binary tetrahedral.  
If $\Gamma' = \D_q^*$ then $\Gamma = \D_{2q}^*$ by Lemma \ref{build}.  If
$\Gamma' = \T^*$ then $\Gamma_1 = \Z_2$ and $\Gamma = \O^*$.
\end{proof}

Now we need only show that if $\Gamma$ is not solvable then $\Gamma = \I^*$.
For that we use $\I^* \cong SL(2,5)$, the group of $2 \times 2$ unimodular
matrices over the field $\F_5$\,, and M. Suzuki's theorem 
\cite[Theorem E]{S1955} 
that a non--solvable group with every abelian subgroup cyclic has a normal
subgroup isomorphic to some $SL(2,p)$ with $p > 3$ prime.

\begin{lemma}\label{non-solv-1}
If $\Gamma \cong SL(2,p)$ then $p = 3$ or $p = 5$.
\end{lemma}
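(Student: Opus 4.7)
My plan is to use representation theory directly: I will show that no faithful representation $\varphi: SL(2,p) \to U(\ell)$ can satisfy the constant displacement condition from Lemma \ref{clif-basics} unless $p \in \{3, 5\}$. The key observation is that the Clifford condition applied to a unipotent element of order $p$ forces $\cos(2\pi k/p)$ to lie in a small extension of $\Q$, contradicting its true Galois degree when $p \geq 7$.

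First, by the reduction in the sketch of Lemma \ref{clif-basics}(3), all irreducible constituents of $\varphi$ are isomorphic to a single irreducible $\rho$ with $\rho(-I) = -I$---a \emph{genuine} irreducible representation of $SL(2,p)$. Fix the unipotent $u = \bigl(\begin{smallmatrix} 1 & 1 \\ 0 & 1\end{smallmatrix}\bigr)$ of order $p$. The Clifford translation condition on $\rho(u)$ forces all its eigenvalues to be $e^{\pm 2\pi ik/p}$ for some fixed $k \not\equiv 0 \pmod p$; with $d = \dim\rho$ and $a$ the multiplicity of $e^{2\pi ik/p}$, the trace is
\[
 \chi_\rho(u) \,=\, d\cos(2\pi k/p) + i(2a - d)\sin(2\pi k/p).
\]

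Next I would consult the character table of $SL(2,p)$: the genuine irreducibles are the principal series (degree $p+1$, $\chi(u) = 1$), the discrete series (degree $p-1$, $\chi(u) = -1$), and the halved series of degrees $(p \pm 1)/2$ whose values at $u$ are of the form $(c \pm \sqrt{\epsilon p})/2$ with $c \in \{\pm 1\}$ and $\epsilon = (-1)^{(p-1)/2}$, via the classical quadratic Gauss sum formulas. In every case $\chi_\rho(u) \in \Q(\sqrt{\epsilon p})$, an extension of $\Q$ of degree at most $2$. Equating real parts in the trace formula gives $d\cos(2\pi k/p) = \mathrm{Re}(\chi_\rho(u)) \in \Q(\sqrt{\epsilon p}) \cap \R$, which is $\Q(\sqrt p)$ if $p \equiv 1 \pmod 4$ and $\Q$ if $p \equiv 3 \pmod 4$. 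In either case $\cos(2\pi k/p)$ lies in an extension of $\Q$ of degree at most $2$.

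But by cyclotomic Galois theory, $\cos(2\pi k/p)$ generates the maximal totally real subfield of $\Q(\zeta_p)$, which has degree $(p-1)/2$ over $\Q$. Therefore $(p-1)/2 \leq 2$, i.e., $p \leq 5$; together with $p \ne 2$ (since $SL(2,2) \cong S_3$ has three involutions, violating Lemma \ref{clif-basics}(3)), we conclude $p \in \{3, 5\}$. The main obstacle is pinning down the precise character values of the halved series at the unipotent $u$: these involve classical quadratic Gauss sums and require invoking the explicit character table of $SL(2,p)$, which is a nontrivial technical input rather than an abstract representation-theoretic deduction.
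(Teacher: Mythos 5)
Your proof is correct, but it reaches the conclusion by a genuinely different route from the paper's. The paper's argument is a two-line internal computation: with $\nu = \bigl(\begin{smallmatrix}\omega & 0\\ 0 & \omega^{-1}\end{smallmatrix}\bigr)$ for $\omega$ a generator of $\F_p^*$ and $\alpha = \bigl(\begin{smallmatrix}1&1\\0&1\end{smallmatrix}\bigr)$, one has $\nu\alpha\nu^{-1} = \alpha^{\omega^2}$, and the two-eigenvalue property of Clifford translations of the sphere (Lemma \ref{clif-basics}) forces $\alpha^{\omega^2} = \alpha^{\pm 1}$, i.e.\ $\omega^2 \equiv \pm 1 \pmod p$, whence $(p-1)\mid 4$ and $p \in \{2,3,5\}$, with $p=2$ excluded exactly as you exclude it. You push the same two-eigenvalue principle through the trace instead: the eigenvalue rigidity pins $\mathrm{Re}\,\chi_\rho(u) = d\cos(2\pi k/p)$, while the character table places $\chi_\rho(u)$ in the quadratic field $\Q(\sqrt{\epsilon p})$, and the degree $(p-1)/2$ of the real cyclotomic field finishes the job. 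The two arguments are secretly the same arithmetic: the set $\{m : u^m \sim u\}$ contains the squares of $(\Z/p\Z)^*$ (which is precisely why $\chi_\rho(u)$ has degree at most $2$ over $\Q$ --- its Galois orbit under $\sigma_j : \zeta_p \mapsto \zeta_p^j$ is controlled by the classes of the powers $u^j$), while the Clifford condition confines that set to $\{\pm 1\}$. The paper extracts this by one explicit conjugation inside $SL(2,p)$ and needs no representation theory beyond Lemma \ref{clif-basics}; your version pays for the Gauss-sum character values of the degree-$(p\pm1)/2$ representations, as you candidly note, but in exchange it makes transparent exactly where the size of $p$ enters and would transfer to any situation where one knows character values but not convenient matrix generators. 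If you want to keep your framework but shed the character-table input, observe that $u^j$ is conjugate to $u$ for every nonzero square $j$ bmod $p$, so $\chi_\rho(u)$ is automatically fixed by the index-two subgroup of $\mathrm{Gal}(\Q(\zeta_p)/\Q)$; that recovers the degree bound $[\Q(\chi_\rho(u)):\Q]\leqq 2$ with no Gauss sums --- though at that point you have essentially rediscovered the paper's conjugation argument.
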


\begin{proof} Let $\omega$ generate the multiplicative group of non-zero 
elements of the field of $p$ elements, and 
$$
\nu = \left ( \begin{smallmatrix} \omega & 0 \\ 0 & \omega^{-1} 
	\end{smallmatrix} \right ) \text{ and } \alpha =
\left ( \begin{smallmatrix} 1 & 1 \\ 0 & 1 
	\end{smallmatrix} \right ) \text{ in } SL(2,p)
$$
Then $\nu\alpha\nu^{-1} = \alpha^{\omega^2}$ so $\omega^2 = \pm 1$ mod $p$
by Lemma \ref{clif-basics}, so $\omega^4 = 1$, so $p-1$ divides $4$.
Thus $p$ is $2$, $3$, or $5$, and $p \ne 2$ because $SL(2,2)$ 
has several éléments of order 2.
\end{proof}

\begin{lemma}\label{non-solv-2}
If $\Gamma$ has a normal subgroup $\Gamma_1 \cong SL(2,5)$ then
$\Gamma = \Gamma_1$\,.
\end{lemma}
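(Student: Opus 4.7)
The plan is to analyze $\Gamma$ via its conjugation action on the normal subgroup $\Gamma_1 \cong SL(2,5)$. Set $C = C_\Gamma(\Gamma_1)$, so conjugation gives an embedding $\Gamma / C \hookrightarrow \Aut(\Gamma_1) \cong \bbS_5$ with image containing the inner automorphisms $\Gamma_1/Z(\Gamma_1) \cong \A_5$, and $\Gamma_1 \cap C = Z(\Gamma_1) = \{\pm I\}$. It suffices to prove (a) $C = \{\pm I\}$ and (b) no element of $\Gamma$ induces an outer automorphism of $\Gamma_1$, for then $\Gamma = \Gamma_1 \cdot C = \Gamma_1$.

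For (a), I take $c \in C$ of prime power order. If $c$ has prime order $p \in \{3,5\}$, choosing $\alpha \in \Gamma_1$ of the same order makes $\langle c, \alpha\rangle$ abelian, hence cyclic of order $p$ by Lemma \ref{clif-basics}(1); so $c \in \langle\alpha\rangle \subseteq \Gamma_1$, forcing $c \in \{\pm I\}$, contradiction. If $c$ has order $4$, I compare with $i \in \Gamma_1$ of order $4$: since $c^2 = -I = i^2$, the product $ci$ satisfies $(ci)^2 = I$, so $ci \in \{1,-I\}$ by Lemma \ref{clif-basics}(3), whence $c \in \Gamma_1 \cap C = \{\pm I\}$, again a contradiction. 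Iterating on $c^{2^{k-2}}$ for $c$ of order $2^k$, $k \geq 3$, eliminates all higher $2$-power torsion, so the $2$-Sylow of $C$ is $\{\pm I\}$. The delicate case is $c \in C$ of prime order $p \geq 7$. Since $-I$ is the unique central involution, $c$ is noncentral, so the $\varphi(c)$-eigenspace decomposition $\C^\ell = W_+ \oplus W_-$ (with eigenvalues $\zeta^{\pm r}$, $\zeta = e^{2\pi\sqrt{-1}/p}$) has both summands nonzero and $\Gamma_1$-invariant. For $\alpha \in \Gamma_1$ of order $5$, constant displacement of $\varphi(\alpha)$ confines its eigenvalues to a pair $\{\omega^{\pm s}\}$ with $\omega = e^{2\pi\sqrt{-1}/5}$ and $s \in \{1,2\}$; comparing the real parts of the four products $\zeta^{\pm r}\omega^{\pm s}$ shows that constant displacement of $\varphi(c\alpha)$ forces $\alpha$ to act as a nontrivial scalar on each $W_\pm$. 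But $\alpha$ is not central in $SL(2,5)$, so by Schur's lemma no irreducible summand of the $\Gamma_1$-action on $W_\pm$ sends $\alpha$ to a nontrivial scalar, contradiction.

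For (b), suppose $\tau \in \Gamma$ induces an outer automorphism of $\Gamma_1$. The outer automorphism of $SL(2,5)$ interchanges the two conjugacy classes of order-$5$ elements, so after adjusting $\tau$ by an element of $\Gamma_1$ we may assume $\tau \alpha \tau^{-1} = \alpha^2$ for some $\alpha$ of order $5$. Then $\varphi(\alpha)$ and $\varphi(\alpha)^2$ are conjugate in $U(\ell)$, hence have equal eigenvalue multiplicities. Writing $m_j$ for the multiplicity of $\omega^j$ in $\varphi(\alpha)$, equating multiplicities yields $m_1 = m_2 = m_3 = m_4$. But constant displacement restricts these eigenvalues to a single pair $\{\omega^{\pm s}\}$, so two of $m_1,\ldots,m_4$ vanish while the other two do not, contradicting equality. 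Combining (a) and (b) gives $\Gamma = \Gamma_1 \cdot C = \Gamma_1$. The main obstacle is part (a) for primes $p \geq 7$, where Lemma \ref{clif-basics} alone does not suffice and one must invoke the eigenvalue characterization of constant-displacement isometries on the sphere in $\C^\ell$.
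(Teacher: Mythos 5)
Your proof is correct, and its decisive step coincides with the paper's: an element of $\Gamma$ inducing the outer automorphism of $\Gamma_1 \cong SL(2,5)$ would make an order-$5$ element conjugate in $\Gamma$ to its own square, which is incompatible with the fact that constant displacement confines the eigenvalues of each $\varphi(\gamma)$ to a conjugate pair $\{\lambda,\bar\lambda\}$. The paper realizes this with explicit matrices (from $\Ad(\sigma)\alpha=\beta^3$ and $\gamma\alpha\gamma^{-1}=\beta^{-1}$ one gets $\beta$ conjugate to $\beta^2$, forcing $\beta=I$ or $\beta^3=I$), while you phrase it via the interchange of the two order-$5$ conjugacy classes and equality of eigenvalue multiplicities; these are the same computation in different clothing. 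Where you genuinely add something is part (a): the paper's sketch simply asserts $\Gamma/\Gamma_1 \subset \Aut(\Gamma_1)/\Int(\Gamma_1)$, which presupposes $Z_\Gamma(\Gamma_1)=\{\pm I\}$, and you supply a proof. Your handling of orders $3$, $4$, $5$ and higher $2$-powers via Lemma \ref{clif-basics}(1) and (3) is clean, and you correctly observe that primes $p\geqq 7$ cannot be excluded by Lemma \ref{clif-basics} alone (a cyclic group of order $5p$ acts freely with constant displacement, so nothing about abelian subgroups being cyclic is violated); the eigenspace argument on $W_+\oplus W_-$ is genuinely needed there and is sound: both eigenspaces are nonzero because $\varphi(c)$ scalar would force $c$ into the order-$2$ center, they are $\Gamma_1$-invariant, the real-part comparison makes $\varphi(\alpha)$ scalar on each, and since every irreducible constituent of $\varphi|_{\Gamma_1}$ is faithful (a nontrivial kernel would give some $\varphi(\gamma)$, $\gamma\ne 1$, the eigenvalue $1$ and hence a fixed point on the sphere), this contradicts $\alpha\notin Z(\Gamma_1)$. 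In short: same key idea for the main step, with the centralizer step made explicit and complete.
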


\noindent {\sl Idea of Proof.} One argues that $\Gamma / \Gamma_1
\subset \Aut(\Gamma_1)/\Int(\Gamma_1)$.  That group has order 2,
and its nontrivial element $\Ad(\sigma)$ is represented by conjugation
by $\left ( \begin{smallmatrix} 0 & -1 \\ 2 & 0 
        \end{smallmatrix} \right )$. 

Suppose $\Gamma \ne \Gamma_1$. We can assume $\sigma^2$ central in 
$\Gamma$, so $\sigma^2 = -1 \in SL(2,5)$.  Denote $\alpha = 
\left ( \begin{smallmatrix} 1 & 1 \\ 0 & 1 
        \end{smallmatrix} \right )$, 
$\beta = \left ( \begin{smallmatrix} 1 & 0 \\ 1 & 1 
        \end{smallmatrix} \right )$
and $\gamma = \left ( \begin{smallmatrix} 0 & -1 \\ 1 & 0 
        \end{smallmatrix} \right )$ in $SL(2,5)$.
Then $\Ad(\sigma)\alpha = \beta^3$ and $\gamma\alpha\gamma^{-1} = \beta^{-1}$
so $\beta$ is conjugate in $\\Gamma$ to $\beta^{-3} = \beta^2$.  That
would say $\beta = I$ or $\beta^3 = I$, which is a contradiction.
\hfill $\diamondsuit$

\begin{lemma}\label{non-solv-3}
If $\Gamma$ is not solvable then $\Gamma \cong \I^*$.
\end{lemma}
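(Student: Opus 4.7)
The plan is to chain together the three preceding ingredients: Suzuki's theorem quoted in the text, Lemma \ref{non-solv-1}, and Lemma \ref{non-solv-2}. First, I would invoke Suzuki \cite[Theorem E]{S1955}: since $\Gamma$ is non--solvable and every abelian subgroup of $\Gamma$ is cyclic by Lemma \ref{clif-basics}(1), there is a normal subgroup $N \subset \Gamma$ with $N \cong SL(2,p)$ for some prime $p > 3$.

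Next I would observe that $N$ inherits the standing hypotheses of this subsection. The restriction $\varphi|_N$ is a faithful unitary representation of $N$; every $\varphi(\gamma)$ with $\gamma \in N$ remains an isometry of constant displacement on the unit sphere; and $SL(2,p)$ with $p>3$ is non--cyclic. Hence Lemma \ref{non-solv-1}, applied to $N$ in place of $\Gamma$, forces $p \in \{3,5\}$, and combined with $p > 3$ this gives $p = 5$. Thus $N \cong SL(2,5) \cong \I^*$. Finally, Lemma \ref{non-solv-2} applied to the normal subgroup $N$ of $\Gamma$ yields $\Gamma = N \cong \I^*$, as required.

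The only delicate point, and the step I would verify most carefully, is confirming that Lemma \ref{non-solv-1} applies to $N$ rather than the ambient group it was stated for. This reduces to checking that the consequences of Lemma \ref{clif-basics} that are used in proving Lemma \ref{non-solv-1} -- in particular the transpose--conjugacy dichotomy (4) -- pass from $\Gamma$ to $N$. For (4) this is automatic, since conjugacy within $N$ entails conjugacy within $\Gamma$, so the eigenvalue dichotomy $\alpha = \alpha^t$ or $\alpha^{-1} = \alpha^t$ transfers to elements of $N$ immediately. Once this inheritance is in hand the proof is essentially a three--line citation of the preceding lemmas, and there is no remaining case analysis to perform.
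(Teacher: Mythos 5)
Your proposal is correct and follows exactly the paper's own argument: the paper's proof is the one-line citation of Suzuki's theorem together with Lemmas \ref{non-solv-1} and \ref{non-solv-2}, which is precisely the chain you spell out. Your extra care about why Lemma \ref{non-solv-1} applies to the normal subgroup $N$ (inheritance of the constant-displacement hypothesis and of Lemma \ref{clif-basics}(4)) is a reasonable elaboration of a point the paper leaves implicit.
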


\begin{proof} Lemmas \ref{non-solv-1} and \ref{non-solv-2}, and the 
result of Suzuki \cite[Theorem E]{S1955}, show $\Gamma \cong SL(2,5)$.  
\end{proof}

Combining Lemmas \ref{solv} and \ref{non-solv-3} we have the first part of

\begin{proposition}\label{const-curve-pos}
Consider a Riemannian manifold $(M',ds'^2) = \Gamma \backslash \bbS^n$
of constant positive curvature.  Suppose that every element 
$\gamma \in \Gamma$ is an isometry of constant displacement on $\bbS^n$. 
Then the group $\Gamma$ is cyclic, binary dihedral, 
binary tetrahedral, binary octahedral or binary icosahedral, and
$(M',ds'^2)$ is homogeneous.  
\end{proposition}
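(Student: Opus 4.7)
The classification half is immediate: Lemma~\ref{solv} covers the solvable case and Lemma~\ref{non-solv-3} the non-solvable case, so $\Gamma$ must be cyclic, binary dihedral, binary tetrahedral, binary octahedral, or binary icosahedral. The content is the homogeneity half, and my plan is to produce a connected subgroup $G' \subset O(n{+}1)$ that centralizes $\Gamma$ and is transitive on $\bbS^n$; since $G'$ commutes with $\Gamma$ it descends through the covering map to a transitive group of isometries of $\Gamma \backslash \bbS^n$.

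First I would show that the faithful representation $\varphi \colon \Gamma \to O(n{+}1)$ is \emph{isotypic}. Suppose instead that two inequivalent irreducible summands $V_1, V_2$ appeared. Restricting $\gamma \in \Gamma$ to the $\gamma$-invariant subsphere of $V_1 \oplus V_2$ preserves constant displacement, but reading $\rho(x, \gamma x)$ off the eigenvalue-angle data of $\varphi(\gamma)$ shows that the displacement depends nontrivially on the ratio $|x_1|^2/|x_2|^2$ unless the rotation angles of $\varphi(\gamma)$ on $V_1$ and $V_2$ coincide. Choosing a $\gamma$ whose angular data on $V_1$ and $V_2$ differ (possible whenever $V_1 \not\cong V_2$) yields a contradiction, so $\varphi$ must be a direct sum of copies of a single irreducible $\pi$.

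Next, Schur's Lemma identifies the commutant of $\pi$ with a real division algebra $\A \in \{\R, \C, \Q\}$, so $\R^{n+1}$ acquires the structure of a left $\A$-module in which $\Gamma$ acts by scalar multiplication through its given embedding — into $\{\pm 1\} \subset \R^\times$, $U(1) \subset \C^\times$, or the appropriate binary dihedral or binary polyhedral subgroup of $Sp(1) \subset \Q^\times$. This recovers the trichotomy of \eqref{div-alg}. I then take $G'$ to be the group of right $\A$-linear orthogonal transformations of the module, namely $SO(n{+}1)$, $U(\ell)$, or $Sp(m)$ respectively. By construction $G'$ centralizes the left $\A$-scalar action of $\Gamma$, and each of these three classical groups is transitive on the unit sphere of its $\A$-module, which finishes the plan.

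The main obstacle is the isotypic reduction in the second paragraph: extracting rigidity of the $\varphi$-decomposition from the hypothesis that every $\gamma \in \Gamma$ simultaneously has constant displacement. The central involution of Lemma~\ref{clif-basics}(3) already pins part of the spectral picture (it must act as $-I$ on each irreducible summand), and the remaining angular-alignment computation is the core of the Clifford-translation calculation carried out explicitly in \cite{W1960}.
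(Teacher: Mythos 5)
Your overall architecture is the paper's: the group-theoretic classification is exactly Lemmas \ref{solv} and \ref{non-solv-3}, and the homogeneity half is the reduction to the three cases of (\ref{div-alg}), after which the centralizer $SO(n+1)$, $U(\ell)$ or $Sp(m)$ of the scalar action is transitive on the sphere. Your isotypic reduction is also essentially right, and it is the \emph{easy} step: constant displacement forces every eigenvalue of $\varphi(\gamma)$ to be $e^{\pm i\theta(\gamma)}$ for a single angle $\theta(\gamma)$, so the normalized characters of all irreducible constituents coincide and orthogonality makes the constituents equivalent --- this is the ``looking at characters'' remark in the proof of Lemma \ref{clif-basics}.

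The gap is in your Schur step. Schur's Lemma gives that the commutant of the irreducible constituent $\pi$ is a division algebra $\A \in \{\R,\C,\Q\}$; it does \emph{not} give that $\Gamma$ acts by $\A$-scalars. That would require $\pi$ to be $1$-dimensional over $\A$, i.e.\ of real degree $\dim_\R \A$, and this is false for general irreducibles of the groups on your list: $\I^* \cong SL(2,5)$ has faithful irreducibles of complex degrees $4$ and $6$, and the degree-$6$ one is again of quaternionic type, with commutant $\Q$, but there $\Gamma$ acts by $3\times 3$ quaternionic matrices, not scalars. If $\pi$ has $\A$-dimension $m>1$, the centralizer of $\varphi = \pi^{\oplus k}$ in $O(n+1)$ is only $O(k)$, $U(k)$ or $Sp(k)$ acting on $W \otimes_\A \A^k$, which is not transitive on the unit sphere, so the group $G'$ you construct fails to be transitive. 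The statement you actually need --- that the constant-displacement hypothesis forces the surviving irreducible to be the scalar embedding into $\{\pm1\}$, $U(1)$ or $Sp(1)$ --- is precisely the case-by-case spectral check that the paper's proof runs through: the generator $J_m$ for cyclic $\Gamma$; the two eigenspaces of the cyclic normal subgroup of $\D^*_m$ interchanged by $\beta$; building $\T^*$ and $\O^*$ from those blocks; and the observation that only the two degree-$2$ irreducibles of $\I^*$ have single-angle spectra on all elements. So the real work is not the isotypic reduction, as your closing paragraph suggests, but the per-irreducible angular analysis that you have folded, incorrectly, into an appeal to Schur's Lemma.
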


\noindent {\em Idea of Proof.}
Since every $\gamma \in \Gamma$ is of constant displacement on $\bbS^n$,
we know the structure of $\Gamma$ from Lemmas \ref{solv} and \ref{non-solv-3},
and we run through the cases to see in each case that $\Gamma$ is given
as in (\ref{div-alg}).  This is immediate for $\Gamma$ cyclic of order
$1$ or $2$.  For $\Gamma$ cyclic of order $m > 2$ a generator is given
by $J_m$ in \ref{div-alg}, so $\Gamma$ is given by Case 2 of \ref{div-alg}. 

If $\Gamma = \D_m^*$ with $m > 2$ even it has a normal subgroup
$\Gamma_1 = \Z_{2m}$ with generator $\gamma$ as in Case 2 of \ref{div-alg}
with $k = 2m$, and another generator $\beta$ with $\beta\gamma\beta^{-1}
= \gamma^{-1}$.  Then $\beta^2 = \gamma^m = -1$ so $\beta$ acts as
$\left ( \begin{smallmatrix} 0 & 1 \\ -1 & 0 \end{smallmatrix} \right )$
in block form on the two eigenspaces of $\gamma$.  Thus $\Gamma$ is
given by Case 3 of \ref{div-alg}.  

If $\Gamma = \D_m^*$ with $m > 2$ odd, the argument is similar.

If $\Gamma$ is of form $\T^*$ or then $\O^*$ one builds it up from
a cyclic subgroups as for the even binary dihedral cases.  As was
implicit there, the building blocks have exactly two joint eigenspaces
and an element of the normalizer exchanges them.

If $\Gamma = \I^*$ there are only two irreducible representations that
give isometries of constant displacement on the corresponding spheres.
They have degree 2 and are $O(4)$--conjugate, so we may assume that
$\Gamma \to U(\frac{n+1}{2})$ is a sum of copies of just one of those
irreducibles.  Thus $\Gamma$ is given by Case 3 of \ref{div-alg}.
\hfill $\diamondsuit$

Theorem \ref{hc-const-curv} follows by combining Lemmas \ref{const-curve-neg}
and \ref{const-curve-zero} with Proposition \ref{const-curve-pos}.
\hfill $\square$

\section{\bf Riemannian Symmetric Spaces with Simple Isometry Group.}
\label{sec3}
\setcounter{equation}{0}
\setcounter{subsection}{0}

In Sections \ref{sec3} and \ref{sec4} we describe our original proof the 
Homogeneity Conjecture for the cases where $(M,ds^2)$ is a Riemannian 
symmetric space.  
That uses \' E. Cartan's classification of symmetric spaces.  Some of the 
results along the way are valid more generally for Riemannian homogeneous 
spaces.  We indicate that situation in the notation as well as the statements, 
in some cases 
writing $(L,dt^2)$ and $(L',dt'^2)$ instead of $(M,ds^2)$ and $(M',ds'^2)$.
Then in Section \ref{sec5} we describe Ozols' argument which minimizes
the use of classification in the proof of Theorem \ref{symm-simple-isom}.

\begin{theorem}\label{hc-symmetric}
Let $(M,ds^2)$ be a connected simply connected Riemannian symmetric space.
Let $\pi: (M,ds^2) \to (M',ds'^2) = (\Gamma \backslash M, ds'^2)$ be a
Riemannian covering.  Then $(M',ds'^2)$ is homogeneous if and only if every
$\gamma \in \Gamma$ is an isometry of constant displacement on $(M,ds^2)$.
\end{theorem}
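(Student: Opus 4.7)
The forward direction is already Proposition \ref{CW-translation}, so the content is the converse: assuming every $\gamma \in \Gamma$ is an isometry of constant displacement, build a transitive group of isometries of $(M',ds'^2)$. My plan is to reduce the problem to irreducible factors via the de Rham decomposition and then invoke a case-by-case analysis over \'E.~Cartan's classification of irreducible Riemannian symmetric spaces.

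Since $(M,ds^2)$ is simply connected, complete and symmetric, the de Rham theorem gives a canonical splitting
\begin{equation*}
(M,ds^2) \;=\; (M_0,ds_0^2) \times (M_1,ds_1^2) \times \cdots \times (M_r,ds_r^2),
\end{equation*}
in which $(M_0,ds_0^2)$ is Euclidean and each $(M_i,ds_i^2)$, $i \ge 1$, is irreducible. The first step is to show that any constant displacement isometry $\gamma$ of a Riemannian product respects this decomposition in the sense that it permutes mutually isometric factors and, after the permutation is undone, restricts to a constant displacement isometry on each factor. This follows from the uniqueness of the de Rham decomposition together with the observation that restricting $\gamma$ to a leaf of one of the parallel foliations preserves the bound on displacement. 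After grouping mutually isometric factors, the problem reduces to the case where $\gamma$ acts componentwise on a product of pairwise non-isometric irreducible factors (the ``diagonal'' permutation component being absorbed by a transitive group of permutation-commuting isometries when isometric factors occur). By Cartan's classification, each irreducible factor is then of one of three types: (i) a space of constant sectional curvature, (ii) an irreducible symmetric space whose identity isometry group is simple, or (iii) a group manifold with bi-invariant metric.

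It remains to apply the three case theorems. Type (i) is handled by Theorem \ref{hc-const-curv}, which has already been proved in Section \ref{sec2}. Types (ii) and (iii) will be handled respectively by Theorem \ref{symm-simple-isom} of this section and by the main theorem of Section \ref{sec4}. In each case the output is that the group of constant displacement isometries on that factor is centralized by a transitive subgroup of $\bI(M_i,ds_i^2)$; taking the product of these transitive centralizers over the factors (and, for groups of isometric factors, combining with the symmetric-group action that commutes with the diagonal centralizer) yields a transitive subgroup of $\bI(M,ds^2)$ centralizing $\Gamma$. Therefore $\Gamma \backslash (M,ds^2) = (M',ds'^2)$ is homogeneous. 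The main obstacles lie in the two unproved case results, specifically in the compact parts of each: for Theorem \ref{symm-simple-isom} the delicate point is when $\gamma$ lies in a component of the isometry group involving outer automorphisms of $G = \bI^0(M,ds^2)$, and for the group manifold case the analogous difficulty arises from outer automorphisms of $G$; the reduction step described above and the noncompact cases are comparatively straightforward.
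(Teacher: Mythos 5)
Your overall architecture is the same as the paper's: the forward direction is Proposition \ref{CW-translation}, and the converse is reduced via the de Rham decomposition to the irreducible factors, where it is delegated to the constant--curvature results of Section \ref{sec2} (for the Euclidean factor, Lemma \ref{const-curve-zero}), to Proposition \ref{symm-neg} for noncompact type, to Theorem \ref{symm-simple-isom} for compact type with simple isometry group, and to Theorem \ref{symm-group} for group manifolds. The one step where you genuinely diverge is the treatment of mutually isometric irreducible factors, and that step as written would fail. The paper (following \cite{W1962}) uses the stronger fact that a constant displacement isometry of a product with no Euclidean factor acts factorwise, $\gamma = \gamma_1 \times \dots \times \gamma_\ell$, with the induced permutation of the irreducible factors \emph{trivial}. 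You instead allow a nontrivial permutation and propose to absorb it ``by a transitive group of permutation-commuting isometries.'' That cannot work: the centralizer in $\bI(M_1 \times M_1)$ of the swap $(x,y) \mapsto (y,x)$ of two isometric factors is the diagonal $\{(h,h)\}$ (together with the swap itself), and the diagonal is not transitive on $M_1 \times M_1$ when $M_1$ is nonflat, so a quotient by a group containing a factor-permuting element would not be homogeneous by your mechanism. What rescues the theorem is that no factor-permuting isometry of a product of nonflat irreducible factors has constant displacement --- e.g.\ the swap above has displacement $\sqrt{2}\,d(x,y)$ --- so the branch you are trying to patch is empty; but that is precisely the content of the splitting lemma you need to prove rather than circumvent. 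Once you replace the ``absorption'' step by a proof that constant displacement forces the permutation to be trivial, your argument coincides with the paper's.
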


There are three surprises here.  One is that the divergent geodesic argument
of Lemma \ref{const-curve-neg} works (with some modification) for symmetric 
spaces
of noncompact type.  The second is that the round sphere, as in Proposition
\ref{const-curve-pos}, is the hard case for symmetric spaces of compact
type.  Perhaps that is because the sphere has so many isometries.  The
third is that the result holds for Finsler symmetric spaces \cite{DW2012}.

If $(M,ds^2)$ is a Riemannian product, $(M,ds^2) = 
(M_1,ds_1^2) \times (M_2,ds_2^2)$ with no Euclidean factor, and 
if $\gamma$ is an isometry of constant displacement on $(M,ds^2)$, then 
$\gamma = \gamma_1 \times \gamma_2$ where each $\gamma_i$
is an isometry of constant displacement on $(M_i,ds_i^2)$.  Thus, in the
de Rham decomposition
$$
(M,ds^2) = (M_0,ds_0^2) \times (M_1,ds_1^2) 
	\times \dots \times (M_\ell,ds_\ell^2),
$$
$(M_0,ds_0^2)$ euclidean and the other $(M_i,ds_i^2)$ irreducible, we have
$\gamma = \gamma_0 \times \gamma_1 \times \dots \times \gamma_\ell$ where
each $\gamma_i$ is an isometry of constant displacement on $(M_i,ds_i^2)$.
Consequently we need only prove Theorem \ref{hc-symmetric} for each of the
$\gamma_i$.  By Lemma \ref{const-curve-zero} we already know it for $\gamma_0$,
so in the proof of Theorem \ref{hc-symmetric} we may (and do) assume that 
$(M,ds^2)$ is an irreducible Riemannian symmetric space.

\medskip
\addtocounter{subsection}{1}
\centerline{\bf \thesubsection.  Symmetric Spaces of Nonpositive Curvature.}
\label{ss3a}

\begin{proposition}\label{ill-j}{\rm (\cite[Theorem 1]{W1964})} If $(L,dt^2)$
is a complete connected simply connected Riemannian manifold of sectional
curvature $\geqq 0$, with no euclidean factor in its de Rham decomposition,
then every bounded isometry of $(L,dt^2)$ is trivial.  In particular if a
Riemannian quotient $(L',dt'^2) := \Gamma \backslash (L,dt^2)$ is homogeneous 
then $\Gamma = \{1\}$ and $(L',dt'^2) = (L,dt^2)$.
\end{proposition}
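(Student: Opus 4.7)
The plan is to treat $(L,dt^2)$ as a Hadamard manifold with no Euclidean de Rham factor (following the subsection heading and the cited source \cite[Theorem 1]{W1964}), and to factor the argument into two essentially independent pieces: (i) every bounded isometry of $(L,dt^2)$ has constant displacement, and (ii) every constant-displacement isometry of such a manifold is the identity.

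For (i) I would invoke convexity. The squared distance function on a Hadamard manifold is convex along pairs of geodesics, so for any isometry $\gamma$ the displacement $\delta_\gamma(x) := d(x,\gamma x)$ is convex along every geodesic of $L$. Since $\delta_\gamma$ is bounded by hypothesis, its restriction to any geodesic line is a bounded convex real-valued function, hence constant. So $\delta_\gamma$ is constant along each geodesic, and by connectedness constant on $L$; thus $\gamma$ is a Clifford--Wolf isometry.

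For (ii) I would produce a globally defined nowhere-zero parallel vector field and appeal to the de Rham decomposition. Suppose $\delta_\gamma \equiv c$. If $c > 0$, then at each $x$ the geodesic joining $x$ to $\gamma(x)$ extends to a complete axis $\sigma_x$ on which $\gamma$ translates by $c$, because the convex function $\delta_\gamma$ attains its global minimum along any such geodesic. The Eberlein--O'Neill flat-strip theorem, applied to pairs of axes $(\sigma_x, \sigma_y)$, shows that they bound totally geodesic Euclidean strips; letting $y \to x$ yields that the unit tangent field $V(x) := \dot\sigma_x(0)$ is smooth and parallel. A nowhere-zero parallel vector field splits off a one-dimensional Euclidean factor via de Rham's theorem, contradicting the hypothesis. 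Hence $c = 0$, and constancy of $\delta_\gamma$ forces $\gamma = \mathrm{id}$. The ``in particular'' statement about $\Gamma$ follows immediately from Proposition~\ref{CW-translation}.

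The main obstacle will be piece (ii): in particular, justifying that $x \mapsto \sigma_x$ is a well-defined (uniqueness from strict transverse convexity of $\delta_\gamma^2$) and smooth (by an implicit-function argument applied to the midpoint map between $x$ and $\gamma(x)$) assignment, and that the flat-strip conclusion upgrades to genuine parallelism of $V$ in every tangent direction (by letting $y$ approach $x$ along each direction, so that the infinitesimal strip becomes the parallel transport of $V$). Once $V$ is produced and shown parallel, de Rham's splitting theorem concludes the argument.
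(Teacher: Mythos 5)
Your argument is correct, but note first that the statement as printed carries a sign typo: the hypothesis must be sectional curvature $\leqq 0$ (as in the restatement in Section \ref{sec11} and in the title of \cite{W1964}); you read it the intended way. The paper supplies no proof of this proposition---it is quoted from \cite[Theorem 1]{W1964}---so the comparison is with that source and with the one special case the paper does argue, Lemma \ref{const-curve-neg}, where constant negative curvature is handled by the divergence of any two distinct geodesics. Your route is essentially the modern packaging of Wolf's 1964 argument: convexity of the distance function on a Hadamard manifold forces a bounded displacement function to be constant along every complete geodesic, hence constant; a Clifford--Wolf translation with $c>0$ then yields a foliation by axes, the flat strip theorem produces flat totally geodesic strips between axes, and the resulting parallel unit field contradicts the absence of a Euclidean de Rham factor. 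The original proof establishes the convexity and flat-strip facts by hand (it predates the Eberlein--O'Neill formulation), while your version buys brevity by quoting them; the divergence argument of Lemma \ref{const-curve-neg} is more elementary still, but it is only available when the curvature is strictly negative and there are no parallel geodesics at all.

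The ``main obstacle'' you flag in step (ii) is not actually an obstacle, and neither the implicit function theorem nor a limit $y\to x$ is needed. The axis $\sigma_x$ is unique because the geodesic from $x$ to $\gamma(x)$ is unique in a Hadamard manifold and the axis is its complete extension, so $V$ is well defined. For $x\ne y$ the axes are at constant distance (the convex function $t\mapsto d(\sigma_x(t),\sigma_y(t))$ is invariant under $t\mapsto t+c$, hence bounded, hence constant), so they bound a flat strip; the strip is convex and totally geodesic, hence contains the unique geodesic from $x$ to $y$, and inside a Euclidean strip the two boundary directions are related by parallel transport along any connecting segment. Total geodesy makes that ambient parallel transport. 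Thus $V(y)$ is the parallel transport of $V(x)$ along the geodesic from $x$ to $y$ for every pair of points, which gives smoothness (transport from a fixed base point) and $\nabla V=0$ (differentiate along geodesics) simultaneously; de Rham then splits off the line field and completes the contradiction.
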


The symmetric space case is a special case of Proposition \ref{ill-j}:

\begin{proposition}\label{symm-neg}
Any isometry of bounded displacement on an irreducible Riemannian symmetric 
space $(M,ds^2)$ of noncompact type is the identity transformation.  In 
particular, if $(M',ds'^2) = \Gamma \backslash (M,ds^2)$ is homogeneous 
then $(M',ds'^2) = (M,ds^2)$.
\end{proposition}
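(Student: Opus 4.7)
The plan is to deduce Proposition \ref{symm-neg} as an immediate special case of Proposition \ref{ill-j}. An irreducible Riemannian symmetric space $(M,ds^2)$ of noncompact type is complete, simply connected, and has nonpositive sectional curvature, and its de Rham decomposition consists of the single factor $(M,ds^2)$ itself, so in particular it has no Euclidean factor. Proposition \ref{ill-j} therefore applies, and every bounded displacement isometry of $(M,ds^2)$ is the identity. The homogeneity statement is then immediate from Proposition \ref{CW-translation}: if $(M',ds'^2) = \Gamma\backslash(M,ds^2)$ is homogeneous then each $\gamma \in \Gamma$ is of constant, hence bounded, displacement, so $\Gamma = \{1\}$ and $(M',ds'^2) = (M,ds^2)$.

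To indicate why Proposition \ref{ill-j} specializes cleanly to this setting, I would recall the convexity mechanism native to nonpositive curvature. For any unit speed geodesic $\sigma$ and any isometry $\gamma$ of $(M,ds^2)$, the function $f(t) = d(\sigma(t), \gamma\sigma(t))$ is convex on $\R$, because $(M,ds^2)$ is a CAT(0) space. If $\gamma$ has bounded displacement then $f$ is a bounded convex function, hence constant, so $\sigma$ and $\gamma\sigma$ bound a flat strip. Letting $\sigma$ range over the geodesic directions at a fixed basepoint $p$ assembles these strips into a totally geodesic flat submanifold $F$ through $p$ on which $\gamma$ acts by a pure Euclidean translation.

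The one non--obvious step, which I regard as the main obstacle in any self--contained argument, is promoting this flat locus $F$ to a genuine Euclidean de Rham factor of $(M,ds^2)$. Once that is established, irreducibility forces $F = \{p\}$, whence $\gamma(p) = p$ and constancy of $f$ give $\gamma = 1$. In the symmetric space setting this reduction is standard, carried out via the transvection group and the root--space decomposition associated with the Cartan involution; it amounts to the fact that a semisimple Lie group without compact factors has no non--trivial elements whose conjugacy class is bounded, which is precisely where the noncompact type hypothesis is used.
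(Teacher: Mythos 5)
Your proposal matches the paper exactly: Proposition \ref{symm-neg} is obtained there in one line as a special case of Proposition \ref{ill-j} (whose proof is delegated to \cite{W1964}), with the homogeneity assertion following from Proposition \ref{CW-translation} just as you say. Your additional sketch of the convexity/flat-strip mechanism behind Proposition \ref{ill-j} is a reasonable outline of the cited argument but is not required for the deduction at hand.
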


We now assume that $(M,ds^2)$ is an irreducible Riemannian symmetric space
of compact type.  We will use two results of \' Elie Cartan: the classification
and the description of the full group of isometries.  The description in
question as formulated in \cite[Theorem 8.8.1]{W1966} is

\begin{proposition}\label{full-group}
{\rm (\'E. Cartan)}  Let $(M = G/K, ds^2)$ be an irreducible simply
connected Riemannian symmetric space where $G = \bI^0(M,ds^2)$. Let $s$
be the symmetry. Define $K'' = K \cup s\cdot K$ and $G'' = G \cup s\cdot G$.
Let $\Aut(K)^G$ (resp. $\Int(K)^G$) denote the group of all (resp. all
inner) automorphisms of $K$ that extend to automorphisms of $G$.
Let $K'$ be the isotropy subgroup of $\bI(M,ds^2)$.  Then 
$K' = \bigcup (k_i\cdot K'')$ and  $\bI(M,ds^2) = \bigcup (k_i\cdot G'')$, 
disjoint unions where $\Aut(K)^G = 
\bigcup \left (\Ad(k_i)|_K\cdot \Int(K)^G \right )$ disjoint.
\end{proposition}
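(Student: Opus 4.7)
The plan is to reduce the description of the full isometry group to a description of the isotropy subgroup $K'$ at the base point $o \in M$, and then to analyze $K'$ through its conjugation action on the normal subgroup $G = \bI^0(M,ds^2)$. Because $G$ acts transitively on $M$, every $\phi \in \bI(M,ds^2)$ factors as $\phi = g k'$, where $g \in G$ satisfies $g(o) = \phi(o)$ and $k' = g^{-1}\phi \in K'$. Together with $K' \cap G = K$, this produces a canonical bijection $K'/K \to \bI(M,ds^2)/G$, so once $K'$ is written as a disjoint union $\bigcup_i k_i K''$, the same coset representatives $k_i$ will simultaneously yield $\bI(M,ds^2) = \bigcup_i k_i G''$. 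The problem therefore reduces to describing $K'/K''$.

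Since $G$ is normal in $\bI(M,ds^2)$, conjugation by any $k' \in K'$ is an automorphism of $G$, and because $k'$ fixes $o$ this automorphism preserves $K$. Restriction defines a homomorphism $\Phi : K' \to \Aut(K)^G$, $\Phi(k') = \Ad(k')|_K$. First I would note that $K \subset \Phi^{-1}(\Int(K))$ trivially, and that $\Phi(s) = \mathrm{id}_K$ because the symmetry $s$ commutes pointwise with $K$ (the subgroup $K$ lies in the fixed set of the Cartan involution of $G$ induced by $s$). Hence $\Phi(K'') \subseteq \Int(K)^G$, and $\Phi$ descends to a map $\bar\Phi : K'/K'' \to \Aut(K)^G/\Int(K)^G$.

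The goal is to show $\bar\Phi$ is a bijection. For surjectivity, given $\phi \in \Aut(K)^G$ I would extend $\phi$ to $\tilde\phi \in \Aut(G)$; since $\tilde\phi$ preserves $K$ it descends to a map on $M = G/K$ fixing $o$, and the irreducibility hypothesis, combined with the uniqueness up to scalar of a $K$-invariant inner product on $\gm = T_oM$, forces this map to be an isometry, hence an element of $K'$ whose $\Phi$-image is $\phi$. For injectivity, if $\Phi(k') \in \Int(K)^G$, then after adjusting $k'$ by an element of $K$ we may assume $k'$ centralizes $K$; applying Schur's lemma to the irreducible action of $K$ on $\gm$ and comparing with the naturally available elements $1, s \in K'$, I would argue that the centralizer of $K$ in $K'$ lies in $\{1, s\}\cdot Z(K) \subseteq K''$, so $k' \in K''$.

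The main obstacle is the Schur-type centralizer computation paired with the extension-of-automorphisms step. Both depend essentially on irreducibility of the isotropy representation, and their clean treatment either invokes \'E.\ Cartan's classification of symmetric spaces or requires one to distinguish the three commutant types (real, complex, quaternionic) of the $K$-action on $\gm$ case by case, verifying in each that no extra isotropy elements beyond $\{1,s\}\cdot Z(K)$ arise and that every $\phi \in \Aut(K)^G$ is realized by a genuine isometry rather than merely a diffeomorphism. Once these ingredients are in place, choosing any system of coset representatives $\{k_i\}$ for $\Aut(K)^G/\Int(K)^G$ and lifting them through $\bar\Phi$ produces the three disjoint decompositions in the statement simultaneously.
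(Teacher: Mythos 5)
The paper does not actually prove Proposition \ref{full-group}: it is quoted as a theorem of \'E.~Cartan in the formulation of \cite[Theorem 8.8.1]{W1966}, so there is no in-text argument to compare against. Your outline is, in substance, the standard proof and the one given in that reference: write $\bI(M,ds^2)=K'\cdot G$ by transitivity, observe that $K'\cap G''=K''$ (since $s\in K'$ gives $K'\cap sG=s(K'\cap G)=sK$), so coset representatives for $K'/K''$ serve simultaneously for $\bI(M,ds^2)/G''$, and then identify $K'/K''$ with $\Aut(K)^G/\Int(K)^G$ via $k'\mapsto \Ad(k')|_K$. Two points in your sketch should be made explicit, both of which you partly flag. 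In the surjectivity step, uniqueness up to scalar of the $\Ad(K)$-invariant inner product on $\gm$ only yields $\tilde\phi^*ds^2=c\,ds^2$; to pin down $c=1$ one uses that $\tilde\phi$ preserves the Killing form, whose restriction to $\gm$ is proportional to the metric precisely because of irreducibility (or, in the compact case, that a self-homothety of a compact manifold preserves total volume). In the injectivity step, the commutant of the isotropy representation is $\R$ or $\C$ (the latter exactly in the Hermitian case), and one must verify that its orthogonal part is realized inside $K'$ by $\{1,s\}$, respectively by $\Ad(Z(K)^0)\subset K$, so that the centralizer of $K$ in $K'$ really lies in $K''$; this commutant analysis is where the genuine content of Cartan's theorem sits, and your sketch correctly locates it even though it does not carry it out.
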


\smallskip
\addtocounter{subsection}{1}
\centerline{\bf \thesubsection.  Compact Simple Isometry Group Using 
	Classification.}

We run through the steps in the argument for Theorem \ref{hc-symmetric}
with $\bI(M,ds^2)$ compact and simple.

\begin{lemma}\label{eq-rank-1} {\rm (\cite[Theorem 5.2.2]{W1962})}
Let $(L,dt^2)$ be a connected homogeneous Riemannian manifold 
such that the identity components of the isotropy subgroups of 
$(L,d\tau^2)$ are irreducible on the tangent spaces.
Suppose that $\beta \in \bI(L,dt^2)$ centralizes $\bI^0(L,dt^2)$, 
$g \in \bI(L,dt^2)$ has a fixed point on $L$, and $\gamma = g\beta$ is an
isometry of constant displacement on $(L,dt^2)$.  Then $\gamma = \beta$,
i.e. $g = 1$.
\end{lemma}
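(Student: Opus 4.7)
The plan is to show that $dg_{x_0} = \mathrm{id}$ at the fixed point $x_0$ of $g$; since an isometry of a connected Riemannian manifold is determined by its value and differential at a point, this immediately gives $g = e$ and hence $\gamma = \beta$.

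First I check that $\beta$ has constant displacement and that its value equals the displacement of $\gamma$. Because $\beta$ centralizes the transitive group $G = \bI^0(L,dt^2)$, for any $y \in L$ I write $y = h(x_0)$ with $h \in G$ and compute $\rho(y,\beta y) = \rho(hx_0, h\beta x_0) = \rho(x_0,\beta x_0) =: d$. Set $y_0 = \beta(x_0)$. At $x_0$, $\rho(x_0,\gamma x_0) = \rho(g^{-1}x_0,\beta x_0) = d$, so the constant displacement of $\gamma$ also equals $d$. If $d=0$ then $\gamma = e$, so $g = \beta^{-1}$ centralizes $G$ and fixes $x_0$, hence fixes the whole $G$-orbit $L$, giving $g = e$; so assume $d>0$.

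Next I apply the first variation of arclength. For a Killing field $X \in \gg$ with one-parameter flow $\phi_s = \exp(sX)$, the identity $\beta\phi_s = \phi_s\beta$ rewrites the constant-displacement condition on $\gamma$ as $\rho(g^{-1}\phi_s x_0, \phi_s y_0) = d$. I differentiate this at $s=0$ along any minimizing unit-speed geodesic $c$ from $x_0$ to $y_0$. Two ingredients are needed: the Killing-field identity $d\beta_{x_0}(X(x_0)) = X(y_0)$ (obtained by differentiating $\beta\phi_s = \phi_s\beta$ at $x_0$), and the translation property $d\beta_{x_0}(\dot c(0)) = \dot c(d)$ (which itself follows from the same first-variation argument applied to $\beta$ alone). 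Substituting these and using orthogonality of $dg_{x_0}$ collapses the first-variation equation to
\[
\langle dg_{x_0}(\dot c(0)) - \dot c(0),\ X(x_0)\rangle = 0 \quad \text{for every } X \in \gg.
\]
Since $X(x_0)$ sweeps out $T_{x_0}L$ by transitivity, $dg_{x_0}$ fixes $\dot c(0)$.

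Finally I invoke the isotropy irreducibility. The preceding conclusion applies to every minimizing geodesic from $x_0$ to $y_0$. For each $k$ in the identity component $K^0 := K_{x_0}^0 \subset G$, $k$ commutes with $\beta$ and therefore fixes $y_0$; hence $k\circ c$ is another such minimizer, with initial tangent $dk_{x_0}(\dot c(0))$. Applying the previous step to $k\circ c$ gives $dg_{x_0}(dk_{x_0}(\dot c(0))) = dk_{x_0}(\dot c(0))$ for every $k \in K^0$. The span of the orbit $K^0 \cdot \dot c(0)$ is a nonzero $K^0$-invariant subspace of $T_{x_0}L$, hence by the irreducibility hypothesis equals $T_{x_0}L$. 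Therefore $dg_{x_0}$ fixes a spanning set and is the identity, completing the proof. The main obstacle is the clean cancellation in the first-variation step, which relies essentially on $\beta$ centralizing $G$ (making $d\beta$ intertwine the Killing-field identifications at $x_0$ and $y_0$); without this centralization hypothesis the variation produces only tautologies and yields no constraint on $g$.
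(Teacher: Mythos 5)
Your proof is correct. The paper itself gives no argument for this lemma (it only cites \cite[Theorem 5.2.2]{W1962}), but the intended mechanism can be read off from Section \ref{sec8}, where the statement is re-proved in the slightly more general form of Proposition \ref{is-central} using Lemma \ref{halfspace}. Your argument is the same in spirit --- a first-variation computation anchored at the fixed point $x_0$ of $g$, exploiting that $\beta$ centralizes the transitive group so that $\beta$ and $\gamma$ share the same constant displacement $d$ and $d\beta_{x_0}$ intertwines the Killing-field evaluations at $x_0$ and $y_0$ --- and each step checks out: the first-variation identity does hold along every minimizing geodesic because the distance function is constant, the translation property $d\beta_{x_0}(\dot c(0))=\dot c(d)$ follows from varying along $c$ itself plus equality in Cauchy--Schwarz, and $K^0\subset G$ fixes $y_0$ so that $k\circ c$ is again a minimizer. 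The one genuine divergence is the final linear-algebra step: you conclude from irreducibility that the span of the $K^0$-orbit of $\dot c(0)$ is all of $T_{x_0}L$, so $dg_{x_0}$ fixes a spanning set. The argument in \cite{W1962}, as reflected in Lemma \ref{halfspace}, instead uses only that the $K^0$-orbit of a nonzero vector is not contained in a half-space (via the center-of-gravity trick). That weaker input is what allows the paper to relax ``irreducible'' to ``no trivial summands'' in Proposition \ref{is-central}; your spanning argument uses the full strength of irreducibility and would not extend to that generalization, though it is perfectly adequate for the lemma as stated.
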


\begin{lemma}\label{eq-rank-2} {\rm (\cite[Corollaries 5.2.3 and 5.2.4]{W1962})}
Suppose that $(L,dt^2)$ is a compact homogeneous Riemannian manifold 
of Euler characteristic $\chi(L) > 0$.  Suppose further that 
the identity components of the isotropy subgroups of
$(L,d\tau^2)$ are irreducible on the tangent spaces.  Finally suppose that
$\bI(L,dt^2) = \bigcup \left ( \beta_j\cdot \bI^0(L,dt^2)\right )$ 
where the $\beta_j$ centralize $\bI^0(L,dt^2)$. {\rm (}This is
automatic if $\bI^0(L,dt^2)$ has no outer automorphism, in other words if
$\bI^0(L,dt^2)$ is not of type $A_n (n > 1)$, $D_n (n > 3)$ nor $E_6$.{\rm )}  
Let $\Gamma$ be a group of isometries of constant displacement on $(L,dt^2)$.
Then $\Gamma$ centralizes $\bI^0(L,dt^2)$, so $(L',dt'^2) := \Gamma \backslash
(L,dt^2)$ is homogeneous.
\end{lemma}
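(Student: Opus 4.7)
The plan is to combine the hypothesis $\chi(L) > 0$ with the preceding Lemma \ref{eq-rank-1}. The core idea is that positive Euler characteristic forces every element of $\bI^0(L,dt^2)$ to have a fixed point, which lets us peel off any "inner" factor of a constant displacement isometry.

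First I would recall the classical fact (Hopf--Samelson) that if $L = G/K$ is a compact homogeneous space with $\chi(L) > 0$ and $G = \bI^0(L,dt^2)$, then $\rank K = \rank G$. Consequently any $g \in G$ lies in some maximal torus $T \subset G$, and a conjugate of $T$ is contained in $K$; hence $g$ is conjugate in $G$ to an element of $K$, so $g$ has a fixed point on $L$. Thus every element of $\bI^0(L,dt^2)$ admits a fixed point.

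Next, given $\gamma \in \Gamma$, use the hypothesized decomposition $\bI(L,dt^2) = \bigcup \beta_j \cdot \bI^0(L,dt^2)$ with the $\beta_j$ centralizing $\bI^0(L,dt^2)$ to write $\gamma = \beta_j \cdot g$ for some index $j$ and some $g \in \bI^0(L,dt^2)$. By the previous paragraph, $g$ has a fixed point. Since $\gamma$ is an isometry of constant displacement and $\beta_j$ centralizes $\bI^0(L,dt^2)$, the hypotheses of Lemma \ref{eq-rank-1} are met with the roles $\beta := \beta_j$ and the given $g$. That lemma forces $g = 1$, so $\gamma = \beta_j$, which centralizes $\bI^0(L,dt^2)$. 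As $\gamma$ was arbitrary, $\Gamma$ centralizes $\bI^0(L,dt^2)$.

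Finally, since $\bI^0(L,dt^2)$ acts transitively on $L$ and commutes with $\Gamma$, it descends to a transitive action by isometries on the quotient $(L',dt'^2) = \Gamma \backslash (L,dt^2)$, so $(L',dt'^2)$ is homogeneous. The main obstacle is really packaged into Lemma \ref{eq-rank-1}; what remains here is essentially the rank argument that produces fixed points, and the observation that the coset decomposition of $\bI(L,dt^2)$ together with the centralizing property of the $\beta_j$ lets one reduce to the situation of that lemma. The parenthetical remark about types $A_n (n>1)$, $D_n (n>3)$, $E_6$ is just the statement that outer automorphisms are the only obstruction to the coset decomposition hypothesis holding automatically.
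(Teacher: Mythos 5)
Your proposal is correct and follows essentially the same route as the paper, which recalls this lemma from \cite[Corollaries 5.2.3 and 5.2.4]{W1962} as a direct consequence of Lemma \ref{eq-rank-1}: the equal-rank (Hopf--Samelson) argument puts every element of $\bI^0(L,dt^2)$ in a conjugate of the isotropy group, hence gives it a fixed point, and Lemma \ref{eq-rank-1} then kills the $\bI^0$-factor of each $\gamma = \beta_j g$. The final step, that a transitive group centralizing $\Gamma$ descends to the quotient, is exactly the standard argument the paper uses elsewhere.
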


Now we need a result of Jean de Siebenthal \cite[pp. 57--58]{dS1956}.
\begin{proposition}\label{dS}
Let $L^0$ be the identity component of a compact Lie group $L$, $x \in L$, 
and $T$ a maximal torus of the centralizer of $Z_L(x)$. Write
$\<T,x\>$ for the group generated by $T$ and $x$.  Then every element 
of the component $xL^0$ of $x$ is $\Ad(L^0)$--conjugate to an element of 
$xT$.  If $x' \in Lx$ and $T'$ is a maximal torus of $Z_L(x')$ then
$\<T',x'\>$ is $\Ad(L^0)$--conjugate to $\<T,x\>$.
\end{proposition}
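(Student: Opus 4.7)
The plan is to derive both assertions from surjectivity of the sweep map
\[
\Phi \colon L^0 \times T \longrightarrow xL^0,\qquad (g,t)\mapsto g(xt)g^{-1}.
\]
Since $T$ is a torus it is connected and lies in $L^0$, so $\Phi$ is well-defined, and the first assertion is exactly surjectivity of $\Phi$.

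To prove surjectivity I would choose a favorable base point $t_\ast\in T$ as follows. Because $T\subseteq Z_L(x)$, the set $\<T,x\>$ is abelian; its closure $A$ is a compact abelian Lie group whose identity component is a torus in $Z_L(x)$ containing $T$, and so is equal to $T$ by maximality. Thus $A/T$ is finite cyclic, generated by the coset of $x$; let $n$ be its order. Since $t$ commutes with $x$ we have $(xt)^n=x^nt^n$, and for a dense open set of $t\in T$ the element $x^nt^n$ generates a dense subgroup of $T$. For any such $t_\ast$, $\overline{\<xt_\ast\>}=A$, so
\[
Z_L(xt_\ast)=Z_L(A)=Z_L(T)\cap Z_L(x);
\]
since $T$ is a maximal torus of the compact group $Z_L(x)$, the identity component of the right-hand side is $T$, giving $Z_L(xt_\ast)^0=T$. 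Using an $\Ad(L^0)$-invariant inner product on $\gl$ and identifying $T_{xt_\ast}(xL^0)\cong\gl$ by left translation, a short computation gives
\[
d\Phi_{(e,t_\ast)}(\xi,\tau) = (\Ad((xt_\ast)^{-1})-I)\xi + \tau.
\]
The image of $\Ad((xt_\ast)^{-1})-I$ is the orthogonal complement of $\ker(\Ad(xt_\ast)-I)=\gt$, so the image of $d\Phi_{(e,t_\ast)}$ is $\gt^\perp+\gt=\gl$. By $L^0$-equivariance $\Phi(g_0g,t)=g_0\Phi(g,t)g_0^{-1}$, $d\Phi$ is surjective at every $(g,t_\ast)$. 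Hence the image of $\Phi$ is open, closed by compactness, and nonempty in the connected set $xL^0$, so $\Phi$ is surjective, proving the first claim.

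For the second claim, apply the first to $x'$ to choose $g\in L^0$ with $gx'g^{-1}=xt_1\in xT$; replacing $x'$ and $T'$ by their conjugates under $g$, we may assume $x'=xt_1$ and $T'$ is a maximal torus of $Z_L(xt_1)$. I claim $T$ is also a maximal torus of $Z_L(xt_1)$: certainly $T\subseteq Z_L(xt_1)$, and if $S\supseteq T$ is a torus in $Z_L(xt_1)$ then $S$ is abelian and contains $t_1\in T$, so $S$ commutes with $(xt_1)t_1^{-1}=x$; maximality of $T$ in $Z_L(x)$ forces $S=T$. Standard conjugacy of maximal tori in the compact group $Z_L(xt_1)$ now supplies $h\in Z_L(xt_1)^0\subseteq L^0$ with $hTh^{-1}=T'$, and because $h$ centralizes $xt_1$,
\[
h\<T,x\>h^{-1}=h\<T,xt_1\>h^{-1}=\<T',xt_1\>=\<T',x'\>,
\]
completing the proof.

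The main obstacle will be the density step producing $t_\ast\in T$ with $Z_L(xt_\ast)^0=T$; this rests on the observation that $\<T,x\>$ has torus part exactly $T$ (by maximality in $Z_L(x)$) together with a Kronecker-density argument inside $T$. Once $t_\ast$ is in hand, the differential-geometric surjectivity and the second claim reduce to routine manipulations with centralizers and conjugacy of maximal tori.
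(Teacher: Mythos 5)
First, a point of comparison: the paper does not actually prove Proposition \ref{dS} --- it is quoted from de Siebenthal \cite[pp. 57--58]{dS1956} --- so your argument has to stand on its own. Much of it does. The construction of a generic $t_*\in T$ with $\overline{\<xt_*\>}=\overline{\<T,x\>}$ and hence $Z_L(xt_*)^0=T$ is correct (minor quibble: the good set of $t$ is a dense $G_\delta$, not a dense open set, but only nonemptiness is needed); the computation of $d\Phi_{(e,t_*)}$ and of the image of $\Ad((xt_*)^{-1})-I$ is right; and the deduction of the second assertion from the first, via the observation that $T$ is again a maximal torus of $Z_L(xt_1)$ and conjugacy of maximal tori in the compact connected group $Z_L(xt_1)^0$, is clean and complete.

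The gap is the sentence ``Hence the image of $\Phi$ is open.'' What you have established is that $d\Phi$ is surjective at the points $(g,t_*)$ with $g\in L^0$, and that shows only that the image contains an open neighborhood of the single conjugacy class $\Ad(L^0)(xt_*)$. The image of $\Phi$ is $\bigcup_{t\in T}\Ad(L^0)(xt)$, and at a point $(g,t)$ with $t$ non-generic the differential is \emph{not} surjective: its image is $\ker(\Ad(xt)-I)^\perp+\gt$, which is a proper subspace whenever $\dim Z_{L^0}(xt)>\dim T$. So openness of the image at those points is precisely what still needs proof, and ``a closed subset of a connected manifold containing a nonempty open set'' need not be the whole manifold (a closed ball is a counterexample to that inference). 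The standard repair stays close to your setup: pass to $\widetilde\Phi:(L^0/T)\times T\to xL^0$, $(gT,t)\mapsto g(xt)g^{-1}$, a smooth map of compact oriented manifolds of equal dimension; check that $xt_*$ is a regular value with finitely many preimages, at each of which the Jacobian equals $\det\bigl((\Ad((xt)^{-1})-I)|_{\gt^\perp}\bigr)$ for some $xt$ conjugate to $xt_*$, hence is the same nonzero number at every preimage; conclude $\deg\widetilde\Phi\ne 0$ and therefore surjectivity. (Alternatively, a Lefschetz fixed-point argument for $gT\mapsto ygx^{-1}T$ on $L^0/T$, or de Siebenthal's original argument.) This degree or fixed-point step is where the real content of the proposition lives, and it is missing from your write-up.
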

\noindent to show that in certain components of certain $\bI(M,ds^2)$ every 
element has a fixed point, in particular those components do not contain any 
isometries of constant displacement.

\begin{lemma}\label{eq-rank-3} {\rm (\cite[Lemma 5.3.1]{W1962})} 
Let $\sigma$ be a symmetry of a compact connected Riemannian symmetric space 
$(M,ds^2)$. Then every element of $\sigma\cdot \bI^0(M,ds^2)$ has a fixed point 
on $M$.
\end{lemma}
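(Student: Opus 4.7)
The plan is to apply Proposition \ref{dS} (de Siebenthal) to the compact group $L := \langle G, \sigma\rangle \subseteq \bI(M,ds^2)$, where $G := \bI^0(M,ds^2)$ and $\sigma$ is taken to be the symmetry at a chosen basepoint $x_0 \in M$. The target is to show that every element of the coset $\sigma G$ is $\Ad(G)$-conjugate to an element which manifestly fixes $x_0$, and then transport that fixed point back by the conjugating element.

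First I would identify the centralizer $Z_L(\sigma)$. Let $K := G_{x_0}$ be the isotropy at $x_0$. Since $\sigma$ fixes $x_0$ and acts as $-I$ on $T_{x_0}M$, for any $k \in K$ the conjugate $\sigma k \sigma^{-1}$ fixes $x_0$ and has the same differential there as $k$; rigidity of isometries then forces $\sigma k \sigma^{-1} = k$, so $K \subseteq Z_L(\sigma)$. Conversely, $g \in G$ centralizes $\sigma$ precisely when it is fixed by the Cartan involution $\theta = \Ad(\sigma)|_G$, and this fixed-point subgroup is $K$. Hence $Z_L(\sigma) = K \cup \sigma K$, and any maximal torus $T$ of $K$ is also a maximal torus of $Z_L(\sigma)$.

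Next, applying Proposition \ref{dS} to $x = \sigma$ with this choice of $T$, every element of the component $\sigma L^0 = \sigma G$ is $\Ad(G)$-conjugate to an element of $\sigma T$. Each such element has the form $\sigma t$ with $t \in T \subseteq K$, and both $\sigma$ and $t$ fix $x_0$, so $\sigma t$ fixes $x_0$. Thus if $\sigma g = h(\sigma t) h^{-1}$ for some $h \in G$, then $\sigma g$ fixes $h(x_0) \in M$, which proves the lemma.

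The main obstacle I anticipate is purely bookkeeping: verifying that $Z_G(\sigma)$ equals $K$ exactly (rather than merely its identity component), so that the maximal tori of $Z_L(\sigma)$ and those of $K$ coincide and de Siebenthal's theorem applies with the intended conclusion. For simply connected compact symmetric spaces this identification is standard, and the general compact case follows after dividing by a discrete central subgroup, which does not interfere with the fixed-point transport argument.
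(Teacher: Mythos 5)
Your proposal is correct and follows essentially the paper's own argument: apply de Siebenthal's theorem (Proposition \ref{dS}) to the coset $\sigma\cdot\bI^0(M,ds^2)$ with $T$ a maximal torus of the isotropy group at $x_0$, conclude that every element of the coset is conjugate to some $\sigma t$ fixing $x_0$, and transport the fixed point. One small caveat: your claim that $Z_G(\sigma)=K$ exactly is false in general even for simply connected $M$ (e.g.\ $M=S^2$, where $Z_{SO(3)}(\sigma)$ has two components while $K=SO(2)$), but this is harmless --- all that is needed, and all the paper asserts, is that $K$ and the centralizer of $\sigma$ have the same identity component, so that a maximal torus of $K$ is a maximal torus of $Z_L(\sigma)$; your own computation of the Lie algebra fixed points already gives exactly that.
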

\begin{proof}
Write $\sigma$ for the symmetry at $x_0 \in M$. Let $T$ be a maximal torus of 
the isotropy subgroup $K'$ of $\bI(M,ds^2)$ at $x_0$\,. $K'$ and the centralizer 
of $\sigma$ in $\bI(M,ds^2)$ have the same identity component. Thus 
de Siebenthal's theorem shows that every element of 
$\sigma\cdot \bI^0(M,ds^2)$ is conjugate to an 
element of $(\sigma\cdot T) \subset K'$. The Lemma follows. 
\end{proof}

The argument of Lemma \ref{eq-rank-3} proves
\begin{lemma}\label{eq-rank-4} {\rm (\cite[Lemma 5.3.2]{W1962})}
Let $(L,dt^2)$ be a compact connected Riemannian homogeneous manifold,
$K'$ the isotropy subgroup of $\bI(L,dt^2)$ at $x_0 \in L$, $k \in K'$, and 
$g \in k\cdot \bI^0(L,dt^2)$. Suppose that $K'$ contains a maximal torus 
of the centralizer of $k$  in $\bI(L,dt^2)$. Then $g$ has
a fixed point on $L$.
\end{lemma}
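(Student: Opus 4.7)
The plan is to mimic the argument of Lemma \ref{eq-rank-3}, replacing the role of the symmetry $\sigma$ by the general element $k \in K'$, and using the full force of Proposition \ref{dS} (de Siebenthal's theorem) to transport $g$ into a coset of a torus that already lies in the isotropy group $K'$.

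First I would let $T$ denote a maximal torus of the centralizer $Z_{\bI(L,dt^2)}(k)$; by hypothesis $T \subset K'$. Applying Proposition \ref{dS} with the ambient compact group taken to be $\bI(L,dt^2)$ and with $x = k$, every element of the component $k \cdot \bI^0(L,dt^2)$ is $\Ad(\bI^0(L,dt^2))$--conjugate to an element of $k\cdot T$. In particular, there exists $h \in \bI^0(L,dt^2)$ and $t \in T$ such that $g = h\,(kt)\,h^{-1}$.

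Next, since $k \in K'$ and $T \subset K'$ with $K'$ a subgroup of $\bI(L,dt^2)$, we have $kt \in K'$, so $kt$ fixes the basepoint $x_0$. Therefore $g = h(kt)h^{-1}$ fixes $h(x_0) \in L$, which is the desired fixed point. This completes the proof.

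The only conceptual step is recognizing that the hypothesis "$K'$ contains a maximal torus of the centralizer of $k$" is precisely what is needed to ensure the coset $kT$ produced by de Siebenthal's theorem lies inside the isotropy group $K'$; once that is in hand, there is essentially no obstacle, and the argument of Lemma \ref{eq-rank-3} goes through verbatim (indeed, Lemma \ref{eq-rank-3} is the special case $k = \sigma$, for which $K'$ and $Z_{\bI(M,ds^2)}(\sigma)$ have the same identity component, and hence share a maximal torus).
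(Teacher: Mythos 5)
Your proof is correct and follows exactly the route the paper intends: the paper states that "the argument of Lemma \ref{eq-rank-3} proves" this lemma, and your write-up is precisely that argument with the symmetry $\sigma$ replaced by $k$ and with the hypothesis on $K'$ used to guarantee $T \subset K'$ so that de Siebenthal's theorem (Proposition \ref{dS}) lands the coset $kT$ inside the isotropy group. Nothing is missing.
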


\begin{lemma}\label{eq-rank-5} {\rm (\cite[Lemma 5.3.3]{W1962})}
Let $(M,ds^2)$ be a compact connected irreducible Riemannian symmetric 
manifold such that a connected isotropy subgroup of $\bI(M,ds^2)$ admits no 
outer automorphism. Let $\gamma$ be an isometry of $(M,ds^2)$ which has no 
fixed point. Then $\gamma \in \bI^0(M,ds^2)$.
\end{lemma}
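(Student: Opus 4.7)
The plan is to prove the contrapositive: every element $\gamma \in \bI(M,ds^2) \setminus \bI^0(M,ds^2)$ has a fixed point on $M$. The starting point is Cartan's description in Proposition \ref{full-group}, which writes $\bI(M,ds^2) = \bigcup k_i G''$ with $G'' = G \cup s \cdot G$, the disjoint union being indexed by $\Aut(K)^G/\Int(K)^G$. This immediately splits the problem into two tasks: (a) handle the $s\cdot G$ portion of $G''$, and (b) use the hypothesis on $K^0$ to rule out any nontrivial coset $k_i G''$.

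For task (a), no new work is needed: Lemma \ref{eq-rank-3} already says that every element of $s\cdot G$ has a fixed point, so if $\gamma$ is fixed-point-free, it cannot lie in $s\cdot G$. For task (b), I would show that the hypothesis forces $\Aut(K)^G = \Int(K)^G$, which collapses Cartan's decomposition to a single coset $\bI(M,ds^2) = G \cup s\cdot G$. Given $\phi \in \Aut(K)^G$, its restriction $\phi|_{K^0}$ is an automorphism of $K^0$, hence inner by hypothesis: $\phi|_{K^0} = \Ad(x)|_{K^0}$ for some $x \in K^0$. Replacing $\phi$ by $\Ad(x)^{-1}\circ \phi$, I may assume $\phi|_{K^0} = 1$, and I then need to argue that such a $\phi$ is already inner on all of $K$. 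Once $\Aut(K)^G = \Int(K)^G$, every $\gamma \notin G = \bI^0(M,ds^2)$ lies in $s\cdot G$, so by (a) has a fixed point, contradicting the hypothesis on $\gamma$.

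The main obstacle is the final structural step in task (b): passing from ``$K^0$ has no outer automorphism'' to ``every automorphism of $K$ extending to $G$ is inner in $K$''. If $K$ happens to be connected, this is immediate; otherwise one has to control how the component group $K/K^0$ interacts with $\Aut(K)$ in the irreducible compact symmetric setting, showing that an automorphism of $K$ trivial on $K^0$ is conjugation by a centralizing element. A cleaner alternative I would try in parallel is to work coset-by-coset using Proposition \ref{dS}: for each $k_i$, choose a maximal torus $T$ of $Z_{\bI(M,ds^2)}(k_i)$, verify that $K'$ contains $T$ by a centralizer-identity-component argument parallel to the one used for $\sigma$ in the proof of Lemma \ref{eq-rank-3}, and then apply Lemma \ref{eq-rank-4} to conclude that every element of $k_i G$ has a fixed point. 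This route bypasses the discussion of $\Aut(K)^G$ entirely, trading it for a de Siebenthal-style centralizer computation in each component, which I expect to be the cleanest way to handle the disconnected-$K$ subtleties.
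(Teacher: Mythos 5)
Your proposal follows the paper's own two-line proof exactly: Proposition \ref{full-group} collapses the decomposition to $\bI(M,ds^2) = \bI^0(M,ds^2) \cup \sigma\cdot\bI^0(M,ds^2)$ once $\Aut(K)^G = \Int(K)^G$, and Lemma \ref{eq-rank-3} disposes of the $\sigma$--component. The disconnected-$K$ worry in your task (b) is moot, since Proposition \ref{full-group} is stated for $M$ simply connected with $G = \bI^0(M,ds^2)$ connected, so the isotropy subgroup $K$ is connected and the hypothesis ``no outer automorphism of $K$'' immediately gives $\Aut(K)^G = \Int(K)^G$.
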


\begin{proof}
$\bI(M,ds^2) = \bI^0(M,ds^2) \cup \sigma \cdot \bI^0(M,ds^2)$ by Proposition 
\ref{full-group}.  Now apply Lemma \ref{eq-rank-3}.
\end{proof}

Now we come to the main result of this section.

\begin{theorem}\label{symm-simple-isom} {\rm (\cite[Theorem 5.5.1]{W1962})}
Let $(M,ds^2)$ be a compact connected simply connected irreducible
Riemannian symmetric manifold with $\bI^0(M,ds^2)$ simple. Let 
$\Gamma$ be a group of isometries of constant displacement on $(M,ds^2)$. 

If $\Gamma$ is finite, then $(M',ds'^2) = \Gamma \backslash (M,ds^2)$ is 
a Riemannian homogeneous manifold. If $(M,ds^2)$ is neither an odd dimensional 
sphere, 
nor a space $SU(2m)/Sp(m)$ with $m > 1$, nor a complex projective space of 
odd complex dimension $ > 1$, nor a space $SO(4n+2)/U(2n+1)$ with $n>0$, 
then $\Gamma$ is finite and centralizes $\bI^0(M,ds^2)$, and $(M',ds'^2)$ 
is a Riemannian symmetric manifold; otherwise $(M,ds^2)$ has
finite groups of constant displacement isometries and the corresponding
quotients are Riemannian homogeneous but not Riemannian symmetric. 

If $(M,ds^2)$ is neither an odd dimensional sphere nor a space 
$SU(2m)/Sp(m)$ with $m>1$, then $\Gamma$ is finite;
otherwise $(M,ds^2)$ has one--parameter groups of constant displacement 
isometries.
\end{theorem}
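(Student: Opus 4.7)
My plan is to reduce, as far as possible, to the analysis of isometries in the identity component $G := \bI^0(M,ds^2)$, then to exploit Lie-theoretic structure (maximal tori and the isotropy representation), and finally to use the Cartan classification to identify exactly the exceptional spaces listed.

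First I would show that, aside from the outer-automorphism components, every constant-displacement isometry lies in $G$. Concretely, if $\gamma \in \Gamma$ is nontrivial it is fixed-point-free (constant displacement plus a fixed point forces $\gamma=1$); hence Lemma \ref{eq-rank-5} places $\gamma$ in $G$ whenever the connected isotropy admits no outer automorphism. Since $G$ is simple, outer automorphisms of the isotropy can only contribute for the classical families with $G$ of type $A_n$, $D_n$, or $E_6$, and here I would argue case-by-case (using Proposition \ref{full-group} to list components of $\bI(M,ds^2)$ and Lemma \ref{eq-rank-4} to produce fixed points on the offending components) that no nontrivial constant-displacement element survives outside $G$. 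This isolates $\Gamma \subset G$ in all relevant situations.

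Next, having $\Gamma \subset G$, I would use that every $\gamma \in G$ sits in a maximal torus $T_\gamma$, and combine this with the de Siebenthal-type Proposition \ref{dS} together with Lemmas \ref{eq-rank-1} and \ref{eq-rank-2}. When $\chi(M) > 0$ (equal-rank case) and the isotropy has no outer automorphism, Lemma \ref{eq-rank-2} immediately shows $\Gamma$ centralizes $G$, so $\Gamma \backslash M$ is $G$-homogeneous and even symmetric; this will cover most of the list. For the remaining irreducible compact symmetric spaces I would analyze the isotropy representation on the tangent space: the condition that $\gamma$ move every point the same distance forces strong restrictions on the eigenvalues of $\Ad(\gamma)$ acting on $\gp$ (the $-1$-eigenspace of the Cartan involution), roughly that $\Ad(\gamma)|_\gp$ must commute with the isotropy action up to scalars on each isotypic component, in the spirit of Schur's lemma as used in Section \ref{sec2}.

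For the four exceptional families $\bbS^{2k+1}$, $SU(2m)/Sp(m)$ $(m>1)$, $\C P^{2k+1}$ $(k>0)$, and $SO(4n+2)/U(2n+1)$ $(n>0)$, I would exhibit explicit one-parameter groups of constant-displacement isometries: for the odd sphere, unit quaternions (or units in an appropriate division algebra) acting on the left give Clifford--Wolf translations as in \eqref{div-alg}; the remaining three cases fiber over quaternionic/complex projective bases in a manner compatible with such a left multiplication, yielding analogous one-parameter groups. For every other irreducible compact symmetric space with simple $G$, I would show (via the classification list) that the isotropy representation on $\gp$ is strongly irreducible enough that no nontrivial element of $G$ can satisfy the constant-displacement constraint except central ones, giving $\Gamma$ finite and central, hence centralizing $G$ and making $M'$ symmetric.

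The main obstacle I anticipate is the classification-dependent step three, specifically the tight case-by-case verification that precisely the four listed families admit nontrivial constant-displacement isometries and that in all other simple types only central (hence small, finite, centralizing) $\Gamma$ can appear. Secondary but also delicate is the outer-automorphism bookkeeping for types $A_n$, $D_n$, $E_6$, where one must rule out constant-displacement representatives outside $G$; this is the step that Section \ref{sec5} will later bypass via Ozols' classification-free approach.
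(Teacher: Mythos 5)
Your overall route --- dispose of every $G$ not of type $A_n$, $D_n$, $E_6$ by Lemma \ref{eq-rank-2}, then run through Cartan's list (AI, AII, AIII, DI, DIII, EI--EIV) case by case and exhibit the exceptional one-parameter families explicitly --- is the paper's route. But your first step asserts something that is not merely unproved but false, and it would derail the case analysis exactly where the theorem has content. You propose to show that ``no nontrivial constant-displacement element survives outside $G$,'' so that $\Gamma \subset G = \bI^0(M,ds^2)$. For $M = P^{2k+1}(\C)$ the situation is precisely the opposite: every element of $G = PSU(2k+2)$ is induced by a unitary matrix, which has an eigenvector and hence a fixed point on $P^{2k+1}(\C)$, so \emph{no} nontrivial constant-displacement isometry lies in $G$; the ones that do exist (for instance the free antiholomorphic involution induced by a quaternionic structure on $\C^{2k+2}$, whose quotient is exactly the homogeneous non-symmetric space the theorem describes) live in the outer component of $\bI(M,ds^2)$. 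Your step 1 would force $\Gamma = \{1\}$ there, contradicting the statement being proved. More mildly, even where Lemma \ref{eq-rank-2} applies, $\Gamma$ generally lies in $Z_{\bI(M,ds^2)}(G)$ but not in $G$ (the antipodal map of an even-dimensional sphere is in $O(2n+1)\setminus SO(2n+1)$), so ``$\Gamma \subset G$'' is the wrong target throughout.

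The repair is what the paper actually does: the goal is $\Gamma \subset Z_{\bI(M,ds^2)}(G)$, not $\Gamma \subset G$. Lemmas \ref{eq-rank-1} and \ref{eq-rank-2} show that any component of $\bI(M,ds^2)$ meeting $\Gamma$ has the form $\beta\, G$ with $\beta$ centralizing $G$, and that the constant-displacement element of that coset is $\beta$ itself; for types $A_n$, $D_n$, $E_6$ one must instead examine each coset supplied by Proposition \ref{full-group} directly and determine which of its elements are fixed-point-free of constant displacement --- accepting that in the four exceptional families the answer is nonempty and sometimes concentrated entirely outside $G$. With that correction, the rest of your outline (the eigenvalue constraints on $\Ad(\gamma)|_{\gp}$, the explicit Clifford translations on odd spheres, $SU(2m)/Sp(m)$, $P^{2k+1}(\C)$ and $SO(4n+2)/U(2n+1)$) is the content of the matrix computations the paper delegates to \cite{W1962}.
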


\noindent {\em Indication of Proof.}
Let $G= \bI^0(M,ds^2)$ and let $K$ be the isotropy subgroup at $x\in M$.
By  Lemma \ref{eq-rank-2} we need only check the cases where $G$ is of type 
$A_n (n >1)$, $D_n (n > 3)$ or $E_6$\,.  As the statements are known for 
spheres (see Section 2), É. Cartan's classification of symmetric spaces 
shows that we need only check the
cases 

(AI) $SU(n)/SO(n), n>2$; (AII) $SU(2n)/Sp(n), n>1$;
(AIII) $SU(p+q)/\{S(U(p) \times U(q))\}, pq>1$; 

(DI) $SO(p+q)/SO(p)\times SO(q)$ with $p \geqq 2, q \geqq 2,
p+q > 4, p+q$ even; (DIII) $S0(2n)/U(n), n>1$; 

(EI) $E_6/\Ad(C_4)$; (EII) $E_6/\{A_5\times A_1\}$; 
(EIII) $E_6/\{D_5 \times T_1\}$; (EIV) $E_6/F_4$.

\smallskip
The cases where $G$ is a classical group involve quite a lot of matrix 
calculation, and the
$E_6$ cases depend on classical subgroups of $E_6$\,.
\hfill $\diamondsuit$

\section{\bf Riemannian Symmetric Spaces that are Group Manifolds.}\label{sec4}
\setcounter{equation}{0}
\setcounter{subsection}{0}

In this section we indicate the proof of the Homogeneity Conjecture for 
compact Riemannian group manifolds with bi--invariant metric.  Those 
symmetric spaces were introduced by É. Cartan in \cite{C1927b} and led to his
development of symmetric space theory.

Fix a compact connected simply connected simple Lie group $G$ with
bi--invariant Riemannian metric $ds^2$.  Let $T(G)$ denote the set of all
isometries of the form
$$
(g_1,g_2): x \mapsto g_1^{-1}xg_2 \text{ where } x,\, g_1,\, g_2 \in G.
$$
Then $T(G)$ is the identity component of the isometry group $\bI(G,ds^2)$.

\begin{lemma}\label{transl-0}
{\rm (\cite[Lemmas 4.2.1, 4.2.2]{W1962})}
If $(g_1,g_2) \in T(G)$ is an isometry of constant displacement on
$(G,ds^2)$ then every conjugate of $g_1$ commutes with every conjugate
of $g_2$\,, so either $g_1$ or $g_2$ is central in $G$.
\end{lemma}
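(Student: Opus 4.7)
The plan is to prove, via a differential calculation exploiting bi-invariance, that every conjugate of $g_1$ commutes with $g_2$; the symmetric commutation statement and the centrality conclusion then follow formally.

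First I would use bi-invariance of $ds^2$ to rewrite the displacement at $x$ as
$\rho(x, g_1^{-1} x g_2) = \rho(1, (x^{-1} g_1^{-1} x)\, g_2)$.
As $x$ ranges over $G$, the element $h := x^{-1} g_1^{-1} x$ sweeps out the entire conjugacy class $C$ of $g_1^{-1}$, so the constant--displacement hypothesis becomes the statement that $\psi(h) := \rho(1, h g_2)^2$ is constant on $C$. Next I would fix a generic $h \in C$ --- one for which $h g_2$ lies outside the cut locus of the identity; these form an open dense subset --- and write $h g_2 = \exp W$ with $W$ the minimal--length logarithm, so that $|W| = D := \rho(1, g_1^{-1} g_2)$. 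In a compact group with bi--invariant metric, the gradient of $\tfrac{1}{2}\rho(1, \cdot)^2$ at $\exp W$ equals $W$ in right--trivialization; since right translation by $g_2$ is an isometry, pulling back through $h \mapsto hg_2$ yields $\operatorname{grad}\psi(h) = 2W$ in right--trivialization at $h$. On the other hand, the tangent space $T_h C$ in right--trivialization is the image of $I - \Ad(h)$, whose orthogonal complement under the $\Ad$--invariant inner product on $\gg$ is $\ker(I - \Ad(h^{-1})) = \gz(h)$. Constancy of $\psi$ on $C$ therefore forces $W \in \gz(h)$, i.e.\ $\Ad(h) W = W$, which is exactly the statement that $h$ commutes with $\exp W = h g_2$. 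Expanding $h \cdot h g_2 = h g_2 \cdot h$ and cancelling $h$ on the left gives $h g_2 = g_2 h$. A continuity argument then extends this commutation from the generic locus to every $h \in C$.

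Once every conjugate of $g_1^{-1}$ (equivalently of $g_1$) commutes with $g_2$, I note that the set of conjugates of $g_1$ is closed under inner automorphism, so it also commutes elementwise with every conjugate of $g_2$ --- proving the first assertion. For the centrality conclusion I would invoke simplicity of $G$: in a compact simple Lie group of positive dimension, the normal subgroup generated by a noncentral element is all of $G$. If neither $g_1$ nor $g_2$ were central, the subgroups generated by their conjugacy classes would both equal $G$, and would commute elementwise, forcing $G$ abelian --- contradicting simplicity. Hence at least one of $g_1, g_2$ lies in the center.

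The main obstacle is the gradient/orthogonality computation in the middle paragraph: correctly identifying $\operatorname{grad}\psi(h)$ via bi--invariance in a way that pairs cleanly with the $\Ad$--orbit tangent space $T_h C$, and then patching over the cut locus of the identity through density and continuity. Once those geometric ingredients are in place, the algebraic reduction from $\Ad(h) W = W$ down to $h g_2 = g_2 h$ is a single line of cancellation.
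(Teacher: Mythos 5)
Your overall strategy is sound and in fact closely parallels how the paper handles the analogous statement for left--invariant metrics (Lemma \ref{lr1}): reduce constant displacement to the statement that the whole conjugacy class $Cg_2$ lies on a single metric sphere about $1$, deduce that the (right--trivialized) tangent of a minimizing geodesic from $1$ to $hg_2$ is orthogonal to $T_hC=\operatorname{im}(I-\Ad(h))$, identify the orthocomplement with $\gz(h)$, and conclude $hg_2=g_2h$; the passage to ``every conjugate of $g_1$ commutes with every conjugate of $g_2$'' and the centrality conclusion via normal closures in the simple group $G$ are both correct (the paper's Lemma \ref{lr2} runs the same dimension/simplicity argument). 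The paper itself gives no proof of Lemma \ref{transl-0}, only the citation to \cite{W1962}, so your write--up is a legitimate reconstruction.

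The one genuine gap is the assertion that the set of $h\in C$ with $hg_2$ off the cut locus of $1$ is \emph{dense} in $C$. Openness is clear, but density is not: $Cg_2$ is a positive--codimension submanifold confined to a single metric sphere $\{q:\rho(1,q)=D\}$, and nothing rules out a priori that this entire set sits inside the cut locus, in which case your gradient computation never applies and the ``continuity argument'' has nothing to extend. The repair is standard and removes the genericity hypothesis altogether: at \emph{any} $h\in C$ choose \emph{any} minimizing geodesic $t\mapsto\exp(tW)$ from $1$ to $hg_2$ (in a bi--invariant metric every geodesic through $1$ is a one--parameter subgroup, so such a $W$ with $\exp W=hg_2$, $|W|=D$ always exists, possibly non--uniquely at cut points). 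The first variation inequality $\rho(1,c(s))\leqq L(\sigma_s)=D+s\langle W/|W|,c'(0)\rangle+o(s)$, applied to curves $c(s)$ in $Cg_2$ along which $\rho(1,\cdot)\equiv D$, forces $\langle W,c'(0)\rangle=0$ for both signs of $s$, i.e.\ $W\perp\operatorname{im}(I-\Ad(h))$, and the rest of your argument proceeds verbatim. This is exactly the ``orbit lies in the sphere of radius $c_\gamma$'' step in the paper's proof of Lemma \ref{lr1}; with that substitution your proof is complete.
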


The symmetry $\sigma : x \mapsto x^{-1}$ satisfies 
$\sigma\cdot (g_1,g_2)\cdot\sigma^{-1} = (g_2,g_1)$.  Let $I'(G,ds^2)$
denote the group $T(G) \cup \sigma\cdot T(G)$ of isometries of $(G,ds^2)$.

\begin{proposition}\label{transl-1}
{\rm (\cite[Lemma 4.2.3, Theorem 4.2.4]{W1962})}
Every element of $\sigma\cdot T(G)$ has a fixed point on $G$.  Every
subgroup $\Gamma \subset I'(G,ds^2)$ of isometries of constant
displacement is conjugate in $I'(G,ds^2)$ to a group of left translations
of $G$.
\end{proposition}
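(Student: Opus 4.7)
My plan is to handle the two assertions in turn, leaning on Lemma \ref{eq-rank-3} for the fixed-point statement and on Lemma \ref{transl-0} plus a bit of elementary group theory for the structure of $\Gamma$. The first assertion follows directly: since $(G,ds^2)$ is a Riemannian symmetric space with $\bI^0(G,ds^2) = T(G)$ and $\sigma : x \mapsto x^{-1}$ is the symmetry at $e \in G$, Lemma \ref{eq-rank-3} applies verbatim and says that every element of $\sigma \cdot T(G)$ fixes some point of $G$.

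For the second assertion I would first use the first to push $\Gamma$ into $T(G)$: a constant displacement isometry with a fixed point has zero displacement there, hence zero displacement everywhere, so must be the identity, and combined with part one this forces $\Gamma \subseteq T(G)$. Then I would apply Lemma \ref{transl-0} element by element --- for each $(g_1, g_2) \in \Gamma$ either $g_1$ or $g_2$ lies in $Z := Z(G)$ --- and split
\[
\Gamma_1 = \{(g_1, g_2) \in \Gamma : g_2 \in Z\}, \qquad \Gamma_2 = \{(g_1, g_2) \in \Gamma : g_1 \in Z\}.
\]
Since $Z$ is a subgroup of $G$, the composition law in $T(G)$ shows both $\Gamma_i$ are subgroups of $\Gamma$, and by construction $\Gamma = \Gamma_1 \cup \Gamma_2$.

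Now I would invoke the standard fact that a group is never the union of two proper subgroups to conclude $\Gamma = \Gamma_1$ or $\Gamma = \Gamma_2$. The relation $\sigma \cdot (g_1, g_2) \cdot \sigma^{-1} = (g_2, g_1)$ lives in $I'(G, ds^2)$ and exchanges $\Gamma_1 \leftrightarrow \Gamma_2$, so after possibly conjugating by $\sigma$ inside $I'(G,ds^2)$ I may assume $\Gamma = \Gamma_1$. Each element $(g_1, z)$ with $z \in Z$ then acts on $G$ by $x \mapsto g_1^{-1} x z = (z g_1^{-1}) x$, i.e.\ as left translation by $z g_1^{-1}$, which is exactly what the proposition asserts. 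I do not anticipate a real obstacle here: the whole argument is bookkeeping on top of Lemmas \ref{eq-rank-3} and \ref{transl-0}, together with the trivial union-of-subgroups fact.
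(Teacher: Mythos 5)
Your proof is correct, and it is essentially the argument of the cited source \cite[Lemma 4.2.3, Theorem 4.2.4]{W1962}, which the paper does not reproduce: fixed points kill any constant displacement isometry in $\sigma\cdot T(G)$, the centrality dichotomy of Lemma \ref{transl-0} splits $\Gamma$ into the two subgroups $\Gamma_1$ (left translations, since $g_1^{-1}xz=(zg_1^{-1})x$ for $z$ central) and $\Gamma_2$, and the fact that a group is never the union of two proper subgroups, together with conjugation by $\sigma$, finishes. The one place you deviate is the fixed-point statement: the original argument is elementary (a fixed point of $x\mapsto g_2^{-1}x^{-1}g_1$ amounts to solving $y^2=g_2g_1$, which is always possible in a compact connected group), whereas you invoke Lemma \ref{eq-rank-3}, i.e.\ de Siebenthal's theorem; that is a legitimate shortcut here since $(G,ds^2)$ is a compact symmetric space with symmetry $\sigma$ at $1$ and Lemma \ref{eq-rank-3} is established independently, but it is heavier machinery than needed.
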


\begin{lemma}\label{transl-2}
{\rm (\cite[Lemmas 4.3.1, 4.3.2]{W1962})}
Let $\alpha$ be an automorphism of $G$.  Let $g \in G$ and suppose that
$x \mapsto g\alpha(x)$ is an isometry of constant displacement on
$(G,ds^2)$.  Then $\alpha( ug\alpha(u^{-1}) ) = ug\alpha(u^{-1})$ for
ever $u \in G$.  Let $B$ denote the identity component of the centralizer
of $g$ in $G$. Then $\alpha(B) = B$, and if $\alpha|_B$ is an inner 
automorphism of $B$ then $\alpha$ is inner on $G$.
\end{lemma}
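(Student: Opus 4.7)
The map $\gamma(x) = g\alpha(x)$ is of constant displacement, and conjugation by the left translation $L_u \in T(G)$ preserves that property. A direct calculation gives
\[
(L_u \gamma L_u^{-1})(x) = u\,g\,\alpha(u^{-1})\,\alpha(x) = y\,\alpha(x), \qquad y = u g \alpha(u^{-1}),
\]
again of the form ``$y$ times $\alpha$''. Hence the entire claim $\alpha(ug\alpha(u^{-1})) = ug\alpha(u^{-1})$ follows from the single case $u = e$, namely that whenever $x \mapsto g\alpha(x)$ is of constant displacement one has $\alpha(g) = g$. The plan is to prove that reduced statement infinitesimally.

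\textbf{Proof that $\alpha(g) = g$.} The bi-invariance of $ds^2$ gives tractable formulas: on an open dense set of $G$ (the complement of the cut locus of $e$), the distance $\rho(e,\cdot)$ is smooth, and if $g = \exp(Z)$ with $|Z|=\rho(e,g)$ then the gradient of $\rho(e,\cdot)^2$ at $g$, transported to $\gg$ by left translation, is $2Z$. The curve
\[
t \longmapsto \exp(-tX)\, g\, \exp(t\alpha_* X)
\]
starts at $g$ with tangent $\alpha_*X - \Ad(g^{-1})X$ in left-trivialization, and its image has constant distance from $e$. Orthogonality to the gradient gives
\[
\<Z,\ \alpha_*X - \Ad(g^{-1})X\> = 0 \quad \text{for every } X \in \gg.
\]
Using $\Ad(g)Z = Z$ and the orthogonality of $\alpha_*$ (an isometry fixing $e$), this collapses to $\alpha_* Z = Z$, whence $\alpha(g) = \exp(\alpha_*Z) = g$. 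If $g$ happens to sit on the cut locus of $e$, use the reduction in the previous paragraph: choose $u$ so that $y = ug\alpha(u^{-1})$ is generic (non-cut), apply the argument to $\gamma_u(x) = y\alpha(x)$, then deduce the case of $g$ by continuity of $u \mapsto ug\alpha(u^{-1})$.

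\textbf{The remaining two assertions.} Since $\alpha$ fixes every $y = ug\alpha(u^{-1})$, in particular $\alpha(g) = g$, it follows that $\alpha(Z_G(g)) = Z_G(\alpha(g)) = Z_G(g)$; continuity of $\alpha$ then forces $\alpha(B) = B$ for the identity component $B$. For the final assertion, suppose $\alpha|_B = \Ad(b)|_B$ with $b \in B$, and set $\beta := \Ad(b^{-1}) \circ \alpha$, so that $\beta$ is an automorphism of $G$ fixing $B$ pointwise. Because $g \in B$ lies in some maximal torus $T$ of $G$, and that $T$ is then contained in $Z_G(g)$ and hence in $B$, $\beta$ fixes $T$ pointwise. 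A standard structural argument (any automorphism of a compact connected simple $G$ fixing a maximal torus pointwise acts on each root space $\gg_\eta$ by a scalar, and the bracket relations force that scalar to be $e^{\eta(H)}$ for a single $H \in \gt$, so $\beta = \Ad(\exp H)$) shows $\beta$ is inner, hence $\alpha = \Ad(b \exp H)$ is inner on $G$.

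\textbf{Main obstacle.} The crux is Step~2: the infinitesimal computation is clean only off the cut locus, and the chief technical point is using the flexibility from Step~1 to move the relevant point off the cut locus before differentiating. Once $\alpha(g) = g$ is in hand, the $\alpha(B)=B$ statement is essentially formal and the inner-restriction-implies-inner statement is standard Lie-theoretic structure theory.
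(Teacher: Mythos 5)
The paper does not reproduce a proof of this lemma (it is quoted from \cite[Lemmas 4.3.1, 4.3.2]{W1962}); the closest in-paper template for the key step is the proof of Lemma \ref{lr1} in Section \ref{sec9}, which handles the analogous orthogonality statement by observing that the relevant orbit lies in a metric sphere. Measured against that, your architecture is right and most of the details check out: the reduction to the single case $u=e$ by conjugating with left translations is correct; the left-trivialized velocity $\alpha_*X-\Ad(g^{-1})X$ and the computation $\langle Z,\alpha_*X-\Ad(g^{-1})X\rangle=0\Rightarrow\alpha_*Z=Z$ (using $\Ad(g)Z=Z$ and orthogonality of $\alpha_*$ and $\Ad(g^{-1})$ for the bi-invariant metric) are correct; $\alpha(B)=B$ is indeed formal once $\alpha(g)=g$ is known; and the root-space argument that an automorphism fixing a maximal torus pointwise is $\Ad(\exp H)$ is standard and correctly applied (note $g\in B$ and $T\subset Z_G(g)^0=B$, as you say).

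The genuine gap is the cut-locus patch. You assert one can ``choose $u$ so that $y=ug\alpha(u^{-1})$ is generic (non-cut)'' and then ``deduce the case of $g$ by continuity,'' but neither step is justified: the twisted-conjugacy orbit $\{ug\alpha(u^{-1})\}$ is constrained to lie in the metric sphere of radius $\rho(e,g)$ about $e$, and it can sit entirely inside the cut locus (already for $G=SU(2)$, $\alpha=1$, $g=-e$ the orbit is the single cut point $-e$); and even when some good $u$ exists, the continuity argument needs the set of good $u$ to be dense in $G$, which you do not establish. The repair is to abandon differentiability of $\rho(e,\cdot)^2$ at $g$ altogether and use the one-sided first-variation inequality along a \emph{chosen} minimizing geodesic $t\mapsto\exp(tZ)$ from $e$ to $g$: since $t\mapsto\rho(e,\exp(-tX)\,g\,\exp(t\alpha_*X))$ is constant, comparison with the deformed geodesic gives $\langle Z,\alpha_*X-\Ad(g^{-1})X\rangle\geqq 0$ and $\leqq 0$, hence $=0$, for \emph{every} minimal $Z$ with $\exp Z=g$ and every $X\in\gg$ — no smoothness at $g$ required. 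Then $\alpha_*Z=Z$ and $\alpha(g)=\exp(\alpha_*Z)=g$ directly. This is exactly the ``orbit lies in a sphere, so a minimizing geodesic to it is orthogonal to it'' device used in the proof of Lemma \ref{lr1}; with that substitution your proof is complete.
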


\begin{proposition}\label{transl-3}
{\rm (\cite[Theorem 4.3.3]{W1962})}
Let $\alpha \in \Aut(G)$, $(g_1,g_2)\in T(G)$, and $\gamma = 
(g_1,g_2)\cdot\alpha$. Suppose that both $\gamma$ and $\gamma^2$
are isometries of constant displacement on $(G,ds^2)$.  Then
$\alpha$ is an inner automorphism of $G$.
\end{proposition}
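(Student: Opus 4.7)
The plan is to reduce everything to Lemma~\ref{transl-2}, invoked once for $\gamma$ and once for $\gamma^2$. First I would absorb the right translation into the automorphism factor by rewriting
\[
 \gamma(x) = g_1^{-1}\alpha(x)g_2 = g\,\beta(x),
\]
where $g := g_1^{-1}g_2 \in G$ and $\beta := \Ad(g_2^{-1})\circ\alpha \in \Aut(G)$. Since $\Ad(g_2^{-1})$ is inner, $\alpha$ and $\beta$ represent the same class in $\Out(G)$, so it suffices to show $\beta$ is inner on $G$.

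From $\gamma = L_g\beta$ being of constant displacement, Lemma~\ref{transl-2} yields $\beta(g) = g$ (set $u=1$ in $\beta(ug\beta(u^{-1})) = ug\beta(u^{-1})$), that $\beta(B) = B$ with $B := Z_G(g)^0$, and the key implication: $\beta|_B$ inner on $B$ forces $\beta$ inner on $G$. Using $\beta(g) = g$ a direct computation gives $\gamma^2 = L_{g\beta(g)}\beta^2 = L_{g^2}\beta^2$, so applying Lemma~\ref{transl-2} a second time to $\gamma^2$ yields $\beta^2(B') = B'$ with $B' := Z_G(g^2)^0 \supseteq B$ and the analogous implication for $\beta^2$. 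When $\beta^2$ is inner---which is automatic whenever $\beta$ has outer order at most $2$ in $\Out(G)$, i.e.\ outside the $D_4$ triality case---the isometry $\gamma^2$ lies in $T(G)$, and Lemma~\ref{transl-0} forces that either $\beta^2 = 1$ or $\beta^2 = \Ad(g^{-2})$ modulo $Z(G)$. In the first subcase $\beta$ is an outer involution; in the second, $\beta\cdot\Ad(g)$ is an outer involution (noting that $\beta(g)=g$ implies $\beta$ and $\Ad(g)$ commute), and $\beta$ is inner if and only if $\beta\cdot\Ad(g)$ is. So in both subcases one has an explicit outer involution of $G$ to analyze.

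The main obstacle is to show $\beta|_B$ is inner on $B$, after which Lemma~\ref{transl-2} closes the argument. This is a case analysis along $\Out(G)$, nontrivial only for types $A_n$ ($n\geq 2$), $D_n$ ($n\geq 3$), and $E_6$ (with $D_4$ triality as an extra complication). The strategy exploits the decomposition $B = Z(B)^0\cdot B_{ss}$ into central torus and semisimple part (inner automorphisms of $B$ act trivially on $Z(B)^0$) and traces the explicit form of the outer involutions on each simple factor of $B_{ss}$; the constant-displacement conditions on both $\gamma$ and $\gamma^2$, combined with $\beta(g)=g$ and $\beta(B)=B$, supply enough rigidity to force $\beta|_B \in \Int(B)$. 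The technical details follow the explicit argument of \cite[\S 4.3]{W1962}.
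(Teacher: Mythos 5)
Your reductions are correct and follow exactly the route the paper intends: the survey gives no proof of Proposition~\ref{transl-3} beyond the citation to \cite[Theorem 4.3.3]{W1962}, but it places Lemma~\ref{transl-2} immediately before it precisely so that one normalizes $\gamma$ to the form $L_g\beta$, extracts $\beta(g)=g$, $\beta(B)=B$ and the implication ``$\beta|_B$ inner on $B$ $\Rightarrow$ $\beta$ inner on $G$,'' and then repeats for $\gamma^2=L_{g^2}\beta^2$, which is what you do. Note, however, that the decisive content of the cited theorem is the case-by-case verification for types $A_n$, $D_n$ and $E_6$ that $\beta|_B$ cannot be outer; you (like the paper) defer this entirely to \cite[\S 4.3]{W1962}, so your proposal is an accurate skeleton rather than a self-contained proof.
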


\begin{lemma}\label{transl-4}
{\rm (\cite[Lemma 4.3.4]{W1962})}
Let $\alpha \in \Aut(G)$, $\sigma$ the symmetry $x \mapsto x^{-1}$
and $g \in G$.  Suppose that $\gamma = (g,1)\cdot\alpha\cdot\sigma$
is an isometry of constant displacement on $(G,ds^2)$.  Then
$(ug \alpha^2(u^{-1}))^{-1} = \alpha(ug \alpha^2(u^{-1}))$
for every $u \in G$.
\end{lemma}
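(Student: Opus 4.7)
The plan is to extract the claimed identity from the constant displacement hypothesis on $\gamma$, proceeding in a manner parallel to the proof of Lemma \ref{transl-2} but accounting for the anti-automorphism $\sigma$. Writing $\gamma$ explicitly, $\gamma(x)=g^{-1}\alpha(x^{-1})$, so $\gamma(e)=g^{-1}$ and the common displacement is $c=\rho(e,g)$; bi-invariance of $ds^{2}$ turns the constant displacement condition into
\[
\rho\bigl(e,\; x^{-1}g^{-1}\alpha(x^{-1})\bigr)\;=\;c\qquad\text{for every } x\in G.
\]

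First, I would compute $\gamma^{2}$ explicitly, obtaining
\[
\gamma^{2}(x)\;=\;g^{-1}\alpha^{2}(x)\,\alpha(g).
\]
The appearance of $\alpha^{2}$ here is exactly what produces the $\alpha^{2}$ in the conclusion, and it arises because $(\alpha\sigma)^{2}=\alpha^{2}$ is an honest automorphism. The $u=e$ case of the target identity reduces to $\alpha(g)=g^{-1}$, which I would establish separately by a first-variation argument applied at $x=e$: the vanishing of the derivative of $\rho\bigl(e,\,x^{-1}g^{-1}\alpha(x^{-1})\bigr)^{2}$ in every tangent direction at $x=e$ forces $\alpha(\log g)=-\log g$, and hence $\alpha(g)=g^{-1}$.

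Next, to pass from $u=e$ to arbitrary $u$, I would conjugate $\gamma$ by the left translation $L_{u}:x\mapsto ux$, which is itself an isometry of $(G,ds^{2})$, so that $L_{u}\gamma L_{u}^{-1}$ is again of constant displacement $c$. A direct calculation gives
\[
L_{u}\gamma L_{u}^{-1}(x)=ug^{-1}\alpha(x^{-1})\alpha(u),\qquad
\bigl(L_{u}\gamma L_{u}^{-1}\bigr)^{2}(x)=ug^{-1}\alpha^{2}(u^{-1})\,\alpha^{2}(x)\,\alpha(g).
\]
Applying the same first-variation analysis to the conjugate at $x=e$ produces an identity of the same type as in the previous step, but now centered at a twisted analog of $g$; substituting $\alpha(g)=g^{-1}$ and unwinding the resulting group-theoretic relation yields the desired identity $(ug\alpha^{2}(u^{-1}))^{-1}=\alpha\bigl(ug\alpha^{2}(u^{-1})\bigr)$.

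The main obstacle will be the bookkeeping in the last step: the anti-automorphism $\sigma$ prevents a direct appeal to Lemma \ref{transl-2}, because $\gamma$ cannot be written in the form $(g_{1},g_{2})\cdot\alpha$ assumed there. Tracking carefully how $\sigma$ flips signs in the variational computation is the technical heart of the argument, and it is precisely this sign flip that converts the $\alpha$-fixed-point condition $\alpha(v)=v$ produced by Lemma \ref{transl-2} into the $\alpha$-antifixed condition $\alpha(v)=v^{-1}$ of the present lemma.
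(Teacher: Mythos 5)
First, note that the paper itself gives no proof of this lemma; it is quoted verbatim from \cite[Lemma 4.3.4]{W1962}, so your attempt has to be measured against what a correct argument must look like. Your skeleton is right in outline (first variation of the displacement at the base point to get $\alpha(g)=g^{-1}$, then conjugation to propagate), and your explicit formulas for $\gamma$, $\gamma^{2}$, $L_{u}\gamma L_{u}^{-1}$ and its square are all correct. The base--point step is also essentially sound: writing $\exp X$ for a minimal logarithm of $\gamma(e)$, orthogonality of $X$ to the image of $x\mapsto x^{-1}\gamma(x)$ at $x=e$, together with $\Ad(\gamma(e))X=X$, forces $d\alpha(X)=-X$ and hence $\alpha(g)=g^{-1}$.

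The genuine gap is in the propagation step. Conjugating by the left translation $L_{u}$ alone gives $L_{u}\gamma L_{u}^{-1}(x)=ug^{-1}\alpha(x^{-1})\alpha(u)$, which is \emph{not} of the same normal form as $\gamma$: it carries a nontrivial right factor $\alpha(u)$. Evaluating its displacement at $e$ is literally the same as evaluating the displacement of $\gamma$ at $u^{-1}$, so this step adds no information; and when you run the first--variation analysis at the resulting point $ug^{-1}\alpha(u)$, the orthogonality condition acquires an $\Ad$--twist ($d\alpha(X)=-\Ad(\cdot)X$ rather than $d\alpha(X)=-X$), and what comes out is the identity $\alpha\bigl(x^{-1}g^{-1}\alpha(x^{-1})\bigr)=gx\alpha(x)$ --- a cyclically permuted cousin of the claim, not the claim itself. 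The elements $ug\alpha^{2}(u^{-1})$ of the conclusion form the orbit of $g$ under $\alpha^{2}$--twisted conjugation, and the points $ug^{-1}\alpha(u)$ you reach by left conjugation do not even form a group orbit; ``unwinding the resulting group-theoretic relation'' will not bridge this. The missing idea is to conjugate by the \emph{two--sided} translation $h_{u}=\ell(u)r(\alpha(u))$: a direct computation gives
\begin{equation*}
h_{u}\,\gamma\,h_{u}^{-1}(x)\;=\;\bigl(ug^{-1}\alpha^{2}(u^{-1})\bigr)\,\alpha(x^{-1}),
\end{equation*}
i.e.\ a constant displacement isometry of exactly the original form with $g^{-1}$ replaced by $ug^{-1}\alpha^{2}(u^{-1})$ (the $\alpha^{2}$ is produced because $\sigma$ moves the right factor $\alpha(u)^{-1}$ through $\alpha$ once more --- not, as you suggest, because of $\gamma^{2}$, which your hypothesis gives you no displacement control over and which your argument never actually uses). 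Applying your base--point step to $h_{u}\gamma h_{u}^{-1}$ then yields the assertion for every $u$ at once, with no further algebra needed.
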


Now we come to the main results of this section:

\begin{theorem}\label{symm-group}
{\rm (\cite[Theorem 4.5.1]{W1962})}
Let $G$ be a compact connected Lie group and $ds^2$ a bi--invariant
Riemannian metric on $G$.  Let $\Gamma$ be a group of isometries of constant 
displacement on $(G,ds^2)$.  Then $\Gamma$ is conjugate in $\bI(G,ds^2)$ to 
a group of left translations on $G$.  Thus $\Gamma\backslash (G,ds^2)$ is
homogeneous.

In particular, the Homogeneity Conjecture holds for Riemannian symmetric 
spaces that are group manifolds.
\end{theorem}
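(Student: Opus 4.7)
The plan is to show that every $\gamma \in \Gamma$ already lies in the subgroup $I'(G,ds^2) = T(G) \cup \sigma\cdot T(G)$, so that Proposition \ref{transl-1} applies directly and produces a conjugate of $\Gamma$ inside $I'(G,ds^2)$ consisting entirely of left translations. Once that is done, the quotient $\Gamma\backslash(G,ds^2)$ is homogeneous because the right translations descend to a transitive isometric action, and the symmetric-space assertion follows because a compact Lie group with bi-invariant metric is a Riemannian symmetric space of group type.

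First I would reduce to the case where $G$ is compact, connected, simply connected, and simple. A bi-invariant metric on a compact connected $G$ is induced by an $\Ad$-invariant inner product on $\gg$, which decomposes orthogonally as the sum of its restrictions to the center and to each simple ideal; the de Rham decomposition of $(G,ds^2)$ matches this Lie-algebra splitting. A constant displacement isometry respects the splitting (permuting isomorphic simple factors, which can be handled componentwise), the torus factor is covered by Lemma \ref{const-curve-zero}, and only the simple factors then remain. For such a factor, an arbitrary element $\gamma \in \bI(G,ds^2)$ has the form $(g_1,g_2)\cdot\alpha$ or $(g_1,g_2)\cdot\alpha\cdot\sigma$ with $(g_1,g_2)\in T(G)$ and $\alpha \in \Aut(G)$, and the task is to show that for every $\gamma \in \Gamma$ the automorphism $\alpha$ is inner.

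In the case $\gamma = (g_1,g_2)\cdot\alpha$ the group property of $\Gamma$ makes $\gamma^2$ a constant displacement isometry too, and Proposition \ref{transl-3} forces $\alpha$ to be inner, so $\gamma \in T(G) \subset I'(G,ds^2)$. In the case $\gamma = (g_1,g_2)\cdot\alpha\cdot\sigma$, I would first absorb one factor of $T(G)$ to reduce to $\gamma = (g,1)\cdot\alpha\cdot\sigma$, then apply Lemma \ref{transl-4} to obtain the pointwise identity
$$
(u g \alpha^2(u^{-1}))^{-1} = \alpha(u g \alpha^2(u^{-1})) \quad \text{for every } u \in G.
$$
A direct computation using $\sigma\cdot(h_1,h_2)\cdot\sigma = (h_2,h_1)$ and $\alpha\cdot(h_1,h_2) = (\alpha(h_1),\alpha(h_2))\cdot\alpha$ shows that $\gamma^2$ lies in $T(G)\cdot\alpha^2$; since $\gamma^2 \in \Gamma$ is constant displacement, Proposition \ref{transl-3} applied to $\gamma^2$ forces $\alpha^2$ to be inner, and Lemma \ref{transl-0} imposes commutation constraints between conjugates of the two translation coordinates of $\gamma^2$. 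Combining these constraints with the pointwise identity above should rule out genuinely outer $\alpha$ and establish $\Gamma \subset I'(G,ds^2)$, at which point Proposition \ref{transl-1} finishes the proof.

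The main obstacle is exactly this last combination: extracting from Lemma \ref{transl-4} enough rigidity on $g$ relative to $\alpha$ to force the commutation constraints from Lemma \ref{transl-0} to collapse $\alpha$ to an inner automorphism. I expect this to require a case-by-case inspection using the classification of compact simple Lie groups with nontrivial outer automorphism group, paralleling the classification-dependent arguments in the proof of Theorem \ref{symm-simple-isom}; the payoff, compared with Theorem \ref{symm-simple-isom}, is that here the $\sigma$-twist is the only subtle source of "exotic" constant displacement isometries, whereas in the simple-isometry-group case each individual Cartan type had to be treated separately.
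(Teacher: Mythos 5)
Your proposal is correct and follows essentially the same route as the paper: reduce to the compact simply connected simple case, use Proposition \ref{transl-3} to force inner automorphisms for elements of the form $(g_1,g_2)\cdot\alpha$, use Lemma \ref{transl-4} together with Lemma \ref{transl-0} to handle the $\sigma$--twisted components, and finish by conjugating into left translations via Proposition \ref{transl-1}. The paper itself only indicates the proof by assembling Lemmas 4.2.1--4.3.4 of \cite{W1962}, so the residual outer-automorphism analysis you flag as the ``main obstacle'' is precisely the part deferred to that reference rather than a defect of your outline.
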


Combining Theorems \ref{symm-simple-isom} and \ref{symm-group} we have
verified the Homogeneity Conjecture (\ref{hc-symmetric})
for Riemannian symmetric spaces.

\begin{remark}{\rm In Theorem \ref{symm-group} the metric $ds^2$ on $G$
is bi-invariant.  Later we will see the extent to which the result holds more
generally for left--invariant metrics on Lie groups.}
\end{remark}
\medskip

\section{\bf A Classification Free Approach.} \label{sec5}
\setcounter{equation}{0}
\setcounter{subsection}{0}
\medskip

In this section we look at the characterizations of constant displacement 
isometries in terms of invariant geodesics.  We follow the notation of
\cite{O1974}, slightly adjusted for consistency with the other sections
of this paper.  Let $(M,ds^2)$ be a complete Riemannian manifold and 
$\sigma: \R \to M$ a geodesic parameterized proportional to arc length.
Then an isometry $\gamma \in \bI(M,ds^2)$ {\em preserves} $\sigma$ if 
there  is a constant $c$ such that $\gamma(\sigma(t)) = \sigma(t+c)$ for 
all $t \in \R$.  We say that
$$
\begin{aligned}
&\gamma \text{ satisfies } P_x \text{ if } \gamma 
	\text{ preserves at least one minimizing geodesic from } x \text{ to }
	\gamma(x),\\
&Pres(\gamma) = \{x \in M \mid \gamma \text{ satisfies } P_x\}, \text{ and } \\
&Crit(\gamma) = \{x \in M \mid x \text{ is a critical point of the square of
	the displacement function of } \gamma\}.
\end{aligned}
$$
In \cite{O1969} it was shown that if $\gamma$ has small displacement, i.e.
if $\gamma(x)$ is not in the cut locus of $x$ for any $x \in M$, then
$Crit(\gamma) = Pres(\gamma)$, and from that isometries of small constant
displacement have transitive centralizers. The results of \cite{O1974} 
extend that.  The main results of this section, taken from 
\cite{O1974}, are the following.

\begin{proposition}\label{oz1}{\rm \cite[Theorem 1.6]{O1974}}
Let $(M,ds^2)$ be a complete connected $C^\infty$ Riemannian manifold.
Given an isometry $\gamma \in \bI(M,ds^2)$ the following are equivalent.

{\rm 1.} $\gamma$ is an isometry of constant displacement.

{\rm 2.} If $x \in M$ then $\gamma$ preserves some minimizing geodesic
	from $x$ to $\gamma(x)$.

{\rm 3.} If $x \in M$ then $\gamma$ preserves every minimizing geodesic
	from $x$ to $\gamma(x)$, i.e. $Pres(\gamma) = M$.
\end{proposition}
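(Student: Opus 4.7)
The plan is to reduce the equivalence to the single key identity $Pres(\gamma)=Crit(\gamma)$, i.e., a point $x$ lies in $Pres(\gamma)$ if and only if it is a critical point of the squared displacement $\phi^2(x):=d(x,\gamma(x))^2$. Once this identity is established, the three assertions become easy to compare, and the implication $(3)\Rightarrow(2)$ is in any case immediate.

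To prove the identity I would fix a minimizing geodesic $\sigma:[0,L]\to M$ from $x$ to $\gamma(x)$, parameterized by arc length, and vary $x$ by an arbitrary $v\in T_xM$; the endpoint $\gamma(x)$ then varies by $\gamma_*v$. The first variation of length yields
$$
\tfrac{d}{ds}\Big|_{s=0} L(\sigma_s) \;=\; \langle \gamma_*v,\dot\sigma(L)\rangle - \langle v,\dot\sigma(0)\rangle.
$$
Hence all one-sided directional derivatives of $\phi^2$ at $x$ vanish (so $x\in Crit(\gamma)$, in the generalized sense needed when multiple minimizers are present) exactly when $\gamma_*\dot\sigma(0)=\dot\sigma(L)$ for every minimizing $\sigma$. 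When this tangent-matching holds, $t\mapsto\gamma(\sigma(t))$ and $t\mapsto\sigma(t+L)$ are geodesics through $\sigma(L)=\gamma(x)$ with the same initial velocity; by uniqueness they coincide, giving $\gamma(\sigma(t))=\sigma(t+L)$, which is exactly the condition that $\gamma$ preserves $\sigma$. Conversely, any preserved $\sigma$ obviously satisfies the tangent-matching condition.

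With the identity in hand the equivalences follow. For $(1)\Rightarrow(3)$, constancy of $\phi$ forces every one-sided directional derivative of $\phi^2$ to vanish in every direction, and the displayed formula then yields $\gamma_*\dot\sigma(0)=\dot\sigma(L)$ for \emph{every} minimizing $\sigma$ from $x$ to $\gamma(x)$, which by the discussion above gives (3). For $(2)\Rightarrow(1)$, condition (2) says $Pres(\gamma)=M=Crit(\gamma)$, and the task is to conclude $\phi$ constant on $M$.

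The main obstacle is this last step, because $\phi^2$ is only locally Lipschitz (it fails to be smooth wherever multiple minimizers compete), so smooth gradient-flow reasoning is unavailable. My approach is a direct propagation argument: along a preserved geodesic $\sigma$ through $x$, the segment $\sigma|_{[t,t+L]}$ always has length $L=\phi(x)$ and joins $\sigma(t)$ to $\gamma(\sigma(t))$, so $\phi\le\phi(x)$ on $\sigma$; applying (2) at $\sigma(t)$ supplies another preserved geodesic through that point, and iterating this construction together with the first-variation characterization shows that the set $\{y\in M:\phi(y)=\phi(x)\}$ is both closed and open. Connectedness of $M$ then gives $\phi$ constant. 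Equivalently, one may invoke Clarke's generalized gradient theory: the tangent-matching characterization supplies $0\in\partial\phi^2(y)$ at every $y\in M$, and the semiconcavity of the squared distance upgrades this to $\phi^2$ constant. In either form, this local-to-global step is the point at which the proof is subtler than the case of small displacement treated earlier in \cite{O1969}.
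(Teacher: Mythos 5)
The paper offers no proof of this proposition: it is quoted verbatim from Ozols \cite[Theorem 1.6]{O1974}, so there is no in-house argument to compare against. Your outline is essentially the standard proof and is sound in substance. The first-variation formula does show that vanishing of all one-sided directional derivatives of $\phi^2$ at $x$ is equivalent to the tangent-matching condition $\gamma_*\dot\sigma(0)=\dot\sigma(L)$ for \emph{every} minimizing $\sigma$ from $x$ to $\gamma(x)$, and tangent-matching is equivalent to preservation by uniqueness of geodesics (plus completeness, to extend $\sigma$ to all of $\R$ as the definition of ``preserves'' requires). This gives $(1)\Rightarrow(3)$, and $(3)\Rightarrow(2)$ is trivial.

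Two points need repair before this is a proof. First, the headline identity ``$Pres(\gamma)=Crit(\gamma)$'' is misstated: your computation characterizes $Crit(\gamma)$ as the set where \emph{every} minimizing geodesic is preserved, whereas $Pres(\gamma)$, as defined in the paper, asks for only \emph{one}. These sets differ in general -- their coincidence for small displacement is precisely the content of \cite{O1969} -- and writing $(2)$ as ``$Pres(\gamma)=M=Crit(\gamma)$'' quietly assumes the equivalence you are trying to prove. Second, the step $(2)\Rightarrow(1)$, which you rightly identify as the crux, is only gestured at, and of your two suggested completions the Clarke-subdifferential one does not work as stated: $0\in\partial\phi^2(y)$ for all $y$ does not by itself force a Lipschitz function to be constant, and the semiconcavity of $\phi^2$ is asserted rather than proved. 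The clean completion uses only the easy half of first variation: a single preserved minimizing geodesic $\sigma$ at $x$ serves as a competitor in the infimum defining $\phi$, giving the one-sided bound $D^+_v\phi(x)\leqq\langle\gamma_*v,\dot\sigma(L)\rangle-\langle v,\dot\sigma(0)\rangle=0$ for every direction $v$ and every $x$; since $\phi$ is $2$-Lipschitz, it is then nonincreasing along every smooth path traversed in either direction, hence constant on the connected manifold $M$. This replaces the open-and-closed propagation (which, as written, controls $\phi$ only along one geodesic rather than on a neighborhood). You might also note that $(1)\Rightarrow(3)$ has a softer classical proof avoiding nonsmooth analysis entirely: if $\phi\equiv c$ and $\sigma$ is minimizing from $x$ to $\gamma(x)$, then for $0<t<c$ the concatenation of $\sigma|_{[t,c]}$ with $\gamma(\sigma|_{[0,t]})$ joins $\sigma(t)$ to $\gamma(\sigma(t))$ with total length $c=\phi(\sigma(t))$, hence is an unbroken minimizing geodesic, which is exactly tangent-matching.
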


\begin{proposition}\label{oz2}{\rm \cite[Theorem 2.6]{O1974}}
Let $(M,ds^2)$ be a connected simply connected Riemannian symmetric space
of compact type.
If $\gamma \in \bI(M,ds^2)$ then 
$Pres(\gamma) = \bigcup Z_G^0(\gamma)\cdot x_i$ where 
$\{x_i\}$ is a set of representatives of the components of $Pres(\gamma)$.
\end{proposition}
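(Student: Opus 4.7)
Let $G = \bI^0(M,ds^2)$, a compact connected semisimple Lie group acting transitively on $M$, and let $Z := Z_G^0(\gamma)$, the identity component of the centralizer of $\gamma$ in $G$. The plan is to show that, for any $x \in Pres(\gamma)$, the orbit $Z \cdot x$ is both open and closed in the component of $Pres(\gamma)$ through $x$; connectedness then forces that component to equal the orbit.

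First the easy direction: $Z$ preserves $Pres(\gamma)$. If $x \in Pres(\gamma)$ and $\sigma$ is a $\gamma$-preserved minimizing geodesic from $x$ to $\gamma(x)$ with translation length $c$, then for any $z \in Z$ the image $z\sigma$ is a minimizing geodesic from $zx$ to $z\gamma(x) = \gamma(zx)$ satisfying $\gamma((z\sigma)(t)) = z\gamma(\sigma(t)) = z\sigma(t+c) = (z\sigma)(t+c)$, so $zx \in Pres(\gamma)$. Since $Z$ is connected, each orbit lies in a single component of $Pres(\gamma)$, and closedness of the orbit in $M$ follows from compactness of $Z$.

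The key observation for openness is that the one-parameter group of transvections $\{\tau_t\}_{t \in \R}$ along $\sigma$ lies in $Z$. Indeed, $\sigma$ and $\gamma\circ\sigma = \sigma(\,\cdot + c)$ are the same oriented geodesic with shifted parameter, and the transvection $\tau_t$ is characterized by translating $\sigma$ by $t$ and acting as parallel transport on orthogonal vectors; so the conjugate $\gamma\tau_t\gamma^{-1}$ is the transvection along $\gamma\sigma = \sigma$ by distance $t$, which is $\tau_t$ itself. Hence $\sigma = \{\tau_t\cdot x\}_{t\in\R}$ lies entirely in $Z\cdot x$, and by Proposition \ref{oz1}(3) the initial velocity of \emph{every} minimizing geodesic from $x$ to $\gamma(x)$ lies in $T_x(Z\cdot x)$.

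To finish, I would show that any smooth path $x_s$ in $Pres(\gamma)$ with $x_0 = x$ admits a smooth lift $z_s$ in $Z$ with $z_s x = x_s$. One produces this lift by choosing a smoothly varying family $\sigma_s$ of $\gamma$-preserved minimizing geodesics from $x_s$ to $\gamma(x_s)$, applying the transvection construction of the previous paragraph pointwise in $s$, and correcting by a smoothly varying isotropy element at the base point. The main obstacle is verifying that this lifted path actually lies in $Z$ rather than merely in $G$ --- equivalently, that its infinitesimal generator at $s=0$ commutes with $\gamma$. This uses the Cartan decomposition of the tangent space at $x$ together with the $\gamma$-invariance imposed by the preservation condition on each $\sigma_s$. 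Combining openness and closedness inside each component then yields $Z\cdot x$ equal to the component of $Pres(\gamma)$ containing $x$, as required.
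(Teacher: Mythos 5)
First, a remark on the comparison itself: the paper does not prove this proposition --- it is quoted verbatim from Ozols \cite[Theorem 2.6]{O1974}, so there is no in-paper argument to measure yours against. Judged on its own terms, your proposal gets the correct overall shape (show each $Z_G^0(\gamma)$--orbit is open and closed in its component of $Pres(\gamma)$) and two genuine ingredients are right: $Z := Z_G^0(\gamma)$ preserves $Pres(\gamma)$ and has closed orbits, and the one--parameter group of transvections along a $\gamma$--preserved geodesic $\sigma$ centralizes $\gamma$ (since $\tau_t = s_{\sigma(u+t/2)}\circ s_{\sigma(u)}$ for every $u$, conjugation by $\gamma$ only shifts $u$ by $c$). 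So the whole preserved geodesic through $x$ does lie in $Z\cdot x$.

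The gap is that the openness half --- which is the entire content of the theorem --- is not actually proved. Three specific problems. (i) Your appeal to Proposition \ref{oz1}(3) is a misapplication: that proposition characterizes isometries of \emph{constant displacement}, for which $Pres(\gamma)=M$; for a general $\gamma$ and a single point $x\in Pres(\gamma)$ it gives no information about whether \emph{every} minimizing geodesic from $x$ to $\gamma(x)$ is preserved. (ii) The construction of a ``smoothly varying family $\sigma_s$ of $\gamma$--preserved minimizing geodesics'' along a path $x_s$ in $Pres(\gamma)$ is asserted without justification, and it is exactly the delicate point: minimizing geodesics from $x_s$ to $\gamma(x_s)$ can jump discontinuously when $\gamma(x_s)$ crosses the cut locus of $x_s$, and the subset of \emph{preserved} ones is even less controlled. (Relatedly, $Pres(\gamma)$ is not yet known to be a manifold or even locally path--connected, so reducing openness to path--lifting needs justification.) (iii) You explicitly label the commutation step --- that the lifted path has generator centralizing $\gamma$ --- as ``the main obstacle'' and then do not overcome it; invoking ``the Cartan decomposition together with the $\gamma$--invariance'' is a pointer, not an argument. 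This is precisely where Ozols' proof does its work, via a local analysis at $x$ of the set of preserved geodesics (writing $\gamma = \tau_c k$ with $k$ fixing $\sigma$ pointwise and using the conjugate/cut locus structure of compact symmetric spaces in the spirit of Crittenden \cite{C1962}). As it stands the proposal establishes $Z\cdot x \subset$ (component of $Pres(\gamma)$ through $x$) but not the reverse inclusion.
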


\begin{corollary}\label{oz3}{\rm \cite[Corollary 2.7]{O1974}}
Let $(M,ds^2)$ be a connected simply connected Riemannian symmetric space
of compact type.
If $\gamma \in \bI(M,ds^2)$ then $\gamma$ is an isometry of constant 
displacement if and only if $Z_G^0(\gamma)$ is transitive on $M$.
\end{corollary}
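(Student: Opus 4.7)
\medskip
\noindent\textbf{Proof proposal.} The plan is to derive both implications as direct consequences of Propositions \ref{oz1} and \ref{oz2}, together with the elementary observation (already recorded in the proof of Proposition \ref{CW-translation}) that an isometry commuting with a transitive group of isometries automatically has constant displacement.

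First I would handle the easy direction. Suppose $Z_G^0(\gamma)$ is transitive on $M$. Given $x,y\in M$, pick $g\in Z_G^0(\gamma)$ with $g(x)=y$; then, writing $\rho$ for the Riemannian distance,
\[
\rho(y,\gamma(y))=\rho(g(x),\gamma g(x))=\rho(g(x),g\gamma(x))=\rho(x,\gamma(x)),
\]
using $g\gamma=\gamma g$ in the middle equality and the fact that $g$ is an isometry at the ends. Hence $\gamma$ has constant displacement.

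For the forward direction I would combine Propositions \ref{oz1} and \ref{oz2}. Assume $\gamma$ is an isometry of constant displacement. Proposition \ref{oz1} gives $\mathrm{Pres}(\gamma)=M$. By Proposition \ref{oz2} we may then write
\[
M=\mathrm{Pres}(\gamma)=\bigcup_{i} Z_G^0(\gamma)\cdot x_i,
\]
where the $Z_G^0(\gamma)$-orbits on the right-hand side are exactly the connected components of $\mathrm{Pres}(\gamma)=M$. Since $M$ is connected there is only one such component, so the index $i$ takes a single value and $M=Z_G^0(\gamma)\cdot x_1$. Therefore $Z_G^0(\gamma)$ acts transitively on $M$.

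I do not anticipate a substantial obstacle: the content is packaged in Propositions \ref{oz1} and \ref{oz2}, and the only point that requires a moment of care is the connectedness argument used to collapse the union of $Z_G^0(\gamma)$-orbits to a single orbit. One small check worth flagging is that $Z_G^0(\gamma)$ is indeed connected (it is by definition), so each orbit $Z_G^0(\gamma)\cdot x_i$ is connected, which is what justifies identifying those orbits with the connected components of $\mathrm{Pres}(\gamma)$ rather than with some finer decomposition.
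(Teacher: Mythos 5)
Your proof is correct and follows exactly the route the paper intends: the statement is presented as an immediate corollary of Propositions \ref{oz1} and \ref{oz2} (citing Ozols), with the converse direction being the standard centralizer-transitivity computation already used in Proposition \ref{CW-translation}. Nothing further is needed.
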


\begin{corollary}\label{oz4}{\rm \cite[Corollary 2.9]{O1974}}
Let $(M,ds^2)$ be a connected simply connected Riemannian symmetric space
of compact type.  Let $\Gamma$ be a finite cyclic group of isometries of
constant displacement.  Then $\Gamma \backslash (M,ds^2)$ is homogeneous.
\end{corollary}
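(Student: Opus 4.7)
The plan is to derive this as an essentially immediate consequence of Corollary \ref{oz3}, which has already reduced the constant displacement condition on a single isometry to transitivity of the identity component of its centralizer.

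Write $\Gamma = \langle \gamma \rangle$ and set $G = \bI^0(M,ds^2)$. By hypothesis $\gamma$ is an isometry of constant displacement on $(M,ds^2)$, so Corollary \ref{oz3} gives that $Z_G^0(\gamma)$ acts transitively on $M$. Next I observe that any element of $Z_G^0(\gamma)$ commutes not only with $\gamma$ but with every power $\gamma^k$, so $Z_G^0(\gamma)$ centralizes the entire cyclic group $\Gamma$. Consequently the $Z_G^0(\gamma)$--action on $M$ carries $\Gamma$--orbits to $\Gamma$--orbits and descends to a well--defined isometric action on the Riemannian quotient $(M',ds'^2) = \Gamma \backslash (M,ds^2)$. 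Transitivity of $Z_G^0(\gamma)$ on $M$ forces transitivity of the induced action on $M'$, so $(M',ds'^2)$ is homogeneous.

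There is no real obstacle in this deduction; the whole force of the statement is concentrated in Proposition \ref{oz1} and Proposition \ref{oz2}, from which Corollary \ref{oz3} is extracted. What is worth emphasizing, however, is exactly \emph{why} the argument is restricted to cyclic $\Gamma$: transitivity of $Z_G^0(\gamma)$ for each individual $\gamma \in \Gamma$ does \emph{not} in general imply transitivity of the joint centralizer $\bigcap_{\gamma \in \Gamma} Z_G^0(\gamma)$, which is what one would need in order to obtain a transitive group of isometries on $\Gamma \backslash M$ for non--cyclic $\Gamma$. For cyclic $\Gamma$ this intersection collapses to $Z_G^0(\gamma)$ for a generator $\gamma$, and the argument goes through. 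Extending beyond the cyclic case by this classification--free route therefore requires a genuinely new idea, which is precisely why the original proofs of Theorems \ref{symm-simple-isom} and \ref{symm-group} resort to case--by--case analysis involving outer automorphisms.
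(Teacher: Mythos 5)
Your proposal is correct and follows exactly the route the paper intends: the statement is quoted from Ozols, and both the introduction and Section \ref{sec5} make clear that the deduction is precisely that for cyclic $\Gamma = \langle\gamma\rangle$ the transitive group $Z_G^0(\gamma)$ furnished by Corollary \ref{oz3} centralizes all of $\Gamma$ and hence descends to a transitive isometric action on the quotient. Your closing remark on why this classification--free argument is confined to the cyclic case also matches the paper's own discussion.
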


\section{\bf Extension to Finsler Symmetric Spaces.}\label{sec6}
\setcounter{equation}{0}
\setcounter{subsection}{0}
\medskip

In this section we indicate the steps in the proof the Finsler symmetric 
space analog of Theorem \ref{hc-symmetric}.  The result is

\begin{theorem}\label{hc-finsler}{\rm (Deng--Wolf \cite[Theorem 1.1]{DW2012})}
Let $\Gamma$ be a properly discontinuous group of isometries of a connected
simply connected globally symmetric Finsler space $(M, F)$.  Then
$\Gamma\backslash (M, F)$ is a homogeneous Finsler space if and only if
$\Gamma$ consists of isometries of constant displacement.  Further, if 
$\Gamma\backslash (M, F)$ is
homogeneous, and if in the decomposition of $(M, F)$ as the
Berwald product of Minkowski space and irreducible symmetric Finsler
spaces, none of whose factors is
\begin{itemize}
\item[] a compact Lie group with a bi-invariant Finsler metric,
\item[] an odd-dimensional sphere with the constant curvature Riemannian metric,
\item[] a complex projective of odd complex dimension $>1$ with the 
	standard Riemannian metric,
\item[] $\mathrm{SU}(2n)/\mathrm{Sp}(n)$, $n\geq 2$ with a possibly
	non--Riemannian $\mathrm{SU}(2n)$-invariant Finsler metric, nor
\item[] $\mathrm{SU}(4n+2)/\mathrm{U}(2n+1)$,   $n\geqq 1$, with a
	possibly non--Riemannian $\mathrm{SU}(4n+2)$-invariant Finsler metric, 
\end{itemize}
then $\Gamma\backslash (M, F)$ is a Finsler symmetric space.
\end{theorem}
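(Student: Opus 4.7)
The plan is to mimic the Riemannian strategy of Theorem \ref{hc-symmetric} while developing just enough Finsler machinery to reduce to it. The necessity direction is a general metric-space fact: the argument of Proposition \ref{CW-translation} goes through verbatim once Riemannian distance is replaced by Finsler distance. The work lies entirely in the converse.

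First I would establish a Berwald de Rham decomposition $(M,F) = (M_0,F_0) \times (M_1,F_1) \times \cdots \times (M_\ell,F_\ell)$ with $(M_0,F_0)$ a Minkowski space and the other $(M_i,F_i)$ irreducible symmetric Finsler spaces. As in the Riemannian case, a constant-displacement $\gamma$ must split as $\gamma_0 \times \cdots \times \gamma_\ell$ with each $\gamma_i$ of constant displacement on its factor, reducing the theorem to the irreducible case plus the flat Minkowski case. The Minkowski case parallels Lemma \ref{const-curve-zero}: a Finsler isometry of Minkowski space sending every straight line to a parallel line must be a pure translation.

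The core Finsler-to-Riemannian tool is to associate, to each irreducible symmetric Finsler space $(M = G/K, F)$ with $G = \bI^0(M,F)$, a $G$-invariant Riemannian metric $ds^2$ on $M$. Such $ds^2$ exists because the isotropy $K$ is compact (standard for symmetric Finsler spaces), so averaging produces a $K$-invariant inner product on $T_{x_0}M$ which extends $G$-invariantly. The bridge lemma I would prove is that an isometry $\gamma$ of $(M,F)$ is of constant $F$-displacement if and only if its centralizer in $G$ is transitive on $M$, the Finsler analogue of Corollary \ref{oz3}. This is carried out by adapting Proposition \ref{oz1}: a constant-$F$-displacement $\gamma$ preserves at least one minimizing Finsler geodesic from $x$ to $\gamma(x)$, and the flow along such a geodesic generates a one-parameter subgroup of the centralizer whose orbits sweep out $M$. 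Transitivity of the centralizer then forces $\gamma$ to be of constant $ds^2$-displacement as well.

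With the bridge in place, Theorem \ref{hc-symmetric} applied to $(M,ds^2)$ yields the homogeneity of $\Gamma\backslash(M,ds^2)$, hence of $\Gamma\backslash(M,F)$ since $\Gamma$ is centralized by $G$ and $F$ is $G$-invariant. For the second assertion, the excluded factors are precisely those in which the Riemannian Theorems \ref{symm-simple-isom} and \ref{symm-group} produce homogeneous but non-symmetric quotients; outside these cases the constant-displacement $\Gamma$ centralizes $G$ and commutes with the Finsler geodesic symmetry at a suitable point, so the symmetry descends to the quotient. The main obstacle is the bridge lemma itself, because Finsler displacement functions lack the infinitesimal roundness that makes the Riemannian arguments of \cite{O1974} and Lemma \ref{eq-rank-2} go through; the geodesic-preservation characterization and the Schur-type irreducibility arguments must be reformulated in terms of the $\Ad(K)$-invariance of the Finsler unit sphere rather than of a quadratic form, and keeping track of which Finsler invariants survive this rephrasing is where the technical work concentrates.
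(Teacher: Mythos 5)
Your overall architecture --- necessity as a metric-space fact, a Berwald/de Rham decomposition, splitting of constant-displacement isometries across the factors, and reduction of each irreducible compact factor to Theorem \ref{hc-symmetric} via an auxiliary Riemannian metric --- is exactly the route the paper takes. The gap is in how you build the bridge. You propose to obtain the auxiliary metric $ds^2$ by averaging an inner product over the compact isotropy group $K$. Averaging only gives a $G$-invariant Riemannian metric; it gives no a priori relation between the geodesics, conjugate loci and cut loci of $(M,F)$ and those of $(M,ds^2)$, and these relations are precisely what your ``bridge lemma'' needs. The step ``a constant-$F$-displacement $\gamma$ preserves a minimizing Finsler geodesic \dots\ and the flow along such a geodesic generates a one-parameter subgroup of the centralizer'' is the Ozols transvection argument, and it requires (i) that $F$-geodesics are orbits of one-parameter transvection groups, i.e.\ geodesics of the canonical connection of $G/K$, and (ii) a Crittenden-type identification of the $F$-cut locus with the first conjugate locus, so that ``minimizing'' can be controlled at the connection level. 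Neither follows from the mere existence of a $K$-invariant inner product, and your closing remark about replacing the quadratic form by the $\Ad(K)$-invariant Finsler unit sphere does not supply them.

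The paper closes this gap with Szab\'o's theorem: every Berwald space carries a Riemannian metric $Q$ whose Levi-Civit\`a connection coincides with the linear (Berwald) connection of $(M,F)$. Since a symmetric Finsler space is Berwald with the canonical connection as its linear connection, the $F$-geodesics, Jacobi fields and conjugate points are literally those of $(M,Q)$, and the Ozols and Crittenden machinery transfers to $(M,F)$. (One could instead argue directly that for a symmetric coset space the canonical connection is torsion-free and every invariant metric --- Finsler or Riemannian --- is parallel for it, so all of them share the same geodesics; but some statement of this strength must be proved, not just the existence of an invariant Riemannian metric.) Once that is in place, your remaining steps --- transitivity of the centralizer forces constant $ds^2$-displacement, apply Theorem \ref{hc-symmetric}, and read off the exceptional factors from Theorems \ref{symm-simple-isom} and \ref{symm-group} together with the two genuinely Finsler exceptions --- agree with the paper's.
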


This uses some essential background on Finsler spaces, all of which can be
found in the background expository parts of S. Deng's treatise \cite{D2012}.
Then the notion of Minkowski Lie Algebra is introduced as a convenience in
studying affine symmetric Berwald spaces, leading to a 
Finsler symmetric space variation on the Riemannian manifold de Rham 
decomposition.  That applies to affine symmetric Berwald spaces.  As in the 
Riemannian case, isometries of constant displacement decompose accordingly
as products.  That uses the ideas behind 
Ozols' description \cite{O1974} of constant displacement
isometries in terms of invariant minimizing geodesics, and Crittenden's
description \cite{C1962} of cut locus {\em vs} conjugate locus in
symmetric spaces.  

At that point the connected simply connected affine symmetric Berwald space
$(M,F)$ is a Berwald product $(M,F) = (M_0,F_0) \times
(M_1,F_1) \times \dots \times (M_r,F_r)$ with $(M_0,F_0)$ flat and the 
other $(M_i,F_i)$ irreducible, and the group $\Gamma$ decomposes
accordingly as 
$\Gamma = \Gamma_0 \times \Gamma_1 \times \dots \times \Gamma_r$\,.
The Homogeneity Conjecture is easy for $(M_0,F_0)$ and more or less
similar to the Riemannian case for $(M_i,F_i)$ of noncompact type.
For $(M_i,F_i)$ of compact type one needs the result of Szab\' o
\cite{S1981}, that if $(M, F)$ is a Berwald space, then there exists a 
Riemannian metric $Q$ on $M$ whose Levi-Civit\`a connection coincides
with the linear connection of $(M, F)$.  With that one can reduce 
the Homogeneity Conjecture for $(M_i,F_i)$ of compact type to the Riemannian
result, and the main part of Theorem \ref{hc-finsler} follows.  As in the
Riemannian case, the part on symmetric space quotients is easily extracted.

\medskip
\centerline{\bf Part II.  Geometric Classes of Compact Riemannian Manifolds.}
\medskip

In this Part we verify the Homogeneity Conjecture for several classes of
compact Riemannian homogeneous spaces.  Those are manifolds of Euler
charactristic $\chi(G/H) \ne 0$ in Section \ref{sec8}, compact group 
manifolds in Section \ref{sec9} and manifolds of positive curvature in
Section \ref{sec10}.  But first we develop some tools in Section \ref{sec7} 
that we will need for those geometrically defined classes of manifolds.

\section{\bf Isotropy Splitting Fibrations.}\label{sec7}
\setcounter{equation}{0}
\setcounter{subsection}{0}

In this section we develop and apply a tool for reducing cases of the 
Homogeneity Conjecture to Riemannian symmetric spaces and other cases that
we will have verified.  The tool is based on the idea of the classical 
fibration $SO(k + \ell)/SO(k) \to (SO(k)\times SO(\ell))$ of a Stieffel 
manifold over a Grassmann manifold.  It applies to real, complex and 
quaternionic Stieffel manifolds and will be a key to considering Riemannian 
homogeneous spaces of the classes mentioned above.

\medskip
\addtocounter{subsection}{1}
\centerline{\bf \thesubsection.  Definition and Goal.}
\medskip

\noindent Our basic setup here is 

\begin{equation}\label{split-def}
\begin{aligned}
&G \text{ is a compact connected simply connected Lie group,}\\
&K = HN \text{ where } H \text{ and } N \text{ are closed connected 
	subgroups of } G \text{ such that} \\
&\qquad {\rm (i)}\,\, K = (H \times N)/(H \cap N),\, 
        {\rm (ii)}\,\, \text{Lie algebras }\gh \perp \gn \text{ and } 
        {\rm (iii)}\,\, \dim H \ne 0 \ne \dim N\,,\\
&\text{the centralizers } Z_G(H) = Z_{H} N^\flat
        \text{ and } Z_G(N) = H^\flat Z_{N} \text{ with }
        H = (H^\flat)^0\, \text{ and } N = (N^\flat)^0 \\
&M = G/H \text{ and } M'' = G/K \text{ are normal Riemannian
        homogeneous spaces of }G.
\end{aligned}
\end{equation}

We may assume that the the metrics $ds^2$ on $M$ and $ds''^2$ on 
$M''$ are the normal
metrics given by the negative of the Killing form of $\gg$.  Also, since $G$
is simply connected, and $H$ and $N$ are connected, $M$ and $M''$ are
simply connected.  We refer to the fibration $\pi:  M \to M''$
as an {\em isotropy splitting fibration}.  
Since $N$ normalizes $H$, $G \times N$ acts on $M$ by left and 
right translations, $\ell(g)r(n)(xH)
= gxHn^{-1} = gxn^{-1}H$.  The main result on isotropy splitting fibrations is

\begin{theorem}\label{cw}
Suppose that $M = G/N$ is compact, connected and simply connected, 
that $\rank K = \rank G$ and that $\{G,H,N\}$ satisfies {\rm (\ref{split-def})}.
If $\Gamma$ is a group of isometries of constant displacement on
$(M,ds^2)$ then $\Gamma \subset (\ell(Z_G) \times r(N))$ where 
$Z_G$ denotes the center of $G$.  Conversely, if $\Gamma \subset 
(\ell(Z_G) \times r(N))$ then every $\gamma \in \Gamma$ is an isometry
of constant displacement on $(M,ds^2)$.
\end{theorem}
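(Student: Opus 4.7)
The converse direction is immediate: if $\gamma = \ell(z)r(n)$ with $z \in Z_G$ and $n \in N$, then $\gamma$ commutes with every $\ell(g)$, since $z$ is central in $G$ and left and right translations commute. Applying the centralizer argument in the proof of Proposition \ref{CW-translation} to the transitive action of $\ell(G)$ on $M$ gives that $\gamma$ has constant displacement.

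The forward direction is the main content. My first step is to identify $\bI^0(M,ds^2)$. Because $ds^2$ is the normal metric, $N$ normalizes $H$, and $\Ad(N)$ preserves the Killing form of $\gg$, both $\ell(G)$ and $r(N)$ act by isometries on $M$. The hypothesis $Z_G(H) = Z_H N^\flat$ from (\ref{split-def}) exhausts the centralizer of $H$ in $G$ and ensures that no further infinitesimal right translations are isometries, so $\bI^0(M,ds^2) = \ell(G)\cdot r(N)$ modulo its ineffective kernel. In particular, the centralizer of $\ell(G)$ inside $\bI^0(M,ds^2)$ is exactly $\ell(Z_G)\cdot r(N)$.

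My second step is to apply the centralizer-transitivity principle from Section \ref{sec5}. Ozols' characterization in Proposition \ref{oz1}---that a constant displacement isometry preserves every minimizing geodesic from each point to its image---produces, for each $x \in M$, Killing fields along the preserved geodesic that commute with $\gamma$ and integrate inside $\bI^0(M,ds^2)$. As in Corollary \ref{oz3}, this forces $Z^0_{\bI^0(M,ds^2)}(\gamma)$ to act transitively on $M$. Writing $\gamma = \ell(g)r(n)$, so that this centralizer equals $\ell(Z_G(g)^0)\cdot r(Z_N(n)^0)$, and projecting to the base $M'' = G/K$ of the isotropy splitting fibration (on which $r(N)$ acts trivially), one gets that $\ell(Z_G(g)^0)$ is transitive on $M''$. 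The rank condition $\rank K = \rank G$ gives $\chi(M'') > 0$, and combining it with the de Siebenthal argument of Lemma \ref{eq-rank-3} and Proposition \ref{dS} restricts the possible transitive subgroups of $G$ on $M''$ that can arise as centralizers, forcing $Z_G(g)^0 = G$, i.e.\ $g \in Z_G$. The same fixed-point mechanism at the level of $M''$ rules out outer components of $\bI(M,ds^2)$ as sources of constant displacement isometries, so $\gamma$ lies in $\bI^0(M,ds^2)$ to begin with, and hence in $\ell(Z_G)\cdot r(N)$.

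The principal obstacle I anticipate is the last deduction: passing from transitivity of $Z_G(g)^0$ on $M'' = G/K$ to $Z_G(g)^0 = G$. The isotropy-splitting structure (\ref{split-def}) together with $\chi(M'') > 0$ must be pushed to rule out proper connected centralizers as transitive subgroups on $G/K$, and in parallel to exclude outer components of the full isometry group. A careful case analysis based on the data in (\ref{split-def}) should suffice, but it is where the detailed structure of the fibration enters most decisively.
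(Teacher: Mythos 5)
Your converse direction and your identification of $\bI^0(M,ds^2) = \ell(G)\cdot r(N^\flat)$ (which the paper obtains from Reggiani's theorem, Proposition \ref{fullgroup1}) are fine. The forward direction, however, has a genuine gap at its central step. You invoke the centralizer-transitivity principle ``as in Corollary \ref{oz3}'' to conclude that $Z^0_{\bI^0(M,ds^2)}(\gamma)$ is transitive on $M$. But Proposition \ref{oz2} and Corollary \ref{oz3} are established only for Riemannian symmetric spaces of compact type: the identification $Pres(\gamma) = \bigcup Z_G^0(\gamma)\cdot x_i$ rests on the transvection group along a preserved geodesic, which is special to symmetric spaces. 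The isotropy-split manifold $(M,ds^2)=(G/H,\text{normal metric})$ is a geodesic orbit space but generally not symmetric, and extending Ozols' mechanism to geodesic orbit spaces is listed in Section \ref{sec15} as an open problem. So the implication ``constant displacement $\Rightarrow$ transitive connected centralizer'' is essentially the statement to be proved, and it cannot be imported from Section \ref{sec5}. Your second, acknowledged obstacle is also real: even granting transitivity of $Z_G(g)^0$ on $M''=G/K$, concluding $Z_G(g)^0=G$ requires ruling out proper full-rank transitive subgroups of $G$ on $G/K$, and no argument is given. Finally, the dismissal of the outer components of $\bI(M,ds^2)$ is too quick: the paper's Lemma \ref{outers} does not show those components are free of constant displacement isometries a priori; it shows that if $\gamma$ lies in an outer component and both $\gamma$ and $\gamma^2$ have constant displacement (automatic for elements of a group $\Gamma$), then $\gamma$ is forced into $Z_G\times r(N^\flat)$.

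The paper's route is different and avoids these issues: it is the ``flat rectangle'' argument leading to Proposition \ref{cw-conn}. Writing $\gamma=\exp(\xi_1+\xi_2)$ with $\xi_2\in dr(\gn)$ and $\xi_1\in d\ell(\gg)$, $\xi_1\perp\gn$, the commuting Killing fields $\xi_1,\xi_2$ sweep out a flat totally geodesic rectangle $\exp(t_1\xi_1+t_2\xi_2)(1H)$; since $r(l)=\exp(\xi_2)$ and $\gamma$ have the same constant displacement, realized respectively along a side and along the diagonal of that rectangle, one concludes $\xi_1=0$. Combined with Proposition \ref{fullgroup1}(3) --- when $\rank K=\rank G$ the constant length Killing vector fields are exactly $dr(\gn)$ --- this yields the containment $\Gamma\cap G^\flat\subset Z_G\times r(N^\flat)$ directly, with no appeal to centralizer transitivity, and Lemma \ref{outers} then handles the outer components. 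To repair your proof you would need either to reproduce that metric argument or to supply a genuine extension of Ozols' theorem beyond the symmetric case.
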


\begin{examples}\label{examples}
Before indicating the proof of {\rm Theorem \ref{cw}} we mention some
interesting examples.
{\rm 
\begin{itemize}
\item[{\rm (1)}] $G/K$ is an irreducible hermitian symmetric space and 
	$K = Z_K^0K'$.
   \begin{itemize}
      \item $G/K' \to  G/K$ is a circle bundle over $G/K$\qquad\qquad\qquad 
		$(H = K'$ and $N = Z_K^0)$
      \item $G/Z_K^0 \to G/K$ is a principal $K'$--bundle over $G/K$\qquad\,
		$(H = Z_K^0$ and $N = K'$)
      \item $SU(s + t)/SU(s) \to SU(s + t)/S(U(s)U(t))$ and 
		$SU(s + t)/U(s) \to SU(s + t)/S(U(s)U(t))$
   \end{itemize}
\item[{\rm (2)}] $G/K$ is a quaternion–kaehler symmetric space and 
	$K = Sp(1)K'$.  
   \begin{itemize}
      \item $G/K = SO(s+t)/SO(s)SO(t)$ with $s = 3, 4$, $s \leqq t$, $st$ even
      \item $G/K = Sp(s+t)/Sp(s)Sp(t)$ with $1 \leqq s \leqq t$
      \item $G/K = G_2/A_1A_1\,, F_4/A_1C_3\,, E_6/A_1A_5\,, E_7/A_1D_6
		\text{ or } E_8/A_1E_7$
   \end{itemize}
\item[{\rm (3)}] $G/K$ is a nearly--kaehler $3$--symmetric space and 
$K = SU(3)K'$ so
$G/K$ is $G_2/A_2$, $F_4/A_2A_2$, $E_6/A_2A_2A_2$, $E_7/A_2A_5$, or 
		$E_8/A_2E_6$\,.  In the $F_4$ case the $A_2 = SU(3)$
		can be given by a long root or a short root. The corresponding 
		fibrations are $G/K'\to G/K$ principal $SU(3)$ bundle and 
		$G/SU(3) \to  G/K$ principal $K'$ bundle.
\item[{\rm (4)}] $G/K$ is the $5$--symmetric space $E_8/A_4A_4$, yielding to two 
	principal $SU(5)$ bundles $E_8/A_4 \to E_8/A_4A_4$\,.
\item[{\rm (5)}] $G/K$ is an odd dimensional real Grassmann manifold and
$K = SO(2s+1)\times SO(2t+1)$.  Then $G/K_1 \to G/K$ is 
	$SO(2s+2t+2)/SO(2s+1) \to SO(2s+2t+2)/(SO(2s+1)\times SO(2t+1))$.
\end{itemize}
}
\end{examples}

\smallskip
\addtocounter{subsection}{1}
\centerline{\bf \thesubsection.  The Full Isometry Group.}
\medskip

\noindent
Here we run through the arguments of \cite{W2018}.  The corresponding
part of Section \ref{sec9} will correspond to a limiting case $\gn = 0$.
As we saw dealing with compact symmetric spaces, verification of the 
Homogeneity Conjecture requires that we find the full group
of isometries of $(M,ds^2)$.  We do that now.  As more or less noted earlier,

\begin{lemma}\label{rt-act}
The right action of $N^\flat$ on $M$, given by 
$r(n)(gH) = gHn^{-1} = gn^{-1}H$, is a well defined
action by isometries. The fiber of $\pi: M \to M''$ through 
$gH$ is $F:= r(N)(gH)$.
\end{lemma}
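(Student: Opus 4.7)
The lemma decomposes into three claims: well-definedness of $r(n)$ as a map $M\to M$, the isometry property, and the identification of the $\pi$-fiber through $gH$ with the $N$-orbit. The plan is to derive each from the structural hypotheses in (\ref{split-def}), specifically the centralizer identity $Z_G(H)=Z_H N^\flat$ (so that $N^\flat\subset Z_G(H)$) and the almost-direct-product presentation $K=(H\times N)/(H\cap N)$ (so that $H$ and $N$ commute pointwise).

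For well-definedness, I would first observe that the relation $Z_G(H)=Z_H N^\flat$ forces $N^\flat\subset Z_G(H)$; equivalently, every $n\in N^\flat$ centralizes $H$. Hence $nh=hn$ for $h\in H$, which immediately gives $gHn^{-1}=gn^{-1}H$ and shows the formula $r(n)(gH)=gn^{-1}H$ is independent of the representative $g$ of the coset. The assignment $n\mapsto r(n)$ is a right action because $r(n_1n_2)(gH)=g(n_1n_2)^{-1}H=(gn_2^{-1})n_1^{-1}H=r(n_1)r(n_2)(gH)$.

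For the isometry assertion, I would use that $ds^2$ is the normal metric induced from the negative of the Killing form $B$ of $\gg$. Identifying $T_{gH}M$ with $\gh^\perp\subset\gg$ via the differential of left translation by $g$, a tangent vector $X\in\gh^\perp$ is represented by the curve $g\exp(tX)H$, and the image under $r(n)$ is $g\exp(tX)n^{-1}H=gn^{-1}\exp(t\,\Ad(n)X)H$. Thus in the corresponding identification $T_{gn^{-1}H}M\cong \gh^\perp$ the differential of $r(n)$ is $\Ad(n)$. Since $n$ centralizes $H$, $\Ad(n)$ fixes $\gh$ and, by $\Ad$-invariance of $B$, also preserves $\gh^\perp$ together with the inner product $-B|_{\gh^\perp}$; hence $r(n)$ is an isometry.

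For the fiber identification, I would compute directly: $\pi^{-1}(\pi(gH))=\{g'H:g'K=gK\}=\{gkH:k\in K\}$. Writing $k=hn$ with $h\in H$, $n\in N$ (using $K=HN$) and invoking once more that $H$ and $N$ commute, $gkH=ghnH=gnhH=gnH=g(n^{-1})^{-1}H=r(n^{-1})(gH)$. As $n$ ranges over $N$, so does $n^{-1}$, and we obtain exactly the orbit $r(N)(gH)=F$. No step in this lemma is truly delicate; everything follows mechanically from the standing hypotheses, and the only point requiring a small argument is the identification of the differential of $r(n)$ with $\Ad(n)$ on $\gh^\perp$.
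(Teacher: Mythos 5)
Your proof is correct and supplies exactly the routine details that the paper leaves implicit: the lemma is asserted ``as more or less noted earlier,'' resting on the remark after (\ref{split-def}) that $N$ normalizes (indeed centralizes) $H$, so the paper's intended argument is the same coset computation you give. Your two substantive points --- that $N^\flat\subset Z_G(H)$ makes $r(n)$ well defined and that $dr(n)$ acts as $\Ad(n)$ on $\gh^\perp$, hence preserves the normal metric --- together with the factorization $k=hn$ for the fiber, are precisely what is needed and are carried out correctly.
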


Now we have larger (than $G$) transitive groups of isometries of
$M$ given by
\begin{equation}\label{g-flat}
G^\flat = \ell(G) \cdot r(N^\flat) \text{ and }
(G^\flat)^0 = \ell(G) \cdot r(L) \text{ acting by }
(\ell(g), r(n)) : xH \to g(xH)n^{-1} = gxn^{-1}H\,.
\end{equation}
Every $g^\flat := (\ell(g), r(n))$ sends fiber to fiber and induces 
an isometry $\ell(g) \in (M'',ds''^2)$.  Specializing a theorem of Reggiani 
\cite[Corollary 1.3]{R2010} for the first assertion we have

\begin{proposition}\label{fullgroup1}
If the Riemannian manifold $(M, ds^2),\, M = G/H$, is irreducible for its
de Rham decomposition, then 
$(G^\flat)^0$ is the identity component $\bI^0(M,ds^2)$ of 
its isometry group.  Consequently

{\rm (1)} The algebra of all Killing vector fields on $(M,ds^2)$ is 
$d((\ell,r)(\gg^\flat)) := d\ell(\gg) \oplus dr(\gn)$.

{\rm (2)} Define $\gt = \{\xi \in \gg \mid \xi \text{ defines a constant length 
Killing vector field on }(M'',ds''^2)\}$.  Then the set of all constant length 
Killing vector fields on $(M,ds^2)$ is $d\ell(\gt) \oplus dr(\gn)$.

{\rm (3)} If  $\rank K = \rank G$ then $\gt = 0$ and $dr(\gn)$ is the set of 
all constant length Killing vector fields on $(M,ds^2)$.
\end{proposition}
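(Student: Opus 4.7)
The plan is to invoke Reggiani's theorem to pin down the identity component, then read off (1) on the Lie-algebra side, and finally analyze constant-length Killing fields via the Riemannian submersion $\pi : M \to M''$.

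First I would apply \cite[Corollary 1.3]{R2010}. Its hypotheses are satisfied: $(M,ds^2)$ is an irreducible normal homogeneous space of the compact connected simple $G$, and (\ref{split-def}) furnishes the commuting right action of $N$ by isometries through $N^\flat \subset Z_G(H)$. Reggiani's conclusion is that no further Killing fields exist beyond those coming from $\ell(G)$ and $r(N)$, so $\bI^0(M,ds^2) = (G^\flat)^0 = \ell(G)\cdot r(N)$. Part (1) follows immediately by passing to Lie algebras: Killing fields on $M$ are the Lie algebra of $\bI^0$, which is $d\ell(\gg) + dr(\gn)$. The sum is direct because any element in the intersection, evaluated at $aH$ under the identification $T_{aH}M \cong \gm$ via $\ell(a^{-1})_*$, would read $(\Ad(a^{-1})\xi)_\gm = -\eta$ for every $a \in G$; with $\eta$ fixed and $\Ad(a^{-1})\xi$ varying nontrivially unless $\xi = 0$, this forces both $\xi = 0$ and $\eta = 0$.

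For (2) I would work with the reductive decomposition $\gg = \gh \oplus \gn \oplus \gp$ where $\gp = \gk^\perp$; then $\gm = \gn \oplus \gp$ and $\pi$ is a Riemannian submersion with vertical distribution spanned by $dr(\gn)$. For $X = d\ell(\xi) + dr(\eta)$ with $\xi \in \gg$ and $\eta \in \gn$, a direct computation gives
\begin{equation*}
\|X|_{aH}\|^2 \;=\; \|(\Ad(a^{-1})\xi)_\gp\|^2 \;+\; \|(\Ad(a^{-1})\xi)_\gn + \eta\|^2 ,
\end{equation*}
the first summand being $\|d\ell(\xi)|_{aK}\|^2$ on $(M'',ds''^2)$. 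The centralizing hypothesis $[\gh,\gn] = 0$ coming from $Z_G(H) = Z_H N^\flat$ makes $\Ad(H)$ act trivially on $\gn$ and $\Ad(N)$ act isometrically on both $\gn$ and $\gp$, so the first summand depends only on the coset $aK \in M''$, while inside each fiber the second summand reduces to $\|v + \Ad(n)\eta\|^2$ for $n \in N$ and $v = (\Ad(a^{-1})\xi)_\gn$. Constant length of $X$ therefore separately forces horizontal constancy ($\xi \in \gt$) and vertical constancy, giving the stated characterization.

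For (3), the equal-rank hypothesis places a maximal torus of $G$ inside $K$, so every $\xi \in \gg$ is $\Ad(G)$-conjugate into $\gk$ and $d\ell(\xi)$ vanishes somewhere on $M''$. A constant-length Killing field with a zero is identically zero, whence $\gt = 0$, and (2) then leaves exactly $dr(\gn)$ as the space of constant-length Killing fields on $M$. The principal obstacle is the decoupling step inside (2): one must rule out horizontal and vertical contributions compensating each other in the length formula. This is where the specific structural features of the isotropy splitting fibration---the commutation $[\gh,\gn] = 0$, the $\Ad(N)$-invariance of $\gp$, and the isometric action of $\Ad(N)$ on $\gn$---become essential.
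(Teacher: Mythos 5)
Your route to the headline assertion and to (1) is essentially the paper's: both rest on Reggiani \cite[Corollary 1.3]{R2010}, and your directness argument for $d\ell(\gg)\oplus dr(\gn)$ (the span of $\Ad(G)\xi$ is an ideal, hence all of $\gg$, so it cannot sit inside $\gh+\R\eta$) is fine. Your argument that $\gt=0$ when $\rank K=\rank G$ --- conjugate $\xi$ into $\gk$ so that $d\ell(\xi)$ has a zero on $M''$, and a constant--length Killing field with a zero vanishes --- is also the standard one.

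The genuine gap is in (2), and you have in effect flagged it yourself without closing it. From your length formula, constancy of $\|X\|^2$ along each fiber $aN\cdot x_0$ only tells you that the vertical summand is constant \emph{on each fiber} (because the horizontal summand $\|(\Ad(a^{-1})\xi)_\gp\|^2$ is). It does not rule out the horizontal summand taking different values on different fibers, compensated by different fiberwise--constant values of the vertical summand; but ruling out exactly that trade--off \emph{is} the assertion $\xi\in\gt$. The situation is worst in Example \ref{examples}(1), the circle--bundle case $N=Z_K^0$: there $\Ad(N)$ acts trivially on the one--dimensional $\gn$, the vertical summand is automatically constant on every fiber, and your fiberwise step carries no information at all --- the entire content of (2) is the cross--fiber cancellation you have not addressed. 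The forward inclusion is not established by the formula either: for $\xi\in\gt$ and $\eta\neq 0$ you still must show that $\|(\Ad(a^{-1})\xi)_\gn-\eta\|^2$ is constant in $a$; only the case $\xi=0$ is immediate, from the fact that $r(\exp t\eta)$ commutes with the transitive group $\ell(G)$. Since your (3) quotes (2) to conclude that $dr(\gn)$ exhausts the constant--length Killing fields, the gap propagates there as well. The paper itself defers these points to \cite{W2018}, where the needed extra ingredient is the analysis of constant--length Killing vector fields on normal homogeneous spaces from \cite{XW2016}; some input of that kind appears unavoidable.
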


\begin{corollary}\label{norm-k2}
If $H \not\cong N$ then
every isometry of $(M,ds^2)$ normalizes $r(N)$ and sends fiber to
fiber of $\pi: M \to M''$.
\end{corollary}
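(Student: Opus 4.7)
The plan is to use Proposition \ref{fullgroup1} to reduce the question about isometries to one about Lie algebra automorphisms of the full Killing algebra, then invoke the hypothesis $H \not\cong N$ to rule out ``swap'' automorphisms that would move the $dr(\gn)$ summand.

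By Proposition \ref{fullgroup1}, the algebra of Killing vector fields on $(M,ds^2)$ is the direct sum of commuting ideals
$$\gi = d\ell(\gg) \oplus dr(\gn).$$
Any isometry $\varphi \in \bI(M,ds^2)$ normalizes the identity component $\bI^0(M,ds^2) = \ell(G) \cdot r(N)$, hence conjugation induces a Lie algebra automorphism $\varphi_*$ of $\gi$. Under the effective-action setup of (\ref{split-def}) one has $d\ell(\gg) \cong \gg$, while condition (ii) of (\ref{split-def}) gives $\gn \cap \gh = 0$, so $dr(\gn) \cong \gn$. The goal is to prove $\varphi_*(dr(\gn)) = dr(\gn)$.

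The main step is showing $\varphi_*$ cannot mix simple ideals of $dr(\gn)$ into $d\ell(\gg)$. Since automorphisms preserve constant length Killing fields, Proposition \ref{fullgroup1}(2) gives $\varphi_*(dr(\gn)) \subset d\ell(\gt) \oplus dr(\gn)$. So it suffices to show that any ideal of $\gi$ of the form $\gj_1 \oplus \gj_2$ with $\gj_1 \subset d\ell(\gt)$ and $\gj_2 \subset dr(\gn)$, abstractly isomorphic to $\gn$ as Lie algebras, must satisfy $\gj_1 = 0$. I would argue by contradiction: a nonzero $\gj_1$ would provide an embedding of a simple ideal of $\gn$ as a simple ideal of $\gg$ sitting inside $\gt$. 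Pulling this back through the left-action homomorphism, one obtains a closed connected subgroup $N_1 \subset G$, isomorphic to a nontrivial factor of $N$, whose action on $M$ is by constant displacement isometries that commute with $r(N)$ on fibers. Tracing through the isotropy-split structure $\gk = \gh \oplus \gn$ together with the centralizer conditions $Z_G(H) = Z_H N^\flat$ and $Z_G(N) = H^\flat Z_N$ from (\ref{split-def}), this extra subgroup must be absorbable into the $H$-side of the decomposition, producing an abstract isomorphism $H \cong N$ at the identity-component level and contradicting the hypothesis. The main obstacle is making this last step precise, since the relationship between the simple factors of $\gn$ and those of $\gh$ is not symmetric a priori, and one must carefully use the non-degeneracy in (\ref{split-def}) together with the fact that $\gh$ contains no ideal of $\gg$.

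Once $\varphi_*(dr(\gn)) = dr(\gn)$ is established, exponentiating inside the connected group $r(N)$ yields $\varphi \, r(N)\, \varphi^{-1} = r(N)$, so $\varphi$ normalizes $r(N)$. By Lemma \ref{rt-act}, the fibers of $\pi: M \to M''$ are exactly the orbits of $r(N)$, so normalization of $r(N)$ forces $\varphi$ to permute these orbits; thus $\varphi$ sends each fiber to a fiber, completing the proof.
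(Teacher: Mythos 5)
Your reduction is set up correctly as far as it goes: by Proposition \ref{fullgroup1} an isometry $\varphi$ normalizes $\bI^0(M,ds^2)=\ell(G)\cdot r(N)$, hence induces an automorphism $\varphi_*$ of the Killing algebra $d\ell(\gg)\oplus dr(\gn)$; automorphisms of a compact Lie algebra permute ideals and, since $\varphi_*$ preserves the set of constant--length Killing fields, $\varphi_*(dr(\gn))\subset d\ell(\gt)\oplus dr(\gn)$; and once $\varphi_*(dr(\gn))=dr(\gn)$ is known, connectedness of $r(N)$ and Lemma \ref{rt-act} give both conclusions. The problem is the one step you yourself flag as the ``main obstacle'': it is not an obstacle to be smoothed over later, it is the entire content of the corollary, and the mechanism you propose for it cannot work. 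The algebra $d\ell(\gg)\oplus dr(\gn)$ does not see $\gh$ at all, so no amount of tracing ideals of $\gg\oplus\gn$ and the centralizer conditions of (\ref{split-def}) can manufacture the abstract isomorphism $H\cong N$ you need for the contradiction; deducing ``$H\cong N$'' from ``some simple ideal of $\gn$ is isomorphic to some simple ideal of $\gg$ lying in $\gt$'' is a non sequitur. A symptom that the hypothesis is being used in the wrong place: when $G$ is simple (which covers every example in \ref{examples}), your ideal--permutation argument closes by a bare dimension count ($\dim\gg>\dim\gn$ forces $\varphi_*(d\ell(\gg))=d\ell(\gg)$, hence $\varphi_*(dr(\gn))=\varphi_*(Z_{\gg^\flat}(d\ell(\gg)))=dr(\gn)$) without ever invoking $H\not\cong N$, so the hypothesis must be doing its work elsewhere.

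Where it actually enters is through the isotropy subgroup rather than the full Killing algebra. After composing with an element of the transitive group $\bI^0(M,ds^2)$ you may assume $\varphi(x_0)=x_0$, so $\varphi_*$ is an automorphism of the isotropy algebra $d\ell(\gh)\oplus\{d\ell(\xi)+dr(\xi)\mid\xi\in\gn\}\cong\gh\oplus\gn$ (Lemma \ref{old-isotropy}); this is the algebra in which both $\gh$ and $\gn$ are visible and in which the hypothesis $H\not\cong N$ has teeth, since it is what prevents $\varphi_*$ from interchanging the two local factors of $K=HN$. Because $[\gh,\gn]=0$, the $H$--factor acts trivially on the fiber direction $\gn\subset T_{x_0}M$ and without trivial summand on $\gm=T_{x_0}M''$, so the tangent space to the fiber is recovered from the $H$--factor of the isotropy group (for instance as its fixed vectors), and preserving the $H$--factor is what forces $d\varphi_{x_0}$ to preserve $T_{x_0}F$ and hence $\varphi$ to send fiber to fiber. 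Your proof needs to be rebuilt around this point (or around the corresponding statement in \cite{W2018}); as written, the argument that is supposed to consume the hypothesis $H\not\cong N$ is absent.
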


\noindent
The normalizer of $H$ in $G$ also normalizes the centralizer of $N$
so it normalizes $K$ as well. Denote
\begin{equation}\label{outer}
\begin{aligned}
& \Aut(G,H) = \bigl \{\alpha \in \Aut(G) \mid \alpha(H) = H\},\,
        \Int(G,H) = \{\alpha \in \Aut(G,H) \mid \alpha|_H \text{ is inner}\},\\
&       \phantom{\Aut(G,H,ds^2) =}      \text{ and }
        \Out(G,H) = \Aut(G,H)/\Int(G,H);\\
& \Aut(G,H,ds^2) = \{\alpha \in \Aut(G,H) \mid \alpha^*ds^2 = ds^2\},\,
        \Int(G,H,ds^2) = \Aut(G,H,ds^2) \cap \Int(G,H),\\
&       \phantom{\Aut(G,H,ds^2) =}
        \text{ and } \Out(G,H,ds^2) = \Aut(G,H,ds^2)/\Int(G,H,ds^2),\\
\end{aligned}
\end{equation}
and similarly for $\Aut(G,K), \Int(G,K),\Out(G,K) \text{ and } xxx(G,K,ds''^2)$.
$\Out(G,H) \subset \Out(G,K)$ because $H$ is a local direct factor of $K$.
In many cases $\Out(G,H) = \Out(G,K)$ because $\gn$ is the 
$\gg$--centralizer of $\gh$ and $\gh \not \cong
\gn$.  But there are exceptions, such as orthocomplementation (which
exchanges the two factors of $K$) in the cases of Stieffel manifold fibrations 
$$
\begin{aligned}
&SO(2k)/SO(k) \to SO(2k)/[SO(k)\times SO(k)], \\
&SU(2k)/U(k) \to SU(2k)/S(U(k) \times U(k)) \text{ and } \\
&Sp(2k)/Sp(k) \to Sp(2k)/[Sp(k)\times Sp(k)].
\end{aligned}
$$ 
There are other exceptions, including $E_6/[A_2A_2A_2]$, but neither 
$F_4/A_2A_2$ nor $E_8/A_4A_4$ is an exception.

\begin{lemma}\label{old-inn-out}
Suppose that $\rank K = \rank G$.  Let $\alpha \in \Aut(G,H)$
$($and thus also $\alpha \in \Aut(G,N)$, so $\alpha \in \Aut(G,K))$. 
Then the following conditions
are equivalent: 
{\rm (i)} $\alpha|_K$ is an inner automorphism of $K$, 
{\rm (ii)} as an isometry, $\alpha \in \bI^0(M,ds^2)$, and 
{\rm (iii)} as an isometry, $\alpha \in \bI^0(M'',ds''^2)$.
\end{lemma}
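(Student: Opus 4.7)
The plan is to combine Proposition~\ref{fullgroup1} with the standard fact that an automorphism of a compact connected simply connected Lie group that fixes a maximal torus pointwise is inner, i.e., equals $\Ad(t_0)$ for some $t_0$ in that torus. Observe that $\alpha \in \Aut(G,H) = \Aut(G,N) = \Aut(G,K)$ preserves the Killing form of $\gg$, hence descends to well-defined isometries of $(M, ds^2)$ and $(M'', ds''^2)$ fixing $eH$ and $eK$ respectively.

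For (i) $\Rightarrow$ (ii), (iii), suppose $\alpha|_K = \Ad(k_0)|_K$ for some $k_0 \in K$, and set $\beta := \Ad(k_0)^{-1} \circ \alpha$. Then $\beta$ is trivial on $K$, hence on a maximal torus $T \subset K$; by the rank hypothesis this $T$ is also maximal in $G$, so the cited fact gives $\beta = \Ad(t_0)$ for some $t_0 \in T$. Consequently $\alpha = \Ad(k_0 t_0)$ is inner on $G$. On $M''$ we have $\alpha(xK) = (k_0 t_0) x (k_0 t_0)^{-1} K = (k_0 t_0) x K$ (since $(k_0 t_0)^{-1} \in K$), that is, $\alpha$ acts as $\ell(k_0 t_0) \in \bI^0(M'')$, yielding (iii). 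On $M$, write $(k_0 t_0)^{-1} = hn$ with $h \in H$, $n \in N$; the fact that $N$ centralizes $H$ yields the coset identity $xhnH = xnH$, so $\alpha(xH) = (k_0 t_0) x n H = \ell(k_0 t_0)\, r(n^{-1})(xH) \in \ell(G) \cdot r(N) = \bI^0(M)$, yielding (ii).

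For the converses, suppose $\alpha$ acts on $M''$ as $\ell(k_1)$ for some $k_1 \in K$, and set $\alpha' := \Ad(k_1)^{-1} \circ \alpha \in \Aut(G,K)$, so that $\alpha'$ acts as the identity on $M''$; equivalently, $\alpha'(x) \in xK$ for all $x \in G$. Writing $\gg = \gk \oplus \gm$ with $\gm = \gk^\perp$ for the Killing form $B$, a one-parameter-group computation shows that the linear map $A := d\alpha'|_\gm - \mathrm{id}_\gm$ takes values in $\gk$. Preservation of $B$ together with the orthogonality $\gm \perp \gk$ then forces $B(A(X), A(X)) = 0$ for every $X \in \gm$; since $-B$ is positive definite on $\gk$, this gives $A = 0$, so $d\alpha'|_\gm = \mathrm{id}$. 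Reducing factor by factor to the case where $G$ is simple, the Lie subalgebra of $\gg$ generated by $\gm$ is a nonzero $\Ad(G)$-invariant ideal, hence equals $\gg$; consequently $d\alpha' = \mathrm{id}$, $\alpha' = \mathrm{id}$, and $\alpha = \Ad(k_1)$, whose restriction to $K$ is inner, establishing (i). Finally, the equivalence (ii) $\Leftrightarrow$ (iii) follows from the $G$-equivariant fibration $\pi: M \to M''$ and the observation that the projection $\bI^0(M) = \ell(G) r(N) \to \ell(G) = \bI^0(M'')$ has kernel contained in $r(N)$.

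The main obstacle is the converse direction, where one must pass from the geometric condition $\alpha \in \bI^0$ to the algebraic conclusion that $\alpha|_K$ is inner. The two key technical inputs are the Killing-form rigidity forcing $d\alpha'$ to be the identity on $\gm$, and the simplicity-based bracket-propagation step that spreads this identity from $\gm$ to all of $\gg$.
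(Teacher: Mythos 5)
Your proof is correct. The paper states this lemma without an explicit proof, deferring to \cite{W2018}, and your argument uses exactly the ingredients one expects there: the equal--rank hypothesis converts ``inner on $K$'' into ``inner on $G$'' via the standard fact that an automorphism fixing a maximal torus pointwise is $\Ad(t_0)$, and the converse is a Killing--form rigidity computation at the base point. Two small points are left implicit and deserve a sentence each. First, to read hypothesis (iii) as ``$\alpha$ acts on $M''$ as $\ell(k_1)$ with $k_1\in K$'' you need $\bI^0(M'',ds''^2)=\ell(G)$; this is the $M''$--analogue of Proposition \ref{fullgroup1} (Reggiani), where the absence of a right factor follows from $Z_G(K)\subset Z_G(T)=T\subset K$ when $\rank K=\rank G$, and it should be cited rather than assumed. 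Second, your ``reduce to $G$ simple'' step for propagating $d\alpha'|_{\gm}=\mathrm{id}$ to all of $\gg$ really needs that the ideal generated by $\gm$ is all of $\gg$, which is equivalent to $G$ acting (almost) effectively on $M''$; otherwise the orthocomplement of that ideal is a normal subgroup inside $K$ on which the geometric hypotheses say nothing. Both are harmless in the paper's setting (effectiveness is implicit in (\ref{split-def}) and all examples have $G$ simple), but they are the only places where the argument could silently fail in a more general formulation.
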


Denote base points 
$
x_0 = 1H \in G/H = M \text{ and } x_0'' = 1K \in G/K = M''.
$
We reformulate Lemma \ref{old-inn-out} as

\begin{lemma}\label{old-isotropy}
Let $\rank K = \rank G$.  Recall that $ds^2$ is the normal metric on $M = G/H$
defined by the negative of the Killing form of $\gg$.
The isotropy subgroup $\bI(M,ds^2)_{x_0}$ has identity component 
$$
\bI^0(M,ds^2)_{x_0} = 
H\cdot\{(\ell(n),r(n)) \in G^\flat \mid n \in N\}
$$ 
and
\begin{equation}\label{tildes}
\bI(M,ds^2)_{x_0} = {\bigcup}_{\alpha \in \Out(G,H)}\, 
	H\alpha\cdot \bigl \{(\ell(n),r(n)) \in G^\flat \mid n 
		\in N^\flat \bigr \}
	\cong {\bigcup}_{\alpha \in \Out(G,H)}\, H\alpha\cdot N^\flat .
\end{equation}
Given $\alpha \in \Out(G,H)$ 
the component $H\alpha\cdot N^\flat = H\alpha'\cdot N^\flat $
if and only if $\alpha = \alpha'$ modulo inner automorphisms.
\end{lemma}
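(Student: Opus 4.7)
The plan is to deduce this statement as a direct reformulation of the results already at hand, with the identity-component part being a stabilizer computation and the full-isotropy part an enumeration of components by $\Out(G,H)$.

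The identity-component formula is an orbit--stabilizer calculation. By Proposition \ref{fullgroup1}(1), $\bI^0(M,ds^2) = (G^\flat)^0 = \ell(G)\cdot r(N)$, and this group acts by $(\ell(g),r(n)) : xH \mapsto gxn^{-1}H$. The stabilizer of $x_0 = 1H$ is cut out by $gn^{-1} \in H$, which forces $g = hn$ with $h \in H$ and $n \in N$; every isotropy element then factors uniquely as $\ell(h)\cdot(\ell(n),r(n))$, giving the stated product description $\bI^0(M,ds^2)_{x_0} = H\cdot\{(\ell(n),r(n)) \mid n\in N\}$.

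For the full isotropy the plan is to enumerate components using $\Aut(G,H)$. Each $\alpha \in \Aut(G,H)$ preserves both $H$ and the Killing form of $\gg$, so it descends to an isometry of $(M,ds^2)$ fixing $x_0$; since $\alpha(K) = K$ it also normalizes $N^\flat$, so the product $H\alpha\cdot\{(\ell(n),r(n)) \mid n \in N^\flat\}$ is a well-defined subset of $\bI(M,ds^2)_{x_0}$. Lemma \ref{old-inn-out} supplies the key collapsing criterion: two representatives $\alpha,\alpha' \in \Aut(G,H)$ yield the same coset of $\bI^0(M,ds^2)_{x_0}$ iff $\alpha(\alpha')^{-1}|_K$ is inner, which by (\ref{outer}) is exactly $\alpha \equiv \alpha' \pmod{\Int(G,H)}$. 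This parametrizes distinct components by $\Out(G,H)$.

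The substantive step is exhaustiveness: every $\phi \in \bI(M,ds^2)_{x_0}$ must arise as $h\alpha\cdot(\ell(n),r(n))$ for some choice of $h,\alpha,n$. The approach is to observe that $\phi$ normalizes the characteristic subgroup $\bI^0(M,ds^2) = \ell(G)\cdot r(N)$, and that under the hypothesis $\rank K = \rank G$, Proposition \ref{fullgroup1}(3) characterizes $r(\gn)$ intrinsically as the space of all constant-length Killing fields on $(M,ds^2)$. Consequently conjugation by $\phi$ preserves the complementary semisimple factor $\ell(G)$, inducing an automorphism $\alpha$ of $G$ that moreover satisfies $\alpha(H) = H$ because $\phi$ fixes $x_0$; then $\phi \cdot \alpha^{-1} \in \bI^0(M,ds^2)_{x_0}$, and absorbing the $\ell(n)$-contributions into the $H$-factor yields the asserted isomorphism with $\bigcup_{\alpha \in \Out(G,H)} H\alpha\cdot N^\flat$. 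This rigidity step, which uses the rank condition in an essential way to rule out spurious components, is the part I expect to be the main obstacle; once it is in hand, the rest is bookkeeping from Proposition \ref{fullgroup1} and Lemma \ref{old-inn-out}.
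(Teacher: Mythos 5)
Your overall architecture is the one the paper intends: the paper gives no independent proof of this lemma, presenting it as a ``reformulation'' of Lemma \ref{old-inn-out} resting on Proposition \ref{fullgroup1} (Reggiani's theorem for the identity component) and on the fact that, when $\rank K=\rank G$, the constant--length Killing fields are exactly $dr(\gn)$, so that every isometry normalizes $r(N)$ (Corollary \ref{norm-k2}) and hence induces an element of $\Aut(G,H)$. Your orbit--stabilizer computation of $\bI^0(M,ds^2)_{x_0}$ is correct, and your exhaustiveness step is the right idea; the only thing missing there is a sentence explaining why preserving $dr(\gn)$ forces $\Ad(\phi)$ to preserve $\ell(\gg)$ as well --- for instance, $\ell(\gg)$ is the derived algebra of the centralizer of $dr(\gn)$ in $\gg^\flat$, because $\gg$ is semisimple while the remaining summand of that centralizer is the (abelian) center of $\gn$.

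The genuine gap is in your collapsing criterion. Lemma \ref{old-inn-out} tests whether $\alpha(\alpha')^{-1}$ lies in $\bI^0(M,ds^2)$, i.e.\ whether $\alpha(\alpha')^{-1}|_K$ is inner \emph{on $K$}; that distinguishes the cosets of the identity component $H\cdot\{(\ell(n),r(n))\mid n\in N\}$. But the sets $H\alpha\cdot N^\flat$ in (\ref{tildes}) are built from the possibly disconnected group $N^\flat$, and the asserted indexing is by $\Out(G,H)=\Aut(G,H)/\Int(G,H)$, where $\Int(G,H)$ only demands innerness \emph{on $H$}. These two conditions are not equivalent: since $N$ centralizes $H$, ``inner on $K$'' implies ``inner on $H$'', but an automorphism inner on $H$ may still act by an outer automorphism of $N$ and hence of $K$. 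The non--identity components of $N^\flat$ (recall $Z_G(H)=Z_HN^\flat$ with $N=(N^\flat)^0$) exist precisely to absorb such automorphisms. So your sentence ``\ldots iff $\alpha(\alpha')^{-1}|_K$ is inner, which by (\ref{outer}) is exactly $\alpha\equiv\alpha'\pmod{\Int(G,H)}$'' equates two different equivalence relations; as written your argument indexes the cosets of the identity component by $\Aut(G,H)$ modulo innerness on $K$, which is a finer decomposition than the one the lemma asserts. To close the gap you must show that an $\alpha\in\Int(G,H)$, say with $\alpha|_H=\Ad(h_0)|_H$, coincides as an isometry of $G/H$ with some $\Ad(hn)$, $h\in H$, $n\in N^\flat$; this is where the structural hypothesis $Z_G(H)=Z_HN^\flat$ from (\ref{split-def}) actually enters, and it is the step your write-up skips.
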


We now define two subgroups 
$G^\dagger \subset \bI(M,ds^2)$ and
$G''^\dagger \subset \bI(M'',ds''^2)$ 
of the isometry groups by
\begin{equation}\label{old-daggers}
G^\dagger = {\bigcup}_{\alpha \in \Out(G,H) }\, 
	G \alpha \cdot N^\flat \subset \bI(M,ds^2)
\text{ and } G''^\dagger = 
{\bigcup}_{\beta \in \Out(G,K)}\, 
G\beta \subset \bI(M'',ds^2)\,.
\end{equation}
Here $(g\alpha, n)$ acts on $M$ by 
$xH \mapsto  g\alpha(x) n^{-1}H$ and
$g\beta$ acts on $M''$ by $xK \mapsto g\beta(x)K$.
\medskip

\begin{theorem}\label{old-isogroup}
Let $\pi: M \to M''$, be an isotropy--split fibration with $M = G/H$ and
$M'' = G/K$ as in {\rm (\ref{split-def})}, and $\rank K = \rank G$.  
Recall that $ds^2$ is the normal Riemannian metric on $M$ defined by the 
negative of the Killing form of $\gg$.  Then
the identity component $\bI^0(M,ds^2) = G^\flat$ and 
the full isometry group $\bI(M,ds^2) = G^\dagger$.
\end{theorem}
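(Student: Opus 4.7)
The plan is to combine the identification of the algebra of Killing vector fields (Proposition \ref{fullgroup1}) with the explicit description of the isotropy subgroup at the base point (Lemma \ref{old-isotropy}). The theorem then reduces to coset multiplication in $\bI(M,ds^2)$.

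First I would pin down the identity component. Proposition \ref{fullgroup1}(1), applied to the irreducible Riemannian manifold $(M,ds^2)$, identifies the algebra of all Killing fields as $d\ell(\gg) \oplus dr(\gn)$. Integrating produces the connected group generated by $\ell(G)$ and $r(N)$, which is exactly $(G^\flat)^0 = \ell(G)\cdot r(N)$ from (\ref{g-flat}); this is what the theorem denotes $G^\flat$ at the identity-component level (since $N = (N^\flat)^0$, the factor $r(N^\flat)$ in the definition of $G^\flat$ need not be connected). The hypothesis $\rank K = \rank G$, combined with the $G$-simplicity built into (\ref{split-def}) and the normal Killing-form metric, ensures that $(M,ds^2)$ falls into the irreducible setting required by Proposition \ref{fullgroup1}.

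For the full isometry group I would use the standard decomposition $\bI(M,ds^2) = \bI^0(M,ds^2)\cdot \bI(M,ds^2)_{x_0}$ with $x_0 = 1H$. Lemma \ref{old-isotropy} gives
\[
\bI(M,ds^2)_{x_0} = \bigcup_{\alpha \in \Out(G,H)} H\alpha \cdot \{(\ell(n),r(n)) : n \in N^\flat\}.
\]
Multiplying on the left by $(G^\flat)^0 = \ell(G)\cdot r(N)$, the $\ell(G)$ factor absorbs the $H$ in each coset (since $H \subset G$) to produce $G\alpha$, and the right translations combine via $r(N)\cdot r(N^\flat) = r(N^\flat)$ because $N = (N^\flat)^0$. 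The result is
\[
\bI(M,ds^2) = \bigcup_{\alpha \in \Out(G,H)} G\alpha \cdot r(N^\flat) = G^\dagger,
\]
matching (\ref{old-daggers}). Here the joint effect of $\ell(g)$ and $r(n)$ on $xH$ is precisely $gxn^{-1}H$, so no extra identifications are lost.

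The main obstacle is the identity-component step, which rests on Proposition \ref{fullgroup1} and hence on Reggiani's theorem; one must verify (or quote from the classification underlying Examples \ref{examples}) that $(M,ds^2)$ is de Rham irreducible under the stated equal-rank hypothesis. A secondary point worth checking is that each class $\alpha \in \Out(G,H)$ really is represented by an isometry of $(M,ds^2)$; this is automatic because the normal metric is induced from the $\Aut(G)$-invariant negative Killing form, so every $\alpha \in \Aut(G,H)$ preserves $ds^2$ and descends to a well-defined element of $\bI(M,ds^2)$. Once these two points are in hand, the theorem follows by the coset multiplication above.
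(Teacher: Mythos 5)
Your proposal is correct and follows essentially the same route as the paper: the identity component comes from Proposition \ref{fullgroup1} (Reggiani's theorem applied to the de Rham irreducible normal-metric space), and the full group is obtained by multiplying $\bI^0(M,ds^2)$ against the isotropy subgroup of Lemma \ref{old-isotropy}, which yields exactly the $G^\dagger$ of (\ref{old-daggers}). The only small caution is that (\ref{split-def}) does not literally assume $G$ simple, so the de Rham irreducibility you invoke should be justified from the stated hypotheses (equal rank, normal metric) rather than attributed to a simplicity assumption built into the setup.
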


\smallskip
\addtocounter{subsection}{1}
\centerline{\bf \thesubsection. Isometries of Constant Displacement.} 
\medskip
\noindent
We first accumulate a few simple observations:

(i) If $\rank K = \rank G$ and $g^\flat = (g,r(n)) \in G^\flat$
there is a $g^\flat$--invariant fiber $xF = \pi^{-1}(xK)$ of $M \to M''$,

(ii) the metrics $ds^2$ on $M = G/H$ and $ds''^2$ on $M'' = G/K$ 
are naturally reductive relative to $G$,

(iii) the isotropy–split manifold $(M,ds^2)$ is a geodesic orbit space,

(iv) the fiber $F$ of $\pi: M \to M''$ is totally geodesic in $(M,ds^2)$,

(v) $(F,ds^2|_F)$ is a geodesic orbit space.

\smallskip\noindent
The splitting fibration construction,
starting with Proposition \ref{cw-conn} just below, is used for
a flat rectangle argument:  $\xi_1$ and $\xi_2$ are
commuting Killing vector fields, typically
$\xi_2 \in dr(\gn)$ and $\xi_1 \in \gg$, such that $\xi_1 \perp \gn$ and 
both $g\times r(l) = \exp(\xi_1 + \xi_2)$ and $r(l) = \exp(\xi_2)$
have the same constant displacement.  Then the 
$\exp(t_1\xi_1 + t_2\xi_2)(1H)$, for $0 \leqq t_ \leqq 1$, 
form a flat rectangle.  There $r(l)$ is displacement along one side while
$g\times r(l)$ is displacement along the diagonal.  Since these displacements
are the same one argues that $\xi_1 = 0$.  The result is

\begin{proposition}\label{cw-conn}
Suppose that $\rank K = \rank G$.
Let $\Gamma$ be a subgroup of $G^\flat$ such that every $\gamma \in \Gamma$ is 
an isometry of constant displacement on $(M,ds^2)$.
Then $\Gamma \subset (Z_G \times r(N^\flat))$ where $Z_G$ is the center
of $G$.
\end{proposition}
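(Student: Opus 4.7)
The plan is a flat--rectangle argument on $(M, ds^2)$, combining Propositions~\ref{fullgroup1}(1) and \ref{fullgroup1}(3): the direct--sum decomposition of the Killing algebra and the identification of constant--length Killing fields with $dr(\gn)$.

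Let $\gamma \in \Gamma \subset G^\flat$ be of constant displacement $c$. Since $G^\flat$ is compact and connected, I write $\gamma = \exp(\xi)$ for a Killing field $\xi$ lying in the Lie algebra of a maximal torus of $G^\flat$ containing $\gamma$, and decompose $\xi = \xi_1 + \xi_2$ with $\xi_1 = d\ell(X)$, $X \in \gg$ chosen orthogonal to $\gn$, and $\xi_2 = dr(Y)$, $Y \in \gn$. The two fields commute because $\ell(G)$ and $r(N^\flat)$ do, so $\gamma = \ell(\exp X) \cdot r(\exp Y)$. By Proposition~\ref{fullgroup1}(3) the field $\xi_2$ has constant length, and the flow $t \mapsto r(\exp tY)$ is a one--parameter family of constant--displacement isometries. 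Using $\rank K = \rank G$ and de~Siebenthal--type conjugacy in $G$, I conjugate $\gamma$ within $\ell(G)$ so that $g \in T \subset K = HN$, and then select $l \in N$ so that $r(l)(x_0) = \gamma(x_0)$ at the base point $x_0 = 1H$; since $r(l)$ is of constant displacement and matches $\gamma$ at $x_0$, both have the same constant displacement $c$.

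Now the flat rectangle. The commuting pair $(\xi_1, \xi_2)$ generates a $2$--dimensional torus orbit through $x_0$ that is flat (torus orbits in a normal homogeneous space are flat), and since $X \perp \gn$ in $\gg$ the tangent vectors $\xi_1(x_0), \xi_2(x_0)$ are orthogonal in $T_{x_0}M$, so locally the orbit is a Euclidean rectangle. On this rectangle, $r(l)$ moves $x_0$ along the side of length $|\xi_2|_{x_0}$ while $\gamma$ moves $x_0$ along the diagonal of length $\sqrt{|\xi_1|_{x_0}^2 + |\xi_2|_{x_0}^2}$. Setting both equal to $c$ forces $|\xi_1|_{x_0} = 0$, i.e.\ $X \in \gh$. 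Re--running the argument at each base point $xH$ (equivalently, replacing $\gamma$ by $\ell(x)^{-1}\gamma\,\ell(x)$, which is again of constant displacement $c$) yields $\Ad(x^{-1})X \in \gh$ for every $x \in G$. For $\gg$ simple with $H \ne G$, the intersection $\bigcap_{x\in G} \Ad(x)\gh$ is the largest $\Ad(G)$--invariant subspace of $\gh$, namely $\gh \cap Z_\gg$; tracking the kernel lattice of $\exp\colon \gt \to T$ in the maximal torus then promotes this to $g = \exp X \in Z_G$. Thus $\gamma \in \ell(Z_G) \cdot r(N^\flat)$, and applying this to every $\gamma \in \Gamma$ gives the conclusion.

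The main obstacle is the flat--rectangle identity. The $(\xi_1, \xi_2)$--orbit through $x_0$ is a torus, and the claim that $d_M(x_0, \gamma(x_0))$ equals $\sqrt{|\xi_1|_{x_0}^2 + |\xi_2|_{x_0}^2}$ requires either unwrapping to the Euclidean cover of this torus or arranging $\xi_i$ to be small enough that the flat orbit embeds as an honest Euclidean rectangle (which can be done by passing to a suitable iterate or conjugate of $\gamma$); without this, torus wrapping could cut the diagonal short and destroy the conclusion. The subsequent lattice analysis --- distinguishing $X = 0$ from $\exp X \in Z_G$ when $\gamma$ has finite order --- is a separate technical point requiring careful bookkeeping of central lifts inside the maximal torus of $G$.
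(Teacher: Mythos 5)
Your proof is essentially the paper's own argument: the paper establishes Proposition \ref{cw-conn} by exactly this flat--rectangle comparison --- commuting Killing fields $\xi_1 \in d\ell(\gg)$ with $\xi_1 \perp \gn$ and $\xi_2 \in dr(\gn)$, with $r(l) = \exp(\xi_2)$ displacing along one side and $\gamma = \exp(\xi_1+\xi_2)$ along the diagonal, forcing $\xi_1 = 0$. The two points you flag as obstacles (making the diagonal--length identity rigorous against torus wrapping, and the endgame bookkeeping distinguishing $X=0$ from $\exp X \in Z_G$) are precisely the details the paper itself leaves to \cite{W2018}, so your write-up matches the paper's proof in both strategy and level of rigor.
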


Proposition \ref{cw-conn} applies, in particular, to Examples \ref{examples}(1)
through Examples \ref{examples}(4).  Now we look for the components of
the full isometry group.

\begin{lemma}\label{outers} {\rm \cite[Lemma 5.5]{W2018}}
Suppose that $\rank K = \rank G$.
Let $\alpha \in \Out(G,H)$ and $\gamma \in G^\flat \alpha$ such that
both $\gamma$ and $\gamma^2$ are isometries of constant displacement on
$(M,ds^2)$.  Then $\alpha|_H$ is an inner automorphism of $H$
and $\gamma \in (Z_G \times r(N^\flat))$.
\end{lemma}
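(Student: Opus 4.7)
The strategy is the analogue, for isotropy-splitting fibrations, of the argument for Proposition~\ref{transl-3} in the group-manifold setting; it combines Proposition~\ref{dS} (de Siebenthal), the flat-rectangle technique described just before Proposition~\ref{cw-conn}, and an iteration that reduces the outer-component case to Proposition~\ref{cw-conn} itself. Write $\gamma = (g\alpha, r(n))$ with $g \in G$, $n \in N^\flat$, and representative $\alpha \in \Aut(G,H) \cap \Aut(G,K)$ of its $\Out(G,H)$-class, acting as $\gamma(xH) = g\alpha(x) n^{-1} H$. The composition law in $G^\dagger$ yields $\gamma^2 = (g\alpha(g)\alpha^2, r(n\alpha(n)))$, and the induced isometry on the base $(M'',ds''^2)$ is $\bar{\gamma}(xK) = g\alpha(x)K \in G\alpha$.

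First, I would exploit $\rank K = \rank G$ to put $\gamma$ in standard form. One may choose a maximal torus $T \subset K$ of $G$ stabilized by $\alpha$, and then Proposition~\ref{dS} applied to the component $G\alpha \subset \<G,\alpha\>$ shows that every element of $G\alpha$ is $G$-conjugate to an element of $T\alpha$. Conjugation inside $\bI(M,ds^2)$ preserves the constant-displacement property of both $\gamma$ and $\gamma^2$, so I may assume $g \in T$ and $g$ commutes with $\alpha$. The square then simplifies to $\gamma^2 = (g^2 \alpha^2, r(n\alpha(n)))$. Because $\Out(G,H)$ is finite, some iterate $\gamma^{2^k}$ lies in $G^\flat$; each such iterate is still of constant displacement, so Proposition~\ref{cw-conn} applied to $\gamma^{2^k}$ forces $g^{2^k} \in Z_G$.

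The decisive step is to upgrade this to $[\alpha] = [1]$ in $\Out(G,H)$, which pushes $\gamma$ into $G^\flat$ and then, by a final application of Proposition~\ref{cw-conn}, into $Z_G \times r(N^\flat)$. Here I would adapt the flat-rectangle argument preceding Proposition~\ref{cw-conn}: for each $\xi \in \gh$ one builds commuting Killing fields from the left $G$-action and the right $N^\flat$-action, with a correction that absorbs the $\alpha$-twist into a vertical (fiber) translation. The constant displacement of $\gamma$ along one side of the resulting flat rectangle must equal that along its diagonal, and letting $\xi$ run through $\gh$ forces $\Ad(g\alpha)|_\gh$ to agree with an inner automorphism of $H$; equivalently, $\alpha|_H$ is inner.

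The main obstacle is this last step. The flat-rectangle argument in Proposition~\ref{cw-conn} relies on genuinely commuting Killing fields that lie in the identity component $G^\flat$, whereas here the outer twist $\alpha$ introduces asymmetries that must be neutralized. The second hypothesis of the lemma—that $\gamma^2$ is also of constant displacement—is precisely what permits the iteration into $G^\flat$ in the preceding paragraph, so that the untwisted rectangle comparison can be brought to bear on the twisted situation.
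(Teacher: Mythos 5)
First, a point of reference: the paper does not prove this lemma at all --- it is quoted from \cite[Lemma 5.5]{W2018} --- so the only basis for comparison is the machinery the paper assembles around it (Propositions \ref{dS} and \ref{cw-conn}) and the group--manifold model, Proposition \ref{transl-3}. Your plan is in the right spirit (normalize $\gamma$ by conjugation, exploit the flat rectangle, use the hypothesis on $\gamma^2$), but as written it has gaps that would not survive being carried out.

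The iteration step fails twice over. You assert that every iterate $\gamma^{2^k}$ is of constant displacement, but the hypothesis supplies this only for $\gamma$ and $\gamma^{2}$; constant displacement of a single isometry does not pass to its powers (if it did, the hypothesis on $\gamma^{2}$ here, and in Proposition \ref{transl-3}, would be redundant). Moreover $\gamma^{2^k}\in G^\flat$ requires $\alpha^{2^k}$ to be trivial in $\Out(G,H)$, which never happens when $\alpha$ has odd order $>1$ in $\Out(G,H)$ --- and such classes occur in exactly the examples the paper lists ($E_6/A_2A_2A_2$, $F_4/A_2A_2$, $E_8/A_4A_4$, triality for $D_4$). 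So the reduction to Proposition \ref{cw-conn} does not get off the ground in general. More seriously, the actual content of the lemma --- that $\alpha|_H$ is inner --- is the step you explicitly defer (``the main obstacle is this last step''), and ``absorbing the $\alpha$--twist into a vertical translation'' is not a construction: the flat rectangle of Proposition \ref{cw-conn} compares the displacements of $\exp(\xi_2)$ and $\exp(\xi_1+\xi_2)$ along a surface swept out by commuting Killing vector fields, and an element of an outer component of $\bI(M,ds^2)$ is not of that form, so the side--versus--diagonal comparison has no meaning for $\gamma$ itself. The argument that actually works in this situation (see the proof of \cite[Theorem 4.3.3]{W1962}, i.e.\ Proposition \ref{transl-3} via Lemma \ref{transl-2}) extracts information directly from the pair $(\gamma,\gamma^2)$: constancy of both displacements yields an identity of the form $\alpha(ug\alpha(u^{-1}))=ug\alpha(u^{-1})$ for all $u$, from which one reads off that $\alpha$ is inner on the relevant subgroup; no passage to higher powers or into the identity component is required. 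Finally, a small correction: Proposition \ref{dS} conjugates $\gamma$ into $T\alpha$ where $T$ is a maximal torus of the centralizer $Z_G(\alpha)=G^\alpha$, not a maximal torus of $G$ stabilized by $\alpha$; when $\alpha$ is outer these are different, and the distinction matters for the normal form you use.
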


\begin{theorem}\label{cw-cover}
Let $\pi: (M,ds^2) \to (M'',ds''^2)$, be an isotropy--split fibration with 
$M = G/H$ and $M'' = G/K$ as in {\rm (\ref{split-def})}, and 
$\rank K = \rank G$.  Recall that $ds^2$ is the normal Riemannian 
metric on $M$ defined by the negative of the Killing form of $\gg$. 
Let $(M,ds^2) \to \Gamma\backslash (M,ds^2)$ be a Riemannian
covering whose deck transformation group $\Gamma$ consists of isometries
of constant displacement.  Then $\Gamma \subset (Z_G \times r(N^\flat))$ and
$\Gamma\backslash (M,ds^2)$ is homogeneous.  In other words the Homogeneity 
Conjecture is verified for $(M,ds^2)$.
\end{theorem}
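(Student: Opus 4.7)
The plan is to assemble the two principal inputs already in hand: Theorem \ref{old-isogroup}, which lists every component of the full isometry group, and Lemma \ref{outers}, which controls constant displacement isometries living in any of those components. First I would pick an arbitrary $\gamma \in \Gamma$. By Theorem \ref{old-isogroup} we may write $\gamma \in G^\flat \alpha$ for some $\alpha \in \Out(G,H)$. Because $\Gamma$ is a group, $\gamma^2 \in \Gamma$ as well, and by hypothesis both $\gamma$ and $\gamma^2$ are isometries of constant displacement on $(M,ds^2)$. Lemma \ref{outers} then applies and forces $\alpha|_H$ to be inner and $\gamma \in (Z_G \times r(N^\flat))$. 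Since $\gamma$ was arbitrary, this establishes the inclusion $\Gamma \subset (Z_G \times r(N^\flat))$.

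Second, I would deduce homogeneity of $\Gamma\backslash (M,ds^2)$ from this inclusion by exhibiting a transitive centralizer. Identifying $Z_G$ with $\ell(Z_G)$, every element of $\ell(Z_G) \cdot r(N^\flat)$ commutes with every left translation: $\ell(Z_G)$ is the centre of $\ell(G)$, and right translations always commute with left translations. Hence $\ell(G)$ centralizes $\Gamma$. By Proposition \ref{CW-translation}'s companion argument, the $\ell(G)$ action therefore descends to a well-defined isometric action on $\Gamma\backslash M$; since $\ell(G)$ is already transitive on $M$, its induced action on $\Gamma\backslash M$ is transitive, so $\Gamma\backslash (M,ds^2)$ is Riemannian homogeneous. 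This is exactly the conclusion of the Homogeneity Conjecture for $(M,ds^2)$.

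The substantive obstacle has already been dispatched in Lemma \ref{outers}; without it, Proposition \ref{cw-conn} would yield only the identity-component assertion $\Gamma \cap G^\flat \subset (Z_G \times r(N^\flat))$, leaving the components $G^\flat \alpha$ with $\alpha \in \Out(G,H)$ nontrivial unresolved. The device of applying the constant displacement hypothesis simultaneously to $\gamma$ and $\gamma^2$ is precisely what pulls the outer-component case back into the flat-rectangle setup behind Proposition \ref{cw-conn}. Beyond invoking Lemma \ref{outers}, no further casework is required: the result is a clean consequence of the rank equality $\rank K = \rank G$ through Theorem \ref{old-isogroup} and the commutation between $\ell(G)$ and $r(N^\flat)$.
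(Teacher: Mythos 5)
Your proposal is correct and follows the same route the paper intends: the full isometry group is identified by Theorem \ref{old-isogroup}, the identity component is handled by Proposition \ref{cw-conn}, the outer components by applying Lemma \ref{outers} to $\gamma$ and $\gamma^2$ (legitimate since $\Gamma$ is a group), and homogeneity of $\Gamma\backslash(M,ds^2)$ follows because $\ell(Z_G)\cdot r(N^\flat)$ centralizes the transitive group $\ell(G)$, which therefore descends to a transitive isometry group of the quotient. Nothing is missing.
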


\smallskip
\addtocounter{subsection}{1}
\centerline{\bf \thesubsection.  Odd Real Stieffel Manifolds.}

\noindent
The idea of isotropy split fibrations came from the fibrations of 
Stieffel manifolds over Grassmann manifolds.  The cases $\chi(M'') \ne 0$
are covered by Theorem \ref{cw-cover}.  That leaves just one case,
Example \ref{examples}(5), the case of the
odd dimensional real Grassmann manifold where $\chi(M'') = 0$:
\begin{equation}\label{odd-fibr}
M = SO(2s+2t+2)/SO(2s+1) \to SO(2s+2t+2)/(SO(2s+1)\times SO(2t+1)) = M''.
\end{equation}
We know $\bI^0(M,ds^2) = G \times r(N)$ by Proposition \ref{fullgroup1}.
The following variation on Proposition \ref{cw-conn} uses an argument
of C\' ampoli \cite{C1986}.

\begin{proposition}\label{cw-less}
Let $\pi: M \to M''$ as in {\rm (\ref{split-def})} with
$\chi(M) = 0$.  If $\Gamma$ is a group of isometries of constant 
displacement on $(M,ds^2)$, and if $\Gamma \subset \bI^0(M,ds^2)$,
then $\Gamma \subset (Z_G \times r(N))$.  Conversely if 
$\Gamma \subset (Z_G \times r(N))$ then every
$\gamma \in \Gamma$ is an isometry of constant displacement on $(M,ds^2)$.
\end{proposition}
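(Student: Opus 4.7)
The plan has two parts: a direct displacement computation for the converse, and a flat-rectangle argument after Câmpoli \cite{C1986} for the main implication, with the hypothesis $\chi(M)=0$ replacing the rank equality used in Proposition \ref{cw-conn}.

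\textbf{Converse.} For $\gamma = \ell(z) r(n)$ with $z \in Z_G$ and $n \in N$, left-invariance of the normal metric together with centrality of $z$ gives
\[
d(xH,\, \gamma(xH)) \;=\; d(xH,\, zxn^{-1}H) \;=\; d(H,\, x^{-1}zxn^{-1}H) \;=\; d(H,\, zn^{-1}H),
\]
independent of $x \in G$, so every such $\gamma$ is of constant displacement.

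\textbf{Forward direction.} Let $\gamma \in \Gamma \subset \bI^0(M,ds^2)$. By Proposition \ref{fullgroup1}(1) the Lie algebra of $\bI^0(M,ds^2)$ is $d\ell(\gg) \oplus dr(\gn)$. Since $\bI^0(M,ds^2)$ is a compact connected Lie group, I would embed $\gamma$ in a maximal torus and, using that $\ell(G)$ and $r(N)$ commute inside $\bI(M,ds^2)$, factor $\gamma = \exp(\xi_1)\exp(\xi_2)$ with $\xi_1 \in d\ell(\gg)$, $\xi_2 \in dr(\gn)$, and $[\xi_1,\xi_2]=0$. Because $(M,ds^2)$ is a geodesic orbit space (fact (iii) of Subsection 7C) and the two commuting Killing fields arise from naturally reductive torus actions, the two-parameter orbit $(s,t) \mapsto \exp(s\xi_1)\exp(t\xi_2)\cdot x$ is a flat totally geodesic surface through $x$, on which $\gamma$ acts as the diagonal translation. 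The standard flat-rectangle calculation, as in the proof of Proposition \ref{cw-conn}, yields
\[
d(x,\,\gamma(x))^2 \;=\; |\xi_1(x)|^2 + |\xi_2(x)|^2.
\]
By Proposition \ref{fullgroup1}(2), every $\xi_2 \in dr(\gn)$ has constant length on $(M,ds^2)$. Thus constancy of displacement of $\gamma$ forces $|\xi_1(\cdot)|$ to be constant on $M$, so $\xi_1$ is itself a constant-length Killing field of the form $d\ell(\eta)$; applying Proposition \ref{fullgroup1}(2) again gives $\xi_1 \in d\ell(\gt)$.

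\textbf{The main obstacle.} The remaining step is to conclude $\exp(\xi_1) \in \ell(Z_G)$, so that $\gamma \in Z_G \cdot r(N)$. In Proposition \ref{cw-conn} this was immediate because the equirank hypothesis gave $\gt = 0$ through Proposition \ref{fullgroup1}(3). In the present setting $\gt$ may exceed $\gz(\gg)$, so the reduction to the center is no longer formal. This is precisely where $\chi(M)=0$ is used: following Câmpoli \cite{C1986}, the hypothesis $\chi(M)=0$ together with the isotropy-split fibration structure produces a nowhere-vanishing Killing field from $r(N)$ and thereby enough room to classify which $\eta \in \gt$ can lift through the flat-rectangle construction to a genuine constant-displacement isometry of $(M,ds^2)$; the conclusion is that only central $\eta$ survive. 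Completing this Câmpoli-style analysis is the technical heart of the argument; with it in hand, $\xi_1$ exponentiates into $\ell(Z_G)$ and $\gamma \in Z_G \cdot r(N)$, finishing the proof.
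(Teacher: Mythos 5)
Your converse direction is correct and is the standard computation. The forward direction, however, stops exactly where this proposition begins to differ from Proposition \ref{cw-conn}: after factoring $\gamma=\ell(g)r(n)$ and running the flat-rectangle reduction you arrive at ``$\xi_1$ is a constant-length Killing field, hence lies in $d\ell(\gt)$,'' and then you explicitly defer the decisive step $\exp(\xi_1)\in\ell(Z_G)$ to an unspecified ``C\'ampoli-style analysis.'' That step is the entire content of the proposition beyond Proposition \ref{cw-conn} --- everything before it is shared with the equal-rank case --- so what you have written is a reduction, not a proof. (The paper itself gives no more detail: it states the proposition as a variation on Proposition \ref{cw-conn} that ``uses an argument of C\'ampoli \cite{C1986}.'' You have correctly located the missing ingredient, but locating it is not supplying it. The substance is C\'ampoli's explicit matrix computation on the odd Stiefel manifold of (\ref{odd-fibr}), showing directly that $\ell(g)r(n)$ of constant displacement forces $g=\pm I$.)

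Two further problems with the setup. First, your account of where the hypothesis enters is off: in the situation of (\ref{split-def}) one always has $\rank H<\rank K\leqq\rank G$, so $\chi(M)=\chi(G/H)=0$ automatically; the operative hypothesis (see the text surrounding (\ref{odd-fibr})) is $\chi(M'')=0$, i.e.\ $\rank K<\rank G$, which is what disables Proposition \ref{fullgroup1}(3) (the conclusion $\gt=0$) and the existence of a $\gamma$-invariant fiber used in the equal-rank argument (observation (i) of Subsection 7C). It has nothing to do with ``producing a nowhere-vanishing Killing field'' --- $dr(\gn)$ supplies those in the equal-rank case too. Second, the claim that $(s,t)\mapsto\exp(s\xi_1)\exp(t\xi_2)x$ is a flat totally geodesic surface for arbitrary commuting Killing fields is unjustified: even for a single $\xi\in\gg$ the curve $t\mapsto\exp(t\xi)x_0$ need not be a geodesic of a naturally reductive space unless $\xi$ is suitably positioned relative to the reductive complement, which is why the sketch preceding Proposition \ref{cw-conn} imposes $\xi_1\perp\gn$ and works at a chosen base point; and the identity $d(x,\gamma(x))^2=|\xi_1(x)|^2+|\xi_2(x)|^2$ presupposes that the diagonal of the rectangle is a minimizing geodesic, which is at best a local statement.
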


A look at the relevant group structure shows that
$\bI(M,ds^2) =
\bI^0(M,ds^2) \cup \sigma\cdot \bI^0(M,ds^2)$
where $\sigma$ is the symmetry on the symmetric space $(M'',ds''^2)$.  A short
matrix calculation shows that every element of $\sigma\cdot \bI^0(M,ds^2)$
has a fixed point on $M$.  In view of Theorem \ref{cw-less} we can adapt
the $\chi(M) > 0$ argument to the odd Stieffel manifold split fibration,
as follows.

\begin{proposition}\label{cw-st} 
Let $\pi: (M,ds^2) \to (M'',ds''^2)$ as in {\rm (\ref{odd-fibr})}.  If $\Gamma$
is a group of isometries of constant displacement on the Stieffel manifold
$(M,ds^2)$ then $\Gamma \subset (\{\pm I\} \times r(SO(1+2t)))$,
and $(M,ds^2) = \Gamma \backslash (M,ds^2)$ is homogeneous.
Thus the Homogeneity Conjecture holds for the Stieffel manifold fibration
{\rm (\ref{odd-fibr})} over an odd dimensional real Grassmann manifold.
\end{proposition}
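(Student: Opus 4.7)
The plan is to leverage the fact that everything non-trivial lives in the identity component, so that Proposition \ref{cw-less} does the heavy lifting. I would first record the structural input already furnished by the discussion preceding the statement: $\bI(M,ds^2) = \bI^0(M,ds^2) \cup \sigma\cdot \bI^0(M,ds^2)$, and every element of the outer coset $\sigma\cdot \bI^0(M,ds^2)$ has a fixed point on $M$. From here, a general principle kicks in: if $\gamma$ has constant displacement $c$ and some fixed point $x_0$, then $c = \rho(x_0,\gamma x_0) = 0$, so $\gamma = 1$. Since the identity lies in $\bI^0(M,ds^2)$ and not in the non-trivial coset, this forces every $\gamma \in \Gamma$ to belong to $\bI^0(M,ds^2)$.

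Next, with $\Gamma \subset \bI^0(M,ds^2)$ and all its elements being constant displacement isometries, Proposition \ref{cw-less} applies directly and gives $\Gamma \subset Z_G \times r(N)$. For $G = SO(2s+2t+2)$, the center is $\{\pm I\}$ (the dimension $2(s+t+1)$ is even, so $-I \in G$), and $N = SO(2t+1)$, so this specializes to the asserted containment $\Gamma \subset \{\pm I\} \times r(SO(2t+1))$.

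For the homogeneity of $\Gamma \backslash (M,ds^2)$, I would exhibit a transitive group of isometries centralizing $\Gamma$. The natural candidate is $\ell(G)$: it acts transitively on $M = G/H$, and each element of $\ell(G)$ commutes with every element of $Z_G \times r(N)$, since $Z_G$ is central in $G$ (so $\ell(z)$ commutes with all of $\ell(G)$), and right translations $r(n)$ commute with left translations by construction. Hence $\ell(G)$ descends to a transitive isometric action on $\Gamma \backslash (M,ds^2)$, which is therefore Riemannian homogeneous. This verifies the Homogeneity Conjecture for the fibration in \eqref{odd-fibr}.

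The only conceptual hurdle is Step 1, i.e.\ the assertion that every element of $\sigma\cdot \bI^0(M,ds^2)$ has a fixed point on $M$; however, the paper has reduced this to a direct matrix computation in the orthogonal group, mirroring Lemma \ref{eq-rank-3}, so once that is invoked the remaining argument is essentially bookkeeping. The $\chi(M)=0$ obstruction that blocked the clean $\rank K = \rank G$ machinery on the inner component has been absorbed into Proposition \ref{cw-less} via C\'ampoli's argument, which is precisely what makes this adaptation go through.
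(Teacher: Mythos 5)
Your proposal is correct and follows essentially the same route as the paper: the paper's argument is precisely that the short matrix calculation puts every element of $\sigma\cdot\bI^0(M,ds^2)$ in the fixed-point-having class (so nontrivial constant displacement isometries must lie in $\bI^0(M,ds^2)$), after which Proposition \ref{cw-less} yields $\Gamma \subset (Z_G \times r(N)) = (\{\pm I\}\times r(SO(2t+1)))$ and transitivity of the centralizing group $\ell(G)$ gives homogeneity of the quotient. Your write-up just makes explicit the bookkeeping that the paper leaves as "adapt the $\chi(M)>0$ argument."
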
 

Since the odd dimensional real Grassmann manifolds are the only irreducible
Riemannian symmetric spaces $M = G/K$, $G$ simple, that fit the decomposition
$K = HN$ of (\ref{split-def}), Proposition \ref{cw-st} can be made to
appear to be more general:
\begin{quote}
Let $\pi: (M,ds^2) \to (M'',ds''^2)$ as in {\rm (\ref{split-def})} where
$M'' = G/K$, $G$ simple, is a Riemannian symmetric space.  Then
the Homogeneity Conjecture holds for $(M,ds^2)$.
\end{quote}
This will become more interesting when we look at the group 
manifold case.

\section{\bf Manifolds of Positive Euler Characteristic.}\label{sec8}
\setcounter{equation}{0}
\setcounter{subsection}{0}
In this section we describe the current state of research on the Homogeneity 
Conjecture for manifolds $(M,ds^2)$ of nonzero Euler characteristic.  If 
$M = G/K$ and the metric $ds^2$ is normal relative to $G$ then the 
Homogeneity Conjecture was verified in Theorem \ref{cw-cover}.  But if
$ds^2$ is not normal there are some open problems.  The current result
for $ds^2$ that need not be normal is as follows.

\begin{theorem}\label{euler-pos}
Suppose that $M = G/K$ is compact, connected and simply connected,
with $\chi(M) > 0$ i.e. $\rank K = \rank G$.  Suppose further that 
$G$ is simple, that $ds^2$ is a $G$--invariant Riemannian metric on $M$
{\rm (not necessarily normal)}, and that $G = \bI^0(M,ds^2)$.

Let $\Gamma$ be a group of isometries of constant
displacement on $(M,ds^2)$, such that each $\Ad(\gamma)$, $\gamma \in \Gamma$, 
is an inner automorphism of $G$.
{\rm \{This is automatic unless $G$ is of type $A_n$ ($n \geqq 2)$,
$D_n$ ($n \geqq 4$) or $E_6$.\}}.
Then $\Gamma$ centralizes $G$, each component of $\bI(M,ds^2)$ contains
at most one element of $\Gamma$, and the Riemannian quotient manifold
$\Gamma \backslash (M,ds^2)$ is homogeneous.
Conversely, of course, if\, $\Gamma \backslash (M,ds^2)$ is homogeneous
then every $\gamma \in \Gamma$ is an isometry of constant
displacement on $(M,ds^2)$.
\end{theorem}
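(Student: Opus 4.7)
The plan is to follow the strategy of Proposition \ref{cw-conn} (and of Lemma \ref{eq-rank-2} in the isotropy-irreducible setting), substituting the inner-$\Ad$ hypothesis for isotropy irreducibility. Fix $x_0 = 1K$ and choose a maximal torus $T$ of $G$ with $T \subseteq K$; this is possible since $\rank K = \rank G$.

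For each $\gamma \in \Gamma$, the hypothesis that $\Ad(\gamma)$ is inner produces $g \in G$ with $\Ad(\gamma) = \Ad(g)$, so $\beta := g^{-1}\gamma$ centralizes $G$ inside $\bI(M,ds^2)$. Writing $Z := Z_{\bI(M,ds^2)}(G)$, we obtain $\gamma = g\beta$ with $\beta \in Z$. Since $\beta$ commutes with the transitive $G$-action, the displacement function $x \mapsto \rho(x, \beta(x))$ is $G$-invariant and hence constant, so $\beta$ is automatically an isometry of constant displacement, with some value $d$. Now I apply Proposition \ref{dS} to the coset $G\beta = \beta G$ in $\bI(M,ds^2)$: because $G$ centralizes $\beta$, a maximal torus of $Z_{\bI(M,ds^2)}(\beta)$ is a maximal torus of $G$, which we may take to be $T$. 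Every element of $G\beta$ is then $\Ad(G)$-conjugate to an element of $T\beta$, and since such conjugation preserves constant displacement, we may replace $\gamma$ by a conjugate and assume $\gamma = t\beta$ with $t \in T \subseteq K$. Then $t$ fixes $x_0$, so $\gamma(x_0) = t\beta(x_0) = \beta(t(x_0)) = \beta(x_0)$, and $\gamma$ and $\beta$ share the common displacement $d$.

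At $x_0$ apply Proposition \ref{oz1}: both $\gamma$ and $\beta$ preserve every minimizing geodesic $\sigma$ from $x_0$ to $\beta(x_0)$, each translating $\sigma$ by the common distance $d$. Thus $\gamma$ and $\beta$ agree along $\sigma$; cancelling $\beta$ shows that $t$ fixes every point of $\sigma$, and differentiating at $x_0$ gives $dt_{x_0}(\dot\sigma(0)) = \dot\sigma(0)$. The same reasoning applied to each power $\gamma^n = t^n\beta^n$, which is again of constant displacement because $\Gamma$ is a group, shows that $(dt_{x_0})^n$ fixes the initial tangent of a minimizing geodesic from $x_0$ to $\beta^n(x_0)$, for every $n \in \Z$.

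The main obstacle is upgrading these fixed-direction constraints to the statement $dt_{x_0} = \mathrm{id}$ on all of $T_{x_0}M$; equivalently that $t$ acts trivially on $M$, which for $G$ simple places $t$ in the kernel of the $G$-action and hence in $Z(G)$. The hard part will be showing that the collection of constraints obtained by varying $n \in \Z$—$(dt_{x_0})^n$ fixes the initial tangent of a minimizing geodesic from $x_0$ to $\beta^n(x_0)$—is strong enough, together with the $T$-weight decomposition of $\gm = T_{x_0}M$, to force $dt_{x_0} = \mathrm{id}$. The weight-theoretic input is that the $T$-weights occurring on $\gm$ span $\gt^*$, which is the case because $G$ is simple and $K$ is a proper subgroup of maximal rank. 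Once $t \in Z(G)$ is established, $\gamma = t\beta \in Z$ centralizes $G$, so $\Gamma \subseteq Z$; the transitive $G$-action on $M$ then descends to a transitive isometry group on $\Gamma \backslash (M,ds^2)$, giving homogeneity, and the at-most-one-per-component conclusion follows from the analysis of $\Gamma \cap G \subseteq Z(G)$ together with the freeness condition in the Riemannian quotient construction.
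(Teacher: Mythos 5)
Your opening reduction of $\gamma$ to the form $g\beta$ with $\beta \in Z_{\bI(M,ds^2)}(G)$, and the observation that $\beta$ is then automatically of constant displacement, match the paper's first step (Lemma \ref{cent-inner-G} and the definition of $G''$ in (\ref{inners2})). But the proof is not complete, and you say so yourself: after conjugating $g$ into a maximal torus $T \subseteq K$ via Proposition \ref{dS}, all you extract is that $dt_{x_0}$ fixes the initial tangent of one minimizing geodesic from $x_0$ to $\beta(x_0)$, plus one such condition for each power $\gamma^n$. If $t$ has finite order this is finitely many fixed directions in a tangent space of unbounded dimension, and there is no reason the minimizing direction is generic for the $T$--weight decomposition of $\gm$: a non-central $t$ fixes pointwise the sum of the root planes $\gg^\varphi \subset \gm$ with $\varphi(\log t) \in 2\pi\Z$, and the minimizing geodesic could well start inside that subspace. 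So the ``main obstacle'' you flag is the actual content of the theorem, and the remark that the $\gm$--weights span $\gt^*$ does not close it.

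The paper closes exactly this gap by a mechanism that your de Siebenthal reduction discards. Keep $\gamma = g\beta$ with $g \in G$ having a fixed point $x$ (for $\chi(M)>0$ every element of the connected group $G$ has one, so no conjugation into $T$ is needed). For every $k$ in the isotropy group at $x$, the conjugate $k\gamma k^{-1} = (kgk^{-1})\beta$ is again of constant displacement and still sends $x$ to $\beta(x)$, because $\beta$ is central and $k(x)=x$; running your Ozols argument (Proposition \ref{oz1}) for each such conjugate shows that $dg_x$ fixes the entire isotropy orbit of $\dot\sigma(0)$, not just one vector. Since $\rank K = \rank G$, the isotropy representation has no trivial summand (Borel--de Siebenthal), and Lemma \ref{halfspace} together with Proposition \ref{is-central} then forces $g=1$, hence $\gamma = \beta$ centralizes $G$; this is Corollary \ref{no-outer}. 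The remaining assertions (at most one element of $\Gamma$ per component of $\bI(M,ds^2)$, and homogeneity of the quotient) follow as in the remark after Corollary \ref{no-outer} and in Proposition \ref{chi-outer}. In short: do not trade the full isotropy orbit of fixed directions for a single torus element; the orbit is what makes the half-space lemma applicable.
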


Theorem \ref{euler-pos} will be an immediate consequence of Corollary 
\ref{no-outer} and Proposition \ref{chi-outer}.  At this time it is not known 
whether we really need the restriction that each $\Ad(\gamma)$, 
$\gamma \in \Gamma$, be an inner automorphism of $G$.  Following 
Lemma \ref{outers} that was automatic for $ds^2$ normal.

First we describe the isometry group $\bI(M,ds^2)$ where
$M = G/K$, $\rank G = \rank K$, as described just above.  The description of 
$\bI(M,ds^2)$ is of the form $G^\dagger = \bigcup Gk$, with isotropy 
subgroup $K^\dagger = \bigcup Kk$, with $k$ specified by (\ref{outer})
and (\ref{daggers}). below.  Our description
is inspired by Cartan's description (\cite{C1927a}, \cite{C1927b}) 
of the full isometry group of a symmetric space --- or see 
\cite{W1962} or \cite{W1966}.

\medskip
\addtocounter{subsection}{1}
\centerline{\bf \thesubsection.  Basic Setup.}
\smallskip

The first step in describing the isometry group $\bI(G/H,ds^2)$ 
is to have an idea of the structure of the maximal possible group
$\bI^0(G/H,dt^2)$, especially when $dt^2$ is the normal Riemannian
metric.  Then one works out $G = \bI^0(G/H,ds^2)$ from the specific
structure of $\bI^0(G/H,dt^2)$.  This is especially useful when
$\chi(M) > 0$, when $(G/H,ds^2)$ is a group manifold with simply
transitive group $G$, and when $(G/H,ds^2)$ has strictly positive curvature.
So it is convenient to introduce the definition: 
\begin{equation}\label{mx-condition}
\text{if $G = \bI^0(G/H,ds^2)$ then $G$ is {\em isometry-maximal} for 
$\bI(G/H,ds^2$)}.
\end{equation}

In practice it is not so difficult to check (\ref{mx-condition}).
For example let $M$ be the group manifold $SU(2) = Sp(1) = S^3$  and let
$\{\omega_1, \omega_2, \omega_3\}$ be the left invariant Maurer--Cartan
forms.  The left invariant metrics $ds^2$, leading to isomorphism classes
of isometry groups, are represented by
\begin{itemize}
\item[(1)] $ds^2 = \omega_1^2 + \omega_2^2 + \omega_3^2$ for
$\bI(S^3,ds^2) = O(4)$, constant curvature,

\item[(2)] $ds^2 = \omega_1^2 + \omega_2^2 +a\omega_3^2$ with
$0 < a < 1$ for $\bI(S^3,ds^2) = \{SU(2)\times U(1)\}/\{\pm (1,1)\}$, and

\item[(3)] $ds^2 = \omega_1^2 + b\omega_2^2 +a\omega_3^2$ with $0 < a < b < 1$
for $\bI(S^3,ds^2) \{SU(2)\times [\pm (1,1)]\}/\{\pm (1,1)\}$.
\end{itemize}
A less trivial example: $Sp(n)$ is transitive on the complex projective space
$P^{2n-1}(\C) = SU(2n)/U(2n-1)$, so if $ds^2$ is the Fubini--Study metric on 
$P^{2n-1}(\C)$ then $Sp(n)$ is not isometry--maximal.
But if $ds^2$ is not the Fubini--Study metric
then $Sp(n)$ is isometry-maximal for $(Sp(n)/Sp(n-1)U(1), ds^2)$.
Two other classical cases are from $SO(2r+2)/U(r+1) = SO(2r+1)/U(r)$ and
$SO(7)/(SO(5)\cdot SO(2)) = G_2 /U(2)$.
See Onischik's paper \cite{O1962}, especially Table 7, for a more
comprehensive list.  Or see \cite[Part II, Section 4]{GOV1997}.

The trivial $Sp(1)$ example just above, shows that if $ds^2$ is not the
normal metric on $G/K$ then there can be inner automorphisms of $G$ that
act on $G/K$ but do not preserve $ds^2$.  

\medskip
\addtocounter{subsection}{1}
\centerline{\bf \thesubsection. The Isometry Group} 
\smallskip

We start with the base point $x_0 = 1K \in G/K = M$.  Then as in
(\ref{outer}) we have $\Aut(G,K)$, $\Int(G,K)$ and $\Out(G,K)$,
and $\Aut(G,K,ds^2)$, $\Int(G,K,ds^2)$ and $\Out(G,K,ds^2)$.
Consider the subgroups
\begin{equation}\label{inners}
G' = \bigcup \{Gk \mid k \in \bI(M,ds^2)_{x_0} \text{ and } \Ad(k)|_K 
        \text{ is inner}\} \text{ and } K' = G' \cap \bI(M,ds^2)_{x_0}\,,
	\text{ and }
\end{equation}
\begin{equation}\label{inners2}
G'' = \bigcup \{Gk \mid k \in \bI(M,ds^2)_{x_0} \text{ and } \Ad(k)
        \text{ is inner on } G\} \text{ and } 
	K'' = G'' \cap \bI(M,ds^2)_{x_0}\,,
\end{equation}
\begin{equation}\label{daggers}
G^\dagger = \left ({\bigcup}_{\Ad(k) \in \Out(G,K,ds^2)}\, 
        G'k\right ) \subset \bI(M,ds^2) \text{ and }
K^\dagger = G^\dagger \cap \bI(M,ds^2)_{x_0}\,.
\end{equation}
Here $gk$ acts on $M$ by $xK \mapsto gkxK = g\,kxk^{-1}K$.

Here are some obvious key remarks that will apply to $G$, $G'$, $G''$ and
$G^\dagger$.
\begin{lemma}\label{cent-inner-G}
Suppose $\chi(M) > 0$ and $k \in \bI(M,ds^2)_{x_0}$\,. If $\Ad(k)$ is inner
on $\bI^0(M,ds^2)_{x_0}$, then it is inner on $\bI^0(M,ds^2)$.  In other
words, if $\Ad(k)$ is outer on $\bI^0(M,ds^2)$ it is outer
on $\bI^0(M,ds^2)_{x_0}$\,. If
$\Ad(k)$ is inner on $\bI^0(M,ds^2)$ then there exists
$\gamma \in \bI^0(M,ds^2)k$ that centralizes $\bI^0(M,ds^2)$.
\end{lemma}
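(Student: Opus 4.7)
The plan is to treat the two assertions separately. Throughout, set $G = \bI^0(M,ds^2)$ and let $K$ denote the connected isotropy of $G$ at $x_0$. Since $G$ is open in $\bI(M,ds^2)$ and $M$ is simply connected (forcing $K$ connected via the homotopy sequence of $K\to G\to M$), one checks that $\bI^0(M,ds^2)_{x_0} = K$. The hypothesis $\chi(M) > 0$ is equivalent to $\rank K = \rank G$, so every maximal torus of $K$ is automatically a maximal torus of $G$; this equal--rank coincidence is what will drive the argument.

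For the first assertion, suppose $\Ad(k)|_K$ is inner, say $\Ad(k)|_K = \Ad(k_0)|_K$ with $k_0 \in K$. Replacing $k$ by $k k_0^{-1}$ (which still lies in $\bI(M,ds^2)_{x_0}$ and in the same coset $Gk$), I may assume that $\Ad(k)$ fixes $K$ pointwise, and in particular fixes some maximal torus $T \subset K \subset G$ pointwise. The crux is then the classical fact that any automorphism of a compact connected Lie group which fixes a maximal torus pointwise is inner, in fact realized by conjugation by an element of that very torus. I would prove this via the root space decomposition of $\gg_\C$: since $\Ad(k)$ commutes with $\ad(\gt)$ and each complex root space $\gg_\alpha$ is one dimensional, $\Ad(k)$ acts by a scalar $c_\alpha$ on each $\gg_\alpha$; the brackets $[\gg_\alpha,\gg_\beta] \subset \gg_{\alpha+\beta}$, together with compatibility with the compact real form, force $\alpha \mapsto c_\alpha$ to be a group homomorphism from the root lattice into $U(1)$; since $U(1)$ is divisible, this character extends to the full character lattice of $T$, producing $t \in T$ with $\Ad(t) = \Ad(k)$ on $\gg$. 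Hence $\Ad(k) \in \Int(G)$, which is exactly the (contrapositive of the) first assertion.

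The second assertion is then essentially tautological: writing $\Ad(k) = \Ad(g_0)$ for some $g_0 \in G$ and setting $\gamma := g_0^{-1} k \in Gk$, one has $\Ad(\gamma) = \mathrm{id}_G$, so $\gamma$ centralizes $G = \bI^0(M,ds^2)$. The only genuine obstacle in the argument is the torus--fixing step in the middle paragraph; everything else is the equal--rank observation or a coset manipulation.
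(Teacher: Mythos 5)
Your proof is correct and follows essentially the same route as the paper: the equal--rank condition makes a maximal torus of the isotropy group a maximal torus of $\bI^0(M,ds^2)$, innerness on the isotropy group yields a maximal torus of $\bI^0(M,ds^2)$ centralized by $\Ad(k)$, and the classical fact that an automorphism of a compact connected group fixing a maximal torus pointwise is inner finishes the first assertion, with the second assertion being the same coset manipulation the paper uses. The only difference is that you spell out the root--space proof of the classical torus--fixing fact, which the paper takes for granted.
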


\noindent \{Remark.  The case $S^{2n} = G/K = SO(2n+1)/SO(2n)$, where
$\bI(M,ds^2) = O(2n+1)$, and $k$ the diagonal matrix  
$\diag\{-1,-1,+1,\dots , +1\}$, 
shows that one can have $\Ad(k)$ inner on $G$ but outer on $K$.\}

\begin{proof} If $\Ad(k)$ is inner on $\bI^0(M,ds^2)_{x_0}$ then its
centralizer there contains a torus $T'$ that is a maximal torus in
$\bI^0(M,ds^2)$, so $\Ad(k)$ is inner on $\bI^0(M,ds^2)$.  If
$\Ad(k)$ is inner on $\bI^0(M,ds^2)$ we have $\gamma = gk \in
\bI^0(M,ds^2)k$ that centralizes $\bI^0(M,ds^2)$.
\end{proof}

Finally, we have a modified form of \cite[Theorem 3.12]{W2018}, again
following Section 4 there, as follows.

\begin{theorem}\label{isogroup}
Suppose that $\chi(M) > 0$ and $G = \bI^0(M,ds^2)$.
Then $G^\dagger$ is the full isometry group $\bI(M,ds^2)$, and the
respective isotropy subgroups of $G$ and $G^\dagger$ are $K$ and $K^\dagger$.
\end{theorem}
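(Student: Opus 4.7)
The plan is to prove $\bI(M,ds^2) = G^\dagger$ by two inclusions.  The easy direction $G^\dagger \subseteq \bI(M,ds^2)$ is immediate: $G$ consists of isometries, and the elements $k$ used in (\ref{inners}) and (\ref{daggers}) are chosen inside $\bI(M,ds^2)_{x_0}$, so each coset $G' k$ sits inside $\bI(M,ds^2)$.  For the reverse inclusion I would (a) reduce an arbitrary isometry to one fixing $x_0$ by transitivity of $G$, and then (b) parameterize the components of $\bI(M,ds^2)$ modulo $G'$ by $\Out(G,K,ds^2)$ via the $\Ad$ map on $\bI(M,ds^2)_{x_0}$.

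Concretely, given $\phi \in \bI(M,ds^2)$ I would use transitivity of $G = \bI^0(M,ds^2)$ to write $\phi = g\phi_0$ with $g \in G$ and $\phi_0 \in \bI(M,ds^2)_{x_0}$.  Since $G$ is normal in $\bI(M,ds^2)$, conjugation yields $\Ad(\phi_0) \in \Aut(G)$; the fixity of $x_0$ forces $\Ad(\phi_0)(K) = K$, and identifying $T_{x_0}M \cong \gg/\gk$ shows that preservation of $ds^2$ by $\phi_0$ is exactly preservation of the induced inner product by $\Ad(\phi_0)$.  Hence $\Ad(\phi_0) \in \Aut(G,K,ds^2)$.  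Let $[\alpha]$ be its class in $\Out(G,K,ds^2)$ and $k_\alpha$ the representative chosen in (\ref{daggers}).  Since $\Ad(\phi_0 k_\alpha^{-1}) \in \Int(G,K,ds^2)$, its restriction to $K$ is inner, so the isotropy element $\phi_0 k_\alpha^{-1}$ lies in $K' \subseteq G'$ by the definition of $K'$.  Thus $\phi_0 \in G' k_\alpha$ and $\phi \in G \cdot G' k_\alpha = G' k_\alpha \subseteq G^\dagger$.  The isotropy assertion then follows by intersecting with the stabilizer of $x_0$: an element $g' k_\alpha$ fixes $x_0$ iff $g' \in K'$, yielding isotropy $\bigcup_\alpha K' k_\alpha = K^\dagger$.

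The main obstacle is the bookkeeping around the component decomposition rather than any single computation.  One must verify that $G'$ really is a subgroup of $\bI(M,ds^2)$ (which follows from normality of $G$ together with the closure of inner automorphisms of $K$ under products and inverses), and that the coset $G' k$ depends only on the class of $\Ad(k)$ in $\Out(G,K,ds^2)$, so that the indexing in (\ref{daggers}) is well-defined.  The hypothesis $\chi(M) > 0$ enters through Lemma \ref{cent-inner-G}, whose conclusion --- that $\Ad(k)|_K$ is inner on $K$ if and only if $\Ad(k)$ is inner on all of $G$ for $k \in \bI(M,ds^2)_{x_0}$ --- collapses the two candidate subgroups $G'$ and $G''$ of (\ref{inners})--(\ref{inners2}) into one and ensures that the outer-class parametrization faithfully enumerates the connected components of $\bI(M,ds^2)$.
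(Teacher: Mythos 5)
Your core argument is correct, and it is worth noting that the paper itself supplies no proof of Theorem \ref{isogroup} at all --- it is stated as ``a modified form of [Theorem 3.12]{W2018}, again following Section 4 there,'' so the entire burden is deferred to the reference. Your two-inclusion verification is the natural way to discharge that burden and almost certainly tracks the cited argument: transitivity of $G$ reduces to the isotropy group at $x_0$; normality of $G = \bI^0(M,ds^2)$ gives $\Ad(\phi_0) \in \Aut(G)$; the identity $\phi_0(gx_0) = (\phi_0 g\phi_0^{-1})(x_0)$ shows $\phi_0$ \emph{is} the transformation induced by $\Ad(\phi_0)$, so fixing $x_0$ forces $\Ad(\phi_0)(K)=K$ and the isometry condition is literally $\Ad(\phi_0)^*ds^2 = ds^2$; and the coset bookkeeping in (\ref{inners}) and (\ref{daggers}) then places $\phi_0$ in $G'k_\alpha$. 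The isotropy assertion is definitional once this is in place.

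One correction to your closing paragraph: Lemma \ref{cent-inner-G} is \emph{not} an equivalence. It says only that if $\Ad(k)$ is inner on the isotropy group then it is inner on $G$ (via a maximal torus of $K$ being maximal in $G$, which is where $\rank K = \rank G$ enters); the remark immediately following the lemma, with $k = \diag\{-1,-1,+1,\dots,+1\}$ acting on $S^{2n} = SO(2n+1)/SO(2n)$, is an explicit counterexample to the converse. So the lemma gives $G' \subseteq G''$, not $G' = G''$, and the paper deliberately keeps the two subgroups distinct --- indeed Theorem \ref{euler-pos} and Proposition \ref{chi-outer} impose ``$\Gamma \subset G''$'' and ``$G^\dagger = G''$'' as genuine additional hypotheses. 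This misreading does not damage your proof of Theorem \ref{isogroup}, because your argument never actually invokes the collapse of $G'$ onto $G''$ (nor, as far as I can see, the hypothesis $\chi(M)>0$ at all, which appears to be carried as a standing assumption of the section rather than used in this particular statement); but you should not attribute the role of $\chi(M)>0$ to an equivalence the paper explicitly refutes.
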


\smallskip
\addtocounter{subsection}{1}
\centerline{\bf \thesubsection. Inner Automorphisms and Constant 
Displacement Isometries.}

\begin{lemma}\label{halfspace}
Let $K$ be a compact connected Lie group of linear transformations on a 
real vector space $V$.  Suppose that the representation of $K$ on $V$
does not have any trivial summands.  Let $0 \ne w \in V$.  Then the orbit
$K(w)$ is not contained in a half--space.
\end{lemma}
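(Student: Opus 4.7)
The plan is to use Haar averaging on $K$ to show that the origin lies in the convex hull of the orbit $K(w)$, after which no half-space avoiding the origin can contain $K(w)$.

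Concretely, let $dk$ be normalized Haar measure on $K$ and set $\bar w = \int_K k(w)\,dk \in V$. Left-invariance of $dk$ gives $k_0 \bar w = \int_K (k_0 k)(w)\,dk = \bar w$ for every $k_0 \in K$, so $\bar w$ lies in the $K$-fixed subspace of $V$; since the representation of $K$ on $V$ has no trivial summand, that subspace is $\{0\}$, and so $\bar w = 0$. On the other hand, $K(w)$ is compact (as the continuous image of $K$) and $V$ is finite-dimensional, so $\mathrm{conv}(K(w))$ is compact, and $\bar w$ is a limit of Riemann sums, each of which is a convex combination of points of $K(w)$. Therefore $0 = \bar w \in \mathrm{conv}(K(w))$.

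Finally, if $K(w)$ were contained in a half-space $\{v \in V : f(v) > 0\}$ for some nonzero linear functional $f$, then $\mathrm{conv}(K(w))$ would lie in the same set, forcing $f(0) > 0$, a contradiction; the same averaging rules out any closed affine half-space $\{v : f(v) \geq c\}$ with $c > 0$. I do not foresee any real obstacle. The only point requiring care is the interpretation of ``half-space'' --- the argument excludes any half-space that does not already contain the origin on or behind its bounding hyperplane, which is precisely the form produced by Hahn--Banach separation of $0$ from a convex set.
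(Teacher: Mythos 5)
Your proof is correct and uses the same key idea as the paper: the Haar average $\bar w = \int_K k(w)\,dk$ is $K$-fixed, and the absence of trivial summands together with the separation property of the half-space yields the contradiction. The paper merely runs the argument in the opposite order (it places $\bar w$ in the open half-space, hence $\bar w \ne 0$, and concludes that $\bar w\,\R$ is a forbidden trivial summand), and your remark on the interpretation of ``half-space'' matches the paper's convention $\{v \mid \langle u,v\rangle < 0\}$.
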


\begin{proof}  Suppose that $K(w) \subset U$ where $U$ is a half space in 
$V$.  So $K(w) \subset  U = \{v \in V \mid \< u,v \> < 0\}$ for some 
nonzero $u \in V$.
The center of gravity $\bar{w} := \int_K k(w)dk \in U$ and $K(\bar{w}) = 
\bar{w}$.  Now $\bar{w}\R$ defines a trivial summand for the  
representation of $K$ on $V$.
\end{proof}

Now, with only the obvious changes, we have a very slight extension to
\cite[Theorem 5.2.2]{W1962}:
\begin{proposition}\label{is-central}
Let $(M,ds^2)$ be a connected Riemannian homogeneous manifold and suppose that:

{\rm (a)} The representations of the connected linear isotropy subgroups 
$\bI(M,ds^2)_x$
on the tangent spaces $T_x(M)$ satisfy the conditions on $K$
and $V$ in {\rm Lemma \ref{halfspace}}.

{\rm (b)} Suppose that $\beta \in \bI(M,ds^2)$ centralizes $\bI^0(M,ds^2)$, 
$g \in \bI(M,ds^2)$ has a fixed point on $M$, and $\gamma = g\beta$ is an 
isometry of constant displacement on $(M,ds^2)$.

Then $\gamma = \beta$, i.e. $g = 1$.
\end{proposition}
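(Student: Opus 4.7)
The plan is to fix $x_0 \in M$ with $g(x_0) = x_0$ and analyze the minimizing geodesic from $x_0$ to $\beta(x_0)$ using Proposition \ref{oz1}, then use the connected isotropy at $x_0$ together with Lemma \ref{halfspace} to deduce $g = 1$. First, because $\beta$ centralizes the transitive group $\bI^0(M,ds^2)$, the argument of Proposition \ref{CW-translation} shows that $\beta$ is of constant displacement $c := \rho(x_0, \beta(x_0))$. Since $g$ is an isometry fixing $x_0$, the computation $\rho(x_0, \gamma(x_0)) = \rho(x_0, g\beta(x_0)) = \rho(g^{-1}(x_0), \beta(x_0)) = c$ shows that $\gamma$ also has constant displacement $c$. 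Write $y_0 := \beta(x_0)$.

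The main geometric step is to show that $g$ fixes an entire minimizing geodesic from $x_0$ to $y_0$ pointwise. Let $\sigma \colon \R \to M$ be a minimizing unit-speed geodesic with $\sigma(0) = x_0$ and $\sigma(c) = y_0$; by Proposition \ref{oz1}, $\beta(\sigma(t)) = \sigma(t+c)$. Because $g(x_0) = x_0$ and $g$ is an isometry, $g\sigma$ is a minimizing geodesic from $x_0$ to $g(y_0) = \gamma(x_0)$, so Proposition \ref{oz1} applied to $\gamma$ gives $\gamma(g\sigma(t)) = g\sigma(t+c)$. Applying $g^{-1}$ on both sides and using $\gamma = g\beta$ simplifies this to $\beta(g\sigma(t)) = \sigma(t+c) = \beta(\sigma(t))$, and injectivity of $\beta$ forces $g\sigma(t) = \sigma(t)$ for every $t$. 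Differentiating at $t=0$ yields $g_*(v) = v$ for $v := \dot\sigma(0) \in T_{x_0}M$. Now let $K_0$ denote the identity component of the isotropy subgroup $\bI(M,ds^2)_{x_0}$; since $K_0$ is connected and contained in $\bI^0(M,ds^2)$, every $k \in K_0$ commutes with $\beta$, so $k(y_0) = k\beta(x_0) = \beta k(x_0) = y_0$, and hence $k\sigma$ is another minimizing geodesic from $x_0$ to $y_0$. The argument above applied with $k\sigma$ in place of $\sigma$ gives $g_*(k_*v) = k_*v$. Thus $g_*$ is the identity on the entire orbit $K_0 \cdot v \subset T_{x_0}M$.

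Finally, I invoke Lemma \ref{halfspace}. The centroid $\bar v := \int_{K_0} k_*(v)\, dk$ is a $K_0$-invariant vector in $T_{x_0}M$, so by hypothesis (a) it vanishes. In the case where $\sigma$ is the unique minimizing geodesic from $x_0$ to $y_0$, connectedness of $K_0$ forces $K_0 \cdot v = \{v\}$; if $v \neq 0$, then $\{v\}$ is contained in the open half-space $\{u : \langle u, v\rangle > 0\}$, contradicting Lemma \ref{halfspace}. Hence $v = 0$, so $\beta$ is of constant displacement zero and therefore equals the identity, giving $\gamma = g$; since $g$ has a fixed point and is of constant displacement, $g = 1$. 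The main obstacle is the case of non-unique minimizing geodesics: one needs to extract from the combination of ``$g_*$ fixes $K_0 \cdot v$ pointwise'' and ``$K_0 \cdot v$ is not contained in any open half-space'' the stronger conclusion that $g_* = \mathrm{id}$ on all of $T_{x_0}M$, whence $g = 1$ because a connected isometry is determined by its $1$-jet at a point.
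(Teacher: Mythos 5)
Your opening moves are sound, and they take a genuinely different route from the paper, which simply defers to the proof of \cite[Theorem 5.2.2]{W1962} (quoted here as Lemma \ref{eq-rank-1}) ``with only the obvious changes,'' Lemma \ref{halfspace} standing in for irreducibility of the isotropy representation. You instead invoke Ozols' characterization (Proposition \ref{oz1}) twice to conclude that $g$ fixes pointwise every minimizing geodesic from the fixed point $x_0$ to $\beta(x_0)$, hence that $g_*$ is the identity on the $K_0$-orbit of each initial vector $v$. That computation is correct and rather elegant, and under the hypothesis of the \emph{original} theorem --- connected isotropy irreducible on $T_{x_0}(M)$ --- it would finish the proof: the span of $K_0\cdot v$ is a nonzero invariant subspace, hence all of $T_{x_0}(M)$, so $g_*=\mathrm{id}$ and $g=1$ since an isometry of a connected manifold is determined by its $1$-jet at a point.

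But the proposition you are proving is exactly the weakening of irreducibility to ``no trivial summands,'' and there your argument does not close; the gap you flag at the end is genuine. Under hypothesis (a) the span of $K_0\cdot v$ is only the smallest $K_0$-invariant subspace containing $v$, which may be a proper subspace of $T_{x_0}(M)$, and knowing that $g_*$ is the identity there says nothing about $g_*$ on a complementary invariant subspace: an isometry fixing $x_0$ and a proper totally geodesic piece pointwise need not be trivial. The subcase you do complete (unique minimizing geodesic) only shows that $K_0$ fixes $v$, whence $v=0$ and $\beta=1$; since the proposition must accommodate $\beta\neq 1$ (that is its entire point --- the $\beta$'s are the nontrivial deck transformations), this subcase is vacuous precisely when the statement has content, and it signals that $K_0\cdot v$, with $v$ the direction of $\beta$'s own displacement, is not the orbit to which Lemma \ref{halfspace} is meant to be applied. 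Note that the conclusion of Lemma \ref{halfspace} is positional (``the orbit is not contained in an open half-space''), not a spanning statement; the argument of \cite[Theorem 5.2.2]{W1962} uses it to contradict a strict inequality of the form $\<u,k_*w\><0$ for all $k\in K_0$, extracted from the constancy of the displacement of $\gamma$ under the assumption $g\neq 1$, and it is that first-variation inequality --- constraining $g$ rather than $\beta$ --- that is missing from your proposal. You need either to supply such an inequality or to show directly that the initial vectors of the minimizing geodesics from $x_0$ to $\beta(x_0)$ span $T_{x_0}(M)$; neither follows from what you have written.
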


In order to apply Proposition \ref{is-central} we need to know that the action
of $\bI^0(M,ds^2)_x$ on $T_x(M)$ has no trivial summands.  For $\chi(M) > 0$
that is implicit in the root space calculations of \cite{BdS1949}.  Thus

\begin{corollary}\label{no-outer}
Let $(M,ds^2)$ be a connected Riemannian homogeneous manifold of
Euler characteristic $\chi(M) > 0$.  Then $G'' = G \times Z$ where 
$Z$ is the centralizer of $G$ in $G^\dagger$.  Specifically, 
$G'' = \bigcup G\beta_j$ with $Z = \{\beta_j\}$.  If $\Gamma$ is a group
of isometries of constant displacement on $(M,ds^2)$, and if 
$\Gamma \subset G''$,
then $\Gamma \subset Z$ and $\Gamma \backslash (M,ds^2)$ is homogeneous.
\end{corollary}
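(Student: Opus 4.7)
The plan is to combine the structural description of $G''$ from Lemma \ref{cent-inner-G} with Proposition \ref{is-central}; the equal-rank hypothesis is exactly what lets both pieces fall into place.

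First I would establish $G'' = G \cdot Z$ with $Z$ the centralizer of $G$ in $G^\dagger$. Since $G$ is normal in $G^\dagger$, the set $G \cdot Z$ is a subgroup. Lemma \ref{cent-inner-G} says that every component $Gk \subset G''$ contains some $\beta$ centralizing $G$, so $Z$ meets every component of $G''$ and $G'' \subset G \cdot Z$; the reverse inclusion is immediate. Picking one $\beta_j \in Z$ per coset of $G$ in $G''$ gives the disjoint decomposition $G'' = \bigcup_j G\beta_j$ asserted in the statement.

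For the main claim, take $\gamma \in \Gamma \subset G''$ and write $\gamma = g\beta$ with $g \in G$ and $\beta \in Z$. I will apply Proposition \ref{is-central} to this factorization. That $\beta$ centralizes $\bI^0(M,ds^2) = G$ is built into the definition of $Z$, and $\gamma$ is of constant displacement by assumption. The remaining hypothesis is that $g$ has a fixed point on $M$, and here is exactly where $\chi(M) > 0$, equivalently $\rank K = \rank G$, enters: a maximal torus $T \subset K$ is simultaneously maximal in $G$, so $g$ is $G$-conjugate to some $t \in T \subset K$, say $g = ata^{-1}$ with $a \in G$; then $g \cdot aK = at K = aK$, so $g$ fixes the point $aK \in M$. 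Proposition \ref{is-central} now forces $g = 1$, giving $\gamma = \beta \in Z$. Hence $\Gamma \subset Z$, and since $Z$ centralizes the transitive group $G$, the $G$-action descends to a transitive action of $G$ by isometries on $\Gamma \backslash M$, proving homogeneity. The converse is Proposition \ref{CW-translation}.

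One technical point deserves mention: Proposition \ref{is-central} requires the connected linear isotropy subgroups to act on tangent spaces without trivial summands, and in the $\chi(M) > 0$ case this follows from the Borel--de Siebenthal decomposition \cite{BdS1949} of $\gg/\gk$ under a maximal torus of $G$ lying in $K$, where every weight is a nonzero root. The main obstacle that keeps this proof from covering the full theorem lies outside the scope of the corollary as stated: when $\Ad(\gamma)$ is an outer automorphism of $G$, the element $\gamma$ lies in a component of $G^\dagger$ not contained in $G''$, so Proposition \ref{is-central} does not apply and a case-by-case outer-automorphism analysis (in the spirit of Theorem \ref{symm-simple-isom}) would be required; this is precisely the case that remains open in the non-normal-metric setting.
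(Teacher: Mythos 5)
Your proof is correct and follows essentially the same route the paper intends: it combines Lemma \ref{cent-inner-G} (to write each $\gamma \in G''$ as $g\beta$ with $\beta$ centralizing $G$), the equal-rank fixed-point observation, and Proposition \ref{is-central}, with the no-trivial-summand hypothesis supplied by the Borel--de Siebenthal root-space decomposition exactly as the paper indicates. You have merely written out the details that the paper leaves implicit.
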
 

Among other things, Corollary \ref{no-outer} says that if two isometries
$\gamma_1$ and $\gamma_2$ of constant displacement belong to the same 
component of the isometry group, so $\gamma_1\gamma_2^{-1} \in G$,
then $\gamma_1 = \gamma_2$ because the center of $G$ is reduced to the 
identity.

We complete the proof of Theorem \ref{euler-pos} as follows.  For clarity of
exposition we repeat the key statements.

\begin{proposition}\label{chi-outer}
Suppose that $G^\dagger = G''$, for example that $G$ is not of type 
$A_n$ ($n \geqq 2$), $D_n$ ($n \geqq 4$) nor $E_6$\,.  Then every 
component $Gk$ of $G^\dagger$ contains exactly one isometry of
constant displacement of $(M,ds^2)$.  In particular, the Homogeneity
Conjecture holds for $(M,ds^2)$.
\end{proposition}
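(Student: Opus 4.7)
The strategy is to show that under the hypothesis $G^\dagger = G''$ the proposition is a packaging of Corollary \ref{no-outer} together with the fact that, in our situation, $G$ has trivial center.

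First I would note that since $G^\dagger = G''$ and $\bI(M,ds^2) = G^\dagger$ by Theorem \ref{isogroup}, every isometry of $(M,ds^2)$ lies in $G''$. Applying Corollary \ref{no-outer} to the (cyclic) subgroup generated by any given constant displacement isometry shows that every constant displacement isometry lies in the centralizer $Z$ of $G$ in $G^\dagger$. Conversely, any element of $Z$ centralizes the transitive group $G$ and so is of constant displacement. Thus the set of constant displacement isometries equals $Z$.

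Next I would invoke the observation, recorded explicitly in the paragraph after Corollary \ref{no-outer}, that $Z_G = \{1\}$: because $\rank K = \rank G$, a maximal torus of $K$ is a maximal torus of $G$ and therefore contains $Z_G$, so $Z_G \subset K$; combined with effectiveness of the $G$-action on $G/K$, this forces $Z_G = \{1\}$. In the direct product decomposition $G'' = G \times Z$ from Corollary \ref{no-outer}, triviality of $G \cap Z = Z_G$ implies that the projection $Z \to G^\dagger/G$ is a bijection. Hence each component $Gk$ contains a unique element of $Z$, i.e.\ a unique constant displacement isometry; surjectivity of the projection gives existence in every component, and Corollary \ref{no-outer} gives uniqueness.

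The Homogeneity Conjecture follows at once: a deck transformation group $\Gamma$ consisting of constant displacement isometries must satisfy $\Gamma \subset Z$, hence $\Gamma$ centralizes the transitive group $G$, and the $G$-action descends to a transitive isometric action on $\Gamma \backslash (M,ds^2)$.

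The only mildly technical point is applying Corollary \ref{no-outer} to a single $\gamma$: the corollary is phrased for groups, but one sidesteps this by writing $\gamma = g\beta$ with $g \in G$ and $\beta \in Z$, adjusting by an element of $G$ so that $g$ has a fixed point (available because $\rank K = \rank G$ forces every element of $G$ to be conjugate into $K$), and invoking Proposition \ref{is-central} to conclude $g = 1$. The genuine obstacle, flagged already in the statement, lies \emph{outside} this argument: removing the hypothesis $G^\dagger = G''$ requires handling isometry components coming from outer automorphisms of $G$ (possible only in types $A_n$ with $n \geqq 2$, $D_n$ with $n \geqq 4$, and $E_6$), where the present reasoning breaks down and different techniques are needed.
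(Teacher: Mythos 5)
Your proof is correct and follows essentially the same route as the paper, which disposes of this proposition via the remark after Corollary \ref{no-outer}: constant displacement isometries lie in $Z$, two in the same component differ by an element of $G \cap Z = Z_G = \{1\}$, and each component $G\beta_j$ of $G'' = G \times Z$ contains the centralizing (hence constant displacement) element $\beta_j$. Your expansion merely makes explicit what the paper leaves implicit — the triviality of $Z_G$ from $\rank K = \rank G$ plus effectiveness, and the reduction of Corollary \ref{no-outer} to a single $\gamma$ via Lemma \ref{cent-inner-G} and Proposition \ref{is-central} — and both steps are sound.
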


\medskip
\addtocounter{subsection}{1}
\centerline{\bf \thesubsection. Outer Automorphisms and Constant 
Displacement Isometries}
\smallskip

It is an open problem to drop the outer automorphism condition
$\Gamma \subset G''$ in Theorem \ref{euler-pos} and Proposition 
\ref{chi-outer}.  It only applies to the cases where $G$ of type $A_n$
($n \geqq 2)$, $D_n$ ($n \geqq 4$) or $E_6$\,.  In the Riemannian
symmetric space case \cite{W1962} those cases were addressed by listing the
possibilities for $G/K$ and doing explicit calculations for each of them.
But there are too many nonsymmetric cases, so we need a new idea.
In brief, there are no definitive results yet.  I won't try to describe 
the current fragmentary results because they are part of an active
(at least on my part) research effort, and anything I write will quickly 
be obsolete.

\section{\bf Compact Group Manifolds.}\label{sec9}
\setcounter{equation}{0}
\setcounter{subsection}{0}

In this section we consider the Homogeneity Conjecture for Riemannian
manifolds $(M,ds^2)$ on which there is a compact simply transitive group $G$ of
isometries.  We already saw this for symmetric spaces, where $\bI^0(M,ds^2)$
is of the form $(G \times G)/\{\diag(G)\}$.  There $M$ was identified with
the group manifold $G$, and $(g_1,g_2) \in (G \times G$) acted by
$\ell(g_1)r(g_2):x \mapsto g_1xg_2^{-1}$.  The isotropy subgroup 
$\diag(G)$ of $\bI(G,ds^2)$ all inner automorphisms of $G$, possibly 
also some outer automorphisms, and conjugation 
$(g_1,g_2) \mapsto (g_2,g_1)$ by the symmetry at the identity.  We take that 
as a model for isometry groups of group manifolds, and we apply the result 
to finite groups of isometries of constant displacement.  Everything breaks 
up as a product under the decomposition of $G$ as a product of simple 
Lie groups, so we may assume that $G$ is simple.  The result is not complete;
at this time it is

\begin{theorem}\label{result-grp}
Let $(M,du^2)$ be a connected simply connected Riemannian manifold
on which a compact simple Lie group $G$ acts simply transitively by 
isometries.  Then there is a $G$--equivariant isometry 
$(M,du^2) \cong (G,ds^2)$ where the Riemannian metric $ds^2$ is 
invariant under left translations $\ell(g):x \mapsto gx$.  Let $\Gamma$
be a group of isometries of constant displacement on $(G,ds^2)$, and 
let $r(G,ds^2)$ be the group of all right translations 
$r(g'):x \mapsto xg'^{-1}$ that are isometries.  Then either 
$\Gamma \subset \ell(G)\ell(A)$ or
$\Gamma \subset r(G,ds^2)r(A)$, where $A$ is a finite set of isometries
$a$ such that the $\Ad(a)$ define outer automorphisms of $G$.  
If $\Gamma \subset r(G,ds^2)r(A)$ then
the centralizer of $\Gamma$ in $\bI(G,ds^2)$ contains $\ell(G)$, so the
Homogeneity Conjecture holds for $\Gamma$.
\end{theorem}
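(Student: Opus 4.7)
The plan is to parallel the bi--invariant case of Theorem \ref{symm-group} but allow for the fact that right translations need no longer all be isometries. The skeleton has three parts: identify $\bI(G,ds^2)$, handle the identity component by a flat--rectangle argument, and handle the outer components by a de Siebenthal style fixed--point analysis, before finally assembling everything into the dichotomy.

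First I would pin down $\bI(G,ds^2)$. Since $ds^2$ is left invariant, $\ell(G)\subset \bI^0(G,ds^2)$, and the isotropy at the identity $e\in G$ consists of diffeomorphisms fixing $e$ that preserve $ds^2$; because $G$ is simply connected these are precisely the Lie automorphisms of $G$ whose differential preserves the inner product on $\gg$ determined by $ds^2|_e$. Call this group $\Aut(G,ds^2)$. Then $\bI(G,ds^2)=\ell(G)\rtimes \Aut(G,ds^2)$. Using the identity $r(h)=\ell(h^{-1})\Ad(h)$, the inner part $\Ad(H)\subset \Aut(G,ds^2)$, with $H=\{h\in G:\Ad(h)\in\Aut(G,ds^2)\}$, combines with $\ell(G)$ to give $\bI^0(G,ds^2)=\ell(G)\cdot r(H)=\ell(G)\cdot r(G,ds^2)$. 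The remaining components come from outer automorphism classes in $\Out(G)$ that preserve $ds^2|_e$; a choice of coset representatives gives the finite set $A$ of the statement.

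Next, for $\gamma=\ell(g)r(h)\in\bI^0(G,ds^2)$ of constant displacement, I would run the flat rectangle argument of Proposition \ref{cw-conn} in this setting. Write $\gamma=\exp(X+Y)$ with $X$ the right--invariant Killing field generating $\ell(g)$ and $Y$ the left--invariant Killing field generating $r(h)$; since right-- and left--invariant vector fields commute, the two one--parameter flows through a point span a flat totally geodesic surface on which $\gamma$ acts by translation along the diagonal. The sides $\exp(tX)$ and $\exp(tY)$ then have comparable constant displacement, forcing, via the simplicity of $G$ and an argument like Lemma \ref{transl-0}, that either $X=0$ (whence $\gamma\in r(G,ds^2)$) or the left--invariant field $Y$ has values in the center of $\gg$, which, since $G$ is simple, means $r(h)\in Z_G$ and $\gamma\in\ell(G)\cdot Z_G=\ell(G)$.

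The third step is to extend to the outer components. For $\gamma=\ell(g)\cdot\alpha$ with $\alpha\in A$, apply Proposition \ref{dS} to the component $\bI^0(G,ds^2)\cdot\alpha$: up to conjugacy $\gamma$ lies in a maximal torus extended by $\alpha$, and constant displacement of both $\gamma$ and $\gamma^2$ (combined with a twisted flat rectangle argument, as in Lemma \ref{outers} and Proposition \ref{transl-3}) restricts $\gamma$ to land in $\ell(G)\cdot A$ or $r(G,ds^2)\cdot A$. Then the group--theoretic dichotomy follows: if $\Gamma$ contained one element of each type, their product would mix $\ell$ and $r$ components nontrivially and fail the flat--rectangle constraint established above, so $\Gamma$ is contained in one coset subgroup or the other. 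Finally, in the $r(G,ds^2)\cdot A$ case, right translations already commute with $\ell(G)$, and the restrictions on $A$ forced by the previous step ensure that the outer contributions also centralize $\ell(G)$; hence $\ell(G)$ descends to a transitive isometric action on $\Gamma\backslash(G,ds^2)$, verifying the Homogeneity Conjecture for $\Gamma$.

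The main obstacle will be step three --- the outer--automorphism case. In the symmetric setting of Theorem \ref{symm-group} this was handled by detailed case analysis using the classification of $\Aut(G)/\Int(G)$; here the metric is only left--invariant, so the inversion $\sigma:x\mapsto x^{-1}$ is no longer automatically an isometry, and the symmetric--space tools of Section \ref{sec4} must be adapted by hand, much as in the remark at the end of Section \ref{sec8} on the $A_n,\,D_n,\,E_6$ difficulty.
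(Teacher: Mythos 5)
Your skeleton (identify $\bI(G,ds^2)$ via Ochiai--Takahashi, force each constant displacement isometry $\ell(u)r(v)$ to be essentially a pure left or pure right translation, then show the two types cannot coexist in $\Gamma$) matches the paper's, and your description of the isometry group agrees with Proposition \ref{och-tak}, Corollary \ref{och-tak-cor} and Theorem \ref{iso-grp}. But the central mechanism you propose for the middle step is a genuine gap. The flat rectangle argument of Proposition \ref{cw-conn} is stated and used only for the \emph{normal} metric, where $(M,ds^2)$ is naturally reductive and a geodesic orbit space, so that curves $t\mapsto\exp(t\xi)x_0$ are geodesics and the surface $\exp(t_1\xi_1+t_2\xi_2)x_0$ is a flat totally geodesic rectangle. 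For a general left--invariant metric on $G$ none of this holds: the orbits of one--parameter subgroups are not geodesics, and the surface spanned by a commuting right--invariant and left--invariant Killing field is neither flat nor totally geodesic. So the assertion in your second step that the diagonal and the sides of the ``rectangle'' have comparable displacements has no justification, and the same objection applies to the ``twisted flat rectangle'' you invoke for the outer components.

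The paper's replacement for this step is Lemma \ref{lr1}, whose mechanism is different and metric--robust: since $\gamma=\ell(ua)r(va)$ has the same constant displacement as all of its $\ell(G)$--conjugates, the entire twisted conjugacy class $\Ad(G)(ua)$ lies on the metric sphere of radius $c_\gamma$ centered at $va$ (here one only uses that $\ell(G)$, $r(v)$ and $r(a)$ are isometries). A minimizing geodesic from $va$ to a point of that orbit is therefore orthogonal to the orbit, hence orthogonal to every $\Ad(G)$--orbit it meets, hence ends tangent to the centralizer of $guag^{-1}$, which is totally geodesic and so contains the starting point $va$. This yields that $va$ commutes with every $G$--conjugate of $ua$ (and conversely), after which the purely group--theoretic Lemma \ref{lr2} (simplicity of $G$) forces $ua$ or $va$ into $Z_{\bI(G,ds^2)}(G)$. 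Note also that this treats all components $\ell(Ga_i)r((G,ds^2)a_i)$ at once, so no separate de Siebenthal fixed--point analysis of the outer components is needed; and the final dichotomy (Proposition \ref{lr4}) is a short explicit computation with products of group elements rather than a further appeal to the rectangle. Your final observation about the $r(G,ds^2)r(A)$ case centralizing $\ell(G)$ is correct and matches the paper.
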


See more details on $A$ in Proposition \ref{lr4} at the end of 
Section \ref{sec9}.  One would like a more precise analysis of the case 
$\Gamma \subset \ell(G)\ell(A)$ to test the Homogeneity Conjecture there.

The result $(M,du^2) \cong (G,ds^2)$ is due to Ozeki 
\cite[Theorems 2 and 3]{Oz1977}, extending work of Ochiai and Takahashi
\cite{OT1976}, so we need only look at the assertion that either
$\Gamma \subset \ell(G)\ell(A)$ or $\Gamma \subset r(G,ds^2)r(A)$.  That is
Proposition \ref{lr4} below.  The theory does have some open problems
concerning homogeneity when $\Gamma \subset \ell(G)$.

\medskip
\addtocounter{subsection}{1}
\centerline{\bf \thesubsection. The Isometry Group.}
\smallskip

We start with a connected Lie group $G$.  $\ell(G)$ will 
denote the group of left translations $\ell(g): x \mapsto gx$, $r(G)$
the right translations $r(g): x \mapsto xg^{-1}$, and $ds^2$ an 
$\ell(G)$--invariant Riemannian metric on $G$.  Obviously $\ell(G)$
is contained in the identity component $\bI^0(G,ds^2)$.

\begin{proposition}\label{och-tak}
{\rm (\cite[Theorems 1 and 2]{OT1976})}
If $G$ is compact then 
$\bI^0(G,ds^2) \subset \ell(G)r(G)$,
and $\ell(G)$ is a normal subgroup of $\bI^0(G,ds^2)$.
\end{proposition}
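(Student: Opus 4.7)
The plan is to exploit simple transitivity of $\ell(G)$ on $G$ to reduce to analyzing the isotropy subgroup at the identity, and then identify each isotropy element as an inner automorphism of $G$. Since $\ell(G)$ acts by isometries, $\ell(G)\subseteq\bI^0(G,ds^2)$ and every $\phi\in\bI^0(G,ds^2)$ factors uniquely as $\phi=\ell(\phi(e))\circ\psi$ with $\psi\in K:=\bI^0(G,ds^2)_e$. Observing that each conjugation $c(g)=\ell(g)r(g)$ already lies in $\ell(G)r(G)$, the inclusion $\bI^0(G,ds^2)\subseteq\ell(G)r(G)$ reduces to showing every $\psi\in K$ is an inner automorphism of $G$. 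Once this is in hand, the identity $r(h)\ell(g)r(h)^{-1}=\ell(g)$ gives normality of $\ell(G)$ for free.

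I would proceed in two stages: first show each $\psi\in K$ is a Lie group homomorphism $G\to G$, then show it is inner. The key observation for the first stage is an equivalence: $\psi$ is a homomorphism if and only if $\psi$ normalizes $\ell(G)$ inside $\bI^0(G,ds^2)$, since $\psi\ell(g)\psi^{-1}$ is an isometry sending $e$ to $\psi(g)$ and the homomorphism property forces $\psi\ell(g)\psi^{-1}=\ell(\psi(g))$. So the crux is to show $\ell(G)$ is a normal subgroup of $\bI^0(G,ds^2)$.

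This normality is the principal obstacle, and I would attack it infinitesimally. Setting $\gi=\mathrm{Lie}(\bI^0(G,ds^2))$, $\gr=\mathrm{Lie}(\ell(G))$, and $\gk=\mathrm{Lie}(K)$, simple transitivity of $\ell(G)$ gives an isomorphism $\gr\cong T_eG$ via evaluation at $e$, and $\ell(G)\cap K=\{1\}$ gives a vector-space splitting $\gi=\gr\oplus\gk$. Normality amounts to $\Ad(K)$-invariance of $\gr\subset\gi$. Compactness of $\bI^0(G,ds^2)$, inherited from compactness of $G$, produces an $\Ad(\bI^0)$-invariant inner product on $\gi$ and hence an $\Ad(K)$-invariant complement $\gm$ to $\gk$; both $\gr$ and $\gm$ have dimension $\dim G$ and project isomorphically to $T_eG$. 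The hard step is to force $\gm=\gr$, for which my plan is to combine rigidity of Killing vector fields (a Killing field is determined by its value and covariant derivative at a single point) with the specific form of right-invariant Killing fields under left-invariance of $ds^2$.

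Once each $\psi\in K$ is known to be a Lie group automorphism, the conclusion is short. The smooth injection $\ell(G)\times K\to\bI^0(G,ds^2)$ is a map between manifolds of equal dimension $\dim G+\dim K$; a rank-plus-compactness argument in the compact connected $\bI^0(G,ds^2)$ shows it is a diffeomorphism, and this combined with connectedness of $\ell(G)$ forces $K$ to be connected. Since $G$ is compact and connected, $\Aut(G)^0=\Int(G)$, so $K\subseteq\Int(G)$ and each $\psi=c(g)=\ell(g)r(g)$. This yields $\bI^0(G,ds^2)=\ell(G)\cdot K\subseteq\ell(G)r(G)$, with normality of $\ell(G)$ automatic from the previous step.
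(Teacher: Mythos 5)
The paper itself offers no proof of this proposition --- it is quoted verbatim from Ochiai--Takahashi \cite{OT1976} --- so the comparison has to be with the substance of their theorem, and that substance is exactly the step you have left unexecuted. Your surrounding reductions are sound and worth keeping: simple transitivity of $\ell(G)$ gives $\bI^0(G,ds^2)=\ell(G)\cdot K$ with $K$ the isotropy group at $e$; the continuous bijection $\ell(G)\times K\to\bI^0(G,ds^2)$ between compact spaces is a homeomorphism, so $K$ is connected; an element of $K$ is a group automorphism if and only if it normalizes $\ell(G)$; and $\Aut(G)^0=\Int(G)$ for compact connected $G$ finishes the argument. This correctly isolates the crux: one must show that $\gr=\mathrm{Lie}(\ell(G))$ is $\Ad(K)$--stable inside $\gi=\mathrm{Lie}(\bI^0(G,ds^2))$.

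The route you propose to that crux, however, is not merely incomplete --- its intermediate target is false. You plan to show that the $\Ad(K)$--invariant complement $\gm=\gk^{\perp}$, taken with respect to an $\Ad(\bI^0)$--invariant inner product, equals $\gr$. Take $G=SU(2)$ with the bi--invariant metric: then $\bI^0(G,ds^2)=SO(4)$, $\gi=\mathfrak{su}(2)\oplus\mathfrak{su}(2)$, $\gk$ is the diagonal and $\gr$ is the first factor, while every $\Ad(\bI^0)$--invariant inner product has the form $aB\oplus bB$, so that $\gk^{\perp}=\{(\xi,-(a/b)\xi)\}$ --- never the first factor. Yet $\ell(G)$ is of course normal there. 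The lesson is that $\Ad(K)$--invariant complements to $\gk$ need not be unique (here $\gk$ and its complement carry isomorphic $K$--representations), so producing one by averaging says nothing about $\gr$; what must be proved directly is that $\gr$ itself is $\Ad(K)$--invariant. Your only suggestion for that --- combining rigidity of Killing fields with the form of right--invariant fields --- is a stated intention rather than an argument, and it is precisely where all the difficulty of Ochiai--Takahashi's Theorems 1 and 2 resides. As it stands the proposal does not prove the proposition.
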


Write $\<\,,\,\>$ for the inner product on $\gg$ defined by $ds^2$
and $r(G,ds^2) = \{r(g) \in r(G) \mid \Ad(g) \text{ preserves } \<\,,\,\>\}$.
Obviously $r(G)\cap \bI(G,ds^2) \subset r(G,ds^2)$.  Conversely let
$r(v) \in r(G,ds^2)$.  As $\Ad(v)$ preserves 
$\<\,,\,\>$ at $1 \in G$ it preserves $ds^2$ at every $g \in G$,
so $v \in \bI(G,ds^2)$.  Thus
\begin{corollary}\label{och-tak-cor}
If $G$ is compact then $\bI^0(G,ds^2) = \ell(G)r(G,ds^2)$.
\end{corollary}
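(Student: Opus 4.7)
The plan is a short bookkeeping argument combining Proposition~\ref{och-tak}, which gives $\bI^0(G,ds^2) \subset \ell(G)r(G)$, with the identity $r(G)\cap \bI(G,ds^2) = r(G,ds^2)$ proved in the paragraph immediately preceding the corollary. The forward inclusion $\bI^0 \subset \ell(G)\,r(G,ds^2)$ is the main content, and the reverse inclusion follows from a Lie-algebra-and-connectedness argument.

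For the forward inclusion, given $\phi \in \bI^0(G,ds^2)$, I would use Proposition~\ref{och-tak} to write $\phi = \ell(g)\,r(h)$ for some $g,h \in G$. Since $ds^2$ is left-invariant, $\ell(g) \in \bI(G,ds^2)$, so $r(h) = \ell(g)^{-1}\phi$ is also an isometry. By the preceding identity $r(G) \cap \bI(G,ds^2) = r(G,ds^2)$, this gives $r(h) \in r(G,ds^2)$, and hence $\phi \in \ell(G)\,r(G,ds^2)$.

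For the reverse inclusion, $\ell(G) \subset \bI^0$ by connectedness of $G$. Writing $r(G,ds^2) = r(H)$ with $H := \{h \in G : \Ad(h) \in \mathrm{O}(\gg,\<\,,\,\>)\}$, I would identify its Lie algebra as $\gh = \{\xi \in \gg : \ad(\xi) \in \mathfrak{so}(\gg,\<\,,\,\>)\}$. Each one-parameter subgroup $t \mapsto r(\exp(t\xi))$ with $\xi \in \gh$ is a continuous path of isometries from the identity, and therefore lies in $\bI^0$; this shows $r(H^0) \subset \bI^0$, so $\ell(G)\,r(H^0) \subset \bI^0$.

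The main obstacle I anticipate is the last step, namely that $r(G,ds^2) = r(H)$ actually coincides with $r(H^0)$, i.e., that $H$ is connected, so that no extra components sneak out of $\bI^0$. I would resolve this by a Lie-algebra match: Proposition~\ref{och-tak} together with the forward inclusion pins down every Killing field on $(G,ds^2)$ as $d\ell(X) + dr(Y)$ with $Y \in \gh$, so $\bI^0$ and $\ell(G)\,r(H^0)$ are connected Lie subgroups of $\bI(G,ds^2)$ with a common Lie algebra and hence coincide. The forward inclusion $\bI^0 \subset \ell(G)\,r(H)$ then combines with this identification to force $r(H) = r(H^0)$, completing the equality asserted in the corollary.
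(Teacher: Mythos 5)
Your forward inclusion $\bI^0(G,ds^2) \subset \ell(G)\,r(G,ds^2)$ is exactly the paper's argument: Proposition \ref{och-tak} writes $\phi = \ell(g)r(h)$, left-invariance makes $\ell(g)$ an isometry, hence $r(h)=\ell(g)^{-1}\phi$ is an isometry, and the identity $r(G)\cap\bI(G,ds^2)=r(G,ds^2)$ from the preceding paragraph places it in $r(G,ds^2)$. That is fine, and it is in fact all that the paper itself establishes before asserting the corollary.

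The gap is in your closing step. From $\bI^0 = \ell(G)\,r(H^0)$ (which your one-parameter-subgroup and Lie-algebra argument does prove correctly) together with the forward inclusion $\bI^0 \subset \ell(G)\,r(H)$, you conclude that $r(H)=r(H^0)$. But those two facts only yield $\ell(G)\,r(H^0)\subset \ell(G)\,r(H)$, which is trivially true; nothing forces the non-identity components of $H$ to be absorbed into $\ell(G)\,r(H^0)$. And in general they are not: take $G=SU(2)$ with a generic diagonal left-invariant metric $ds^2=\omega_1^2+b\,\omega_2^2+a\,\omega_3^2$, $0<a<b<1$, as in case (3) after (\ref{mx-condition}). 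There $H=\{h\in G \mid \Ad(h) \text{ preserves } \<\,,\,\>\}$ is the quaternion group $\{\pm 1,\pm i,\pm j,\pm k\}$, so $r(G,ds^2)$ is finite, while $\bI^0(G,ds^2)=\ell(SU(2))$ is connected of dimension $3$ and the isometry $r(i)$ does not lie in it because $i$ is not central. So the reverse inclusion $\ell(G)\,r(G,ds^2)\subset \bI^0(G,ds^2)$ can fail, and no argument will close the gap: the equality of the corollary is only valid with $r(G,ds^2)$ replaced by its identity component, i.e.\ in the form $\bI^0(G,ds^2)=\ell(G)\,r(G,ds^2)^0$, which is precisely the statement you actually proved. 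Your instinct that connectedness of $H$ is ``the main obstacle'' was exactly right; the resolution is that the obstacle is genuine, and the corollary as literally stated (and as left unproved in this direction by the paper) needs this correction.
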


An automorphism of a compact connected Lie group either preserves each 
simple factor or permutes the simple factors.  Since $\ell(G)$ is normal
in $\bI^0(G,ds^2)$, now

\begin{corollary}\label{no-switch} 
Suppose that $G$ is compact and simple, and that $a \in \bI(G,ds^2)$.
If $\Ad(a)(\ell(G)) \ne \ell(G)$ then $\Ad(a)(\ell(G)) =  r(G)$. In that case 
$(G,ds^2)$ is a compact simple Lie group with bi--invariant Riemannian metric,
so it is an irreducible Riemannian symmetric space.
\end{corollary}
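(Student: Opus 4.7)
The plan is to use Proposition~\ref{och-tak} and Corollary~\ref{och-tak-cor} to pin down the Lie algebra of $\bI^0(G,ds^2)$ as an internal direct sum of two commuting ideals, one of them the simple ideal $\gg_\ell$, and then observe that $\Ad(a)$ must permute the simple ideals of the derived algebra. First I would record that $\bI^0(G,ds^2) = \ell(G)\cdot r(G,ds^2)$, that $\ell(G)$ and $r(G,ds^2)$ centralize one another (left and right translations always commute), and that their intersection is finite because it is contained in $\{(\ell(z),r(z^{-1})) : z \in Z(G)\}$ and $Z(G)$ is finite ($G$ being compact simple). Writing $r(G,ds^2) = r(H)$ for the closed subgroup $H = \{g \in G \mid \Ad(g) \text{ preserves } \<\,,\,\>\} \subset G$, this yields
\[
\mathrm{Lie}\,\bI^0(G,ds^2) \;=\; \gg_\ell \oplus \gh_r,
\]
an internal direct sum of commuting ideals, where $\gh$ is the Lie algebra of $H$, viewed as a Lie subalgebra of $\gg$.

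Next, $\gg_\ell$ is simple (being a copy of $\gg$) and lies inside the derived algebra $[\gg_\ell \oplus \gh_r,\, \gg_\ell \oplus \gh_r] = \gg_\ell \oplus [\gh_r,\gh_r]$, which is semisimple. The element $a \in \bI(G,ds^2)$ normalizes the identity component, so $\Ad(a)$ is a Lie algebra automorphism of $\gg_\ell \oplus \gh_r$ and permutes its simple ideals; hence $\Ad(a)\gg_\ell$ is again a simple ideal of dimension $\dim\gg$, lying inside $\gg_\ell \oplus [\gh_r,\gh_r]$. Assuming $\Ad(a)\gg_\ell \ne \gg_\ell$, the only remaining option is that $\Ad(a)\gg_\ell$ is a simple factor of $[\gh_r,\gh_r] \subset \gh_r$, so $\dim\gg = \dim \Ad(a)\gg_\ell \leqq \dim \gh$. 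But $\gh \subset \gg$ as subalgebras, forcing $\gh = \gg$; equivalently, $\Ad(G)$ preserves $\<\,,\,\>$, so $ds^2$ is bi--invariant.

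In that case $r(G,ds^2) = r(G)$, and the previous argument actually forces $\Ad(a)\gg_\ell = \gg_r$; exponentiating (both subgroups being connected) yields $\Ad(a)\ell(G) = r(G)$. Finally, a bi--invariant Riemannian metric on a compact simple Lie group $G$ realises $(G,ds^2)$ as the Cartan symmetric space $(G\times G)/\mathrm{diag}(G)$, which is irreducible because $G$ is simple. The step most likely to require care is the classification-of-ideals argument that forces $\Ad(a)\gg_\ell$ to lie entirely inside $\gh_r$ rather than straddle the two summands; this rests on the fact that a simple ideal of a semisimple Lie algebra must coincide with one of its simple factors, applied to the semisimple derived algebra $\gg_\ell \oplus [\gh_r,\gh_r]$. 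Once this and the finiteness of $\ell(G)\cap r(G,ds^2)$ are in hand, the remainder of the proof is essentially bookkeeping.
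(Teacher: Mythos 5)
Your proposal is correct and is essentially the paper's own argument: the paper disposes of this corollary with the single observation that an automorphism of a compact connected Lie group permutes the simple factors and that $\ell(G)$ is a normal (simple) factor of $\bI^0(G,ds^2)=\ell(G)\,r(G,ds^2)$, which is exactly the ideal-permutation-plus-dimension-count argument you spell out at the Lie algebra level. Your more detailed write-up (direct sum $\gg_\ell\oplus\gh_r$, simple ideals of the derived algebra, $\dim\gg\leqq\dim\gh\leqq\dim\gg$ forcing $\gh=\gg$ and hence bi-invariance) is a faithful expansion of that one-line proof.
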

Let $dt^2$ denote a bi-invariant Riemannian metric on
$G$.  Let $L(G,ds^2)$ and $L(G,dt^2)$ denote the respective isotropy
subgroups of $\bI(G,ds^2)$ and $\bI(G,dt^2)$ at $x_0 := 1 \in G$.
They act on the tangent space $\gg$ at $x_0$ by $dh(\xi) = \Ad(h)\xi$.
From \'Elie Cartan's papers \cite{C1927a} and  \cite{C1927b}, 
$L(G,dt^2)\cap \bI^0(G,dt^2) = \{\ell(h)r(h)\mid h \in G\}$, 
and $L(G,dt^2)$ is generated by (i) $L(G,dt^2)\cap \bI^0(G,dt^2)$, 
(ii) the symmetry $s_{x_0}$ and (iii) the automorphisms $\alpha \in \Aut(G)$
that preserve and are outer on the identity component 
$L^0(G,dt^2)$ of $L(G,dt^2)$.  If $(G,ds^2)$ is not a symmetric space, i.e. 
if $ds^2 \ne dt^2$ (up to multiplication by a real constant), now $L(G,ds^2)$ 
is generated by 
(i) $L^0(G,ds^2)$ and (ii) $\{\alpha \in \Aut(G) \mid \alpha \text{ preserves
both $L(G,ds^2)$ and } ds^2\}$.  In particular,
\begin{equation}\label{isotropy-grp}
\bI(G,ds^2) = \ell(G)L(G,ds^2) \subset \ell(G)L(G,dt^2) = \bI(G,dt^2).
\end{equation}

Adapting (\ref{outer}) to our situation, we denote 
\begin{equation}\label{int-out-grp}
\begin{aligned}
&\Int(G,ds^2) = \{\alpha \in \Aut(G) \mid \alpha(L(G,ds^2)) = L(G,ds^2),\, 
	\alpha|_{L(G,ds^2)} \text{ is inner, and }\alpha
        \text{ preserves } ds^2\} \\
&\Out(G,ds^2) = \{\alpha \in \Aut(G) \mid \alpha \text{ preserves both } 
	L(G,ds^2) \text{ and } ds^2\}/\Int(G,ds^2)
\end{aligned}
\end{equation}
Express 
\begin{equation}\label{reps1}
\Out(G,ds^2) = \Int(G,ds^2)\alpha_1 \cup \dots \cup \Int(G,ds^2)\alpha_k
\end{equation}
where the $\alpha_i \in \Aut(G)$ form a set of representatives modulo
inner automorphisms, and further express 
\begin{equation}\label{reps2}
\alpha_i = \Ad(a_i) = \ell(a_i)r(a_i) \text{ for } 1 \leqq i \leqq k.
\end{equation}  
Then we have the full isometry group as follows.

\begin{theorem}\label{iso-grp}
Let $G$ be a compact connected simple Lie group and $ds^2$ a left-invariant
Riemannian metric on $G$.  Suppose that $(G,ds^2)$ is not a 
symmetric space.  Then $\bI(G,ds^2) = \bigcup_{1\leqq i \leqq k}
\{\ell(G)\ell(a_i) \times r(G,ds^2)r(a_i)\}$ where $a_1 = 1$ and 
$\{a_1, \dots , a_k\}$ is given by 
{\rm (\ref{int-out-grp})}, {\rm (\ref{reps1})} and {\rm (\ref{reps2})}.
\end{theorem}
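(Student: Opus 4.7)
The plan is to assemble Theorem \ref{iso-grp} from the machinery already built up in this section, so the proof is essentially bookkeeping once the correct decomposition is set up. First I would use the identity $\bI(G,ds^2) = \ell(G)\,L(G,ds^2)$ from (\ref{isotropy-grp}) to reduce the problem to understanding the isotropy subgroup $L(G,ds^2)$ and its component structure. Since $G$ is compact simple and $(G,ds^2)$ is not a Riemannian symmetric space, Corollary \ref{no-switch} forbids any isometry from carrying $\ell(G)$ onto $r(G)$; thus every component of $\bI(G,ds^2)$ normalizes the simple factor $\ell(G)$, so each $h\in L(G,ds^2)$ induces an automorphism of $\ell(G)$ and hence of $G$. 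Consequently $L(G,ds^2) \subset \Aut(G)$.

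Next I would use the explicit description of $L(G,ds^2)$ recorded just before (\ref{isotropy-grp}): it is generated by $L^0(G,ds^2)$ together with those $\alpha\in\Aut(G)$ that preserve both $L(G,ds^2)$ and $ds^2$. Combining this with the identification $L^0(G,ds^2)=\{\ell(v)r(v)\mid v\in G,\ \Ad(v)\text{ preserves }\langle\,,\,\rangle\}$, the coset space $L(G,ds^2)/L^0(G,ds^2)$ is precisely the group $\Out(G,ds^2)$ defined in (\ref{int-out-grp}). Choosing representatives $\alpha_i$ as in (\ref{reps1}) with $\alpha_i=\Ad(a_i)=\ell(a_i)r(a_i)$ as in (\ref{reps2}), I obtain the disjoint union
$$
L(G,ds^2) \;=\; \bigsqcup_{i=1}^{k}\, L^0(G,ds^2)\,\alpha_i .
$$

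Multiplying by $\ell(G)$ on the left and invoking Corollary \ref{och-tak-cor} to replace $\ell(G)\,L^0(G,ds^2)=\ell(G)\,r(G,ds^2)$ by $\bI^0(G,ds^2)$, this gives $\bI(G,ds^2)=\bigcup_{i=1}^{k}\bI^0(G,ds^2)\,\alpha_i$. Using that left and right translations commute, the $i$-th component becomes
$$
\ell(G)\,r(G,ds^2)\cdot \ell(a_i)\,r(a_i) \;=\; \bigl(\ell(G)\ell(a_i)\bigr)\cdot\bigl(r(G,ds^2)r(a_i)\bigr),
$$
which is the desired product form. Disjointness of the components follows from the choice of the $\alpha_i$ as distinct coset representatives modulo $\Int(G,ds^2)$, combined with Lemma \ref{cent-inner-G}-style remarks that an inner automorphism of $G$ lying in $L^0(G,ds^2)$ already belongs to $\Int(G,ds^2)$.

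The step I expect to require the most care is the automorphism property of $L(G,ds^2)$ in the non-symmetric case: one must be sure that the only way an element of $L(G,ds^2)$ could fail to be an automorphism of $G$ is via the symmetry $x\mapsto x^{-1}$, which swaps $\ell(G)$ and $r(G)$, and that this possibility is excluded by Corollary \ref{no-switch} together with the hypothesis $ds^2\not\propto dt^2$. A secondary subtlety is interpreting $\ell(a_i)r(a_i)$ uniformly: when $\alpha_i$ is an inner automorphism of $G$ this factorization is literal, but when $\alpha_i$ represents an outer automorphism (possible only in types $A_n,D_n,E_6$) one must read $a_i$ as a lift to an extension of $G$ by $\Out(G)$, in which case $\ell(a_i)$ and $r(a_i)$ are defined as partial operations on $G$ whose product is the automorphism $\alpha_i$; the commutation of $\ell$ and $r$ still gives the claimed factorization of each component.
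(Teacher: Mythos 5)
Your proposal is correct and follows essentially the same route the paper intends: the theorem is assembled from $\bI(G,ds^2)=\ell(G)L(G,ds^2)$ in (\ref{isotropy-grp}), the exclusion of the $\ell\leftrightarrow r$ switch via Corollary \ref{no-switch} in the non-symmetric case, Corollary \ref{och-tak-cor}, and the coset decomposition (\ref{reps1})--(\ref{reps2}), with the commutation of left and right translations giving the product form of each component. Your closing remarks on reading $a_i$ as a lift when $\alpha_i$ is outer match the paper's own (tacit) convention, so there is nothing to add.
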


\medskip
\smallskip
\addtocounter{subsection}{1}
\centerline{\bf \thesubsection.  Constant Displacement Isometries.}
\smallskip

From \cite{W1962}, if $\Gamma_{dt^2}$ is a group of constant displacement
isometries on $(G,dt^2)$ then either $\Gamma_{dt^2} \subset \ell(G)r(Z_G)$ or
$\Gamma_{dt^2} \subset \ell(Z_G)r(G)$.  Now let $\Gamma_{ds^2}$ be a group
of constant displacement isometries on $(G,ds^2)$.  Then certainly 
$\Gamma_{ds^2}$ is a group of fixed point free isometries on $(G,dt^2)$ but
the issue is whether every $\gamma \in \Gamma_{ds^2}$ is of constant 
displacement on $(G,ds^2)$.

We will write $\bar r(G,ds^2)$ for $\{w \in G \mid r(w) \in r(G,ds^2)\}$
and $\bar r(w)$ when $r(w) \in r(G,ds^2)\}$.

\begin{lemma}\label{lr1}
Let $\gamma = \ell(ua)r(va)$ be an isometry of constant displacement on 
$(G,ds^2)$ where
$u \in G$, $v \in \bar r(G,ds^2)$, and $a \in \{a_1, \dots , a_k\}$.
Then $va$ commutes with every $G$--conjugate of $ua$, and $ua$ commutes with
every $G$--conjugate of $va$.
\end{lemma}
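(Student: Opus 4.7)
Plan:

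I would follow the strategy of the bi-invariant case \cite[Lemma 4.2.1]{W1962}, whose main point is to use the first-variation formula for the displacement to translate a constant-displacement hypothesis into a commutation statement. The new features in the present setting are that the metric $ds^2$ is only left-invariant (not bi-invariant) and that the elements $ua$ and $va$ involve the outer automorphism $a$. Both features are handled by the fact that $va \in \bar r(G, ds^2)$, which makes $r(va)$ an isometry.

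Setting $A := ua$ and $B := va$, the isometry writes $\gamma(x) = A x B^{-1}$. Using left-invariance of $ds^2$ for the first equality and the fact that $r(B^{-1})$ is an isometry for the second,
\[
\rho(x, \gamma(x)) \;=\; \rho(1, x^{-1} A x B^{-1}) \;=\; \rho(B, x^{-1} A x),
\]
so the constant-displacement hypothesis reads $\rho(B, x^{-1}A x) = c$ for every $x \in G$, meaning the entire $G$-conjugation orbit of $A$ lies on the Riemannian sphere of radius $c$ about $B$. Applying the same reduction to the isometries $\ell(h)^{-1}\gamma\ell(h) = \ell(h^{-1}Ah)\,r(B)$ and $r(k^{-1})\gamma r(k) = \ell(A)\,r(k^{-1}Bk)$, which have the same constant displacement $c$ for every $h \in G$ and $k \in \bar r(G, ds^2)$, enlarges this to $\rho(k^{-1}Bk, x^{-1}A x) = c$ for all such $x, k$. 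I would then apply the first-variation formula at each point of the orbit: the resulting orthogonality relations in $\gg$, combined with the $\Ad(\bar r(G, ds^2))$-invariance of the inner product (supplied precisely by $B \in \bar r(G, ds^2)$), should force the initial tangent vector of any minimizing geodesic from a $G$-conjugate of $A$ to a $\bar r(G, ds^2)$-conjugate of $B$ to centralize both endpoints; exponentiation then yields the commutation of $B = va$ with every $G$-conjugate of $A = ua$. The second assertion (commutation of $ua$ with every $G$-conjugate of $va$) follows by applying the same argument to $\gamma^{-1} = \ell(A^{-1})r(B^{-1})$, which is of constant displacement $c$ with $B^{-1} \in \bar r(G, ds^2)$.

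The hard step is the last one: translating the Riemannian orthogonality into an algebraic commutation despite the fact that $\<\,,\,\>$ on $\gg$ is only $\Ad(\bar r(G, ds^2))$-invariant and not fully $\Ad(G)$-invariant. Here the role of the hypothesis $v \in \bar r(G, ds^2)$ --- i.e., the structure forced on $\gamma$ by Theorem \ref{iso-grp} --- is decisive: it supplies exactly the $\Ad(B)$-invariance needed for the transpose-of-$\mathrm{ad}$ identity to hold in the correct direction, allowing the bi-invariant-case calculation of \cite{W1962} to go through.
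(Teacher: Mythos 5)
Your reduction of the constant-displacement hypothesis is exactly the paper's first step: both arguments arrive at $\rho(va,\,g^{-1}(ua)g)=c_\gamma$ for all $g\in G$, i.e.\ the full orbit $\Ad(G)(ua)$ lies on the metric sphere of radius $c_\gamma$ about $va$ (the paper gets there by conjugating $\gamma$ by $\ell(g)$; your extra conjugations by $r(k)$ are harmless but not needed). The gap is in the second half, which you yourself flag as ``the hard step'' and then do not carry out: you assert that first-variation orthogonality together with $\Ad(\bar r(G,ds^2))$-invariance ``should force'' the relevant tangent vector to centralize both endpoints, but no mechanism is supplied, and an appeal to a ``transpose-of-$\ad$ identity'' does not by itself yield a commutation statement when $\<\,,\,\>$ is not $\Ad(G)$-invariant. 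The paper's mechanism is geometric rather than infinitesimal: take a minimizing geodesic $\sigma(t)=va\cdot\exp(t\xi)$ from $va$ to $g(ua)g^{-1}$; since the orbit lies on a metric sphere centered at $va$, $\sigma$ meets the orbit orthogonally; a geodesic orthogonal to one $\Ad(G)$-orbit is orthogonal to every $\Ad(G)$-orbit it meets, so $\sigma'(1)$ is orthogonal to the orbit through $g(ua)g^{-1}$ and hence tangent to the centralizer of $g(ua)g^{-1}$; that centralizer is totally geodesic, so it contains all of $\sigma$, in particular $\sigma(0)=va$. Those three ingredients --- propagation of orthogonality along the geodesic, identification of the orthocomplement of the orbit tangent with the tangent space of the centralizer, and total geodesy of the centralizer --- are precisely what is missing from your plan.

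A smaller point: deriving the second assertion from $\gamma^{-1}=\ell((ua)^{-1})r((va)^{-1})$ does not give what you want. Running the argument for $\gamma^{-1}$ shows that $(va)^{-1}$ commutes with every $G$-conjugate of $(ua)^{-1}$, which after inverting is just the first assertion again. The second assertion instead follows from the first by pure algebra, which is how the paper concludes: conjugating $va\cdot g(ua)g^{-1}=g(ua)g^{-1}\cdot va$ by $g^{-1}$ gives $(g^{-1}(va)g)\,ua=ua\,(g^{-1}(va)g)$, and $g^{-1}$ ranges over all of $G$.
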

\begin{proof}
Calculate the displacement $\rho(1,\gamma(1)) = \rho(uv^{-1},1)$ so
$\gamma$ has constant displacement $c_\gamma = \rho(1,uv^{-1})$r,
and $c_\gamma = \rho(g,\gamma(g)) = \rho(g,uaga^{-1}v^{-1})
= \rho(gv,uaga^{-1}) = \rho(gva,uag) = \rho(va,\Ad(g^{-1})(ua))$.
But $\Ad(g^{-1})(ua) = u'_ga \in Ga$ and $\gamma'_g = \ell(u'_ga)r(va)$
is $\ell(G)$--conjugate to $\gamma$, so it has the same constant displacement
$c_\gamma$.  Now the distance from $va$ to any $\ell(G)$--conjugate of $ua$
is the same constant $c_\gamma$\,.

Take a minimizing geodesic $\sigma(t) = va\cdot \exp(t\xi)$ from $va$ 
to $guag^{-1}$.  Then $\sigma'(0)$ is orthogonal to the orbit $\Ad(G)(ua)$
because that orbit lies in the sphere (boundary of the solid sphere) 
of radius $c_\gamma$ with center $va$.
As $\sigma$ is orthogonal to some $\Ad(G)$--orbit it is orthogonal to every
$\Ad(G)$--orbit that it meets.  Thus $\sigma'(1)$ is tangent to the centralizer
of $guag^{-1}$, which is totally geodesic so it contains
$\sigma(0) = va$.  Now $va$ commutes with every $G$--conjugate of $ua$.  But 
$va\cdot guag^{-1} = guag^{-1}\cdot va$ for $g \in G$ so 
$g^{-1}(va\cdot guag^{-1}) g = g^{-1}(guag^{-1}\cdot va)g$ and
$(g^{-1}vag)u = ua(g^{-1}vag)$. Thus $ua$ commutes with 
every $G$--conjugate of $va$.
\end{proof}

\begin{lemma}\label{lr2}
Suppose that $B$ is a simple Lie group and that $B^0a$ is one of its
topological components.  Let $ua, va \in B^0a$ such that $va$ commutes 
with every $B^0$--conjugate of $ua$.  Then either $ua$ or $va$ belongs
to the centralizer $Z_B(B^0)$.
\end{lemma}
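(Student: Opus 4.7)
The plan is to reduce the statement to the classical dichotomy that a closed normal subgroup of a connected simple Lie group is either contained in the (discrete) center or equals the whole group. Write $x = ua$ and $y = va$, both lying in the component $B^0 a$; the hypothesis then reads that $y$ centralizes the $B^0$--conjugacy class $C := \{g x g^{-1} \mid g \in B^0\}$.

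First I would introduce $N$, the closed subgroup of $B$ generated by $C$. Because $C$ is $B^0$--invariant under conjugation, $N$ is normalized by $B^0$, so $N_0 := N \cap B^0$ is a closed normal subgroup of $B^0$. Simplicity of $B^0$ then forces the key dichotomy: either $N_0 = B^0$, or else $N_0 \subset Z(B^0)$, which is discrete. In the first case $B^0 \subset N$; since $y$ commutes with every generator of $N$ it commutes with all of $N$, and therefore with all of $B^0$, giving $y \in Z_B(B^0)$ as required.

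The complementary case $N_0 \subset Z(B^0)$ does not use $y$ at all. For every $g \in B^0$, the element $g x g^{-1} x^{-1}$ lies in the set product $B^0 a \cdot B^0 a^{-1} = B^0$, and, being a product of two elements of $N$, it lies in $N \cap B^0 = N_0 \subset Z(B^0)$. This furnishes a continuous map $B^0 \to Z(B^0)$, $g \mapsto g x g^{-1} x^{-1}$, which sends $1$ to $1$; connectedness of $B^0$ together with discreteness of $Z(B^0)$ force the map to be identically trivial, so $g x g^{-1} = x$ for every $g \in B^0$ and $x \in Z_B(B^0)$. The argument is short, and the only step where care is needed is the coset bookkeeping that places $g x g^{-1} x^{-1}$ in the identity component, so that it actually lands in $N_0$ rather than in a nontrivial coset of $B^0$ in $B$.
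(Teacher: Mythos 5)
Your proof is correct and follows essentially the same route as the paper: both arguments hinge on the fact that the $B^0$--conjugacy class of $ua$ generates a subgroup normalized by $B^0$, so that simplicity forces the dichotomy ``contains $B^0$'' (whence $va$, centralizing the generators, centralizes $B^0$) versus ``discrete central'' (whence $ua$ is central). The paper packages the second horn as a dimension-count contradiction via the proper centralizer $Z_B(va)$, while you make it explicit with the continuous commutator map into the discrete center; this is a reorganization rather than a different proof.
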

\begin{proof}

Suppose $va \notin Z_B(B^0)$.  Then the centralizer $Z_B(va)$ is a proper 
subgroup of lower dimension in $B$ that contains $\Ad(B^0)(ua)$.  But 
$\Ad(B^0)(ua)$
generates a closed normal subgroup $E$ of $B$ contained in $Z_B(va)$.
If $ua \notin Z_B(B^0)$ then $\dim E < \dim B$, contradicting
simplicity of $B$ (as a Lie group).
\end{proof}

Combining Lemmas \ref{lr1} and \ref{lr2} we have
\begin{lemma}\label{lr3}
Let $u \in G$, $v \in \bar r(G,ds^2)$ and $a \in \{a_1, \dots , a_k\}$
such that $\gamma := \ell(ua)r(va)$ is an 
isometry of constant displacement on $(G,ds^2)$.  Write
$\alpha = \Ad(a) = \ell(a)r(a)$, so $\gamma = \ell(u)r(v)\alpha$. 
Then there are two cases:

{\rm (1)} $ua \in Z_{\bI(G,ds^2)}(G)$.  Then $\ell(G)$ centralizes $\gamma$.

{\rm (2)} $va \in Z_{\bI(G,ds^2)}(G)$.  Then $d[\ell(va)r(va)]$ is the
identity on the tangent space $\gg$ so its action on $G$ is trivial and
$\gamma = \ell(uv^{-1})$.
\end{lemma}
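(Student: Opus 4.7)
The strategy is to combine Lemmas \ref{lr1} and \ref{lr2} to force a dichotomy on $\gamma$ and then unwind what each side of that dichotomy says about $\gamma$ as an isometry. I would begin by embedding the situation into a compact Lie group $B$ containing $G$ as a closed normal subgroup with $B^0 = G$ and with an element $a \in B$ realizing $\alpha = \Ad_B(a)|_G$; such a $B$ exists because $\Out(G)$ is finite and the outer class of $\alpha$ is torsion. Then both $ua$ and $va$ lie in the single component $B^0 a = Ga$. Lemma \ref{lr1} directly gives that $va$ commutes with every $B^0$-conjugate of $ua$ (and vice versa), and since $B^0 = G$ is simple, Lemma \ref{lr2} yields the dichotomy: either $ua \in Z_B(G)$ or $va \in Z_B(G)$.

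Next I would translate this centrality statement back into the isometry group. Under the natural embedding of $B$ into $\bI(G,ds^2)$ (so that $a \in B$ is identified with the isometry $\alpha$), centralizing $G$ in $B$ corresponds to centralizing $\ell(G)$ in $\bI(G,ds^2)$, which gives the two cases $ua \in Z_{\bI(G,ds^2)}(G)$ or $va \in Z_{\bI(G,ds^2)}(G)$ as in the lemma. In case (1), the left factor $\ell(ua)$ of $\gamma = \ell(ua)r(va)$ commutes with $\ell(G)$ by hypothesis, while the right factor $r(va)$ automatically commutes with $\ell(G)$ because left and right translations commute on $G$; hence $\ell(G)$ centralizes $\gamma$. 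In case (2), $va \in Z_B(G)$ forces $v\alpha(x)v^{-1} = x$ for every $x \in G$, i.e., $\alpha = \Ad(v^{-1})$ is inner; substituting into $\gamma(x) = u\alpha(x)v^{-1}$ collapses this to $\gamma(x) = uv^{-1}x$, so $\gamma = \ell(uv^{-1})$. The infinitesimal formulation recorded in the statement is that the differential of $\ell(va)r(va)$ at the identity is $\Ad(v) \circ d\alpha = \Ad(v)\Ad(v^{-1}) = \mathrm{id}$ on $\gg$, so this isometry is the identity on $G$ and the remaining factor of $\gamma$ is $\ell(uv^{-1})$.

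The main obstacle, and the thing I would handle most carefully, is the bookkeeping between elements of the ambient extension $B$ and their realizations as isometries in $\bI(G,ds^2)$. When $\alpha$ is outer there is no element of $G$ realizing it, and the symbol $ua$ is genuinely a label for an element of the coset $Ga \subset B$ rather than of $G$ itself; ensuring that Lemma \ref{lr2}'s abstract centralizer condition on $B$ and the isometry-group centralizer condition appearing in the statement of Lemma \ref{lr3} coincide on the nose (with the correct inner/outer dichotomy for $\alpha$) is the crux. Once that dictionary is nailed down, each of the two cases reduces to the short calculation indicated above.
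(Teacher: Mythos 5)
Your proof is correct and follows the paper's route exactly: the paper obtains Lemma \ref{lr3} by simply combining Lemmas \ref{lr1} and \ref{lr2}, and your unwinding of the two resulting cases (in case (1) the relation $(ua)g = g(ua)$ makes $\gamma$ commute with $\ell(G)$, and in case (2) the relation $v\alpha(x)v^{-1}=x$ forces $\alpha = \Ad(v^{-1})$ and collapses $\gamma$ to $\ell(uv^{-1})$) supplies precisely the details the paper leaves implicit. Your attention to the ambient group $B = \bigcup_i Ga_i$ in which the representatives $a_i$ live, so that Lemma \ref{lr2} literally applies, is bookkeeping the paper glosses over but is handled appropriately here.
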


Now we see that these cases cannot be mixed.
\begin{proposition}\label{lr4}
Let $\Gamma$ be a finite 
group of isometries of constant displacement on $(G,ds^2)$.  
There are two cases:
\begin{itemize}
\item[(1.)] $\Gamma \subset \bigcup_{1\leqq i \leqq k} \ell(Ga_i)$ 
where $\{\alpha_1, \dots , \alpha_k\}$ and $\{a_1, \dots , a_k\}$ are given by
{\rm (\ref{int-out-grp})}, {\rm (\ref{reps1})} and {\rm (\ref{reps2})}, and
\item[(2.)] $\Gamma \subset \bigcup_{1\leqq i \leqq k} r((G,ds^2)a_i)$.
\end{itemize}
In {\rm Case 2} 
the centralizer of $\Gamma$ in $\bI(G,ds^2)$ contains $\ell(G)$, so the 
Homogeneity Conjecture holds for $\Gamma$.
\end{proposition}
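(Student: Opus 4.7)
The plan is to apply Lemma \ref{lr3} pointwise to each $\gamma \in \Gamma$ and then promote the per-element dichotomy it provides into a uniform one valid for the whole group $\Gamma$.

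First I would pick an arbitrary $\gamma \in \Gamma$ and, using Theorem \ref{iso-grp}, write it in the form $\gamma = \ell(ua)r(va)$ with $u \in G$, $v \in \bar r(G,ds^2)$, $a \in \{a_1,\dots,a_k\}$. Since $\gamma$ is of constant displacement, Lemma \ref{lr3} applies and delivers one of two options. In option (R), $ua \in Z_{\bI(G,ds^2)}(G)$ so $\ell(G)$ centralizes $\gamma$; collapsing the central factor this places $\gamma$ in $r((G,ds^2)a_i)$ for the same $i$. In option (L), $va$ acts trivially on $G$, so $\gamma = \ell(uv^{-1}) \in \ell(G) = \ell(Ga_1)$. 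Either way, the individual element lies in one of the two unions appearing in the proposition.

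The main step is to show these options cannot coexist nontrivially inside the same finite group $\Gamma$. Assume for contradiction that $\Gamma$ contains both an element $\gamma_R$ which is genuinely of type (R) (i.e.\ lies in $\bigcup_i r((G,ds^2)a_i)$ but is not simultaneously a pure left translation) and an element $\gamma_L = \ell(h) \ne 1$ with $h \notin Z(G)$ of type (L). Then $\gamma_L \gamma_R \in \Gamma$ is again of constant displacement, so Lemma \ref{lr3} applies to it as well. Write $\gamma_R = (\text{central factor})\cdot r(w)\cdot \Ad(a_{i_0})$; because $\ell(G)$ centralizes $\gamma_R$, one can factor $\gamma_L \gamma_R = \gamma_R \gamma_L$ and re-express the product in the form $\ell(u'a_{i_0})r(v'a_{i_0})$ with $u'$ determined by $h$. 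Whichever of Lemma \ref{lr3}'s two cases now applies to $\gamma_L\gamma_R$, solving the resulting centrality equation for the left-translation part forces $h \in Z(G)$, contradicting $h \notin Z(G)$. The dichotomy is therefore uniform across $\Gamma$.

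Finally, in Case (R) every $\gamma \in \Gamma$ is centralized by $\ell(G)$, so $\ell(G) \subset Z_{\bI(G,ds^2)}(\Gamma)$ and hence descends to a transitive isometric action on $\Gamma \backslash (G,ds^2)$, verifying the Homogeneity Conjecture there. The hard part is the mutual-exclusion step, because one must carefully parse what it means for expressions of the form $ua_i$ or $va_i$ to lie in $Z_{\bI(G,ds^2)}(G)$ when $a_i$ is only a representative of an outer automorphism class and need not itself belong to $G$; the combinatorics of mixing outer-representative components through products in $\Gamma$ is where the bookkeeping is delicate, and this is the step on which the entire argument hinges.
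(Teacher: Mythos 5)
Your proposal is correct and follows essentially the same route as the paper: the per-element dichotomy comes from Lemma \ref{lr3}, and mutual exclusion is obtained by applying that lemma (via Lemmas \ref{lr1} and \ref{lr2}) to the product of a genuinely non-central left-translation element with a genuinely right-translation element of $\Gamma$, exactly as in the paper's proof of Proposition \ref{lr4}. The only cosmetic difference is where the contradiction lands --- the paper forces the right-hand part $v_2$ into $Z_G$, while you force $h \in Z_G$ (and in the other subcase the contradiction is really with the genuineness of $\gamma_R$ rather than with $h \notin Z_G$) --- but the mechanism is identical.
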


\begin{proof} 
If $\Gamma \not \subset \bigcup_{1\leqq i \leqq k} r((G,ds^2)a_i)$ 
then $\Gamma$ contains an element
$\gamma_1 = \ell(u_1'a_i)r(v_1'a_i)$ with 
$u_1'a_i \notin Z_{\bI(G,ds^2)}(G)$.  Then
$v_1'a_i \in Z_{\bI(G,ds^2)}(G)$ and $\gamma_1 = \ell(u_1)$ where 
$u_1 = (u_1'a_1)(v_1'a_i)^{-1} = u_1'v_1'^{-1}\notin Z_G$\,.  
Then, if we have $\gamma_2 = \ell(u_2'a_j)r(v_2'a_j) \in \Gamma$
with $\gamma_2'a_j \notin Z_G$\,, $\gamma_2 = r(v_2'u_2'^{-1}) = r(v_2)$.
Thus $\gamma_3 := \gamma_1\gamma_2 = \ell(u_1)r(v_2)$ where $v_2 \in Z_G$
because $u_1 \notin Z_G$\,.  Thus $\gamma_3 = \ell(u_1v_2^{-1}) \in \ell(G)$
and $\gamma_1 = \ell(u_1) \in \ell(G)$ so also 
$\gamma_2 = \gamma_1^{-1}\gamma_3 \in \ell(G)$.  
But $\gamma_2 = r(v_2) \notin \ell(G)$.  This contradicts
our hypothesis $\Gamma \not \subset \bigcup_{1\leqq i \leqq k} r((G,ds^2)a_i)$.
We conclude that $\Gamma \subset \bigcup_{1\leqq i \leqq k}\ell(Ga_i)$.
\end{proof}

\section{\bf Positive Curvature Manifolds.}\label{sec10}
\setcounter{equation}{0}
\setcounter{subsection}{0}

In this section we verify the Homogeneity Conjecture for Riemannian
manifolds $(M,ds^2)$ such that there is some Riemannian metric $dt^2$ of
strictly positive sectional curvature on $M$.  Specifically, we combine
Propositions \ref{7-normal}, \ref{conj4s3}, \ref{conj4su} and
\ref{spm-comp} with the comments at the beginning of Subsection \ref{drop}C,

\begin{theorem}\label{conj-non-normal}
Let $M = G/H$ be a connected, simply connected homogeneous space,
and $ds^2$ a $G$--invariant Riemannian metric on $M$, where $ds^2$ is
not required to be the normal Riemannian metric.
Suppose that $M$ admits another invariant Riemannian metric $dt^2$
of strictly positive curvature.  Then the Homogeneity Conjecture
is valid for $(M,ds^2)$.
\end{theorem}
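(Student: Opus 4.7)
The plan is to reduce the claim to the classification of compact, simply connected homogeneous Riemannian manifolds admitting an invariant metric of strictly positive sectional curvature (due to Berger, Wallach, Aloff--Wallach, and B\'erard-Bergery), which pins the underlying manifold down to a short list: the compact rank one symmetric spaces $\bbS^n$, $\P^n(\C)$, $\P^n(\H)$, $\P^2(\O)$; the Wallach flag manifolds $SU(3)/T^2$, $Sp(3)/Sp(1)^3$, $F_4/Spin(8)$; the Aloff--Wallach spaces $W^7_{k,\ell}$; and the Berger spaces $B^7 = SO(5)/SO(3)$ and $B^{13} = SU(5)/Sp(2)U(1)$. For each such $M$ I would then enumerate, using Onishchik's tables, the possible transitive connected subgroups $G \subset \bI^0(M, dt^2)$ and the corresponding families of $G$--invariant Riemannian metrics $ds^2$ that $M$ can carry.

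The case where $ds^2$ happens to be the normal metric relative to $G$ falls under Sections \ref{sec7} and \ref{sec8}: for the projective spaces, flag manifolds, and Berger spaces one has $\chi(M) > 0$, so Theorem \ref{euler-pos} applies once one checks that $\Ad(\gamma)$ is inner for every $\gamma \in \Gamma$ of constant displacement, which is presumably the content of Proposition \ref{7-normal}. For non--normal $ds^2$, the identity component $\bI^0(M, ds^2)$ is typically a proper subgroup of $\bI^0(M, dt^2)$, so one must redo the constant displacement analysis with this smaller $G$. Propositions \ref{conj4s3}, \ref{conj4su}, and \ref{spm-comp} presumably handle, respectively, the three--sphere with its Berger--type metrics, the odd spheres $\bbS^{2n-1} = SU(n)/SU(n-1)$ with $SU(n)$--invariant non--normal metrics, and the odd spheres $\bbS^{4n-1} = Sp(m)/Sp(m-1)$ with their large family of invariant metrics. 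In each case the strategy is to use the half--space Lemma \ref{halfspace} to rule out trivial summands in the isotropy representation and then to apply Corollary \ref{no-outer}, forcing $\Gamma$ into the centralizer of $\bI^0(M, ds^2)$ and thus making $\Gamma \backslash (M, ds^2)$ homogeneous.

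The hardest step, and presumably the main content of the cited propositions, is the treatment of non--normal invariant metrics on the odd spheres, where transitive actions by $SU(n)$, $Sp(n)$, $Sp(n)U(1)$, $Sp(n)Sp(1)$, $Spin(7)$, and $Spin(9)$ all occur and produce many--parameter families of non--normal invariant metrics. Here the outer automorphism issue flagged at the end of Section \ref{sec8} is expected to bite, since $A_n$ and $D_n$ both appear; but positive curvature provides the crucial leverage, as a constant displacement isometry of a positively curved manifold whose centralizer is not transitive is tightly constrained by Synge--type fixed point theorems, and in these low rank situations the possibilities can be enumerated explicitly. Once these cases are disposed of, the theorem assembles directly from Propositions \ref{7-normal}, \ref{conj4s3}, \ref{conj4su}, \ref{spm-comp} together with the reductions recorded at the beginning of Subsection \ref{drop}C.
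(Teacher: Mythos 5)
Your overall architecture --- reduce to the Berger/Wallach/Aloff--Wallach/B\'erard-Bergery classification, then work through the list case by case, isolating the three odd-sphere families as the content of Propositions \ref{conj4s3}, \ref{conj4su} and \ref{spm-comp} --- matches the paper's. But two of the specific mechanisms you propose would fail. First, you route the normal-metric, $\chi(M)>0$ entries through Theorem \ref{euler-pos}, adding that one must ``check that $\Ad(\gamma)$ is inner for every $\gamma \in \Gamma$.'' That check is precisely the open problem of Subsection 8D for groups of type $A_n$, $D_n$, $E_6$ (which occur here: $SU(3)$, $SU(5)$, $SU(m+1)$, \dots), so this route does not close. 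The paper instead handles the normal case through the isotropy splitting fibrations of Section \ref{sec7} (Theorem \ref{cw-cover}), where Lemma \ref{outers} disposes of the outer components, plus \cite{W1962} for the symmetric entries and ad hoc arguments (constant-length Killing fields via \cite{XW2016}, de Siebenthal conjugation) for entries (10) and (12).

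Second, for the non-normal sphere cases you propose to ``use the half-space Lemma \ref{halfspace} to rule out trivial summands in the isotropy representation and then apply Corollary \ref{no-outer}.'' This is backwards: Lemma \ref{halfspace} and Proposition \ref{is-central} \emph{require} that the isotropy representation have no trivial summands, and for $S^{2m+1}=SU(m+1)/SU(m)$, $S^{4m+3}=Sp(m+1)/Sp(m)$ and $S^3=SU(2)$ the fiber directions ($\gz_K$, $\gw$, all of $\gg$) are exactly trivial summands, so these tools do not apply. The paper instead uses the modified fibration setup (\ref{new-setup}) with totally geodesic fibers over a symmetric base, the flat-rectangle argument, and explicit computations in the isometry groups from Table \ref{shankar-table} (e.g.\ Lemma \ref{if-nu} pinning $\Gamma$ to a binary dihedral group with transitive centralizer $Sp(\tfrac{m+1}{2})$). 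Finally, your appeal to Synge-type fixed point theorems on $(M,ds^2)$ is not available: only the auxiliary metric $dt^2$ is positively curved, and it enters solely through the classification of the underlying manifold $M$; the metric $ds^2$ on which the constant displacement analysis is carried out can have curvature of any sign.
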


\smallskip
\addtocounter{subsection}{1}
\centerline{\bf \thesubsection. The Classification.}

\noindent
The connected simply connected 
homogeneous Riemannian manifolds of positive sectional curvature were
classified by Marcel Berger \cite{B1961}, Nolan Wallach \cite{W1972},
Simon Aloff and Nolan Wallach \cite{AW1974}, and
Lionel B\' erard-Bergery \cite{B1976}.  Their isometry groups were worked
out by Krishnan Shankar \cite{S2001}.  The spaces and the isometry
groups are listed in the first two columns of Table \ref{shankar-table}
below.  When there is a fibration that will be relevant to our
verification of the Homogeneity Conjecture, it will also be listed in
the first column.

\addtocounter{equation}{1}
\begin{longtable}{|r|l|l|}
\caption*{\bf \centerline{{\normalsize Table} \thetable} 
        \centerline{{\normalsize
        Isometry Groups of CSC Homogeneous Manifolds of}} 
        \centerline{{\normalsize  Positive Curvature and Fibrations 
                over Symmetric Spaces}}} 
\label{shankar-table} \\
\hline
 & $M = G/H$ & $\bI(M,ds^2)$ \\ \hline
\hline
\endfirsthead
\multicolumn{3}{l}{{\normalsize \textit{Table \thetable\, continued from
        previous page $ \dots$}}} \\
\hline
 & $M = G/H$ & $\bI(M,ds^2)$ \\ \hline
\hline
\endhead
\hline \multicolumn{3}{r}{{\normalsize \textit{$\dots$ Table \thetable\,
        continued on next page}}} \\
\endfoot
\hline
\endlastfoot
\hline
{\rm 1} & $S^n = SO(n+1)/SO(n)$ & $O(n+1)$ 
        \\ \hline 
{\rm 2}  & $P^m(\C) = SU(m+1)/U(m)$ & $PSU(m+1)\rtimes \Z_2$ 
        \\ \hline
{\rm 3} & $P^k(\H) = Sp(k+1)/(Sp(k)\times Sp(1))$ & $Sp(k+1)/\Z_2)$ 
        \\ \hline
{\rm 4} & $P^2(\O) = F_4/Spin(9)$ & $F_4$ 
        \\ \hline
{\rm 5} & $S^6 = G_2/SU(3)$ & $O(7)$ 
        \\ \hline
{\rm 6} & $\begin{array}{l} P^{2m+1}(\C) = Sp(m+1)/(Sp(m)\times U(1)) \\ \hline
           P^{2m+1}(\C) \to P^m(\H) \end{array}$ 
        & $(Sp(m+1)/\Z_2)\times \Z_2$  
         \\ \hline
{\rm 7} & $\begin{array}{l} F^6 = SU(3)/T^2 \\ \hline
           F^6 \to P^2(\C) \end{array}$ 
        & $(PSU(3) \rtimes \Z_2) \times \Z_2$  
         \\ \hline
{\rm 8} & $\begin{array}{l} F^{12} = Sp(3)/(Sp(1)\times Sp(1)\times Sp(1))
                \\ \hline F^{12} \to P^2(\H) \end{array}$ 
        & $(Sp(3)/\Z_2) \times \Z_2$  
        \\ \hline
{\rm 9} & $\begin{array}{l} F^{24} = F_4/Spin(8) \\ \hline
           F^{24} \to P^2(\O) \end{array}$ 
        & $F_4$  
        \\ \hline
{\rm 10} & $M^7 = SO(5)/SO(3)$ & $SO(5)$  
        \\ \hline
{\rm 11} & $\begin{array}{l} 
        M^{13} = SU(5)/(Sp(2) \times_{\Z_2} U(1) \\ \hline 
                M^{13} \to P^4(\C) \end{array}$
        & $PSU(5) \rtimes \Z_2$ 
        \\ \hline
{\rm 12} & $N_{1,1} = (SU(3) \times SO(3))/U^*(2)$ & 
                $(PSU(3) \rtimes \Z_2) \times SO(3)$  
        \\ \hline
{\rm 13} & $\begin{array}{l} N_{k,\ell} = SU(3)/U(1)_{k,\ell} \\
           (k,\ell) \ne (1,1), 3|(k^2+\ell^2+k\ell) \\ \hline
           N_{k,\ell} \to P^2(\C) \end{array}$ &
                $(PSU(3)\rtimes \Z_2) \times (U(1)\rtimes \Z_2)$ 
        \\ \hline
{\rm 14} & $\begin{array}{l} N_{k,\ell} = SU(3)/U(1)_{k,\ell} \\
           (k,\ell) \ne (1,1), 3\not| (k^2+\ell^2+k\ell) \\ \hline
           N_{k,\ell} \to P^2(\C) \end{array}$ &
        $U(3)\rtimes \Z_2$ 
        \\ \hline
{\rm 15} & $\begin{array}{l} S^{2m+1} = SU(m+1)/SU(m) \\ \hline
           S^{2m+1} \to P^m(\C) \end{array}$ & 
         $U(m+1)\rtimes\Z_2$ 
        \\ \hline
{\rm 16} & $\begin{array}{l} S^{4m+3} = Sp(m+1)/Sp(m) \\ \hline
           S^{4m+3} \to P^m(\H) \end{array}$ & 
        $Sp(m+1)\rtimes_{\Z_2} Sp(1)$ 
        \\ \hline
{\rm 17} & $\begin{array}{l} S^3 = SU(2)\\ \hline
           S^3 \to P^1(\C) = S^2 \end{array}$ 
        & $O(4)$  
        \\ \hline
{\rm 18} & $S^7 = Spin(7)/G_2$ & $O(8)$  
        \\ \hline
{\rm 19} & $\begin{array}{l} S^{15} = Spin(9)/Spin(7) \\ \hline
           S^{15} \to S^8 \end{array}$ & 
        $Spin(9)$  
        \\ \hline
\end{longtable}

\smallskip
\addtocounter{subsection}{1}
\centerline{\bf \thesubsection. The Normal Metric Case.}

\noindent

The first four spaces $M = G/H$ of Table
\ref{shankar-table} are Riemannian symmetric spaces with $G = \bI(M)^0$.
The fifth space is $S^6 = G_2/SU(3)$, where the isotropy group $SU(3)$ is
irreducible on the tangent space, so the only invariant metric is the one
of constant positive curvature; thus it is isometric
to a Riemannian symmetric space.  In view of \cite{W1962},
the Homogeneity Conjecture is valid for the entries {\rm (1)} through
{\rm (5)} of {\rm Table \ref{shankar-table}}.

Entries (6), (7), (8) and (9) of Table \ref{shankar-table} have
$\rank G = \rank H$.  Each is isotropy--split with
fibration over a projective (thus Riemannian symmetric) space, as defined in
\cite[(1.1)]{W2018}.  The Homogeneity Conjecture follows, for
these $(M,ds^2)$ where $ds^2$ is the normal Riemannian metric.

The argument for entries (6), (7), (8) and (9) applies with only obvious
changes to a number of other table entries, using \cite[Theorem 6.1]{W2018}
instead of \cite[Corollary 5.7]{W2018}.  
And the Homogeneity Conjecture is immediate for (18), where $H = G_2$ acts
irreducibly on the tangent space of $S^7 = Spin(7)/G_2$, so $ds^2$ is the
standard constant positive curvature metric.  
That verifies
the Homogeneity Conjecture for the entries
{\rm (11), (13), (14), (15), (16), (17), (18) and (19)} of
{\rm Table \ref{shankar-table}}, where $ds^2$ is the normal
Riemannian metric on $M$.  We have to take a closer look at 
table entries (10) and  (12)

In case (10), where $M^7 = G/H = SO(5)/SO(3)$,  $H$ acts
irreducibly on the tangent space, so $ds^2$ is normal and naturally 
reductive.  Let $\gg = \gh + \gm$, $\gh \perp \gm$ as usual, and $\gamma$
an isometry of constant displacement.  Since $\bI(M,ds^2) = SO(5)$ is
connected we have $\eta \in \gm$ such that
$\sigma(t) = \exp(t\xi)x_0\,, 0 \leqq t \leqq 1$, is a minimizing geodesic 
in $(M,ds^2)$ from $x_0 = 1H$ to $\gamma(x_0)$. One argues that the
corresponding Killing vector field $X$ on $(M,ds^2)$ has constant length.
But there is no such nonzero vector field \cite{XW2016}.  Thus
there is no isometry $\ne 1$ of constant displacement for entry {\rm (10)}
of {\rm Table \ref{shankar-table}}, so the Homogeneity Conjecture 
is immediate in that case.

In case (12), where $N_{1,1} = G/H = (SU(3)\times SO(3))/U^*(2)$,
let $\gamma$ be an isometry of constant displacement $d > 0$ and
suppose that $\gamma^2$ also is an isometry of constant
displacement.  The argument for table entry (18) shows that 
$\gamma \not\in \bI^0(N_{1,1},ds^2)$, so 
$\gamma = (g_1,g_2)\nu$ where $g_1 \in SU(3)$, $g_2 \in SO(3)$,
$\nu^2 = 1$, $\Ad(\nu)$ is complex conjugation on $SU(3)$, and
$\Ad(\nu)$ is the identity on $SO(3)$.  It also shows 
$\gamma^2 \in \bI^0(N_{1,1},ds^2)$ so $\gamma^2 = 1$.

The centralizer of $\nu$
is $K := ((SO(3) \times SO(3)) \cup (SO(3) \times SO(3))\nu$.
Using
de Siebenthal \cite{dS1956} we reduce our considerations to the 
cases where $g_1$ is either
the identity matrix $I_3$ or the matrix $I'_3 := 
\left ( \begin{smallmatrix} -1 & 0 & 0 \\ 0 & -1 & 0 \\ 0 & 0 & +1 
\end{smallmatrix} \right )$, and also $g_2$ is either $I_3$ or $I'_3$.

Recall that $H = U^*(2)$ is the image of
$U(2) \hookrightarrow (SU(3) \times SO(3))$, given by
$h \mapsto (\alpha(h),\beta(h))$ where $\alpha(h) = 
\left ( \begin{smallmatrix} h & 0 \\ 0 & 1/\det(h) \end{smallmatrix} \right )$
and $\beta$ is the projection $U(2) \to U(2)/(center) \cong SO(3)$.
Further, $\bI(L_{1,1}) = G \cup G\nu$
and its isotropy subgroup is $H \cup H\nu$.  Observe that

\qquad if $(g_1,g_2) = (I_3\,, I_3)$ then $\gamma = 
        (\alpha(I_2),\beta(I_2))\nu \in H\nu$, and

\qquad if $(g_1,g_2) = (I'_3\,, I_3)$ then $\gamma = 
        (\alpha(-I_2),\beta(-I_2))\nu \in H\nu$.

\noindent Replace $I'_3$ by $I''_3 =
\left ( \begin{smallmatrix} -1 & 0 & 0 \\ 0 & +1 & 0 \\ 0 & 0 & -1 
\end{smallmatrix} \right )$ and set $I''_2 =
\left ( \begin{smallmatrix} -1 & 0 \\ 0 & +1 \end{smallmatrix} \right )$, so

\qquad if $(g_1,g_2) = (I''_3\,, I''_3)$ then $\gamma = 
        (\alpha(I''_2),\beta(I''_2))\nu \in H\nu$.

\noindent
When $\gamma \in H\nu$ it cannot be of nonzero constant displacement.
We have reduced our considerations to the case $\gamma = (I_3\,, I'_3)\nu$,
or equivalently to one of its conjugates.  Compute
$$
\left ( \left ( \begin{smallmatrix} i & 0 & 0 \\ 0 & i & 0 \\ 0 & 0 & -1  
\end{smallmatrix} \right ), I_3\right ) \cdot (I_3\,, I'_3)\nu \cdot
\left ( \left ( \begin{smallmatrix} i & 0 & 0 \\ 0 & i & 0 \\ 0 & 0 & -1  
\end{smallmatrix} \right ), I_3 \right )^{-1} \\
 = (I'_3\,, I'_3)\nu  \in H\nu,
$$
so again $\gamma$ cannot be of nonzero constant displacement.
Thus
there is no isometry $\ne 1$ of constant displacement for entry {\rm (12)}
of {\rm Table \ref{shankar-table}}, so the Homogeneity Conjecture
is immediate in that case.

Summarizing this section, we have proved

\begin{proposition}\label{7-normal}
Let $M = G/H$ where $G$ is a compact connected Lie group and $M$ is simply
connected.  Let $ds^2$ be the normal Riemannian metric on $M$.
Suppose that $M$ has a Riemannian metric $dt^2$ for which every 
sectional curvature is $> 0$.  Then the
Homogeneity Conjecture is valid for $(M,ds^2)$.
\end{proposition}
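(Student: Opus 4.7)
The plan is to proceed by case-by-case inspection of Table \ref{shankar-table}, which is the Berger--Wallach--Aloff--Wallach--B\'erard-Bergery classification of simply connected positively curved homogeneous Riemannian manifolds together with Shankar's determination of their isometry groups. For each entry I identify whether one of the already-established tools applies, and supply a direct argument in the two cases that do not fit those tools.

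First I dispose of the manifolds that are forced to be (or can be arranged to be) Riemannian symmetric spaces. Entries (1)--(4) are symmetric spaces with $G = \bI^0(M,ds^2)$, so Theorem \ref{hc-symmetric} applies, and entry (5), $S^6 = G_2/SU(3)$, has irreducible linear isotropy, so $ds^2$ must be the metric of constant positive curvature and Theorem \ref{hc-const-curv} applies. The equal-rank entries (6)--(9) admit isotropy-splitting fibrations over projective symmetric spaces in the sense of (\ref{split-def}), so Theorem \ref{cw-cover} directly verifies the Homogeneity Conjecture for their normal metrics. The same splitting machinery, in the variant stated as \cite[Theorem 6.1]{W2018} and needed when $\rank H < \rank G$, handles entries (11), (13)--(17), and (19); entry (18), $S^7 = Spin(7)/G_2$, again has irreducible isotropy and reduces at once to constant curvature.

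The two remaining rows require hands-on arguments. For (10) $M^7 = SO(5)/SO(3)$, the isotropy acts irreducibly on the tangent space, so $ds^2$ is naturally reductive and $\bI(M,ds^2) = SO(5)$ is connected. Given any constant-displacement isometry $\gamma$, Proposition \ref{oz1} yields a minimizing geodesic $\sigma(t) = \exp(t\xi) x_0$ from $x_0$ to $\gamma(x_0)$ that is preserved by $\gamma$; the associated Killing vector field on $(M,ds^2)$ then has constant length. Since \cite{XW2016} shows that $M^7$ carries no nonzero Killing vector field of constant length, we conclude $\xi = 0$ and $\gamma = 1$, so the conjecture holds trivially.

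For (12), $N_{1,1} = (SU(3) \times SO(3))/U^*(2)$, the outer component of $\bI(N_{1,1},ds^2)$ is generated by an involution $\nu$ acting as complex conjugation on $SU(3)$ and trivially on $SO(3)$. An argument parallel to the irreducible-isotropy case forces any constant-displacement $\gamma$ to lie outside $\bI^0$ (if $\gamma^2$ is also of constant displacement) and to satisfy $\gamma^2 = 1$. I would then invoke de Siebenthal's Proposition \ref{dS} to $\Ad(\bI^0)$-conjugate any such $\gamma = (g_1,g_2)\nu$ into the centralizer of $\nu$, reducing to a short list of normal forms with $g_1,g_2 \in \{I_3,\,I_3',\,I_3''\}$. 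The main obstacle is the bookkeeping at this last step: using the embedding $U(2) \hookrightarrow SU(3)\times SO(3)$ that defines $U^*(2)$, one must show that each surviving normal form either already lies in $H\nu$ or is $\bI(M,ds^2)$-conjugate to an element of $H\nu$, hence fixes a point of $M$ and cannot have positive constant displacement. Combining the four groups of cases completes Proposition \ref{7-normal}.
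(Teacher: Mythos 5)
Your proposal follows essentially the same route as the paper's own argument: the identical partition of Table \ref{shankar-table} into symmetric/irreducible-isotropy entries, the equal-rank isotropy-splitting entries (6)--(9), the remaining entries handled by the splitting variant of \cite[Theorem 6.1]{W2018}, and the two hands-on cases (10) (via the nonexistence of constant-length Killing fields from \cite{XW2016}) and (12) (via de Siebenthal reduction of $(g_1,g_2)\nu$ to normal forms conjugate into $H\nu$). The argument is correct as outlined and matches the paper's proof of Proposition \ref{7-normal}.
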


\addtocounter{subsection}{1}
\centerline{\bf \thesubsection. Dropping the Normality Condition.}\label{drop}

\noindent
In this subsection we see how to drop the normality condition
on $ds^2$ in Proposition \ref{7-normal}.  In several cases this is automatic
because the adjoint action of $H$ on the tangent space $\gg/\gh$
is irreducible; there every invariant Riemannian metric on $M = G/H$
is normal. Those are
the spaces given by the entries (1) (2), (3), (4), (5), (10), 
(11), (12) and (18) of Table \ref{shankar-table}.
To consider most of the others one needs a variation on (\ref{split-def}).
\begin{equation}\label{new-setup}
\begin{aligned}
&G \text{ is a compact connected simply connected Lie group, } \\
&H \subset K \text{ are closed connected subgroups of $G$, 
	$M' = G/K$ and $F=H\backslash K$\,, and} \\
& ds^2 \text{ is a $G$--invariant Riemannian metric on $M = G/H$, 
	$ds'^2 = ds^2|_{M'}$ and $ds_F^2 = ds^2|_F$, such that} \\
& \phantom{XX}\text{(i) $\pi: M \to M'$ by  $\pi(gH) = gK$\,,
                right action of $K$\,, } \\
& \phantom{XX}\text{(ii) $(M',ds'^2)$ and $(F,ds_F^2)$ are 
		Riemannian symmetric spaces, and}\\
& \phantom{XX}\text{(iii) the tangent spaces $\gm'$ for $M'$, $\gm''$
        for $F$ and} \text{ $(\gm' + \gm'')$ for $M$ satisfy 
                $\gm' \perp \gm''$}\,.
\end{aligned}
\end{equation}
That leads to a modification of \cite[Lemma 5.2]{W2018}:

\begin{lemma}\label{new-go-space} Assume {\rm (\ref{new-setup})}.
Then the fiber $F$ of
$M \to M'$ is totally geodesic in $M$
In particular it is a geodesic orbit space, and any geodesic of
$M$ tangent to $F$ at some point is of the form
$t \mapsto \exp(t\xi)x$ with $x \in F$ and $\xi \in \gm''$\,.
\end{lemma}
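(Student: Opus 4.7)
The plan is to reduce the statement to verifying a single algebraic condition at the base point $1H \in F \subset M$, namely the geodesic‐vector criterion for reductive homogeneous spaces. Recall that for the reductive decomposition $\gg = \gh + \gm$ with the $\Ad(H)$--invariant inner product $\<,\>$ on $\gm$ determined by $ds^2$, the curve $t\mapsto \exp(tX)\cdot 1H$ is a geodesic of $(M,ds^2)$ if and only if $\<[X,Y]_\gm, X\> = 0$ for every $Y \in \gm$, where the subscript $\gm$ denotes the $\gm$--component with respect to $\gg = \gh + \gm$. So it is enough to show that every $X \in \gm''$ satisfies this condition; the curve $\exp(tX)\cdot 1H$ then automatically lies inside $F = K\cdot 1H$ because $\exp(tX)\in K$.

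To carry this out I would decompose an arbitrary $Y \in \gm = \gm' + \gm''$ as $Y = Y' + Y''$ with $Y' \in \gm'$, $Y'' \in \gm''$, and analyze the two summands of $[X,Y]_\gm$ separately using the two symmetric‐space structures provided by hypothesis (ii) of (\ref{new-setup}). Since $F = H\backslash K$ is Riemannian symmetric with Cartan decomposition $\gk = \gh + \gm''$, we have $[\gm'',\gm''] \subset \gh$, so $[X,Y'']_\gm = 0$. Since $M' = G/K$ is Riemannian symmetric with Cartan decomposition $\gg = \gk + \gm'$, we have $[\gk,\gm'] \subset \gm'$; thus $[X,Y'] \in \gm'$, and in particular $[X,Y']_\gm = [X,Y'] \in \gm'$. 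Combining these, $[X,Y]_\gm \in \gm'$, so the orthogonality hypothesis $\gm' \perp \gm''$ of (iii) gives $\<[X,Y]_\gm, X\> = 0$. This is the whole calculation and is essentially immediate once the two bracket relations have been extracted.

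With this in hand, every geodesic of $M$ starting at $1H$ with tangent vector in $\gm''$ lies in $F$; $G$--invariance (and in particular $K$--invariance) of $ds^2$ then yields the same statement at every point of $F$, so $F$ is totally geodesic in $(M,ds^2)$. That $(F,ds_F^2)$ is a geodesic orbit space is then free, since it is a Riemannian symmetric space by assumption (ii), and symmetric spaces are geodesic orbit. The last assertion is extracted from the above: a geodesic of $M$ tangent to $F$ at some $x = k\cdot 1H$ with $k\in K$ is obtained by $K$--translation of the geodesic at $1H$ with initial direction the corresponding $\xi\in\gm''$, giving exactly the form $t \mapsto k\exp(t\xi)\cdot 1H = \exp(t\xi)x$ after the usual identifications in the homogeneous space $G/H$.

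I do not foresee a genuine obstacle here; the only mild subtlety is keeping track of where each bracket lands in the telescoped decomposition $\gg = \gh + \gm'' + \gm'$, and that $\gm' \perp \gm''$ is used only at the very end (the symmetric space hypotheses alone do the bracket bookkeeping). Worth noting is that natural reductivity of $(M,ds^2)$ is \emph{not} assumed and is not needed: the weaker geodesic‐vector criterion applied to an individual $X \in \gm''$ is all that intervenes.
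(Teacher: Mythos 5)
The paper states this lemma without proof, presenting it only as ``a modification of \cite[Lemma 5.2]{W2018}'', so there is no in-paper argument to compare against line by line; your proof via the geodesic-vector criterion is a sound and self-contained way to supply it, and the key computation is right: for $X\in\gm''$ and $Y=Y'+Y''\in\gm$ one gets $[X,Y'']_\gm=0$ and $[X,Y']\in\gm'\perp\gm''$, so $\<[X,Y]_\gm,X\>=0$, every $X\in\gm''$ is a geodesic vector, and total geodesy of $F$ follows by $K$-homogeneity together with polarization of the second fundamental form. Two caveats are worth recording. First, your bracket relation $[\gm'',\gm'']\subset\gh$ requires reading hypothesis (ii) of (\ref{new-setup}) as asserting that $(\gk,\gh)$ is a symmetric \emph{pair} with Cartan decomposition $\gk=\gh+\gm''$, not merely that $(F,ds_F^2)$ is isometric to a Riemannian symmetric space; a homogeneous presentation of a symmetric space need not be a symmetric pair (e.g. $S^6=G_2/SU(3)$), and for the Aloff--Wallach fibers $U(2)/U(1)_{k,\ell}$ the relation genuinely fails --- consistent with the paper's remark that Lemma \ref{apply-setup} applies to entries (13), (14) only ``with some adjustments''. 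Your other relation $[\gk,\gm']\subset\gm'$, by contrast, needs only reductivity of $G/K$ (i.e. $\Ad(K)$-invariance of $\gm'$), not symmetry, so that half of the computation is more robust than you advertise. Second, the closing identification $k\exp(t\xi)\cdot 1H=\exp(t\xi)x$ is not an identity: it equals $\exp(t\,\Ad(k)\xi)\cdot x$, so the initial direction of a geodesic tangent to $F$ at $x=k\cdot 1H$ lies in $\Ad(k)\gm''$ rather than in $\gm''$ itself (and $\gm''$ is not $\Ad(K)$-invariant in general). This looseness is arguably already present in the lemma's own wording, but ``after the usual identifications'' should not be used to conceal it.
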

\noindent
That facilitates a variation on the arguments of 
\cite[Proposition 5.4]{W2018}  and \cite[Lemma 5.5]{W2018}
to show that $\Gamma$ centralizes $G$, where $\Gamma$ is a group of
isometries of constant displacement on $(M,ds^2)$, as follows

\begin{lemma}\label{apply-setup}
Suppose that $M = G/H$ is an entry of {\rm Table \ref{shankar-table}} 
for which $G = \bI^0(M, ds^2)$, that $(M, ds^2)$ satisfies
{\rm (\ref{new-setup})}, and that
$\pi: (M,ds^2) \to (M',ds'^2)$ is a Riemannian submersion.  
Then the Homogeneity Conjecture holds for $(M,ds^2)$.
\end{lemma}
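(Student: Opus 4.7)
The strategy is to mirror Proposition~\ref{cw-conn} and Lemma~\ref{outers}, adapted to the setup in (\ref{new-setup}) where both the fiber $F$ and the base $M'$ are Riemannian symmetric spaces. Let $\gamma\in\bI(M,ds^2)$ be an isometry of constant displacement. I would proceed in three steps.

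First I would show that $\gamma$ preserves the fibration $\pi:M\to M'$ and descends to an isometry $\bar\gamma$ of $(M',ds'^2)$ which is again of constant displacement. For $\gamma \in \bI^0(M,ds^2)=G$ this is immediate since $G$ acts compatibly on $M$ and $M'$. For $\gamma$ in a nonidentity component one inspects Table~\ref{shankar-table}: in every case relevant to (\ref{new-setup}) the full isometry group is a small finite extension of $G$ (typically by a $\Z_2$), and the extra automorphism is a standard symmetry (complex conjugation, right multiplication by a unit quaternion, the symmetry of the symmetric base, etc.) that normalizes both $H$ and $K$ and hence preserves $\pi$. Constant displacement of $\bar\gamma$ follows from the submersion hypothesis.

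Second, since $(M',ds'^2)$ is a Riemannian symmetric space, Theorem~\ref{hc-symmetric}, together with Lemma~\ref{eq-rank-2}, forces $\bar\gamma$ to centralize $\bI^0(M',ds'^2)$; in particular $\bar\gamma$ acts on $M'$ as (at most) a deck element of a symmetric Riemannian covering. Third, I would run a flat--rectangle argument of the type used in Proposition~\ref{cw-conn}. Fix $x_0\in M$; by Proposition~\ref{oz1}, $\gamma$ preserves a minimizing geodesic $\sigma$ from $x_0$ to $\gamma(x_0)$. Using (\ref{new-setup})(iii) write the initial velocity as
\[
\sigma'(0)=\xi'+\xi'',\qquad \xi'\in\gm',\ \xi''\in\gm''.
\]
By Lemma~\ref{new-go-space} the curve $t\mapsto\exp(t\xi'')x_0$ is a geodesic inside the totally geodesic symmetric fiber $F$, and (since $F$ is a geodesic orbit space) it generates a Killing vector field $X''$ of constant length equal to $\lVert\xi''\rVert$. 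Meanwhile $\xi'$ lifts to a Killing field $X'$ on $M$ commuting with $X''$ modulo the symmetric-space structure of $M'$ and $F$. The two Killing fields span a flat rectangle whose diagonal has length equal to the constant displacement of $\gamma$ and whose vertical side has length equal to the constant displacement $\bar\gamma$ induces along the fiber direction. Comparing, as in the proof of Proposition~\ref{cw-conn}, forces $\xi'$ to correspond to a central element. Iterating over $x_0$ shows that the horizontal part of $\gamma$ is central in $G$, so $\gamma$ acts trivially on $M'$ and restricts to a constant displacement isometry of each fiber.

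The proof then concludes by applying Theorem~\ref{hc-symmetric} to the symmetric fiber $F$: this pins down $\gamma|_F$ as a centralizing element, and the combined information forces $\Gamma$ to centralize $G$, whence $\Gamma\backslash(M,ds^2)$ is homogeneous. \textbf{Main obstacle.} The delicate point is Step~3, where one needs the Killing fields generated by $\xi'$ and $\xi''$ to commute sufficiently well to produce a genuine flat rectangle; the orthogonality condition in (\ref{new-setup})(iii) is a metric condition, not a bracket condition, so one must exploit the symmetric-space structure of both $F$ and $M'$ (and in particular the fact that geodesics in $F$ are one-parameter subgroups of $K$) to upgrade orthogonality to the commutation needed. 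A secondary nuisance is the circle-fiber case (17), $S^3\to S^2$, where $\dim F=1$ and the argument must be run in the more direct style of Lemma~\ref{const-curve-zero}.
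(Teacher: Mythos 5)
Your overall strategy for the core step --- take a minimizing geodesic from $x_0$ to $\gamma(x_0)$, write its initial velocity as $\xi'+\xi''$ along $\gm'\perp\gm''$, and run a flat--rectangle comparison using the totally geodesic symmetric fiber of Lemma \ref{new-go-space} --- is the route the paper intends: Lemma \ref{apply-setup} is presented there precisely as a variation on Proposition \ref{cw-conn} and Lemma \ref{outers}, with Lemma \ref{new-go-space} supplying the geometric input. But two of your steps do not hold up. First, your Step 1 claim that ``constant displacement of $\bar\gamma$ follows from the submersion hypothesis'' is false. A Riemannian submersion only gives $\rho_{M'}(\pi x,\pi\gamma x)\leqq\rho_M(x,\gamma x)$, i.e.\ the induced isometry is \emph{bounded}, which carries no information on a compact base. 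The Hopf fibration $S^3\to S^2=P^1(\C)$ already refutes it: left translation by $e^{i\theta}$ is a Clifford translation of the round $S^3$, yet it descends to a rotation of $S^2$, which fixes two points and is not of constant displacement. Step 2, which applies Theorem \ref{hc-symmetric} and Lemma \ref{eq-rank-2} to $\bar\gamma$ on the symmetric base, rests entirely on this claim and collapses with it --- indeed in the Hopf example it would ``prove'' that $\bar\gamma$ centralizes $\bI^0(S^2)$, which is false. This is why the paper's argument never descends to $M'$: the length comparison is carried out entirely upstairs on $(M,ds^2)$.

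Second, you correctly identify the crux of Step 3 --- that $\gm'\perp\gm''$ is a metric condition and does not make the relevant Killing fields commute --- but you leave it as an ``obstacle'' rather than resolving it, so the central step of your proof has no content. This is exactly where (\ref{new-setup}) differs from the isotropy--split situation of Proposition \ref{cw-conn}: there the vertical Killing fields come from a right $N$--action that genuinely commutes with $\ell(G)$, whereas here $H$ is not normal in $K$ (e.g.\ $H=Sp(m)\times U(1)$ inside $K=Sp(m)\times Sp(1)$ in entry (6)), so no such commuting action exists. What replaces commutativity in the paper's framework is the combination of (a) total geodesy of $F$ together with the fact that geodesics tangent to $F$ are orbits $\exp(t\xi'')x$ with $\xi''\in\gm''$ (Lemma \ref{new-go-space}), and (b) the geodesic--orbit property of $(M,ds^2)$, which via Proposition \ref{oz1} realizes the $\gamma$--invariant minimizing geodesic as $\exp(t\xi)x_0$ and permits a Pythagorean comparison of its length with those of $\exp(t\xi')x_0$ and $\exp(t\xi'')x_0$. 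Finally, be aware that your intended conclusion ``$\Gamma$ centralizes $G$'' is special to the entries (6)--(9), (13), (14) and (19) to which this lemma is actually applied; it fails for entries (15)--(17) (again by the Hopf example), which is exactly why the paper handles those cases by the separate computations of Propositions \ref{conj4s3}, \ref{conj4su} and \ref{spm-comp} rather than by Lemma \ref{apply-setup}.
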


With some adjustments, especially for (6) and (19), Lemma 
\ref{apply-setup} applies to (6) (7), (8), (9), (13), (14) and (19).
The remaining three cases are addressed by direct computation.  The simplest,
(17), is the group manifold $SU(2) = Sp(1) = S^3$, described just after
(\ref{mx-condition}).  
There, (1) is the limit of (2) as $a \uparrow 1$ and of (3) as 
$a, b \uparrow 1$.  Writing $(g,h) := \ell(g)r(h)$ for the transformation 
$x \mapsto gxh^{-1}$,
\begin{lemma}
Let $\Gamma \subset \bI(S^3,ds^2)$ be a finite group of 
constant displacement isometries of $(S^3,ds^2)$.  If
$\gamma = \pm (g,h) \in \Gamma$ and $g \ne \pm 1$ then $h = \pm 1$.
\end{lemma}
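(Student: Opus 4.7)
The plan is to apply the compact group manifold techniques of Section \ref{sec9}---specifically Lemma \ref{lr1} together with the simplicity argument behind Lemma \ref{lr2}---to the simply transitive compact simple Lie group $G = SU(2) = Sp(1)$. First I would note that $SU(2)$ has no outer automorphisms, so in the notation of Theorem \ref{iso-grp} we may take $k = 1$ and $a_1 = 1$. Consequently every element of $\bI(S^3, ds^2)$ is of the form $\ell(g) r(h)$ with $g \in SU(2)$ and $h \in \bar r(SU(2), ds^2)$, and writing $(g,h)$ for this isometry yields the customary representation modulo the central subgroup $\{\pm(1,1)\}$. In the three metric cases listed after \eqref{mx-condition}, $\bar r(SU(2), ds^2)$ is respectively all of $SU(2)$, the diagonal $U(1)$, or $\{\pm 1\}$; in each case the structure falls under the setup of Section \ref{sec9}.

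Next, I would apply Lemma \ref{lr1} directly to $\gamma = \ell(g) r(h)$ with $u = g$, $v = h$ and $a = 1$. Since $\gamma \in \Gamma$ is an isometry of constant displacement on $(S^3, ds^2)$, the flat rectangle argument proved in Section \ref{sec9} yields that $h$ commutes with every $SU(2)$-conjugate of $g$.

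To conclude, I would invoke the simplicity of $SU(2)$ as a Lie group, in the spirit of Lemma \ref{lr2}. If $g \ne \pm 1$ then $g$ is non-central, so the conjugacy class $\{x g x^{-1} : x \in SU(2)\}$ is nontrivial; the closed subgroup it generates is a nontrivial closed normal subgroup of the (three-dimensional) simple Lie group $SU(2)$, hence is all of $SU(2)$. Therefore $h$ centralizes $SU(2)$, which forces $h \in Z(SU(2)) = \{\pm 1\}$, giving the desired conclusion.

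The only point that requires care---and which I expect to be routine rather than a genuine obstacle---is verifying that Lemma \ref{lr1} applies to the non-bi-invariant metrics (2) and (3) on $S^3$. Its proof uses only that $ds^2$ is left-invariant, that $r(h)$ preserves $ds^2$ (guaranteed by $h \in \bar r(SU(2), ds^2)$), and that geodesics starting in an $\mathrm{Ad}(G)$-orbit on a sphere about $v$ are normal to that orbit; all three ingredients are present here. Thus the lemma is essentially a direct specialization of the compact group manifold theory to $G = SU(2)$, where the non-abelian simplicity conveniently reduces the centralizer condition to membership in the two-element center.
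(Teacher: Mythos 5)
Your argument is correct, and it is a legitimate derivation: the paper itself offers no written proof of this lemma, folding it into the phrase ``addressed by direct computation,'' which in context suggests an explicit quaternionic matrix calculation for $Sp(1)$. What you do instead is specialize the general compact group manifold machinery of Section \ref{sec9} to $G = SU(2)$: with $a = a_1 = 1$ (no outer automorphisms), Lemma \ref{lr1} gives that $h$ commutes with every $SU(2)$-conjugate of $g$, and Lemma \ref{lr2} (or your direct simplicity argument, which is the same one) then forces $g$ or $h$ into the center $\{\pm 1\}$ --- exactly the statement at hand. For the bi-invariant case (1) the same conclusion is already Lemma \ref{transl-0} from Section \ref{sec4}, and since the lemma only concerns elements of the form $\pm(g,h)$, the extra components of $O(4)$ are irrelevant. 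Your route buys uniformity across the three metrics listed after (\ref{mx-condition}) and avoids any case-by-case computation; the cost is that it inherits whatever care is needed in the proof of Lemma \ref{lr1} for non-bi-invariant metrics, which you correctly flag. Two small points of attribution: the ``flat rectangle argument'' lives in Section \ref{sec7} (Proposition \ref{cw-conn}); the proof of Lemma \ref{lr1} instead uses the orthogonality of a minimizing geodesic to the $\Ad(G)$-orbits it meets. Also, the hypothesis ``$(G,ds^2)$ is not a symmetric space'' in Theorem \ref{iso-grp} is not needed for Lemma \ref{lr1} itself, so your appeal to it in case (1) is safe.
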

\begin{corollary}\label{pmG}
Let $\Gamma \subset \bI(S^3,ds^2)$ be a finite group of
constant displacement isometries of $(S^3,ds^2)$.  Then either
$\Gamma \subset [SU(2) \times \{\pm 1\}]/[\pm (1,1)]$ or
$\Gamma \subset [\{\pm 1\} \times H]/[\pm (1,1)]$.
\end{corollary}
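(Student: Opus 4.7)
The corollary follows from the preceding Lemma by a short group-theoretic observation, so the plan is simply to leverage the Lemma and then upgrade its elementwise dichotomy to a statement about the entire group $\Gamma$.

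First I would record the immediate consequence of the Lemma: every element $\gamma = \pm(g,h) \in \Gamma$ satisfies $g \in \{\pm 1\}$ or $h \in \{\pm 1\}$, so
\[
\Gamma \subset \bigl(\,[SU(2)\times\{\pm 1\}] \,\cup\, [\{\pm 1\}\times H]\,\bigr)\big/[\pm(1,1)].
\]
The two subgroups on the right meet precisely in the central subgroup $\{\pm(1,1)\}$, so at this stage $\Gamma$ is only known to sit inside a union of two subgroups. The remaining task is to rule out the possibility that $\Gamma$ genuinely straddles this union.

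The key step is a short multiplicative contradiction. Suppose $\Gamma$ were to contain both an element $\alpha = \pm(g_0, \pm 1)$ with $g_0 \ne \pm 1$ and an element $\beta = \pm(\pm 1, h_0)$ with $h_0 \ne \pm 1$. Since $\Gamma$ is a group, $\alpha\beta \in \Gamma$, and multiplying in $\bI(S^3,ds^2) = [SU(2) \times H]/[\pm(1,1)]$ yields $\alpha\beta = \pm(g_0, h_0)$, where any central sign produced along the way is absorbed by the quotient. But then both coordinates of $\alpha\beta$ lie outside $\{\pm 1\}$, directly contradicting the Lemma applied to $\alpha\beta$. Hence $\Gamma$ cannot contain elements of both types, so it lies entirely in one of the two listed subgroups, which is the dichotomy claimed.

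There is no genuine obstacle beyond what the Lemma already provides; the only minor point to verify is that the central quotient by $\pm(1,1)$ does not muddy the notion of a coordinate being $\pm 1$, and this is clear because $\pm(1,1)$ acts componentwise by $\pm 1$.
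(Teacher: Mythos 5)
Your proposal is correct and is essentially the argument the paper intends: the Lemma gives the elementwise dichotomy, and closure of $\Gamma$ under multiplication (the product of a ``left-type'' element $\pm(g_0,\pm1)$ with $g_0\ne\pm1$ and a ``right-type'' element $\pm(\pm1,h_0)$ with $h_0\ne\pm1$ would be $\pm(g_0(\pm1),(\pm1)h_0)$, violating the Lemma) forces the whole group into one of the two subgroups. The observation that the central identification by $\pm(1,1)$ preserves the condition ``coordinate lies in $\{\pm1\}$'' is exactly the point needed to make this clean, and you have it.
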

\begin{proposition} \label{conj4s3}
Let $ds^2$ be a left $SU(2)$--invariant Riemannian metric on $S^3$.
Let $\Gamma$ be a finite group of isometries of constant displacement
on $(S^3,ds^2)$.  Then the centralizer of $\Gamma$ in $\bI(S^2,ds^2)$
is transitive on $S^3$, so the quotient Riemannian manifold 
$\Gamma \backslash (S^3,ds^2)$ is homogeneous.  In other words, the
Homogeneity Conjecture is valid for $(S^3,ds^2)$.
\end{proposition}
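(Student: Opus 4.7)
The plan is to invoke Corollary~\ref{pmG} and handle its two branches separately. That corollary gives the dichotomy that either $\Gamma \subset [SU(2)\times\{\pm 1\}]/\{\pm(1,1)\}$, so $\Gamma$ is essentially a subgroup of left translations, or $\Gamma \subset [\{\pm 1\}\times H]/\{\pm(1,1)\}$, so $\Gamma$ is essentially a subgroup of metric-preserving right translations.

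The right-translation branch is immediate. Every $\gamma = \pm r(h) \in \Gamma$ commutes with every left translation, so $\ell(SU(2))$ lies in the centralizer of $\Gamma$ in $\bI(S^3,ds^2)$; since $\ell(SU(2))$ is transitive on $S^3$, the Homogeneity Conjecture holds for $\Gamma$. This is exactly the final clause of Theorem~\ref{result-grp} specialized to $G = SU(2)$, which has no outer automorphisms.

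For the left-translation branch I would argue according to the three left-invariant metrics on $SU(2)$ listed right after~(\ref{mx-condition}). If $ds^2$ is bi-invariant (case~(1)), then $r(SU(2))$ centralizes $\ell(SU(2)) \supset \Gamma$ and acts transitively, finishing the argument. For the Berger-type metrics (cases~(2) and~(3)), the key step is to show that no $\ell(u)$ with $u\notin\{\pm I\}$ can be of constant displacement, which forces $\Gamma\subset\ell(\{\pm I\})$; since $\ell(\{\pm I\})$ is central in $\bI(S^3,ds^2)$, the centralizer of $\Gamma$ is the full isometry group and transitivity is automatic. The key observation is the identity $\rho(x,ux) = \rho(e, x^{-1}ux)$: constant displacement of $\ell(u)$ is equivalent to $\rho(e,\cdot)$ being constant on the entire $\Ad(SU(2))$-orbit of $u$, which is a Euclidean $2$-sphere in $SU(2)$ whenever $u\notin\{\pm I\}$.

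I expect the main obstacle to be precisely this last observation: upgrading the infinitesimal fact that the inner product on $\gg$ underlying $ds^2$ is not $\Ad(SU(2))$-invariant in cases~(2) and~(3) to the global statement that $\rho(e,\cdot)$ is not constant on any non-central conjugacy class. An explicit computation on Berger spheres is one clean route; another is to parametrize the conjugacy classes of $SU(2)$ by trace and combine a continuity argument with the infinitesimal statement near $e$, where the distance from $e$ to $\exp(\Ad(x)\xi)$ is controlled to leading order by $\|\Ad(x)\xi\|_{ds^2}$, which varies with $x$ precisely because $ds^2$ is not bi-invariant in these cases.
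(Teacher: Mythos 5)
Your overall structure matches the paper's: the paper likewise proceeds via the unnamed Lemma and Corollary~\ref{pmG} to split $\Gamma$ into a right--translation branch (where $\ell(SU(2))$ centralizes $\Gamma$ and transitivity is immediate) and a left--translation branch (where one must either be in the bi--invariant case or force $\Gamma\subset\ell(\{\pm I\})$). So the route is the intended one.

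The issue is the step you yourself flag: for the metrics (2) and (3) you must show that no $\ell(u)$ with $u\notin\{\pm I\}$ has constant displacement, i.e.\ that $\rho(e,\cdot)$ is nonconstant on every noncentral conjugacy class, and you do not actually carry this out. Of your two suggested routes, the second (infinitesimal non--invariance of the norm at $e$ plus ``continuity in the trace parameter'') does not suffice as stated: the leading--order estimate $\rho(e,\exp(\Ad(x)\xi))\approx\|\Ad(x)\xi\|_{ds^2}$ only controls conjugacy classes near the identity, and nonconstancy of $\rho(e,\cdot)$ on classes near $e$ does not propagate by continuity to classes far from $e$ (a function can fail to be constant on nearby level sets while being constant on a distant one). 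The first route (direct computation on Berger spheres) does work and is essentially what the paper means by ``addressed by direct computation''; for instance, using that the Hopf projection onto $P^1(\C)$ is a Riemannian submersion whose base metric is unaffected by the fiber rescaling, one gets $\rho(1,i)=\pi/2$ while $\rho(1,k)\leqq\sqrt{a}\,\pi/2<\pi/2$ in case (2), and $i,k$ are conjugate in $SU(2)$; a similar comparison handles the other noncentral classes and case (3). Until some such computation is supplied, the left--translation branch of your argument is incomplete.
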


The second remaining case, (15), is the sphere
$G/H = SU(m+1)/SU(m) = S^{2m+1}, m \geqq 2$, total space of a 
circle bundle $S^{2m+1} = G/H \to G/K = P^m(\C)$. The
fiber over $z_0 = 1K$ is the center $Z_K$ of $U(m)$.  $G/H$
has tangent space $\gv \oplus \gz_K$ where $\gv$ is the tangent space
$\C^m$ of $G/K$ and $\gz_K$ is the center of $\gk$;
$\gv$ and $\gz_K$ are the (two) irreducible summands of the
isotropy representation of $H$.  Let $ds^2$ be an $SU(m+1)$--invariant 
Riemannian metric on $M = S^{2m+1}$. Then $\mathbf{I}(M,ds^2)$ is either
the orthogonal group $O(2m+2)$ or $[U(m+1)\cup\nu U(m+1)]$
where $\Ad(\nu)$ is complex conjugation on $U(m+1)$.
In the first case $(M,ds^2)$ is the constant curvature $(2m+1)$--sphere,
where we know that the Homogeneity Conjecture is valid.  In the second
case $ds^2$ is given by
\begin{equation}\label{metric}
ds^2|_{\gv} = b'\kappa|_{\gv}\,,\,\, ds^2|_{\gz_K} = b''\kappa|_{\gz_K}\,,
\,\,\text{ and } \,\,ds^2(\gt',\gz_K) = 0
\end{equation}
for some $b', b'' > 0$.
The displacement satisfies
$c^2 = b'\kappa(\eta',\eta') + b''\kappa(\eta'',\eta'')$.  The normal 
metric is given by $b' = b''$.  After some more computation one arrives at
\begin{lemma}\label{if-nu}
Let $\Gamma \subset \mathbf{I}(M,ds^2)$ be a subgroup such that every
$\gamma \in \Gamma$ is an isometry of constant displacement.  If
$\gamma = \nu g \in \Gamma\cap \nu U(m+1)$ then $m+1$ is even,
$\gamma^2 = -I \in U(m+1)$, and $\Gamma$ is $SU(m+1)$--conjugate to the 
binary dihedral group whose centralizer in $U(m+1)$ is $Sp(\tfrac{m+1}{2})$.
\end{lemma}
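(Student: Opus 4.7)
The plan is to analyze $\gamma^{2}$ first, reducing it to a scalar matrix, and then use pure linear algebra together with the structure $\bI(M,ds^{2})=U(m+1)\cup\nu U(m+1)$ to assemble $\Gamma$. Since $\Gamma$ is a group, $\gamma^{2}=(\nu g)(\nu g)=\bar g g\in \Gamma\cap U(m+1)\subset \bI^{0}(M,ds^{2})$ is itself a constant displacement isometry. My first step is to show $\gamma^{2}=\lambda I$ for some $\lambda\in U(1)$. This follows from the isotropy split fibration machinery of Section \ref{sec7} applied to the circle bundle $S^{2m+1}\to P^{m}(\C)$, or, more elementarily, from the fact that a non--scalar $u\in U(m+1)$ has proper eigenspaces in $\C^{m+1}$ and therefore its centralizer in $U(m+1)$ fails to act transitively on $S^{2m+1}$, contradicting the transitive centralizer criterion for constant displacement isometries.

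The second step is a short linear algebra argument. Since $g\in U(m+1)$, $\bar g=(g^{t})^{-1}$, and $\bar g g=\lambda I$ rewrites as $g=\lambda g^{t}$; transposing gives $\lambda^{2}=1$. I then rule out $\lambda=+1$: a symmetric unitary $g$ admits an Autonne--Takagi factorization $g=u^{t}u$ with $u\in U(m+1)$, and in the coordinates $y=ux$ on $S^{2m+1}$ the isometry $\gamma\colon x\mapsto\overline{gx}$ becomes $y\mapsto\bar y$, whose fixed locus on $S^{2m+1}$ is the real equatorial sphere. A nontrivial constant displacement isometry has no fixed point and $\nu g\ne 1$, so this case is excluded. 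Hence $\gamma^{2}=-I$; antisymmetric unitary matrices exist only in even dimension, forcing $m+1=2n$, and the normal form for antisymmetric unitaries lets me conjugate $g$ inside $U(2n)$, then adjust by a scalar to land in $SU(2n)$, so that $g=J:=\bigl(\begin{smallmatrix}0&I_{n}\\ -I_{n}&0\end{smallmatrix}\bigr)$.

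For the third step I pin down $\Gamma$. The subgroup $\Gamma\cap U(2n)$ is a finite group of constant displacement isometries lying in $\bI^{0}$, hence by Step 1 a group of scalars; since it contains $-I=(\nu J)^{2}$, it is a cyclic group $\langle a\rangle$ of order $2k$ with $a^{k}=-I$. Any other $\nu g'\in\Gamma\cap\nu U(2n)$ satisfies $(\nu g')(\nu J)^{-1}\in\Gamma\cap U(2n)$, and a direct computation (using $\nu^{2}=1$ and $J$ real) identifies this product up to sign with $\bar{g'}J^{-1}$; scalarity then forces $g'$ to be a scalar multiple of $J$, so $\Gamma\cap\nu U(2n)=\langle a\rangle\cdot\nu J$. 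Setting $b=\nu J$, the relation $\nu X\nu^{-1}=\bar X$ for $X\in U(2n)$ gives $bab^{-1}=a^{-1}$, together with $b^{2}=-I=a^{k}$, identifying $\Gamma$ with $\D^{*}_{k}$. The same conjugation rule yields $b u b^{-1}=J\bar u J^{-1}$ for $u\in U(2n)$, so the centralizer of $b$ in $U(2n)$ is $\{u\in U(2n):J\bar u J^{-1}=u\}=Sp(n)$, and since $a$ is already central this is also the centralizer of $\Gamma$, as claimed.

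I expect the main obstacle to be Step 1. The isotropy split fibration results of Section \ref{sec7} are cleanest for the normal Riemannian metric, whereas the $ds^{2}$ at hand is genuinely non--normal ($b'\ne b''$); one therefore needs either a direct displacement computation from the formula $c^{2}=b'\kappa(\eta',\eta')+b''\kappa(\eta'',\eta'')$ combined with a transitivity argument, or a small adaptation of the Section \ref{sec7} machinery, to verify that constant displacement elements of $\bI^{0}=U(m+1)$ really are the scalars. Once scalarity of $\gamma^{2}$ is established, the remainder is familiar linear algebra and the presentation of the binary dihedral group.
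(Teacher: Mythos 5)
Your overall architecture matches what the paper intends: the paper gives no detailed argument for this lemma (it records the metric normalization (\ref{metric}), the displacement formula $c^2 = b'\kappa(\eta',\eta') + b''\kappa(\eta'',\eta'')$, and then says ``after some more computation one arrives at'' the statement), and your Steps 2 and 3 supply exactly the linear algebra that computation must end with. Those steps check out: from $\bar g g=\lambda I$ and unitarity you correctly get $g=\lambda g^t$, hence $\lambda=\pm1$; the Autonne--Takagi normal form $g=u^tu$ in the symmetric case produces the fixed real sphere $\{u^{-1}w : w\in\R^{m+1}\}$, killing $\lambda=+1$; antisymmetry forces $m+1$ even and congruence to $J$; and the twisted conjugation rule $v(\nu g)v^{-1}=\nu\,\bar v g\bar v^{\,t}$ correctly turns $U$-congruence of $g$ into conjugacy of $\gamma$, identifies $\Gamma$ as $\langle a\rangle\cup\langle a\rangle \nu J \cong \D^*_k$, and gives $Z_{U(2n)}(\nu J)=\{u : J\bar uJ^{-1}=u\}=Sp(n)$. (Minor point: conjugating by $SU(2n)$ rather than $U(2n)$ only brings $g$ to a scalar multiple $cJ$; this is harmless because the scalar is absorbed into $\langle a\rangle$ and drops out of the centralizer condition, but you should say so.)

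The genuine gap is Step 1, and your proposed ``more elementary'' justification for it does not work. The implication ``constant displacement $\Rightarrow$ transitive centralizer'' is Ozols' Corollary \ref{oz3}, which is proved only for Riemannian symmetric spaces of compact type; the Berger-type sphere $(S^{2m+1},ds^2)$ with $b'\ne b''$ is not symmetric, and invoking a transitive-centralizer criterion there essentially assumes the Homogeneity Conjecture you are trying to verify. Moreover the scalarity claim is genuinely metric-dependent: on the \emph{round} sphere $\diag(e^{i\theta}I_p,e^{-i\theta}I_q)$ is a non-scalar Clifford translation, and for $m=1$ all of $\ell(SU(2))$ has constant displacement, so any correct proof must use both $m\geq2$ and $b'\ne b''$. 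What is actually needed -- and what the paper's phrase ``some more computation'' refers to -- is the direct argument from the displacement formula (equivalently the flat-rectangle argument of Proposition \ref{cw-conn} adapted to the non-normal metric): decompose the minimizing direction as $\eta=\eta'+\eta''$ with $\eta'\in\gv$, $\eta''\in\gz_K$, and show that constancy of $c^2=b'\kappa(\eta',\eta')+b''\kappa(\eta'',\eta'')$ over all of $M$ forces $\eta'=0$, i.e.\ forces the isometry into the Hopf circle of scalars. You correctly identify this as the obstacle, but you leave it as an alternative rather than carrying it out, so as written the proof of the lemma is incomplete at precisely the point where the hypothesis that $ds^2$ is not $O(2m+2)$-invariant enters.
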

\begin{proposition}\label{conj4su}
Let $ds^2$ be an $SU(m+1)$--invariant Riemannian metric on $S^{2m+1}, 
m \geqq 2$.  Let $\Gamma$ be a finite group of isometries of constant
displacement on $S^{2m+1}$.  Then the centralizer of $\Gamma$ in
$\mathbf{I}(S^{2m+1}, ds^2)$ is transitive on $S^{2m+1}$, so the
Riemannian quotient manifold $\Gamma\backslash (SU(m+1), ds^2)$ is
homogeneous.  In other words, the Homogeneity Conjecture is valid for
$(S^{2m+1}, ds^2)$.
\end{proposition}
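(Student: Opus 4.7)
The plan is to split into three cases, handling each with a different tool.

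\textbf{Case 1.} If $\bI(M,ds^2)=O(2m+2)$, then $ds^2$ is (up to scale) the round metric on $S^{2m+1}$, and Theorem \ref{hc-const-curv} gives the result. So we may assume $\bI(M,ds^2)=U(m+1)\cup\nu U(m+1)$, and $ds^2$ has the form (\ref{metric}) with $b'\ne b''$.

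\textbf{Case 2.} If $\Gamma$ meets the outer component $\nu U(m+1)$, then Lemma \ref{if-nu} immediately applies: $m+1$ is even and $\Gamma$ is $SU(m+1)$--conjugate to the binary dihedral group whose centralizer in $U(m+1)$ is $Sp(\tfrac{m+1}{2})$. Since $Sp(\tfrac{m+1}{2})$ is transitive on $S^{4k-1}=S^{2m+1}$, the centralizer of $\Gamma$ in $\bI(M,ds^2)$ acts transitively on $M$, and so $\Gamma\backslash (M,ds^2)$ is homogeneous.

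\textbf{Case 3.} The remaining case is $\Gamma\subset U(m+1)$. The key claim I need to establish is that every $\gamma\in\Gamma$ is a scalar matrix in $U(1)\cdot I\subset U(m+1)$. Given such a $\gamma$, I would first conjugate it into a maximal torus, writing $\delta=u\gamma u^{-1}=\diag(e^{i\theta_1},\dots,e^{i\theta_{m+1}})$; since conjugation by $u\in U(m+1)$ is an isometry, $\delta$ again has constant displacement, say $c$. Evaluating at $x=e_j$ gives $\delta e_j=e^{i\theta_j}e_j$, which lies on the same Hopf fiber as $e_j$. Since fibers are totally geodesic great circles in $(M,ds^2)$, the displacement at $e_j$ equals the fiber length $\sqrt{b''}\,|\theta_j|$ (taking $|\theta_j|\le\pi$). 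Equating across $j$ forces $|\theta_j|=\theta$ for some common $\theta$. If not all $\theta_j$ have the same sign, pick $p,q$ with $\theta_p=-\theta_q=\theta$ and evaluate at $x=(e_p+e_q)/\sqrt{2}$: its image $\delta x$ satisfies $\langle x,\delta x\rangle=\cos\theta\in\R$, and $\pi(\delta x)\ne\pi(x)$ in $P^m(\C)$, so the minimizing geodesic from $x$ to $\delta x$ has nontrivial horizontal component. Because $b'\ne b''$, the resulting Berger length cannot coincide with the pure-fiber value $\sqrt{b''}\,\theta$, contradicting constant displacement. Hence all $\theta_j$ are equal, $\delta=e^{i\theta}I$ is scalar, and so is $\gamma$. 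Once the claim is in hand, $\Gamma\subset U(1)\cdot I$ is central in $U(m+1)$, so its centralizer in $\bI(M,ds^2)$ contains $U(m+1)$, which is transitive on $S^{2m+1}$; therefore $\Gamma\backslash(M,ds^2)$ is homogeneous.

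The principal obstacle is the Berger-sphere length computation at $x=(e_p+e_q)/\sqrt 2$. Berger geodesics are not simple horizontal/vertical concatenations --- horizontal lifts of $P^m(\C)$ geodesics pick up vertical drift --- so one must verify explicitly that the Berger length in the mixed case differs strictly from $\sqrt{b''}\,\theta$ whenever $b'\ne b''$. Everything else in the argument is bookkeeping once that inequality is established.
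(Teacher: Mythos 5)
Your trichotomy --- round metric via Theorem \ref{hc-const-curv}, outer component via Lemma \ref{if-nu}, and $\Gamma\subset U(m+1)$ forced to consist of scalars --- is exactly the structure of the paper's treatment, and the target in the third case (scalars are central in the transitive group $U(m+1)$, so the centralizer is transitive) is the right one. But that third case is the entire content of the proposition, and you have left its crux unproved: you yourself flag that the strict inequality between the displacement at $x=(e_p+e_q)/\sqrt2$ and the pure--fiber value is the ``principal obstacle.'' Having a nontrivial horizontal component does not by itself force the length to differ from $\sqrt{b''}\,\theta$; that quantitative statement is precisely what distinguishes $b'\ne b''$ from $b'=b''$, and without it the case $\Gamma\subset U(m+1)$ is not settled. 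There is also an unflagged error earlier in the same case: the displacement at $e_j$ need not equal the fiber arc length $\sqrt{b''}\,|\theta_j|$. The Hopf fiber is totally geodesic, but its geodesics need not be minimizing in $(M,ds^2)$: when $b''/b'$ is large, a path that leaves the fiber and traverses a horizontal loop in $P^m(\C)$ with the appropriate holonomy is strictly shorter than the fiber arc for $|\theta_j|$ near $\pi$. What is true is that $d(x,e^{i\psi}x)$ is some function $f(|\psi|)$ of the angle alone (scalars centralize the transitive group $U(m+1)$, and $\nu$ conjugates $e^{i\psi}$ to $e^{-i\psi}$), so to conclude that all $|\theta_j|$ coincide you need monotonicity of $f$ on $[0,\pi]$ --- yet another statement requiring proof.

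The paper closes these gaps not by computing Berger--sphere distances pointwise but by the isotropy--splitting machinery of Section \ref{sec7} adapted through (\ref{new-setup}): a minimizing geodesic from $x_0$ to $\gamma(x_0)$ has the form $\exp(t(\eta'+\eta''))x_0$ with displacement $c^2=b'\kappa(\eta',\eta')+b''\kappa(\eta'',\eta'')$ as recorded after (\ref{metric}), and the flat--rectangle argument of Proposition \ref{cw-conn} (two commuting Killing fields, one vertical, comparing displacement along a side of the rectangle with displacement along its diagonal) forces the horizontal component $\eta'$ to vanish, placing $\gamma$ in $Z_G\cdot r(N)=U(1)\cdot I$. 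If you prefer to keep your pointwise approach, you should at least reduce the mixed computation to the totally geodesic Berger $S^3$ spanned by $e_p,e_q$ (the fixed--point set of a unitary involution in $U(m+1)$) and analyze the distance--to--identity function on the conjugacy class of $\diag(e^{i\theta},e^{-i\theta})$, which for $b'\ne b''$ is invariant only under $U(1)$--conjugation rather than $SU(2)$--conjugation. Either way, the step you deferred is the theorem, not bookkeeping.
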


The third and most delicate remaining case, (16), is the sphere 
$G/H = S^{4m+3}$, total space of an $S^3$ bundle 
$G/H = Sp(m+1)/Sp(m) \to Sp(m+1)/ \{Sp(m) \times Sp(1)\} = G/K$.  
$G/H$ has tangent space $\gv + \gw$ where $\gv$ is the
tangent space $\H^m$ of $G/K$ and $\gw$ is the tangent space $\Im \H$
of the fiber of $S^{4m+3} \to P^m(\H)$.  The isotropy representation
of $H$ is the natural representation of $Sp(m)$ on $\H^m = \gv$, and
on $\gw$ it is three copies of the trivial representation.

Write $\kappa$ for the negative of the Killing form of $\gg$.
Let $\kappa' = \kappa|_\gv$ and $\kappa'' = \kappa|_\gw$ where 
$\kappa(\mu,\nu) = -\Re\tr(\mu\,\overline{\nu})$ with trace taken in $Sp(m+1)$.
Let $\{e_1,e_2,e_3\}$ be a $\kappa''$--orthonormal basis of $\gw$
and split $\kappa'' = \kappa_1 + \kappa_2 + \kappa_3$ accordingly.
Then
\begin{equation}\label{sp-metric}
ds^2|_\gv = b_0\kappa',\, ds^2|_\gw = b_1\kappa_1 + b_2\kappa_2 + 
        b_3\kappa_3\,,\, ds^2(\gv,\gw) = 0 \text{ and }
        ds^2(e_i,e_j) = 0 \text{ for } i\ne j
\end{equation}
for some positive numbers $b_0, b_1, b_2 \text{ and } b_3$\,.

\begin{lemma} \label{sp2o}
Let $ds^2$ be an $Sp(m+1)$--invariant Riemannian metric on $M = S^{4m+3}$,
$m \geqq 1$.
Then either $ds^2$ is invariant under $SU(2m+2)$, or 
$\mathbf{I}(M,ds^2) = Sp(m+1)\cdot L = (Sp(m+1)\times L)/\{\pm (I_{m+1},I_3)\}$
where $L$ is one of the following.

{\rm (1)} $L = Sp(1)$ acting on $Sp(m+1)/Sp(m)$ on the right.  $L$ acts
on the tangent space as multiplication by quaternion unit scalars
on $\gv$ and the adjoint representation of $Sp(1)$ on $\gw$.  This is
the case $b_1 = b_2 = b_3$\,.

{\rm (2)} $L = O(2) \times \Z_2$ acting on $Sp(m+1)/Sp(m)$ on the right.  
$L$ acts the tangent space as multiplication by an $O(2)\times \Z_2$ 
{\rm (}essentially circle{\rm )} group of 
quaternion unit scalars on $\gv$, $O(2)$--rotation on the $(e_1,e_2)$--plane 
in $\gw$, and the $\Z_2$--action $e_3 \mapsto \pm e_3$ on $\gw$\,.  
This is the case where two, but not all three, of the $b_i$ are equal,
for example where $b_1 = b_2 \ne b_3$\,.

{\rm (3)} $L = \Z_2^3$ acting on $Sp(m+1)/Sp(m)$ on the right.  $L$
acts on the tangent space by $\pm 1$ on $\gv$ and the $e_i \mapsto \pm e_i$
on $\gw$.  This is the case where $b_1$\,, $b_2$ and $b_3$ are all different.
\end{lemma}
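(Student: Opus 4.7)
The plan is to determine $\mathbf{I}(M,ds^2)$ by first pinning down the identity component via its natural action on the tangent decomposition $\gv \oplus \gw$, then accounting for discrete components. The starting point is that $\ell(Sp(m+1))$ acts isometrically by hypothesis and is already transitive, so by Proposition \ref{och-tak} (adapted to this coset setting) any additional isometries must normalize $\ell(Sp(m+1))$, and the connected supplement comes from the right action of the centralizer of $Sp(m) = H$ in $Sp(m+1)$, namely the block $Sp(1)$ corresponding to the last quaternionic coordinate.

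First I would examine this right $Sp(1)$-action. On $\gv \cong \H^m$ it is quaternionic scalar multiplication, which preserves $\kappa'$ and hence the $\gv$-part of $ds^2$ automatically. On $\gw \cong \Im\H$ it factors through the adjoint representation $Sp(1) \to SO(3)$, where it acts by rotations on the chosen orthonormal basis $\{e_1,e_2,e_3\}$. Thus the set of right $Sp(1)$-elements that preserve $ds^2$ is precisely the preimage in $Sp(1)$ of the stabilizer in $SO(3)$ of the diagonal form $b_1\kappa_1+b_2\kappa_2+b_3\kappa_3$. The three cases in the statement correspond to $b_1=b_2=b_3$ (full $SO(3)$, lift to $Sp(1)$); exactly two $b_i$ equal (a subgroup $O(2)\subset SO(3)$); or all $b_i$ distinct (the discrete stabilizer $\Z_2^2\subset SO(3)$).

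Next I would identify the additional discrete isometries. Candidates are complex conjugation $v \mapsto \bar v$ on $\H^{m+1}$, which is an isometry of $\R^{4m+4}$ commuting with the real form of $Sp(m+1)$ up to an inner automorphism, together with sign-changes $e_i \mapsto -e_i$ on $\gw$ induced by right multiplication by the quaternion units $\pm e_i$. Combining these with the connected right action recovers the listed groups: in case (2) the reflection through the $\{e_1,e_2\}$-plane enlarges $SO(2)$ to $O(2)$ and the extra $\Z_2$ comes from $e_3 \mapsto -e_3$; in case (3) one gets the full $\Z_2^3$. The quotient by $\{\pm(I_{m+1},I_3)\}$ reflects the fact that $-I_{m+1}$ on $\H^{m+1}$ coincides with right multiplication by $-1$. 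To close the classification I would verify exhaustiveness by using that any element of $\mathbf{I}(M,ds^2)$ descends to an automorphism of the principal $Sp(1)$-bundle $S^{4m+3} \to P^m(\H)$ whenever $ds^2$ is not Fubini--Study-like, then noting that $Sp(m+1)$ has no outer automorphisms and the only extra orthogonal transformations commuting appropriately are those described.

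Finally I would treat the exceptional case when $ds^2$ happens to be $SU(2m+2)$-invariant. This occurs precisely when one of the $e_i$ (say $e_1$) gives a compatible complex structure, which forces $b_2=b_3$ together with a specific relation tying $b_0$ to $b_1$; then $\mathbf{I}^0(M,ds^2)$ jumps up to contain $SU(2m+2)$, and the whole analysis is absorbed into the Berger-sphere family. The hardest part, I expect, is Step 3 --- verifying that no hidden isometries exist beyond the right-$Sp(1)$ together with conjugation, especially in case (3) where the residual symmetry is small and one must rule out sporadic orthogonal transformations that could preserve $ds^2$ without normalizing the $\ell(Sp(m+1))$-action in an obvious way. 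This is where a careful inspection of the normalizer of $\ell(Sp(m+1))$ inside $O(4m+4)$, combined with the explicit form of $ds^2$, becomes essential.
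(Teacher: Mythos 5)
Your overall strategy --- decompose the tangent space as $\gv \oplus \gw$, observe that the right action of the fiber $Sp(1)$ acts on $\gw$ through $Sp(1) \to SO(3)$, and read off the connected part of $L$ as the stabilizer of the diagonal form $b_1\kappa_1 + b_2\kappa_2 + b_3\kappa_3$ --- is the right one and is consistent with the setup (\ref{sp-metric}) that the paper gives before stating the lemma (the paper itself prints no proof, only this setup). But there is a concrete gap in your identification of the discrete part of $L$. Right translation by a unit quaternion acts on $\gw = \Im\H$ through the adjoint map $Sp(1)\to SO(3)$, hence always with determinant $+1$; so right multiplication by the units $\pm e_i$ yields only the Klein four--group of \emph{even} sign changes $\{e_i \mapsto \epsilon_i e_i : \epsilon_1\epsilon_2\epsilon_3 = 1\}$, which is an index--$2$ subgroup of the $\Z_2^3$ in case (3). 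Likewise in case (2) the $SO(3)$--stabilizer of $\diag(b,b,c)$ is already only isomorphic to $O(2)$ (a reflection of the $(e_1,e_2)$--plane is forced to carry $e_3 \mapsto -e_3$), so the right $Sp(1)$--action alone cannot account for the full $O(2)\times\Z_2$ of the lemma. The missing elements act on $\gw$ with determinant $-1$ and must come from an isometry outside $\ell(Sp(m+1))\cdot r(Sp(1))$ --- essentially quaternionic conjugation $q \mapsto \bar q$ (an anti--automorphism, acting by $-I_3$ on $\gw$), composed with inner automorphisms; you mention ``complex conjugation'' as a candidate but never connect it to producing precisely these determinant $-1$ transformations, and as written your construction delivers groups of half the required order.

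Two smaller points. First, your criterion for $SU(2m+2)$--invariance is misstated: if $e_1$ generates the Hopf circle, then the horizontal space for $S^{4m+3}\to P^{2m+1}(\C)$ is $\gv \oplus \Span(e_2,e_3)$, so $SU(2m+2)$--invariance forces $b_2 = b_3$ \emph{and ties $b_0$ to $b_2=b_3$}, while the fiber parameter $b_1$ stays free (that is the Berger family); a relation between $b_0$ and $b_1$ is not what is needed. Second, for exhaustiveness the normalizer of $\ell(Sp(m+1))$ in $O(4m+4)$ is not the right ambient object for a general invariant metric (the round sphere is the only case where $\bI(M,ds^2)\subset O(4m+4)$ is automatic); the standard route, consistent with the paper's discussion around (\ref{mx-condition}) and its citation of Onishchik's tables, is to rule out larger transitive groups and then bound the isotropy group by its faithful action on $\gv\oplus\gw$.
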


Making use of (\ref{new-setup}), the particular fibration here, and 
properties of $P^m(\H)$, one arrives at
\begin{lemma}\label{sp-comp}
If $\gamma \in Sp(m+1)$ has constant displacement $c > 0$
on $S^{4m+3}$, $m \geqq 2$, then $\gamma$ belongs to the centralizer
of $Sp(m+1)$ in $\mathbf{I}(S^{4m+3},ds^2)$.
\end{lemma}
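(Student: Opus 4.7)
My plan is to exploit the isotropy split fibration $\pi\colon(S^{4m+3},ds^2)\to(P^m(\H),ds'^2)$ together with a direct displacement comparison at basis and mixed points. Taking $G=Sp(m+1)$, $H=Sp(m)$, $K=Sp(m)\cdot Sp(1)$, the setup (\ref{new-setup}) applies with totally geodesic $S^3$ fiber by Lemma \ref{new-go-space}. Conjugating $\gamma$ inside $G$ (which preserves constant displacement) I may assume $\gamma=\diag(\lambda_0,\ldots,\lambda_m)$ with $\lambda_j\in Sp(1)$.

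The core step is to analyze $d_M(v,\gamma v)$ at two families of test points. At a basis vector $v=e_j\in S^{4m+3}\subset\H^{m+1}$, one has $\gamma e_j=\lambda_j e_j$ on the fiber through $e_j$; since the fiber is totally geodesic, $d_M(e_j,\gamma e_j)$ equals the Berger-sphere distance $d_F(1,\lambda_j)$ in $(S^3,b_1\kappa_1+b_2\kappa_2+b_3\kappa_3)$. Constant displacement $c$ forces $d_F(1,\lambda_j)=c$ for every $j$; in particular all $\lambda_j$ share the same real part. At a mixed point $v=\alpha e_i+\beta e_j$ with $\alpha,\beta\in\H^\times$, $|\alpha|^2+|\beta|^2=1$, the image $\gamma v=\lambda_i\alpha e_i+\lambda_j\beta e_j$ fails to lie on the fiber through $v$ precisely when $\alpha\beta^{-1}$ does not conjugate $\lambda_j$ to $\lambda_i$; for such $v$ the displacement acquires a positive horizontal contribution $d_{M'}([v],[\gamma v])$.

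Using the Riemannian submersion inequality $d_M(v,\gamma v)^2\geq d_{M'}([v],[\gamma v])^2$ together with the basis-point identity, I expect the constant displacement condition to yield a quantitative bound that, after varying $\alpha,\beta$ and exploiting the anisotropy between $b_0$ and the fiber parameters $b_1,b_2,b_3$, forces the horizontal contribution to vanish identically, so that $[\gamma v]=[v]$ for every $v$ and hence $\bar\gamma=\mathrm{id}$ on $P^m(\H)$. (Alternatively one descends to the base and invokes Theorem \ref{symm-simple-isom} for the irreducible symmetric space $P^m(\H)$, noting it is not among the exceptional cases.) Then $\gamma$ lies in the kernel $Z_G=\{\pm I\}$ of $G\to\bI^0(M')$, and since $\{\pm I\}$ centralizes $G$ and every right-translation factor in $\bI(M,ds^2)$ automatically centralizes $\ell(G)$, $\gamma=\pm I$ belongs to the centralizer of $Sp(m+1)$ in $\bI(S^{4m+3},ds^2)$, as required.

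The main obstacle I anticipate is the quantitative horizontal-vertical comparison: the metric (\ref{sp-metric}) is non-normal, so displacements must be computed via the naturally reductive structure and the explicit form of horizontal and vertical geodesics rather than by Killing-form arithmetic, and one must carefully exploit the anisotropy between the horizontal scale $b_0$ and the fiber scales $b_i$ to rule out all non-scalar diagonal $\gamma$ and then rule out non-real scalars. The hypothesis $m\geqq 2$ is essential: for $m=1$ the space $S^7=Sp(2)/Sp(1)$ coincides with $\mathrm{Spin}(7)/G_2$, and the additional $\mathrm{Spin}(7)$-symmetries produce exotic constant displacement elements in $Sp(2)$ that would evade the conclusion.
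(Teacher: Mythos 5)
Your overall strategy---push $\gamma$ down the fibration $S^{4m+3}\to P^m(\H)$, show the induced map of the base is trivial, and conclude $\gamma=\pm I$---is the same one the paper gestures at (it derives the lemma from (\ref{new-setup}), this particular fibration, and properties of $P^m(\H)$), and your endpoint is correct: the centralizer of $\ell(Sp(m+1))$ intersected with $\ell(Sp(m+1))$ is exactly $\{\pm I\}$, so the lemma amounts to showing $\gamma=-I$. The problem is that the one step carrying all the content is exactly the step you do not prove. You write that you ``expect'' the constant displacement hypothesis, the submersion inequality, and the anisotropy of the $b_i$ to force the horizontal contribution to vanish, but no such bound is derived; and your fallback---descend to $P^m(\H)$ and invoke Theorem \ref{symm-simple-isom}---is not available, because a Riemannian submersion only gives $d_{M'}(\pi x,\pi y)\leqq d_M(x,y)$, so constant displacement upstairs does not give constant displacement downstairs. (Left multiplication by $iI$ on the round $S^{4m+3}$ is a Clifford translation of $Sp(m+1)$ whose image in $P^m(\H)$ fixes $[e_0]$ yet is nontrivial.) Thus Theorem \ref{symm-simple-isom} cannot be applied to $\bar\gamma$ without first establishing essentially what you are trying to prove; some version of the flat-rectangle comparison of Proposition \ref{cw-conn}, adapted to the non-normal metric, has to be carried out explicitly.

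Several of the auxiliary claims also need repair. Total geodesy of the fiber gives $d_M\leqq d_F$, not the equality $d_M(e_j,\gamma e_j)=d_F(1,\lambda_j)$ you use; in a Berger metric $b_1\kappa_1+b_2\kappa_2+b_3\kappa_3$ with distinct $b_i$ the metric sphere of radius $c$ about $1$ is not a level set of the real part, so ``all $\lambda_j$ share the same real part'' does not follow; and, more seriously, when the $b_i$ are distinct the metric (\ref{sp-metric}) is not geodesic orbit, so minimizing geodesics need not be orbit curves $\exp(t\xi)x$ and the displacement computations you sketch require genuine extra care---this is precisely why the paper calls case (16) the most delicate one. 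Finally, your explanation of the hypothesis $m\geqq 2$ via $S^7=\mathrm{Spin}(7)/G_2$ is not right: the only $\mathrm{Spin}(7)$-invariant metric on $S^7$ is the round one, which is already excluded as the $SU(2m+2)$-invariant case of Lemma \ref{sp2o}.
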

\begin{proposition}\label{spm-comp}
Let $\Gamma \subset {\mathbf I}(S^{4m+3},ds^2)$ be a subgroup such that every
$\gamma \in \Gamma$ is an isometry of constant displacement.  Suppose
$m \geqq 2$ and that $ds^2$ is not $SU(2m+2)$--invariant.  Then $\Gamma$
centralizes $Sp(m+1)$ in ${\mathbf I}(M,ds^2)$.
\end{proposition}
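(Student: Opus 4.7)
The plan is to reduce every $\gamma \in \Gamma$ to the situation handled by Lemma \ref{sp-comp}. By Lemma \ref{sp2o}, since $ds^2$ is not $SU(2m+2)$-invariant we have
$$\mathbf{I}(M,ds^2) = Sp(m+1)\cdot L,\qquad L\in\{Sp(1),\;O(2)\times \Z_2,\;\Z_2^3\},$$
with $L$ acting from the right and therefore commuting with $Sp(m+1)$. Each $\gamma\in\Gamma$ then factors as $\gamma = g\ell$ with $g\in Sp(m+1)$ and $\ell\in L$; since $L$ already centralizes $Sp(m+1)$, it suffices to show $g\in Z(Sp(m+1)) = \{\pm I\}$. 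Note also that each $\ell\in L$, lying in the centralizer of the transitive group $Sp(m+1)$, is automatically an isometry of constant displacement on $(M,ds^2)$.

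In the finite cases (Lemma \ref{sp2o}(2) and (3)), I would exploit the commutativity of $g$ and $\ell$: if $n = \mathrm{ord}(\ell)$, then $\gamma^n = g^n \in \Gamma \cap Sp(m+1)$ is of constant displacement by the hypothesis on $\Gamma$, so Lemma \ref{sp-comp} yields $g^n \in \{\pm I\}$. To upgrade ``$g$ of finite order'' to ``$g$ central,'' I would apply the same argument to each conjugate $h\gamma h^{-1} = (hgh^{-1})\ell$ for $h\in Sp(m+1)$, which shares the displacement constant of $\gamma$; a flat-rectangle argument modeled on the proof of Proposition \ref{cw-conn}, using the commuting Killing fields generated by $g^{-1}hgh^{-1}\in Sp(m+1)$ and by the constant-displacement element $\ell$, would force $g^{-1}hgh^{-1} \in Z(Sp(m+1))$. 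Since this holds for all $h$, $g$ itself must be central.

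In the continuous case $L = Sp(1)$ the power trick fails, so I would pass to the Riemannian submersion $\pi: (M,ds^2) \to (P^m(\H), ds'^2)$ with $ds'^2 = b_0\kappa'$; this is genuinely a Riemannian submersion because $ds^2(\gv,\gw) = 0$ in (\ref{sp-metric}) and $L$ acts vertically along fibers. The map $\gamma = g\ell$ descends to $\bar g$ on the base since the $\ell$-contribution is purely vertical. By horizontally lifting minimizing geodesics from $\bar x$ to $\bar g\bar x$ and invoking Proposition \ref{oz1}, one shows that $\bar g$ is of constant displacement on $(P^m(\H),ds'^2)$. But $P^m(\H)$ is an irreducible compact Riemannian symmetric space not on the exceptional list of Theorem \ref{symm-simple-isom}, so the finite cyclic group $\langle \bar g\rangle$ of constant-displacement isometries centralizes $\mathbf{I}^0(P^m(\H),ds'^2) = Sp(m+1)/\{\pm I\}$, giving $g \in \{\pm I\}$.

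The main obstacle I anticipate is the careful verification, in the $L = Sp(1)$ case, that $\bar g$ actually has constant displacement on the base: the vertical displacement contributed by $\ell$ may vary horizontally over $M$ in a way that conspires with the horizontal displacement of $g$ to keep $\gamma$ of constant displacement without $\bar g$ itself being so. Controlling this interference, and choosing the horizontal lifts so that the length of the lift agrees with the base distance for the right family of points, is where the technical work resides. The restriction $m\geq 2$ enters precisely through Lemma \ref{sp-comp}, on which both approaches ultimately rely.
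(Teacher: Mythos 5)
Your skeleton --- split $\gamma = g\ell$ with $g \in Sp(m+1)$ and $\ell \in L$ via Lemma \ref{sp2o}, then reduce to Lemma \ref{sp-comp} --- is the strategy the paper indicates (it prints no detailed proof, only the remark that one uses (\ref{new-setup}), the fibration over $P^m(\H)$, and Lemma \ref{sp-comp}). But both branches of your reduction leave the same essential step unproved: that $g$ (equivalently, its image $\bar g$ on the base) is itself of constant displacement, and that is the entire content of the proposition. In the finite cases, $\gamma^n = g^n$ only gives $g^n \in \{\pm I\}$, i.e.\ $g$ has finite order, which is far from central (e.g.\ $\diag(i,1,\dots,1)$ has order $4$). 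Your proposed repair does not work as stated: $h\gamma h^{-1}$ is indeed of constant displacement, but it need not lie in $\Gamma$, so $\gamma^{-1}(h\gamma h^{-1}) = g^{-1}hgh^{-1}$ is merely a product of two constant-displacement isometries and is not known to be one itself; moreover the flat-rectangle argument of Proposition \ref{cw-conn} requires as input that $\exp(\xi_1+\xi_2)$ and $\exp(\xi_2)$ have the \emph{same} constant displacement, a hypothesis you have not produced for these commutators. In the $L = Sp(1)$ case you correctly flag the obstacle yourself: a Riemannian submersion only yields that the displacement of $\bar g$ at $\bar x$ is at most $\rho(x,\gamma x) = c_\gamma$, i.e.\ \emph{bounded} displacement, and on a compact base boundedness carries no information.

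The missing ingredient is the mechanism used throughout Section \ref{sec7}: since $\chi(P^m(\H)) > 0$, the element $g$ has a fixed point on the base, so $\gamma = g\ell$ preserves a fiber $F_0 \cong S^3$, which is totally geodesic by Lemma \ref{new-go-space}. Comparing the displacement of $\gamma$ on $F_0$ with its displacement at a general point, using the orthogonal splitting $\gv \perp \gw$ of (\ref{sp-metric}) and the flat-rectangle argument applied to $\gamma$ itself (horizontal component $\xi_1 \in \gv$, vertical component $\xi_2 \in \gw$ generating the $\ell$-direction, both of constant length), is what forces the horizontal component to vanish and hence $g \in \{\pm I\}$. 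Your appeal to Theorem \ref{symm-simple-isom} on the base is sound once constancy of displacement has descended, and your observation that $L$ centralizes the transitive group $Sp(m+1)$ (so only $g$ matters) is correct; but as written the proof does not close in either branch.
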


As noted at the beginning of Section \ref{sec10}, Propositions 
\ref{7-normal}, \ref{conj4s3}, \ref{conj4su} and \ref{spm-comp}, together
with the comments at the beginning of Subsection \ref{drop}C, combine to
give the main result of this section, Theorem \ref{conj-non-normal}.

\medskip
\centerline{\bf Part III.  Noncompact Homogeneous Riemannian Manifolds.}
\medskip

In the next four sections we will sketch the proof of the Homogeneity 
Conjecture for several classes of noncompact Riemannian homogeneous spaces.
In these noncompact cases ``bounded'' can replace ``constant displacement''
and the result becomes independent of the choice of Riemannian metric.

\section{\bf Negative Curvature.}\label{sec11}
\setcounter{equation}{0}
\setcounter{subsection}{0}

If $\gamma$ is a bounded isometry of a connected, simply connected, 
Riemannian manifold $(M,ds^2)$ of sectional curvature $\leqq 0$, then 
\cite{W1964} $\gamma$ is an ordinary translation along the euclidean
factor in the de Rham decomposition of $(M,ds^2)$.  Thus, as we 
noted in Proposition \ref{ill-j},

\begin{quote}{\rm (\cite[Theorem 1]{W1964})} If $(L,dt^2)$
is a complete connected simply connected Riemannian manifold of sectional
curvature $\leqq 0$, with no euclidean factor in its de Rham decomposition,
then every bounded isometry of $(L,dt^2)$ is trivial.  In particular if a
Riemannian quotient $(L',dt'^2) := \Gamma \backslash (L,dt^2)$ is homogeneous
then $\Gamma = \{1\}$ and $(L',dt'^2) = (L,dt^2)$.
\end{quote}

The Homogeneity Conjecture follows immediately for Riemannian manifolds
of sectional curvature $\leqq 0$.  There is an extension, due to Druetta,
to manifolds without focal points \cite{D1983}, and the Homogeneity
Conjecture is immediate for those spaces as well.

\medskip
\section{\bf Semisimple Groups.}\label{sec12}
\setcounter{equation}{0}
\setcounter{subsection}{0}

Suppose that $G'$ is a connected real semisimple Lie group without compact 
local factors. Consider a Riemannian manifold $(M,ds^2)$ on which $G'$ acts
transitively and effectively by isometries, in other words 
$G' \subset \bI(M,ds^2)$ is transitive on $M$.  An isometry $\gamma$ of
$(M,ds^2)$ is {\sl bounded} if the displacement function $c_\gamma(x) := 
\rho(x,\gamma(x))$ is bounded.

\begin{proposition}\label{DMW1} {\rm \cite[Theorem 2.1]{MMW1986}.}
The centralizer $B:= Z_{\bI(M,ds^2)}(G')$ of $G'$ in $\bI(M,ds^2)$ is the 
set of all bounded isometries of $(M,ds^2)$.  In particular every bounded 
isometry of $(M,ds^2)$ centralizes $G'$ and thus is of constant displacement
on $(M,ds^2)$.  Thus the Homogeneity Conjecture holds for $(M,ds^2)$.
\end{proposition}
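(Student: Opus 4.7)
The inclusion $Z_{\bI(M,ds^2)}(G') \subset B$ is immediate: if $\gamma$ centralizes $G'$ and $x,y \in M$, pick $g \in G'$ with $y = g(x)$, so that
\begin{equation*}
\rho(y,\gamma(y)) = \rho(g(x),\gamma g(x)) = \rho(g(x), g\gamma(x)) = \rho(x,\gamma(x)).
\end{equation*}
Hence $\gamma$ is of constant displacement, and in particular bounded. The Homogeneity Conjecture then follows at once, for $G' \subset B$ is already transitive on $M$, so any Riemannian quotient of $(M,ds^2)$ by a group of bounded isometries is homogeneous.

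For the converse, suppose $\gamma \in \bI(M,ds^2)$ satisfies $\rho(x,\gamma(x)) \leq C$ for every $x \in M$. The key quantitative input is that for every $g \in G'$ and $x \in M$, using only that $g$ and $\gamma^{-1}$ are isometries,
\begin{equation*}
\rho\bigl(g(x),\,\gamma g\gamma^{-1}(x)\bigr)
= \rho\bigl(\gamma^{-1}g(x),\,g\gamma^{-1}(x)\bigr)
\leq \rho\bigl(\gamma^{-1}g(x),g(x)\bigr) + \rho\bigl(g(x),g\gamma^{-1}(x)\bigr)
\leq 2C.
\end{equation*}
In other words, writing $\alpha_\gamma(g) := \gamma g \gamma^{-1}$, the element $\alpha_\gamma(g)\cdot g^{-1}$ is an isometry of displacement at most $2C$ everywhere, \emph{uniformly in $g \in G'$}.

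Next I would promote this pointwise bound into the framework of Section \ref{sec13}. Conjugation by $\gamma$ is an automorphism of $\bI^0(M,ds^2)$, and the non-compact semisimple subgroup $G'$ is characteristic in the local product decomposition of $\bI^0(M,ds^2)$, so $\alpha_\gamma \in \Aut(G')$. The uniform displacement bound, combined with Ascoli--Arzel\`a precompactness of the set of bounded isometries of $(M,ds^2)$ in the compact-open topology, then shows $\{\alpha_\gamma(g)g^{-1} \mid g \in G'\}$ is relatively compact in $\bI(M,ds^2)$, and since this set lies in $G'$ after the normalization step, $\alpha_\gamma$ is a \emph{bounded automorphism} of $G'$ in the sense quoted from Tits \cite{T1964}.

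Finally, Tits's theorem (Section \ref{sec13}) states that every bounded automorphism of a connected semisimple Lie group without compact factors is inner by a central element. So there is some $z \in Z(G')$ with $\alpha_\gamma(g) = zgz^{-1} = g$ for all $g \in G'$, whence $\gamma \in Z_{\bI(M,ds^2)}(G')$. The main obstacle is the middle step: translating the extrinsic pointwise estimate on $(M,ds^2)$ into the intrinsic Tits hypothesis that $\{\alpha_\gamma(g)g^{-1}\}$ is precompact \emph{inside $G'$} (not merely inside $\bI(M,ds^2)$). This is precisely where the absence of compact local factors of $G'$ must be used: bounded motions of $G'$ that are invisible in $G'$ itself can only hide inside compact factors, and by hypothesis there are none.
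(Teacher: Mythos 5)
Your easy direction is essentially right (modulo the slip ``$G' \subset B$'': what you actually need is that the transitive group $G'$ centralizes $\Gamma \subset B$ and therefore descends to the quotient), and the displacement estimate $\rho\bigl(g(x),\gamma g\gamma^{-1}(x)\bigr)\leq 2C$ is correct. The overall strategy of feeding conjugation by $\gamma$ into Tits's theorem on bounded automorphisms is also the right one: the paper deduces Proposition \ref{DMW1} from Proposition \ref{DMW2}, whose proof is stated to rest on ``a variation on some results of Tits'' \emph{together with} Gordon's structure theorem \cite[Theorem 4.4]{G1980}. The two steps you leave open are genuine gaps, and they sit exactly where Gordon's theorem has to be invoked.

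First, you assert that conjugation by $\gamma$ restricts to an automorphism of $G'$ because ``$G'$ is characteristic in the local product decomposition of $\bI^0(M,ds^2)$.'' That cannot simply be read off: $G'$ is only assumed to be a transitive connected semisimple subgroup without compact local factors; it need not be normal, characteristic, or even closed in $\bI^0(M,ds^2)$, so a priori $\gamma G'\gamma^{-1}$ is merely another transitive subgroup. Producing a characteristic reductive subgroup controlling $G'$ --- the paper passes to the closure $G$ of $G'$ and uses that $G'$ is the derived group of the reductive group $G$, with $M=G/H$ and $H$ compact --- is precisely what \cite[Theorem 4.4]{G1980} supplies, and without it the reduction to $\Aut(G')$ does not start. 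Second, even granting $\alpha_\gamma\in\Aut(G')$, Tits's hypothesis requires $\{\alpha_\gamma(g)g^{-1}\mid g\in G'\}$ to be relatively compact \emph{in $G'$}. Your estimate plus Ascoli--Arzel\`a gives relative compactness in $\bI(M,ds^2)$ (the isometries displacing a base point at most $2C$ form a compact set); to intersect that down to $G'$ you need $G'$ closed in $\bI(M,ds^2)$, which is not among the hypotheses --- again the reason the paper works with the closure. You flag this yourself as ``the main obstacle'' and offer only the heuristic that bounded motions ``can only hide inside compact factors,'' which is not an argument. Both gaps are fillable, and filling them is essentially what \cite{MMW1986} does, but as written the converse direction is incomplete at exactly these two points.
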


The structure of $B$ in Proposition \ref{DMW1} is given as follows
\cite[Section 2]{MMW1986}.  Let $G$ denote the closure of $G'$
in $\bI(M,ds^2)$.  Then $G'$ is the derived group of $G$, and $G$ 
is reductive.  Express $M = G/H$ where $H$ is the isotropy subgroup
at some point $x_0 \in M$.  $H$ is compact because $G$ is transitive 
on $M$ and is closed in $\bI(M,ds^2)$. Write $N_G(H)$ for the normalizer
of $H$ in $G$ and consider the right translations
$$
r(u): gH \mapsto gu^{-1}H \text{ for } u \in N_G(H) \text{ and }
U = \{u \in N_G(H) \mid r(u) \in \bI(M,ds^2)\}.
$$
Of course $r(U) = \{r(u) \mid u \in U\}$.
\begin{proposition}\label{DMW2}
If $\gamma \in \bI(M,ds^2)$ then the following conditions are equivalent.
\begin{itemize}
\item[(1.)] $\gamma$ is an isometry of constant displacement on $(M,ds^2)$.
\item[(2.)] $\gamma$ is an isometry of bounded displacement on $(M,ds^2)$.
\item[(3.)] $\gamma \in r(U)$.
\item[(4.)] The centralizer $Z_{\bI(M,ds^2)}(\gamma)$ is transitive on $M$.
\end{itemize}
In particular, if $\Gamma$ is a discrete subgroup of $\bI(M,ds^2)$
consisting of isometries of constant displacement then 
$\Gamma \backslash (M,ds^2)$ is homogeneous; so the Homogeneity 
Conjecture holds for $(M,ds^2)$.
\end{proposition}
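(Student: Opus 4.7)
The plan is to run the cycle $(1) \Rightarrow (2) \Rightarrow (3) \Rightarrow (4) \Rightarrow (1)$, with Proposition~\ref{DMW1} supplying the bridge from boundedness to the centralizer $B := Z_{\bI(M,ds^2)}(G')$. Two of the steps are essentially formal. Implication $(1) \Rightarrow (2)$ is immediate. For $(4) \Rightarrow (1)$, I would repeat the argument in the proof of Proposition~\ref{CW-translation}: given $x, y \in M$, pick $g \in Z_{\bI(M,ds^2)}(\gamma)$ with $gx = y$, then $\rho(y,\gamma y) = \rho(gx, \gamma g x) = \rho(gx, g \gamma x) = \rho(x,\gamma x)$. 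And $(3) \Rightarrow (4)$ is similarly easy: any $r(u)$ commutes with every left translation $\ell(g)$, $g \in G$, so $Z_{\bI(M,ds^2)}(\gamma)$ contains $G$, which is already transitive on $M$.

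The substantive step is $(2) \Rightarrow (3)$, which amounts to showing $B \subset r(U)$. (The reverse inclusion $r(U) \subset B$ follows because $r(U)$ commutes with left translations by all of $G$, hence with $G'$, and then Proposition~\ref{DMW1} yields boundedness.) Fix $\gamma \in B$. Being a continuous isometry centralizing $G'$, $\gamma$ also centralizes the closure $G = \overline{G'}$. Let $x_0 = 1H$ and choose $g_0 \in G$ with $\gamma(x_0) = g_0 x_0$. For any $h \in H$ we have $\gamma(x_0) = \gamma(h x_0) = h\,\gamma(x_0) = h g_0 x_0$, so $g_0^{-1} h g_0 \in H$; thus $g_0 \in N_G(H)$. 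Set $u = g_0^{-1}$. Centralization gives $\gamma(g x_0) = g\, \gamma(x_0) = g g_0 x_0 = g u^{-1} x_0 = r(u)(g x_0)$ for every $g \in G$, so $\gamma = r(u)$ on $M = G x_0$; since $\gamma \in \bI(M,ds^2)$, this forces $u \in U$.

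For the final assertion, if $\Gamma \subset \bI(M,ds^2)$ is discrete and consists of constant displacement isometries, then the equivalence $(1) \Leftrightarrow (3)$ gives $\Gamma \subset r(U)$, so $\Gamma$ commutes with $\ell(G')$. The transitive $G'$--action on $M$ therefore descends to a transitive isometric action on $\Gamma \backslash (M,ds^2)$, proving homogeneity. The main obstacle in this plan is the identification $B = r(U)$ carried out in the middle paragraph; it relies on the fact that the closure $G$ is itself transitive on $M$ with compact isotropy $H$, so that the normalizer $N_G(H)$ is available and the computation $g_0^{-1} H g_0 \subset H$ actually forces $g_0 \in N_G(H)$. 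That reductive structure of $G$ is precisely what is set up preceding the proposition, and it ultimately rests on $G'$ being semisimple with no compact factors.
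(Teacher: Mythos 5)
Your argument is internally correct, but it reverses the logical order the paper intends. The paper proves Proposition \ref{DMW2} directly, using Gordon's theorem \cite[Theorem 4.4]{G1980} on isometry groups containing transitive reductive subgroups together with a variation on Tits' results on bounded automorphisms (described in Section \ref{sec13}), and then states that Proposition \ref{DMW1} \emph{follows from} Proposition \ref{DMW2}. You instead quote Proposition \ref{DMW1} as the established input and deduce the four equivalences from it by formal bookkeeping. That is legitimate, since Proposition \ref{DMW1} is independently citable as \cite[Theorem 2.1]{MMW1986}, and your middle paragraph --- identifying $B = Z_{\bI(M,ds^2)}(G')$ with $r(U)$ by noting that a centralizing isometry sends $x_0$ to $g_0x_0$ with $g_0^{-1}Hg_0 \subseteq H$, hence $g_0 \in N_G(H)$ because a compact subgroup of $H$ isomorphic to $H$ must equal $H$ --- is precisely the structural description of $B$ that the paper records just before the proposition. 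What your route buys is a short, self-contained reduction of the equivalence $(1)\Leftrightarrow(2)\Leftrightarrow(3)\Leftrightarrow(4)$; what it conceals is that all of the analytic content (bounded implies centralizes $G'$) is packed into the black box of Proposition \ref{DMW1}, which is exactly where Gordon and Tits enter. So as a derivation of one quoted theorem from another your proof is sound, but it would be circular if the goal were to establish the pair of propositions from scratch; in that case you would have to open the box and supply the Gordon--Tits argument for the hard implication.
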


Proposition \ref{DMW1} follows from Proposition \ref{DMW2}.  The proof of
Proposition \ref{DMW2} makes use of \cite[Theorem 4.4]{G1980} and a 
variation on some results of Tits \cite{T1964} which we will describe 
in Section \ref{sec13}.

Let $G_\C$ denote a complex reductive Lie group, $G$ a real form of $G_\C$
and $Q$ a parabolic subgroup of $G_\C$.  Then $Z = G_\C/Q$ is a {\em complex
flag manifold}.  The number of $G$--orbits on $Z$ is finite, so there are 
open orbits.  The open orbits are {\em flag domains} and their structure is
$G/L$ where $L_\C$ is the reductive part of $Q$.  See \cite{W1969} for 
details, \cite{W1974} or \cite{W2018} for applications to the representation 
theory of real semisimple Lie groups, and \cite{WeW1977}, \cite{W1979} and
\cite{WaW1983} for applications to automorphic cohomology theory.
From either Proposition \ref{DMW1} or \ref{DMW2},
\begin{corollary}\label{flag}
Let $D = G/L$ be a flag domain with $L$ compact, and let $ds^2$ be any
$G$--invariant Riemannian metric on $D$.  Then the Homogeneity Conjecture
holds for $(D,ds^2)$.
\end{corollary}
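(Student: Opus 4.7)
The forward direction is immediate: if $\Gamma \backslash (D,ds^2)$ is homogeneous then Proposition \ref{CW-translation} says every $\gamma \in \Gamma$ is an isometry of constant displacement, so nothing needs to be done there. My plan is to deduce the nontrivial direction from Proposition \ref{DMW1} (equivalently Proposition \ref{DMW2}) applied to a suitably chosen semisimple subgroup $G' \subset \bI(D,ds^2)$ without compact factors.

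First I would arrange the structure of the transitive group. By definition $D = G/L$ where $G$ is a real form of the complex reductive group $G_\C$ and $L$ is the real form in $G$ of the reductive part $L_\C$ of the parabolic $Q$; the hypothesis gives $L$ compact. Passing to the product decomposition of $G$ into simple factors, write $G = G' \cdot G_c$ where $G'$ is the product of the noncompact simple factors and $G_c$ is the product of the compact simple factors (and, if needed, the compact center). Since $G_c$ is a normal compact subgroup and $L$ is compact, the orbit map $G_c \cdot x_0 \subset D$ lies in the compact subset $G_c L / L$; moreover $G_c$ commutes with $G'$, so its orbits are permuted trivially by $G'$. A short argument shows that $G_c$ lies in $L$ (up to the kernel of the action on $D$): otherwise $D$ would carry a nontrivial compact foliation by $G'$-invariant fibers, contradicting that $G$ is transitive with $L$ compact and that on an open $G$-orbit in $Z = G_\C/Q$ the noncompact part of $G$ has full tangential span. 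Hence $G'$ itself acts transitively on $D$ with compact isotropy $L \cap G'$, and the $G$-invariant metric $ds^2$ is a fortiori $G'$-invariant.

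Now I apply Proposition \ref{DMW1} to the triple $(D, ds^2, G')$: the centralizer $B = Z_{\bI(D,ds^2)}(G')$ is precisely the set of all bounded isometries of $(D,ds^2)$, and in particular every bounded isometry centralizes the transitive group $G'$. Any isometry of constant displacement is automatically of bounded displacement, so if $\gamma \in \Gamma$ is of constant displacement then $\gamma \in B$; that is, $\gamma$ commutes with $G'$. Since $G'$ is transitive on $D$, its centralizer is transitive on the $\Gamma$-quotient, so $\Gamma \backslash (D, ds^2)$ is homogeneous, which is the Homogeneity Conjecture for $(D,ds^2)$.

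The main obstacle in this proposal is the reduction in the first step: one must be sure that after peeling off compact factors of $G$ the remainder is still transitive on $D$. In the flag domain setting with $L$ compact this is structurally built in --- $L$ is contained in a maximal compact subgroup $K$ of $G$, and the compact simple factors of $G$ lie in $K$ and act on the fiber $K/L$ of $D \to G/K$ rather than enlarging the base --- but writing this out cleanly requires the standard real-form description of open $G$-orbits in $Z$ from \cite{W1969}. Once this structural reduction is in hand, Proposition \ref{DMW1} does all the geometric work, and no new argument is needed.
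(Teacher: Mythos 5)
Your overall route --- the forward direction from Proposition \ref{CW-translation} and the converse from Proposition \ref{DMW1} --- is exactly the paper's intended deduction: the paper treats the corollary as immediate from Proposition \ref{DMW1} or \ref{DMW2} applied with $G' = G$, reading the flag-domain hypothesis (consistently with the automorphic-cohomology setting it cites) so that $G$ is semisimple without compact factors and acts transitively on $D$ with compact isotropy $L$. The extra structural reduction you interpose is where the trouble lies.

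That reduction is not correct as stated. The hypothesis that $L$ is compact does not force a compact simple factor $G_c$ of $G$ into $L$: if $Q$ restricts to a proper parabolic subgroup of the complexification of $G_c$, then $D$ splits as $D' \times (G_c/L_c)$ with $G_c/L_c$ a nontrivial compact flag manifold, $L = L' \times L_c$ is still compact, and the fibers $\{d'\}\times (G_c/L_c)$ are precisely the ``nontrivial compact foliation by $G'$-invariant fibers'' that you claim yields a contradiction --- there is no contradiction. Concretely, take $G = SL(2,\R)\times SU(2)$ and $Q$ a Borel subgroup of $G_\C$; then $D$ is the product of the upper half-plane with $P^1(\C)$, $L = SO(2)\times T^1$ is compact, yet $G' = SL(2,\R)$ is not transitive on $D$, so Proposition \ref{DMW1} cannot be applied to $G'$. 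In that situation the corollary does not follow from Proposition \ref{DMW1} alone; one would have to split off the compact factor and invoke the compact results of Part II for it, a genuinely different and longer argument. If instead you adopt the standing convention that $G$ has no compact simple factors --- which is what the paper intends --- your reduction step is vacuous and the remainder of your argument coincides with the paper's one-line proof. You should also note that effectivity, required in Proposition \ref{DMW1}, is arranged by dividing out the kernel of the action, and phrase the last step as: every $\gamma\in\Gamma$ centralizes the transitive group $G'$, so $G'$ descends to a transitive isometry group of $\Gamma\backslash(D,ds^2)$.
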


\medskip
\section{\bf Bounded Automorphisms.}\label{sec13}
\setcounter{equation}{0}
\setcounter{subsection}{0}

An automorphism $\alpha$ of a locally compact group $G$ is {\sl bounded}
if there is a compact subset $C \subset G$ such that 
$\alpha(g)g^{-1} \in C$ for every $g \in G$.  If $g \in G$ the inner 
automorphism $\Ad(g):t \mapsto gtg^{-1}$ is bounded if and only if the
conjugacy class $\Ad(G)g$ is relatively compact.  We write $B(G)$ for
the set of all such elements of $G$.  It is a subgroup.
The result of Jacques Tits mentioned in Section \ref{sec12} is

\begin{proposition}\label{Jtits1} {\rm \cite[Th\' eor\` eme (1)]{T1964}}
Soit $G$ un groupe de Lie sans sous-groupe invariant compact non discret,
et soit $N$ son plus grand sous-groupe invariant nilpotent connexe. Alors, 
$B(G)$ est contenu dans le centralisateur $Z_G(M)$ du groupe $M$ engendr\' e 
par $N$ et par tous les sous-groupes simples non compacts de $G$. Les 
composantes connexes de l’\' el\'ement neutre dans $B(G)$ et $Z_G(M)$,
soient $B^0(G)$ et $Z_G^0(M)$, sont des sous-espaces vectoriels du centre 
de $N$ (en particulier, $Z_G^0(M)$ est le centre connexe de $Z_G(M))$; 
de plus, $B^0(G) = B(G)\cap N$. Enfin, si $G$ est connexe et si
$C(G)$ designe son centre, $B(G) = B^0(G)\cdot C(G)$.
\end{proposition}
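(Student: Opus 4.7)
The plan is to establish the four conclusions of the proposition in order, exploiting throughout the hypothesis that $G$ has no non-discrete compact invariant subgroup. The key technical observation driving everything is that $g \in B(G)$ precisely when the conjugacy class $\Ad(G)g$ is relatively compact in $G$; this boundedness is incompatible with the non-compact parts of $G$ unless $g$ centralizes them.

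First I would establish $B(G) \subseteq Z_G(M)$. Fix $g \in B(G)$ and a closed subgroup $H \subseteq G$. The continuous map $\varphi_H : H \to G$, $h \mapsto hgh^{-1}$, has image in the relatively compact set $\overline{\Ad(G)g}$, and it descends to a continuous injection $H/Z_H(g) \hookrightarrow \overline{\Ad(G)g}$. When $H$ is a non-compact simple Lie subgroup, a closed subgroup with compact quotient must equal $H$ itself (by Iwasawa/Cartan structure a non-compact simple group has no proper closed cocompact subgroup), forcing $g \in Z_G(H)$. For the nilradical $N$, the map $\varphi_N$ takes values in $N$ since $N$ is invariant, so its image lies in a compact subset of $N$; the subgroup of $N$ generated by $\varphi_N(N)$ is then a compact connected subgroup that is normal in $G$, which by hypothesis must be trivial, giving $g \in Z_G(N)$. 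Combining yields $g \in Z_G(M)$.

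Next I would analyze the structure of $B^0(G)$ and $Z_G^0(M)$ and prove $B^0(G) = B(G) \cap N$. Both $B^0(G)$ and $Z_G^0(M)$ are connected invariant subgroups of $G$ contained in $Z_G(M)$; using Step~1 one checks that any two of their elements commute, so each is abelian. A connected abelian Lie group splits as $T \times V$ with $T$ a torus and $V$ a vector group, and here $T$ would be a non-discrete compact connected invariant subgroup of $G$, which is forbidden by hypothesis; hence each is a vector group. As a connected abelian invariant subgroup, each lies in the nilradical $N$, and since it centralizes $N$ it lies in $Z(N)$. This also identifies $Z_G^0(M)$ with the connected center of $Z_G(M)$. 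For the equality $B^0(G) = B(G) \cap N$, the containment $\supseteq$ is clear; for $\subseteq$, any $g \in B(G) \cap N$ lies in $Z(N)^0$, and the entire one-parameter subgroup through $g$ in the vector group $Z(N)^0$ consists of elements of $B(G)$ because $G$-conjugation on $Z(N)^0$ factors through an action whose orbits on the given one-parameter subgroup are relatively compact; this forces $g$ into the connected component $B^0(G)$.

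Finally, for connected $G$ the decomposition $B(G) = B^0(G) \cdot C(G)$ follows because the quotient $G/M$ is compact (as $M$ absorbs both the nilradical and every non-compact simple subgroup of $G$); the resulting compact linear action of $G/M$ on the vector group $Z_G^0(M) = B^0(G)$ admits invariant means, so averaging can adjust any $g \in B(G)$ by an element of $B^0(G)$ to land in the true center $C(G)$. The main obstacle is the first step, specifically the claim that an element of $B(G)$ centralizes the nilradical: the naive compactness argument produces a compact subgroup of $N$ containing $\varphi_N(N)$, but to invoke the hypothesis on $G$ one must verify that the compact subgroup it generates is genuinely $G$-invariant. This requires an induction along the lower central series of $N$, controlling at each stage both the image of $\varphi_N$ in the graded quotients and its behavior under outer conjugation; threading this induction, while also pinning down exactly which one-parameter subgroups of $Z(N)^0$ belong to $B(G)$, is the technical core of Tits's argument.
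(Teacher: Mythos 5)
First, a point of comparison: the paper does not prove this statement at all. Section \ref{sec13} explicitly ``recalls the results of Jacques Tits'' and quotes Th\'eor\`eme (1) of \cite{T1964} verbatim in the original French; it is then used as a black box (via Corollary \ref{Jtits2}) in Section \ref{sec14}. So there is no proof in the paper to measure your attempt against, and your proposal must stand on its own as a reconstruction of Tits's argument. As such it contains several genuine gaps, the most serious in the very first step. You claim that a non-compact simple Lie group has no proper closed cocompact subgroup; this is false --- a minimal parabolic subgroup $P$ of $\mathrm{SL}(2,\R)$ (the upper triangular matrices) is proper and closed with $\mathrm{SL}(2,\R)/P$ compact, and cocompact lattices give discrete examples. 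Independently of that, a continuous injection of $H/Z_H(g)$ into the compact set $\overline{\Ad(G)g}$ does not force $H/Z_H(g)$ to be compact, since the orbit need not be closed and the orbit map need not be proper. So the assertion that $g\in B(G)$ centralizes every non-compact simple subgroup is not established. Tits's actual argument for this step works infinitesimally: one shows a bounded automorphism acts trivially on each non-compact simple factor by examining $\alpha(\exp tX)\exp(-tX)$ as $t\to\infty$ for $X$ in a Cartan subspace; this is also the mechanism behind \cite[Theorem 2.1]{MMW1986}, quoted as Proposition \ref{DMW1}.

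Two further steps are unsupported as written. For the nilradical you assert that the subgroup of $N$ generated by the relatively compact set $\varphi_N(N)$ is a compact connected subgroup; subgroups generated by compact sets need not be compact or even closed, and while you rightly flag that an induction along the lower central series is required, that means this part of Step 1 is a statement of intent rather than a proof. Finally, the assertion $B(G)=B^0(G)\cdot C(G)$ is derived from the claim that $G/M$ is compact, which is false in general: for the $ax+b$ group the hypothesis of the theorem holds, $M=N$ is the translation subgroup, and $G/M\cong\R$. The averaging argument therefore does not start. The intermediate structural analysis --- that $B^0(G)$ and $Z_G^0(M)$ are vector groups inside the center of $N$, and that $B(G)\cap N\subseteq B^0(G)$ via scaling a one-parameter subgroup of the vector group $Z(N)^0$ --- is essentially sound once Step 1 is granted, but Step 1 and the final decomposition both need different arguments.
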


We extract the part that is relevant for us here:

\begin{corollary}\label{Jtits2}{\rm \cite[Corollaire (2)]{T1964}}
Soit $G$ un groupe de Lie connexe. Si le radical $R$ de $G$ est nilpotent 
et simplement connexe, et si $G/R$ n’a pas de sous-groupe invariant compact 
non discret, $G$ n’a pas d’ automorphisme non trivial \` a d\' eplacement 
born\' e.
\end{corollary}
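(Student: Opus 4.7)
The plan is to derive Corollary \ref{Jtits2} by applying Tits' theorem (Proposition \ref{Jtits1}) in two stages: first to $G$ itself to handle inner bounded automorphisms, then to the enlarged group $\tilde G := G \rtimes \langle \alpha \rangle$ to handle an arbitrary bounded automorphism $\alpha$.

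First I would verify that $G$ itself satisfies the hypothesis of Proposition \ref{Jtits1}. If $K$ is a connected compact normal subgroup of $G$, its image in $G/R$ is a connected compact normal subgroup, hence trivial by hypothesis, so $K \subset R$; but $R$ is nilpotent and simply connected, hence torsion free, forcing $K = \{1\}$. Since $R$ is already nilpotent, the nilradical $N$ of Proposition \ref{Jtits1} equals $R$. Next I would show $G$ has no compact simple Levi factor: a compact simple factor $S_c$ of a Levi subgroup would project to a compact connected normal subgroup of $G/R$, which is again trivial, so $S_c \subset R$, impossible. Consequently the Levi subalgebra is a sum of noncompact simple ideals, and together with $\gn = \gr$ it spans $\gg$. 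Hence the group $M$ of Proposition \ref{Jtits1} has Lie algebra $\gg$, its closure equals $G$, and $Z_G(M) = Z_G(G) = C(G)$. Tits' theorem then yields $B(G) \subset C(G)$, so every bounded inner automorphism $\Ad(g)$ of $G$ is trivial.

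Now let $\alpha \in \Aut(G)$ be a bounded automorphism and form $\tilde G := G \rtimes \langle \alpha \rangle$, a Lie group with identity component $G$. Any connected compact invariant subgroup of $\tilde G$ lies in $G^0 = G$ and is normalized by $\alpha$, so the argument above shows it is trivial; thus $\tilde G$ satisfies the hypothesis of Proposition \ref{Jtits1}. The radical $R$ is characteristic in $G$, hence $\alpha$-stable, hence normal in $\tilde G$; by the same argument as for $G$ one checks that $R$ is the nilradical of $\tilde G$ and that every noncompact simple subgroup of $\tilde G$ already lies in $G$. So the subgroup $M$ attached to $\tilde G$ by Proposition \ref{Jtits1} is still dense in $G$, and $Z_{\tilde G}(M) = Z_{\tilde G}(G)$. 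A direct computation in the semidirect product gives
\[
h \alpha h^{-1} = g\, \alpha(g^{-1})\, \alpha \qquad \text{for } h = g \alpha^k \in \tilde G,\, g \in G,
\]
so the $\tilde G$-conjugacy class of $\alpha$ is relatively compact precisely when $\{\alpha(g) g^{-1} : g \in G\}$ is relatively compact in $G$, which is exactly the assumption that $\alpha$ is a bounded automorphism. Therefore $\alpha \in B(\tilde G) \subset Z_{\tilde G}(G)$, which means $\alpha$ acts trivially on $G$, i.e.\ $\alpha = 1$.

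The only delicate point, and the step I expect to be the main obstacle, is the identification of the structural invariants of $\tilde G$ with those of $G$: that the nilradical, the noncompact simple subgroups, and the non-existence of connected compact invariant subgroups all transfer cleanly from $G$ to $\tilde G$. Once these identifications are in place, the translation of boundedness of $\alpha$ into membership in $B(\tilde G)$ is immediate and the conclusion follows directly from Proposition \ref{Jtits1}.
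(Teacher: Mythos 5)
The paper does not actually prove this statement: Corollary \ref{Jtits2} is quoted verbatim (in French) from Tits \cite{T1964}, so there is no internal proof to compare yours against. Your derivation from Proposition \ref{Jtits1} is correct, and the semidirect-product device $\tilde G = G \rtimes \langle\alpha\rangle$ is the standard (in substance, Tits' own) way to pass from the subgroup $B(G)$ of elements with relatively compact conjugacy class, which only sees inner automorphisms, to arbitrary bounded automorphisms. The structural transfers you flag as the delicate point do all go through: a non-discrete compact invariant subgroup of $\tilde G$ would have a nontrivial compact connected identity component, which is normal in $\tilde G$ and lies in $\tilde G^0 = G$, hence in $R$ by the hypothesis on $G/R$, which is impossible since a simply connected nilpotent group has no nontrivial compact subgroup; $R$ is characteristic in $G$, hence normal in $\tilde G$, and remains its largest connected nilpotent invariant subgroup; and the (connected) noncompact simple subgroups of $\tilde G$ all lie in $\tilde G^0 = G$ and include the Levi factors, which are noncompact by your quotient argument, so the group $M$ of Proposition \ref{Jtits1} contains $R\cdot S = G$ and $Z_{\tilde G}(M) \subseteq Z_{\tilde G}(G)$. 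The identification of boundedness of $\alpha$ with relative compactness of the conjugacy class $\{g\,\alpha(g)^{-1}\alpha : g \in G\} \subset G\alpha$ is also right, since inversion carries $\{\alpha(g)g^{-1}\}$ onto $\{g\,\alpha(g)^{-1}\}$. The only point worth making explicit is that $\langle\alpha\rangle$ must carry the discrete topology so that $\tilde G$ is a Lie group with identity component $G$; this is harmless because Proposition \ref{Jtits1} is stated for Lie groups that need not be connected (only its final sentence assumes connectedness, and you do not use that sentence for $\tilde G$).
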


\medskip
\section{\bf Exponential Solvable Groups.}\label{sec14}
\setcounter{equation}{0}
\setcounter{subsection}{0}

Complementing Corollary \ref{Jtits2},

\begin{proposition}\label{esolv1} 
{\rm (\cite[Theorem 2.5]{W2017} and \cite{W2022})}. 
Let $(M, d)$ be a metric space on which an exponential solvable
Lie group $S$ acts effectively and transitively by isometries. Let 
$G = \bI(M, d)$. Then $G$ is a Lie group, any isotropy subgroup $K$ is 
compact, and $G = SK$. If $g \in G$ is a bounded isometry then $g$ is a 
central element in $S$.
\end{proposition}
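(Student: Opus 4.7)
First I would verify the structural statement. Since $S$ is a locally compact Lie group acting effectively and transitively by isometries on $(M,d)$, the metric space $(M,d)$ is proper and standard transformation-group arguments show that $G=\bI(M,d)$ is a Lie group whose point stabilizers are compact and that $G$ acts properly on $M$; letting $K=G_{x_0}$ at a basepoint $x_0$, transitivity of $S$ gives $G=SK$. The essential input from the exponential hypothesis is that $\exp\colon\gs\to S$ is a diffeomorphism, so $S$ is diffeomorphic to $\R^{\dim S}$ and in particular contains no nontrivial compact subgroup. Consequently $S\cap K=\{1\}$, the action of $S$ is simply transitive, and $M$ is canonically identified with $S$ (via $x_0\leftrightarrow e$) with $S$ acting by left translations $\ell$.

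Next, given bounded $g\in G$ with $\sup_x d(x,g(x))\leq C$, both $g$ and $g^{-1}$ have displacement $\leq C$, and for every $h\in G$ the triangle inequality gives
\[
d\bigl(x,\,ghg^{-1}h^{-1}(x)\bigr)\;\leq\;d(x,g(x))\;+\;d\bigl(h^{-1}(x),\,g^{-1}h^{-1}(x)\bigr)\;\leq\;2C
\]
uniformly in $x$. Hence $\{ghg^{-1}h^{-1}:h\in G^0\}$ sends $x_0$ into the compact closed $2C$-ball about $x_0$ and is therefore relatively compact in $G^0$ by properness of the $G$-action, so $\Ad(g)$ is a bounded automorphism of $G^0$ in the sense of Section~\ref{sec13}. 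The identity component $G^0=\ell(S)\,K^0$ has compact semisimple Levi (forced inside $K^0$ since $\ell(S)$ is solvable) and has no nontrivial compact normal connected subgroup (any such would fix every point of $M$ and violate effectiveness). Invoking Tits's bounded-automorphism theory (Proposition~\ref{Jtits1} together with Corollary~\ref{Jtits2}) and exploiting the exponential-solvable structure of $\ell(S)$ then forces $\Ad(g)$ to centralize all of $\ell(S)$, i.e., $g\,\ell(s)=\ell(s)\,g$ for every $s\in S$.

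Finally, evaluation of $g\,\ell(s)=\ell(s)\,g$ at $e$ gives $g(s)=s\cdot g(e)$, so with $s_g:=g(e)\in S$ the isometry $g$ coincides with the right translation $r(s_g^{-1})\colon x\mapsto xs_g$. Then $\ell(s_g^{-1})\circ g=\ell(s_g^{-1})\circ r(s_g^{-1})=\Int(s_g^{-1})$ is an isometry fixing $e$, hence lies in the compact isotropy $K$; in particular $\Ad(s_g^{-1})$ lies in a compact subgroup of $GL(\gs)$, so all its eigenvalues have absolute value $1$. Writing uniquely $s_g=\exp X$ and $\Ad(s_g^{-1})=\exp(-\ad X)$, this forces the eigenvalues of $\ad X$ to be purely imaginary, while the exponential-solvable hypothesis on $S$ forbids any nonzero purely imaginary eigenvalue of $\ad X$. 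Hence $\ad X$ is nilpotent, $\Ad(s_g^{-1})$ is simultaneously unipotent and in a compact group, so $\Ad(s_g^{-1})=I$, giving $s_g\in Z_S$. Therefore $r(s_g^{-1})=\ell(s_g)$ and $g=\ell(s_g)$ is a central element of $S$.

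The main obstacle is the Tits step in the second paragraph: bootstrapping from centrality on the nilradical of $G^0$ (which Proposition~\ref{Jtits1} supplies) to centrality on all of $\ell(S)$ requires careful exploitation of the exponential-solvable structure, since Corollary~\ref{Jtits2} as stated presupposes a nilpotent rather than merely solvable radical. The other steps are essentially routine, the commutator estimate and the spectral argument at $s_g=\exp X$ being the two most elementary ingredients.
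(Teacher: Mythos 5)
The paper itself does not prove this proposition: it is imported verbatim from \cite[Theorem 2.5]{W2017} together with the correction \cite{W2022}, so there is no internal argument to compare yours against. Judged on its own terms, your proposal gets the framing and the endgame right. The reduction to a simply transitive action ($S\cap K=\{1\}$ because an exponential solvable group has no nontrivial compact subgroup, so $M$ is identified with $S$ acted on by left translations), the commutator estimate showing that $\{ghg^{-1}h^{-1} : h\in G\}$ has uniformly bounded displacement and hence that $\Ad(g)$ is a bounded automorphism, and the closing spectral argument (once $g$ commutes with $\ell(S)$ one gets $g=r(s_g^{-1})$ with $\Int(s_g^{-1})\in K$ compact, so $\Ad(s_g)$ is simultaneously unipotent --- since exponential solvability forbids nonzero purely imaginary eigenvalues of $\ad X$ --- and contained in a compact group, hence trivial, whence $s_g\in Z_S$) are all correct and cleanly executed.

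The gap is the sentence ``Invoking Tits's bounded-automorphism theory \dots then forces $\Ad(g)$ to centralize all of $\ell(S)$.'' Proposition \ref{Jtits1} places a bounded element only in the centralizer of the subgroup generated by the nilradical of $G^0$ and the noncompact simple subgroups of $G^0$; to conclude that $g$ centralizes $\ell(S)$ you must show that $\ell(S)$ sits inside that subgroup. That is clear when $S$ is nilpotent, and it is clear when $\ell(S)$ happens to lie in a noncompact semisimple factor (as for $AN\subset SO(n,1)^0$ acting on hyperbolic space), but for a general exponential solvable $S$ --- neither nilpotent nor semisimple, and not obviously normal in $G^0$ --- this containment is exactly the hard structural content of \cite{W2017} and the point on which the published correction \cite{W2022} was needed. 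Corollary \ref{Jtits2} cannot be substituted, since its hypotheses (nilpotent radical, no compact simple factors in the quotient) both fail here. Declaring that the step follows by ``careful exploitation of the exponential-solvable structure'' therefore leaves the essential work undone; one needs a direct argument, for instance that conjugating $g$ by a one-parameter subgroup $\exp(t\xi)$ for which $\ad\xi$ has an eigenvalue with nonzero real part would make the displacement of the commutators unbounded unless $g$ already commutes with it. A secondary point: for a bare metric space $(M,d)$ the properness you assert at the outset (compact closed balls, proper $G$-action) is not automatic and is itself part of what the cited theorem establishes, so it should not be waved through as ``standard.''
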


We combine Corollary \ref{Jtits2} and Proposition \ref{esolv1}:
\begin{theorem}\label{esolv2}
Let $\alpha$ be a bounded automorphism of a connected Lie group $G$.
Suppose that the solvable radical of $G$ is exponential solvable.
Then $G/\Ker(\alpha)$ is compact.
If the semisimple group $G/R$ has no compact simple factor then
$\alpha$ is the identity.
\end{theorem}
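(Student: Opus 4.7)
The plan is to use the semidirect product $\widehat G := G \rtimes_\alpha \Z$ to convert the bounded automorphism $\alpha$ into a bounded conjugacy problem, and then apply Tits' structural results (Proposition \ref{Jtits1} and Corollary \ref{Jtits2}) together with Proposition \ref{esolv1} for the exponential solvable radical.

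First I would establish the auxiliary claim that any bounded automorphism of an exponential solvable Lie group $R$ is the identity. Form $\widetilde R := R \rtimes_{\alpha|_R} \Z$ and let $t$ denote the generator of $\Z$. Since $R$ is exponential solvable, $\exp: \gr \to R$ is a diffeomorphism and $R$ has no non-trivial compact connected subgroup, so $\widetilde R$ has no non-discrete compact invariant subgroup. The $\widetilde R$-conjugacy class of $t$ equals $\{(g\alpha(g)^{-1})\,t : g \in R\}$, which is relatively compact by boundedness of $\alpha|_R$, so $t \in B(\widetilde R)$. Applying Proposition \ref{Jtits1} to $\widetilde R$ decomposes $t = bz$ with $b \in B^0(\widetilde R) \subseteq Z(N_R)$ (the center of the nilradical of $R$) and $z \in Z(\widetilde R)$; unravelling the centrality condition on $z$ yields $\alpha|_R = \Ad(b)$ with $\alpha(b) = b$. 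Since $\Ad(b)$ is bounded, $b$ has bounded conjugacy class in $R$, i.e.\ $b \in B(R)$; and Proposition \ref{esolv1}, applied to $R$ acting on itself by left translations with a left-invariant metric, identifies $B(R) = Z(R)$. Hence $b \in Z(R)$ and $\alpha|_R = \mathrm{id}$.

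Applying this to the theorem, $\alpha|_R = \mathrm{id}$. For Part 2, the induced $\bar\alpha$ on the semisimple $G/R$ is bounded, and $G/R$ has trivial radical; under the no-compact-simple-factor hypothesis $G/R$ has no non-discrete compact invariant subgroup, so Corollary \ref{Jtits2} gives $\bar\alpha = \mathrm{id}$. To deduce $\alpha = \mathrm{id}$ globally, apply Proposition \ref{Jtits1} to $\widehat G$ (which inherits the absence of non-discrete compact invariant subgroups from the combined Part 2 hypotheses) and obtain $\alpha = \Ad(b_0)$ with $b_0 \in Z(N_G) \cap B(G)$. The auxiliary claim applied to $\Ad(b_0)|_R$ gives $b_0 \in Z_G(R)$, and Corollary \ref{Jtits2} applied to $\Ad(b_0)|_{L}$ for a Levi $L = L_{nc}$ gives $b_0 \in Z_G(L)$; together $b_0 \in Z(G)$ and $\alpha = \mathrm{id}$. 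For Part 1, one still has $\alpha|_R = \mathrm{id}$ and $\bar\alpha|_{L_{nc}} = \mathrm{id}$ by Corollary \ref{Jtits2}. Writing $L = L_c L_{nc}$ for a Levi decomposition of $G/R$, the fixed-point subgroup $\Ker(\alpha)$ contains (after an inner-conjugation adjustment preserving a Levi $L_0 \subset G$) $R \cdot L_{0,nc} \cdot \Ker(\alpha|_{L_{0,c}})$, and $G/\Ker(\alpha)$ is a quotient of the compact space $L_{0,c}/\Ker(\alpha|_{L_{0,c}})$.

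The main obstacle is the identification $B(R) = Z(R)$ for exponential solvable $R$, which is the content of Proposition \ref{esolv1}; this is the step that upgrades Tits' Corollary \ref{Jtits2} from the nilpotent simply connected radical setting to the exponential solvable setting. A secondary care-point is the Levi-adjustment in Part 1: one needs to check that replacing $\alpha$ by an inner-equivalent bounded automorphism with $\alpha(L_0) = L_0$ preserves cocompactness of the fixed-point subgroup, and that the auxiliary claim then forces the adjusting element to centralize $R$.
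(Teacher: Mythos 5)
Your overall skeleton --- kill $\alpha$ on the exponential solvable radical $R$, kill the induced automorphism on the noncompact part of $G/R$ via Tits, then reassemble --- matches the paper's, but the reassembly step has a genuine gap. Knowing $\alpha|_R=1$ and that $\bar\alpha$ is trivial on the noncompact part of $G/R$ does not yet say that $\alpha$ is trivial on a cocompact subgroup of $G$: a priori $\alpha(s)=s\,c(s)$ with $c(s)\in R$ for $s$ in the noncompact Levi part. Your device for closing this --- conjugating a Levi subgroup $L_0$ so that $\alpha(L_0)=L_0$ --- does not work as stated: the element $n$ with $\alpha(L_0)=nL_0n^{-1}$ need not have relatively compact conjugacy class, so $\Ad(n)^{-1}\circ\alpha$ need not be bounded, and even if it were, its fixed-point subgroup is not that of $\alpha$. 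You flag this as a ``care-point'' but do not resolve it, and resolving it is the actual content of Part 1. The paper closes the gap differently: in a basis adapted to $\gs_n\oplus\gs_c\oplus\gr$ the operator $d\alpha - I$ sends $\gs_n$ into $\gr$ and kills $\gr$, hence is nilpotent on $\gs_n+\gr$; a bounded unipotent automorphism is the identity, so $\gs_n+\gr$ lies in the fixed subspace and $G/\Ker(\alpha)$ is a quotient of the compact factor $S_c$. That unipotence argument is the idea missing from your proposal, and it disposes of Part 2 at the same time (there $S_c$ is trivial).

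A second problem is your repeated use of the final clause of Proposition \ref{Jtits1}, namely $B(G)=B^0(G)\cdot C(G)$, on the disconnected groups $R\rtimes_{\alpha}\Z$ and $G\rtimes_{\alpha}\Z$: Tits states that clause only for connected groups, so you cannot write $t=bz$ and extract $\alpha|_R=\Ad(b)$, nor $\alpha=\Ad(b_0)$. The connectedness-free part of Proposition \ref{Jtits1} only yields that $t$ centralizes the nilradical and the noncompact simple subgroups; that is not enough to recover your auxiliary claim (an automorphism of the $ax+b$ group that is trivial on the nilradical need not be trivial), although it does salvage Part 2, since there every Levi factor is generated by noncompact simple subgroups and $G=RS$. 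The auxiliary claim itself is exactly what Proposition \ref{esolv1} (via \cite{W2017} and \cite{W2022}) supplies and what the paper invokes, so it can be taken as given; but your proposed derivation of it is not valid as written. Note finally that in Part 1 the hypothesis of Proposition \ref{Jtits1} can fail outright for $G\rtimes_{\alpha}\Z$: if $G/R$ has a compact simple factor, $G$ may contain a nontrivial compact connected normal subgroup.
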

\begin{proof}
Consider a Levi-Whitehead decomposition $G = RS$ where $R$ is the solvable
radical and $S$ is a semisimple complement.  Then $\alpha(R) = R$ and
$\alpha|_R$ is bounded, so $\alpha|_R = 1$ by Proposition \ref{esolv1}.
Now $\alpha$ passes down to $\bar\alpha \in \Aut(G/R)$, where it is a 
bounded automorphism.

Split $S = S_nS_c$ where $S_n$ is the product of the noncompact simple 
normal subgroups and $S_c$ is the product of the compact ones.  Then
$G/R = \bar S_n\bar S_c$ where $\bar S_n$ is the image of $S_n$ under
$G \to G/R$ and $\bar S_c$ is the image of $S_c$\,.  The two factors
of $G/R$ are $\bar\alpha$--invariant, and $\bar\alpha|_{\bar S_n} = 1$
by Proposition \ref{esolv1}.

Now consider a basis $\{u_i,v_j,w_k\}$ of $\gg$ where $\{u_i\}$ is a 
basis of $\gs_n$\,, $\{v_j\}$ is a basis of $\gs_c$\,, and $\{w_k\}$ is a
basis of $\gr$.  The corresponding block form of the matrix of $d\alpha$
on $\gg$ is 
$\left ( \begin{smallmatrix}    0    &    0    & 0 \\
                                0    & A_{2,2} & 0 \\
                             A_{3,1} & A_{3,2} & 0    
         \end{smallmatrix} \right )$
and of $d\alpha|_\gj$ on $\gj := \gs_n + \gr$ is
$\left ( \begin{smallmatrix}    0    & 0 \\
                             A_{3,1} & 0     \end{smallmatrix} \right )$.
That is nilpotent.  Thus a bounded automorphism of $S_nR$ is unipotent,
and consequently is the identity.  Now $A_{3,1} = 0$, so 
$(\gs_n + \gr) \subset \Ker(d\alpha)$.  In other words $\gg/\Ker(d\alpha)$
is a quotient of $\gs_c$ and $G/\Ker(\alpha)$ is a quotient of the compact
group $S_c$\,.  The theorem follows.
\end{proof}

If $g \in \bI(M,ds^2)$ and $(M,ds^2)$ is homogeneous, then 
$g$ is a bounded isometry of $(M,ds^2)$ if and only if $\Ad(g)$
is a bounded automorphism of $\bI^0(M,ds^2)$.  Thus 
an immediate consequence of Theorem \ref{esolv2} is

\begin{theorem}\label{esolv3}
Let $(M,ds^2)$ be a Riemannian manifold on which a connected Lie group $G$ 
acts effectively and transitively by isometries.  
Suppose that the solvable radical $R$ of $G$ is exponential solvable and that
the semisimple quotient $G/R$ has no compact simple factor.  Let $\Gamma$
the a discrete group of isometries of bounded displacement on $(M,ds^2)$.
Then $G$ centralizes $\Gamma$ in $\bI(M,ds^2)$, so $\Gamma \backslash (M,ds^2)$
is homogeneous, and the Homogeneity Conjecture holds for $(M,ds^2)$.
\end{theorem}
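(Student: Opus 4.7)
The strategy is to reduce the claim to Theorem~\ref{esolv2}, applied to $G$ itself. Fix $\gamma \in \Gamma$; the goal is to show that $\gamma$ commutes with every element of $G$. Once that is in hand, the $G$-action and $\Gamma$-action on $M$ commute, so $G$ descends to a transitive isometric action on $\Gamma\backslash(M,ds^2)$, which is exactly the content of the Homogeneity Conjecture here.

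Since $G$ is transitive, $(M,ds^2)$ is Riemannian homogeneous, so the observation preceding the theorem gives that $\Ad(\gamma)$ is a bounded automorphism of $\bI^0(M,ds^2)$: a routine triangle-inequality estimate on $\rho(x_0,\gamma h\gamma^{-1}h^{-1}(x_0))$ for $h \in \bI^0(M,ds^2)$ shows that the commutator set $\{\gamma h \gamma^{-1} h^{-1} \mid h \in \bI^0(M,ds^2)\}$ lies in a uniformly bounded orbit through $x_0$, and compactness of the isotropy in $\bI^0(M,ds^2)$ then makes it relatively compact.

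The main obstacle is the next step: showing $\gamma G \gamma^{-1} = G$, so that $\alpha := \Ad(\gamma)|_G$ is a well-defined automorphism of $G$. Here $\gamma G \gamma^{-1}$ is another connected closed subgroup of $\bI^0(M,ds^2)$ acting transitively on $M$ with the same Lie-algebra type as $G$; the plan is to argue that the hypotheses of exponential-solvable radical and semisimple quotient without compact simple factors single out $G$ canonically among such subgroups of $\bI^0(M,ds^2)$, forcing its invariance under the bounded automorphism $\Ad(\gamma)$. One cannot simply bypass this by applying Theorem~\ref{esolv2} directly to $\bI^0(M,ds^2)$: already for $(M,ds^2) = \R^n$ with $G = \R^n$, one has $\bI^0 = \R^n \rtimes SO(n)$, whose semisimple quotient $SO(n)$ is a compact simple factor, so the hypotheses of Theorem~\ref{esolv2} fail at the $\bI^0$-level even while holding at the $G$-level.

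Once $\alpha \in \Aut(G)$ is defined, the boundedness of $\Ad(\gamma)$ on $\bI^0(M,ds^2)$ restricts to boundedness of $\alpha$ on $G$. Theorem~\ref{esolv2}, together with our hypotheses on $R$ and $G/R$, then yields $\alpha = \operatorname{id}_G$, i.e.\ $\gamma$ centralizes $G$ in $\bI(M,ds^2)$. Since $\gamma \in \Gamma$ was arbitrary, $G$ centralizes $\Gamma$, and the descent of the $G$-action to $\Gamma\backslash(M,ds^2)$ described in the opening paragraph completes the verification of the Homogeneity Conjecture for $(M,ds^2)$.
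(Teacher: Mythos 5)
Your overall route is the paper's: deduce the theorem from Theorem \ref{esolv2} via the equivalence between ``$\gamma$ is a bounded isometry'' and ``$\Ad(\gamma)$ is a bounded automorphism,'' and both your triangle-inequality verification of that equivalence and your warning that Theorem \ref{esolv2} cannot simply be applied to $\bI^0(M,ds^2)$ (the Euclidean example is exactly right) are sound. The problem is the step you yourself call the main obstacle: you never prove that $\gamma G\gamma^{-1}=G$, you only announce a ``plan,'' and the plan as stated cannot work. The hypotheses do \emph{not} single out $G$ canonically inside $\bI^0(M,ds^2)$: already for $M=\H^2(\R)$ the Iwasawa subgroup $AN\subset PSL(2,\R)$ is connected, transitive, exponential solvable (so its radical is exponential solvable and its semisimple quotient is trivial, hence has no compact factor), and so is every conjugate $gANg^{-1}$; these form a whole conjugacy class, and nothing intrinsic distinguishes one member from another, so ``canonical uniqueness'' is not the mechanism that could force $\Ad(\gamma)$-invariance of $G$. (In that example the conclusion survives only because bounded isometries of $\H^2(\R)$ are trivial.) As it stands, the proposal establishes boundedness of $\Ad(\gamma)$ on $\bI^0(M,ds^2)$ but not that its restriction to $G$ is an automorphism of $G$, which is what Theorem \ref{esolv2} requires. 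A secondary point: even granting $\gamma G\gamma^{-1}=G$, relative compactness of the commutator set in $\bI^0(M,ds^2)$ yields relative compactness in $G$ only if $G$ is closed in the isometry group, which is not among the hypotheses.

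To be fair, the paper's own derivation is a single sentence and is silent on the same normalization issue, so you have put your finger on the real content rather than missed it; but identifying an obstacle is not the same as overcoming it. A more promising repair goes through normality rather than uniqueness of $G$: Tits's theorem (Proposition \ref{Jtits1}), applied to the group generated by $\bI^0(M,ds^2)$ and $\gamma$, shows that a bounded element centralizes every noncompact simple subgroup, hence the Levi factor $S$ of $G$, with no need for $\gamma$ to normalize $S$ a priori; the radical is then handled as in Proposition \ref{esolv1} and Proposition \ref{DMW1}, whose conclusions are likewise stated for arbitrary bounded isometries rather than for automorphisms of $G$. Some version of that two-step argument, rather than an appeal to uniqueness of $G$, is what is needed to close your gap.
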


\medskip
\centerline{\bf Part IV. Some Problems.}
\medskip

\section{\bf Open Problems.}\label{sec15}
\setcounter{equation}{0}
\setcounter{subsection}{0}
\medskip

We mention a few open problems in connection with the Homogeneity 
Conjecture.  Of course we would welcome solid general proof and we 
hardly need mention that.
In this section we describe five problems that are related to the 
cases where we do have a proof.  The first two are simply to fill gaps in
our knowledge, but the last three would certainly require some new ideas.

\medskip
\addtocounter{subsection}{1}
\centerline{\bf \thesubsection. Outer Automorphisms and 
Non-Normal Metrics For $\chi(M) > 0$.}
\medskip

Consider the case $\chi(M) > 0$ of Section \ref{sec8}.  There $M = G/K$
with $\rank G = \rank K$, and the case of a normal (from the negative of the
Killing form of $\gg$) metric $dt^2$ was settled in Theorem \ref{cw-cover}.
As long as no outer automorphisms of $G$ occur in $\bI(M,ds^2)$ the
Homogeneity Conjecture is proved in the affirmative; see Theorem 
\ref{euler-pos}.  We don't yet know how to deal with outer automorphisms that 
might occur in isometries of constant displacement, but here is one possibility.

There is an $\Ad(K)$--invariant reductive decomposition $\gg = \gk + \gm$,
and since $\gk$ contains a Cartan subalgebra of $\gg$ we can express $\gm$ 
as a sum of root planes 
$\gg^\varphi := (\gg_\C^\varphi + \gg_\C^{-\varphi})\cap \gg$
for an appropriate set of positive roots $\varphi$.  The Borel-de Siebenthal
theory \cite{BdS1949}, applied recursively, should make this explicit and
should give a useful description of $ds^2$ on $\gm$, perhaps using some of the 
decomposition techniques from \cite{GW1968a} and \cite{GW1968b}.

\medskip
\addtocounter{subsection}{1}
\centerline{\bf \thesubsection. Outer Automorphisms for Group Manifolds.}

We are in the case of a compact group manifold $G$ with a left invariant 
Riemannian metric $ds^2$.  If $dt^2$ is the normal metric defined by a
negative multiple of the Killing form of $\gg$, then $(G,dt^2)$ is a
Riemannian symmetric space, and we know the groups $\Gamma$ of constant 
displacement isometries from \cite[Theorem 4.5.1]{W1962}; see Theorem 
\ref{symm-group} in Section \ref{sec4}.  More generally for $(G,ds^2)$, 
in the notation of Proposition \ref{lr4} and Theorem \ref{result-grp}, 
either $\Gamma \subset \bigcup \ell(G)\ell(a_i)$ or 
$\Gamma \subset \bigcup r((G,ds^2)a_i)$.  In this second case $\Gamma$
centralizes the transitive group $\ell(G)$, but in the first case we need
a better understanding of $\ell(\gamma)$ for $\gamma \in \bI(G,ds^2)$
of constant displacement.
A general $G$--invariant metric $ds^2$ can be diagonalized relative to
the normal metric $dt^2$ by diagonalizing the corresponding inner product
$\<\, ,\, \>_s$ on $\gg$,
say $\gg = \sum_{1\leqq i\leqq r} a_i\gm_i$ where the $\gm_i$ are mutually
orthogonal relative to the inner product $\<\, ,\, \>_t$ from $dt^2$
and $0 < a_1 < \dots < a_r$.  Let $x_0$ denote the base point $1$ and consider
a constant displacement isometry $\gamma \in \bI(M,ds^2)$.  Let $\xi \in \gg$
such that $t \mapsto \exp(t\xi)x_0$ is a shortest (for $\xi \in \gg$) curve
from $x_0$ to $\gamma(x_0)$.  Decompose 
$\xi = \sum \xi_i \in \gg$ with $\xi_i \in \gm_i$\,.  Each 
$\exp(t\xi_i)x_0$, $0 \leqq t \leqq 1$, is an $(M,ds^2)$--geodesic, 
and it should be possible to fit them together to describe minimizing 
geodesics in $(M,ds^2)$ from $x_0 = 1K$ to $\gamma(x_0)$. 

\medskip
\addtocounter{subsection}{1}
\centerline{\bf \thesubsection. Weakly Symmetric Spaces and Geodesic Orbit
Spaces.}

Without going into the definitions, weakly symmetric spaces form a very 
interesting class of homogeneous Riemannian manifolds, and geodesic orbit
spaces form a larger interesting class of homogeneous Riemannian manifolds.
It would be important to verify the Homogeneity Conjecture for them.

\medskip
\addtocounter{subsection}{1}
\centerline{\bf \thesubsection. Extension to Finsler Manifolds.}

The notion of ``constant displacement'' makes perfect sense for isometries
of metric spaces.  Furthermore, the proof of the obvious part of the 
Homogeneity Conjecture 
\begin{quote}
Consider a connected homogeneous Riemannian manifold $(M,ds^2)$ and a
Riemannian covering $(M,ds^2) \to \Gamma \backslash (M,ds^2)$.  If
$\Gamma \backslash (M,ds^2)$ is homogeneous then every $\gamma \in \Gamma$
is an isometry of constant displacement.
\end{quote}
holds for metric spaces:
\begin{quote}
Let $x, y \in M$ and $\gamma \in \Gamma$.  Choose $g \in G$ with
$g(x) = y$.  Let $\rho$ denote distance in $(M,ds^2)$.  Then the
displacement $\rho(x,\gamma(x)) =
\rho(gx,g\gamma(x)) = \rho(gx,\gamma g(x)) = \rho(y,\gamma(y))$.
\end{quote}
Thus one can conjecture the converse, as for the Riemannian case.  But that
certainly is asking too much, and one should start by asking whether the
Homogeneity Conjecture is valid for Finsler spaces, or even Finsler spaces
of Berwald type or of Randers type.  Much of the Riemannian geometry 
machinery behind Ozols' result 
\cite[Theorem 1.6]{O1974} (see Proposition \ref{oz1}
above) is available for Finsler spaces and is conveniently set out in
\cite[Chapter 1]{D2012}.
\medskip

\addtocounter{subsection}{1}
\centerline{\bf \thesubsection. Extension to Pseudo-Riemannian Manifolds.}
\medskip

Here the notion of distance -- and thus of displacement -- does not make sense,
but geodesics and the notion of preserving a geodesic are the same as in the
Riemannian case.  The notion of ``minimizing geodesic'' is not available, so
in view of \cite[Theorem 1.6]{O1974} (Proposition \ref{oz1}) one would
look at isometries $\gamma \in \bI(M,ds^2)$ that preserve every geodesic
$\overline{x,\gamma(x)}$, $x \in M$.  The proof the (2) $\Leftrightarrow$ (3)
in \cite[Theorem 1.6]{O1974} would need some adjustment, and it might be 
necessary to restrict attention to geodesic orbit spaces.

\end{document}